\DeclareMathAlphabet{\pazocal}{OMS}{zplm}{m}{n}
\newcommand{\B}{\mathbb{B}}
\newcommand{\R}{\mathbb{R}}
\newcommand{\Lbb}{\mathbb{L}}
\newcommand{\V}{\pazocal{V}}
\newcommand{\Apazo}{\pazocal{A}}
\newcommand{\Mpazo}{\pazocal{M}}
\newcommand{\Cpazo}{\pazocal{C}}
\newcommand{\Hpazo}{\pazocal{H}}
\newcommand{\Npazo}{\pazocal{N}}
\newcommand{\Lpazo}{\pazocal{L}}
\newcommand{\Ppazo}{\pazocal{P}}
\newcommand{\Dpazo}{\pazocal{D}}
\newcommand{\Spazo}{\pazocal{S}}
\newcommand{\Xpazo}{\pazocal{X}}
\newcommand{\Dcal}{\mathcal{D}}
\newcommand{\Lcal}{\mathcal{L}}
\newcommand{\Ccal}{\mathcal{C}}
\newcommand{\Scal}{\mathcal{S}}
\newcommand{\LcalI}{\Lcal^1_{\mathlarger{\llcorner} I}}
\newcommand{\LcalII}{\Lcal^2_{\mathlarger{\llcorner} I \times I}}
\newcommand{\Id}{\textnormal{Id}}
\newcommand{\supp}{\textnormal{supp}}
\newcommand{\Lip}{\textnormal{Lip}}
\newcommand{\loc}{\textnormal{loc}}
\newcommand{\Div}{\textnormal{div}}
\newcommand{\rg}{\textnormal{rg}}
\newcommand{\argmax}{\textnormal{argmax}}
\newcommand{\disc}{\textnormal{disc}}
\newcommand{\ess}{\textnormal{ess}}
\newcommand{\sym}{\textnormal{sym}}
\newcommand{\textbn}[1]{\textnormal{\textbf{#1}}}
\newcommand{\co}{\overline{\textnormal{co}} \hspace{0.05cm}}
\newcommand{\xb}{\boldsymbol{x}}
\newcommand{\yb}{\boldsymbol{y}}
\newcommand{\vb}{\boldsymbol{v}}
\newcommand{\Ab}{\boldsymbol{A}}
\newcommand{\Lb}{\boldsymbol{L}}
\newcommand{\vt}{\texttt{t}}
\renewcommand{\epsilon}{\varepsilon}
\newcommand{\INTDom}[3]{\int_{#2} #1 \textnormal{d} #3}
\newcommand{\INTSeg}[4]{\int_{#3}^{#4} #1 \textnormal{d} #2}
\newcommand{\NormL}[3]{\parallel \hspace{-0.1cm} #1 \hspace{-0.1cm} \parallel _ {L^{#2}(#3)}}
\newcommand{\Norm}[1]{\parallel \hspace{-0.1cm} #1 \hspace{-0.1cm} \parallel}
\newcommand{\tderv}[2]{\tfrac{\textnormal{d} #1}{ \textnormal{d} #2}}
\newtheorem{Def}{Definition}[section]
\newtheorem{thm}[Def]{Theorem}
\newtheorem{prop}[Def]{Proposition}
\newtheorem{rmk}[Def]{Remark}
\newtheorem{lem}[Def]{Lemma}
\newtheorem*{open}{Open problem}
\numberwithin{figure}{section}
\newenvironment{taggedhyp}[1]
    {\taggedhypx}
    {\endtaggedhypx}
\title{Consensus Formation in First-Order Graphon Models with Time-Varying Topologies}
\date{December 8, 2022}
\author{Benoît Bonnet\footnote{Inria Paris $\mathsmaller{\&}$ Laboratoire Jacques-Louis Lions, Sorbonne Université, Université Paris-Diderot SPC, CNRS, Inria, 75005 Paris, France \textit{Email:} \texttt{benoit.a.bonnet@inria.fr}} , Nastassia Pouradier Duteil\footnote{Inria Paris $\mathsmaller{\&}$ Laboratoire Jacques-Louis Lions, Sorbonne Université, Université Paris-Diderot SPC, CNRS, Inria, 75005 Paris, France \textit{Email:} \texttt{nastassia.pouradierduteil@inria.fr}}\, and Mario Sigalotti\footnote{Inria Paris $\mathsmaller{\&}$ Laboratoire Jacques-Louis Lions, Sorbonne Université, Université Paris-Diderot SPC, CNRS, Inria, 75005 Paris, France \textit{Email:} \texttt{mario.sigalotti@inria.fr}}}
\begin{document}

\maketitle

\begin{abstract}
In this article, we investigate the asymptotic formation of consensus for several classes of time-dependent cooperative graphon dynamics. After motivating the use of this type of macroscopic models to describe multi-agent systems, we adapt the classical notion of scrambling coefficient to this setting, leverage it to establish sufficient conditions ensuring the exponential convergence to consensus with respect to the $L^{\infty}$-norm topology. We then shift our attention to consensus formation expressed in terms of the $L^2$-norm, and prove three different consensus result for symmetric, balanced and strongly connected topologies, which involve a suitable generalisation of the notion of algebraic connectivity to this infinite-dimensional framework. We then show that, just as in the finite-dimensional setting, the notion of algebraic connectivity that we propose encodes information about the connectivity properties of the underlying interaction topology. We finally use the corresponding results to shed some light on the relation between $L^2$- and $L^{\infty}$-consensus formation, and illustrate our contributions by a series of numerical simulations.
\end{abstract}

{\footnotesize
\textbf{Keywords :} Multi-Agent Systems, Graphon Dynamics, Consensus Formation, Algebraic Connectivity, Scrambling Coefficient, Persistence Conditions

\vspace{0.25cm}

\textbf{MSC2020 Subject Classification :} 05C63, 05C90, 37L15, 93A16
}

\tableofcontents


\section{Introduction}
\setcounter{equation}{0} \renewcommand{\theequation}{\thesection.\arabic{equation}}

Since its appearance at the end of the nineteen nineties, the analysis of self-organisation and pattern formation in large groups of interacting individuals has become a prominent topic in applied mathematics. This subject was initially brought forth by the pioneering articles \cite{Flierl1999,Vicsek1995} on statistical physics oriented models for biological systems, and subsequently cemented by a wealth of contributions in the fields of automation theory and engineering, see e.g. \cite{BeardRen,Blondel2009,Blondel2005,Jadbabaie2003,Martin2014,Egerstedt2010,Moreau2005,Tanner2007} and references therein. In the midst of this broad academical trend, a research current led by the works of Hegselman and Krause \cite{Hegselmann2002,Krause2000} on bounded confidence models, and the groundbreaking papers of Cucker and Smale \cite{CS1,CS2} on emergent behaviours, started to focus more specifically on the problem of \textit{consensus formation}. In the meantime, many of the ideas that came into existence around these questions found their way to a large score of application areas, such as herds and flocks analysis \cite{Ballerini,Bellomo2013}, aggregation models in biology \cite{Chuang2007}, decentralised control of fleets of autonomous vehicles \cite{BeardRen,Bullo2009} or pedestrian dynamics \cite{CPT,Piccoli2018}. Besides consensus problems, which by far attracted the most attention in the literature, other families of relevant and perhaps more intricate stable patterns -- such as mills \cite{Birnir2007,Carrillo2021} or rings \cite{Bertozzi2015} -- are being investigated by communities centered around multi-agent dynamics.

Motivated by the apparent ubiquity of clustering behaviours in applied sciences, a growing literature has been devoted to the precise mathematical understanding of the mechanisms subtending this type of pattern formation in the class of \textit{multi-agent systems}. These latter colloquially refer to systems of $N \geq 1$ agents represented by points in a given configuration space -- in our context the euclidean space $\R^d$ --, which evolve according to a coupled dynamics of the form
\begin{equation}
\label{eq:IntroODE}
\dot x_i(t) = \frac{1}{N} \sum_{j=1}^N \psi_{ij}( t,x_i(t)-x_j(t)).
\end{equation}
Here, the vector $(x_1(t),\dots,x_N(t)) \in (\R^d)^N$ represents the collection of all the states of the agents at some time $t \geq 0$, while the maps $\psi_{ij} : \R_+ \times \R^d \rightarrow \R^d$ defined for each pair of indices $i,j \in \{1,\dots,N\}$, encode nonrepulsive pairwise interactions between agents, which may depend on time, on their relative distance and orientation, as well as on their respective labels. In this context, one is usually interested in understanding under which circumstances solutions of \eqref{eq:IntroODE} \textit{converge to consensus}, namely, when there does exist an element $x^{\infty} \in \R^d$ such that 
\begin{equation*}
\lim_{t \rightarrow +\infty} |x_i(t) - x^{\infty}| = 0, 
\end{equation*}
for each agent label $i \in \{1,\dots,N\}$. This question has been the object of a tremendously large amount of work -- see e.g. \cite{Blondel2005,Caponigro2013,Caponigro2015,Dalmao2011,Jabin2014,Jadbabaie2003,Martin2014,Moreau2005,Motsch2011,Olfati2004,Shen2008,TahbazSalehi2008} and references therein --, whose number can be explained by the necessity to tackle a wide variety of specific interaction rules described by the maps $(\psi_{ij})_{1 \leq i,j \leq N}$. Some of the farthest-reaching results on this topic can be attributed to Moreau \cite{Moreau2005} as well as to Olfati-Saber and Murray \cite{Olfati2004}, while the main approaches and proof strategies to study asymptotic clustering are outlined in the reference survey \cite{Motsch2014} by Motsch and Tadmor.

Even though the literature devoted to consensus formation is very broad, quantitative convergence results -- typically obtained by means of Lyapunov arguments that do not rely on LaSalle's invariance principle --, are almost exclusively based on \textit{diameter} or \textit{energy} decay estimates. In both cases, the convergence towards consensus is dictated by a scalar quantity, which carries information about the connectivity properties of the system. In the first scenario, diameter contraction estimates usually depend on the so-called \textit{scrambling coefficient} of the topology. The latter is positive whenever each pair of agents in the system is either directly interacting, or following a common third party agent, and thus encodes information about ``local'' communications in the system. It first appeared in the theory of stochastic matrices as a way of quantifying ergodicity properties of Markov chains, and is discussed in the reference monograph of Seneta \cite{Seneta1979}. In the second case, energy dissipation estimates are often expressed in terms of \textit{algebraic connectivity}. This quantity -- which is widespread in algebraic graph theory -- corresponds to the second smallest eigenvalue of the so-called \textit{graph-Laplacian} matrix associated with the interaction topology. It is known to provide information about the number of connected components in the interaction graph, and allows to quantity the minimum communication strength between agents over all connected components. As such, the algebraic connectivity carries a fairly ``global'' information about the structure of the topology, and a detailed account on its many properties in the context of undirected graph can be found in the reference article of Mohar \cite{Mohar1991}.

\bigskip

In addition to identifying conditions ensuring the formation of consensus for systems of the form \eqref{eq:IntroODE}, it is also very natural to try to understand the behaviour of such patterns in large interacting systems, i.e. when the number $N \geq 1$ of agents goes to infinity. This problem is all the more relevant since in the common application areas of cooperative systems, the number of agents can range between tens of thousands and millions, making frontal resolution potentially arduous from a numerical standpoint. In the situation where the interaction maps $(\psi_{ij})_{1 \leq i,j \leq N}$ do not depend on the agent labels $i,j \in \{1,\dots,N\}$ and are all equal to a common function $\psi : \R_+ \times \R^d \rightarrow \R^d$, it was shown in \cite{Carrillo2010,HaLiu,Ha2008} that meaningful kinetic approximations formulated in terms of \textit{mean-field limits} could be proposed for \eqref{eq:IntroODE}, see also \cite{ControlKCS}. These latter are formulated as transport equations of the form 
\begin{equation}
\label{eq:IntroPDE}
\partial_t \mu(t) + \Div_x \big( \Psi(t,\mu(t)) \mu(t) \big) = 0, 
\end{equation}
where the nonlocal interaction velocities driving the system are given by 
\begin{equation*}
\Psi(t,\mu,x) := \INTDom{\psi(t,y-x)}{\R^d}{\mu(y)}.
\end{equation*}
In this context, a rigorous embedding can be established between the microscopic system \eqref{eq:IntroODE} and its macroscopic approximation \eqref{eq:IntroPDE}, by considering the \textit{empirical measures} defined by
\begin{equation*}
\mu_N(t) := \frac{1}{N} \sum_{i=1}^N \delta_{x_i(t)}
\end{equation*}
which are supported on the discrete positions $(x_1(t),\dots,x_N(t)) \in (\R^d)^N$ of the agents at time $t \geq 0$. These infinite-dimensional models are usually studied by means of a variety of tools coming from optimal transport theory -- see for instance the survey \cite{Choi2014} --, and we point the reader to \cite{ContInc,Pedestrian} for standard Cauchy-Lipschitz well-posedness results for nonlocal dynamics of the form \eqref{eq:IntroPDE}.

However when the interaction kernels $(\psi_{ij})_{1 \leq i,j \leq N}$ depend explicitly on the agent labels, the standard mean-field approach is not applicable any more. Indeed, the latter can only account for systems whose dynamics are invariant under permutations, which greatly limits the scope of admissible interaction models. Motivated by this observation, several recent contributions have been aiming at studying macroscopic approximations of \eqref{eq:IntroODE} which allow for more general and possibly asymmetric interaction functions. The corresponding approaches are based on the theory of \textit{graph limits} introduced by Lov\'asz and Segedy in \cite{Lovasz2006} -- see also the corresponding monograph \cite{Lovasz2012} --, and popularised by Medvedev in \cite{Medvedev2014,Medvedev2014Bis}. In this context, the discrete set of labels $\{1,\dots,N\}$ allowing to discriminate between agents becomes a continuous unit interval $I := [0,1]$, and the state of the system at time $t \geq 0$ is represented by a Lebesgue measurable function $x(t) \in L^2(I,\R^d)$ which maps to each generalised index $i \in I$ a position $x(t,i) \in \R^d$. In this context, the \textit{graphon dynamics} corresponding to \eqref{eq:IntroODE} writes as
\begin{equation}
\label{eq:IntroGraphonGen}
\partial_t x(t,i) = \INTDom{\psi(t,i,j,x(t,i) - x(t,j))}{I}{j}, 
\end{equation}
where $\psi : \R_+ \times I \times I \times \R^d \rightarrow \R^d$ is a general function encoding interactions between agents, depending on time, on their relative position and on their respective labels. At present, graphon models of the form \eqref{eq:IntroGraphonGen} are starting to emerge in the literatures related to multi-agent systems \cite{Ayi2021,Boudin2022}, control theory \cite{Biccari2019,Gao2019}, mean-field games \cite{Caines2019,Carmona2021}, and data sciences \cite{Ruiz2020,Vizuete2021}. We also point the reader to the papers \cite{Boudin2022,Lee2018}, which to the best of our knowledge are currently the only ones investigating consensus formation problems for graphon dynamics. We also mention the much older work \cite{Blondel2009}, in which the authors intuited a consensus model with a continuum of agent labels without relying on, or referring to, the terminology of dense graphs. 

\bigskip

In this article, our goal is to investigate several families of sufficient conditions ensuring the formation of consensus for solutions of generalised Cucker-Smale graphon models of the form 
\begin{equation}
\label{eq:IntroGraphon}
\partial_t x(t,i) = \INTDom{a(t,i,j) \phi(|x(t,i) - x(t,j)|)(x(t,j) - x(t,i))}{I}{j}.
\end{equation}
Here, the map $\phi : \R_+ \rightarrow \R_+$ is a smooth and bounded nonlinear interaction kernel accounting for the \textit{distance-based} magnitude of communications between agents, while the time-dependent kernel $a : \R_+ \times I \times I \rightarrow [0,1]$ encodes the \textit{interaction topology} of the system. We stress that the latter is not symmetric a priori, so that the fine structure of the interaction topology will be the main factor determining what kind of convergence result one may hope to recover. Taking inspiration from the classical dichotomy between $L^{\infty}$- and $L^2$-dissipation estimates in the consensus literature, we start in Section \ref{section:DiamConsensus} by studying sufficient conditions ensuring the contraction of the diameter of the system
\begin{equation*}
\Dpazo(t) := \underset{i,j \in I}{\textnormal{ess sup}} |x(t,i) - x(t,j)|, 
\end{equation*}
defined for all times $t \geq 0$ along solutions of \eqref{eq:IntroGraphon}. To this end, we introduce in Section \ref{subsection:Scrambling} the following macroscopic generalisation for graphon models of the classical scrambling coefficient 
\begin{equation*}
\eta(\Apazo(t)) := \underset{{i,j \in I}}{\textnormal{ess inf}} \INTDom{\min \Big\{ a(t,i,k),a(t,j,k) \Big\}}{I}{k}.
\end{equation*}
Here, $\Apazo(t) \in \Lpazo(L^2(I,\R^d))$ denotes the kernel-type \textit{adjacency operator} generated by $a(t) \in L^{\infty}(I \times I,[0,1])$, and can be seen as the counterpart of the notion of adjacency matrix for standard graphs. By following an original methodology relying on the theorems of Danskin and Scorza-Dragoni, combined with a key geometric lemma on the maximisers of relaxed diameters, we show in Theorem \ref{thm:DiamContraction} that 
\begin{equation}
\label{eq:IntroDiam}
\Dpazo(t) \leq \Dpazo(0) \exp \bigg( - \gamma_R \INTSeg{\eta(\Apazo(s))}{s}{0}{t} \bigg), 
\end{equation}
for all times $t \geq 0$, where $\gamma_R > 0$ is a constant related to the minimum value taken by the nonlinear kernel $\phi(\cdot)$ when the $L^{\infty}$-norm of the initial datum of \eqref{eq:IntroGraphon} is bounded by some constant $R > 0$. In addition to its novelty in the context of graphon models, this contraction estimate also generalises those already available for discrete multi-agent systems, as it does not require that the adjacency matrix be stochastic, see e.g. \cite{Motsch2011,Motsch2014}. Building on \eqref{eq:IntroDiam}, we proceed by showing in Theorem \ref{thm:ConsensusDiam} that solutions of \eqref{eq:IntroGraphon} exponentially converge to consensus in the $L^{\infty}$-norm topology as soon as the scrambling coefficients $\eta(\Apazo(\cdot))$ are \textit{persistent}, in the sense that
\begin{equation}
\label{eq:IntroPersistenceEta}
\frac{1}{\tau} \INTSeg{\eta(\Apazo(s))}{s}{t}{t+\tau} \geq \mu, 
\end{equation}
for all times $t \geq 0$ and some fixed parameters $(\tau,\mu) \in \R_+^* \times (0,1]$. This type of integral condition is fairly standard in stability theory for time-varying systems, as evidenced by the monographs \cite{MazencMalisoff,Narendra1989}. 

In Section \ref{section:ConsensusL2}, we shift our focus to energy dissipation estimates  involving the standard deviations
\begin{equation*}
X(t) := \bigg( \frac{1}{2} \INTDom{\INTDom{|x(t,i) - x(t,j)|^2}{I}{j}}{I}{i} \bigg)^{1/2}, 
\end{equation*}
defined for all times $t \geq 0$, and which allow for a convenient investigation of $L^2$-consensus formation. To this end, we start in Section \ref{subsection:Algebraic} by reformulating the dynamics \eqref{eq:IntroGraphon} as the differential equation 
\begin{equation}
\label{eq:IntroLapDyn}
\dot x(t) = -\Lbb(t,x(t)) x(t), 
\end{equation}
posed in the Hilbert space $L^2(I,\R^d)$. Therein, we introduced the so-called \textit{graph-Laplacian operators} $\Lbb(t,x) \in \Lpazo(L^2(I,\R^d))$ associated with the nonlinear interaction topology of the system. Taking inspiration from the work of Wu \cite{Wu2005,Wu2005ter,Wu2005bis} on the adaptation of the notion of \textit{algebraic connectivity} to directed graphs, we propose several generalisations of this object to the setting of graphon dynamics. For \textit{symmetric} and \textit{balanced} interaction topologies -- see Definition \ref{def:Balanced} below --, we define the algebraic connectivity of an interaction topology whose graph-Laplacian operator is $\Lbb \in \Lpazo(L^2(I,\R^d))$ as
\begin{equation}
\label{eq:IntroAlgebraic}
\lambda_2(\Lbb) := \inf_{x \in \Ccal^{\perp}} \frac{\langle \Lbb x , x \rangle_{L^2(I,\R^d)}}{\NormL{x}{2}{I,\R^d}^2}, 
\end{equation}
where $\Ccal \subset L^2(I,\R^d)$ is the so-called \textit{consensus manifold}. In the subtler and more technical case in which the interaction topology is a disjoint union of strongly connected components -- understood in the sense of Definition \ref{def:StrongCon} below --, it is again possible to define the algebraic connectivity $\lambda_2(\Lbb)$ in a fashion that is similar to \eqref{eq:IntroAlgebraic} by renormalising $\Lbb \in \Lpazo(L^2(I,\R^d))$ using a suitable function belonging to the kernel of its adjoint. Let it be noted that the existence of this canonical weight for graphon models is a consequence of the recent work \cite{Boudin2022}. 

We then proceed by deriving three general convergence results towards consensus depending on the nature of the interaction topology, and presented in ascending order of generality and difficulty. We thus start in Section \ref{subsubsection:Symmetric} by the simplest scenario in which the adjacency operators $\Apazo(t) \in \Lpazo(L^2(I,\R^d))$, whose kernels are $a(t) \in L^{\infty}(I \times I,[0,1])$, define symmetric interaction topologies for almost every time $t \geq 0$. By adapting a methodology developed in \cite{CSComFail} by the first author, we are then able to prove in Theorem \ref{thm:ConsensusSym} that solutions of \eqref{eq:IntroLapDyn} exponentially converge to consensus in the $L^2$-norm topology, whenever the interaction topology satisfies the persistence condition 
\begin{equation}
\label{eq:IntroPersistenceLambda21}
\lambda_2 \bigg( \frac{1}{\tau} \INTSeg{\Lbb^a(s)}{s}{t}{t+\tau} \bigg) \geq \mu 
\end{equation}
for all times $t \geq 0$, in which $\Lbb^a(t)$ stands for the linear graph-Laplacian 
\begin{equation*}
\Lbb^a(t) : x \mapsto \INTDom{a(t,i,j) (x(i) - x(j))}{I}{j},
\end{equation*}
generated by the adjacency operator $\Apazo(t)$ at some time $t \geq 0$. It is worth recalling that, for finite-dimensional multi-agent systems, persistence conditions of the form \eqref{eq:IntroPersistenceLambda21} bearing on the smallest eigenvalue of the averaged matrix are in fact \textit{necessary and sufficient} for the exponential convergence towards consensus, as shown in \cite{Manfredi2016}. We subsequently move on in Section \ref{subsubsection:Balanced} to the more general situation in which the interaction graph is directed, but balanced, in the sense that the in- and out-degrees coincide at each node. Contrary to the symmetric case, we need to restrict our attention to linear dynamics in this context. In Theorem \ref{thm:ConsensusBalanced}, we show that solutions of \eqref{eq:IntroLapDyn} for which $\phi \equiv 1$ and $\Apazo(t)$ defines a balanced topology for almost all $t \geq 0$ exponentially converge to consensus in the $L^2$-norm topology, whenever
\begin{equation}
\label{eq:IntroPersistenceLambda22}
\frac{1}{\tau} \INTSeg{\lambda_2(\Lbb(s))}{s}{t}{t+\tau} \geq \mu 
\end{equation}
for all times $t \geq 0$. At this point, a crucial observation has to be made regarding the persistence conditions \eqref{eq:IntroPersistenceLambda21} and \eqref{eq:IntroPersistenceLambda22}. Indeed, the latter -- which heuristically imposes that the algebraic connectivity of the system be positive sufficiently often -- is much more restrictive than the former, which merely requires that the average graph on every time window of length $\tau > 0$ be strongly connected. This requirement can be met even in the extreme case where $\lambda_2(\Lbb^a(t)) = 0$ for almost every times $t \geq 0$, which corresponds to the scenario in which the interaction graph is disconnected at all times. This intuition is made rigorous in Proposition \ref{prop:Concavity}, where we show that $\lambda_2(\cdot)$ is in fact a concave mapping over the space of linear operators. Finally in Section \ref{subsubsection:StronglyConnected}, we treat the more general situation in which the adjacency operators $\Apazo(t)$ define topologies which are disjoint unions of strongly connected components -- see Definition \ref{def:StrongCon} below -- for almost every time $t \geq 0$. These are much more delicate to handle than balanced topologies, as the system admits dissipative Lyapunov functions which are different for each possible interaction topology. For this reason, even though the corresponding exponential convergence result is supported by a suitable generalisation of the persistence condition \eqref{eq:IntroPersistenceLambda22}, we need to impose additional assumptions on the structure of the interactions to be able to compare the different energy functionals. The corresponding result, which builds on the methodology introduced for balanced graphons combined with stability techniques for switching systems, is presented in Theorem \ref{thm:ConsensusStrong}.

We conclude the paper in Section \ref{section:Discussion} by establishing several auxiliary results which shed light on the interplay between consensus formation and graphon connectivity. In Section \ref{subsection:Connectivity}, we generalise to symmetric graphon models one of the instrumental results about algebraic connectivity, namely that the underlying interaction topology is strongly connected if and only if $\lambda_2(\Lbb) > 0$. We then prove in Section \ref{subsection:L2Linfty} a somewhat surprising result, stating that $L^2$- and $L^{\infty}$-consensus formation are in fact equivalent whenever the in-degree function associated with the graphon model is persistent in a suitable sense. We end the article by displaying numerical illustrations of our consensus results in Section \ref{subsection:Numerics}, along with a discussion on some of the limit cases of the theory that we develop here. 

\bigskip

The structure of the article is the following. In Section \ref{section:Preliminaries}, we present standard notions of measure, graphs and spectral theory, and provide a short primer on discrete multi-agent systems and graphon models. In particular, we prove in this occasion that the convex hull of a graphon dynamics is nonincreasing, which is a new and interesting result in its own right. In Section \ref{section:DiamConsensus}, we define scrambling coefficients for graphons, and study diameter contraction estimates and $L^{\infty}$-consensus formation. In Section \ref{section:ConsensusL2}, we adopt another point of view and study the formation of $L^2$-consensus for symmetric, balanced, and strongly connected interaction topologies. These latter are all based on adequate generalisations of the notion of algebraic connectivity for graph-Laplacian operators. We finally conclude the paper by proving auxiliary results and presenting numerical illustrations in Section \ref{section:Discussion}. 


\section{Preliminaries}
\label{section:Preliminaries}
\setcounter{equation}{0} \renewcommand{\theequation}{\thesection.\arabic{equation}}

In this section, we recollect a series of notions pertaining to functional analysis and multi-agent dynamics at large, all of which will be extensively used throughout the article. 


\subsection{Measure theory and integration}

We start here by introducing some notations and results of measure theory, for which we refer the reader to the very complete monographs  \cite{AmbrosioFuscoPallara,EvansGariepy}.  

In what follows, given an integer $n \geq 1$, we denote by $\Lcal^n$ the standard $n$-dimensional Lebesgue measure defined over $\R^n$. Letting $p \in [1,+\infty]$, taking a Borel set $\Omega \subset \R^n$ and denoting by $(X,\Norm{\cdot}_X)$ a separable Banach space, we shall use the notation $L^p(\Omega,X)$ for the space of $X$-valued integrable maps with respect to $\Lcal^n$, defined in the sense of Bochner (see e.g. \cite[Chapter II]{DiestelUhl}). We will also write $C^0(\Omega,\Scal)$ and $\Lip_{\loc}(\Omega,\Scal)$ to refer to continuous and locally Lipschitz mappings with values in a complete separable metric space $(\Scal,d_{\Scal})$ respectively. Throughout this article, we will use the notations ``$\inf$'', ``$\sup$'' and ``$\rg$'' to refer to the essential infimum, supremum and range of a measurable function, all of which are understood with respect to the standard Lebesgue measure.

We state below simplified versions of the \textit{Scorza-Dragoni} and \textit{Danskin} theorems (see \cite{Berliocchi1973} and \cite{Danskin1967} respectively), which will be used in the sequel. While these latter are both known results in modern analysis, they are somewhat nonstandard, and we include their statements for the sake of completeness. 

\begin{thm}[Scorza-Dragoni]
\label{thm:ScorzaDragoni}
Let $\Omega \subset \R^n$ be a Borel set and $f : \R_+ \times \Omega \rightarrow \Scal$ be such that $x \in \Omega \mapsto f(t,x) \in \Scal$ is $\Lcal^n$-measurable for each $t \geq 0$,  and $t \in \R_+ \mapsto f(t,x) \in \Scal$ is continuous for $\Lcal^n$-almost every $x \in \Omega$. Then for every $\epsilon > 0$, there exists a compact set $\Omega_{\epsilon} \subset \Omega$ satisfying $\Lcal^n(\Omega \setminus \Omega_{\epsilon}) < \epsilon$ and such that the restricted map $f : \R_+ \times \Omega_{\epsilon} \rightarrow \Scal$ is continuous. 
\end{thm}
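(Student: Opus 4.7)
The plan is to lift $f$ into a suitable Polish function space and then apply Lusin's theorem there. More precisely, since $\Scal$ is a complete separable metric space and $\R_+$ is $\sigma$-compact, the space $C(\R_+,\Scal)$ of continuous $\Scal$-valued maps on $\R_+$, equipped with the topology of uniform convergence on compact sets, is itself Polish. I would define $\Phi:\Omega\to C(\R_+,\Scal)$ by $\Phi(x):=f(\cdot,x)$ -- which is well-defined for $\Lcal^n$-almost every $x\in\Omega$ by the continuity-in-$t$ hypothesis, extended arbitrarily on the exceptional null set -- and then reduce the theorem to showing that $\Phi$ is $\Lcal^n$-measurable and applying Lusin's theorem to $\Phi$.

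To justify the measurability of $\Phi$, I would use the standard fact that the Borel $\sigma$-algebra on $C(\R_+,\Scal)$ coincides with the $\sigma$-algebra generated by the point-evaluation maps $\mathrm{ev}_t:g\mapsto g(t)$ as $t$ ranges over any countable dense subset $\{t_k\}_{k\in\N}\subset\R_+$, a reduction that exploits separability of $\Scal$ together with the fact that continuous maps are determined by their values on a dense set. Composing, one has $\mathrm{ev}_{t_k}\circ\Phi=f(t_k,\cdot)$, which is measurable by assumption for each $k$; hence $\Phi$ is Borel measurable with respect to the generating family, and therefore Borel measurable.

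Once the measurability of $\Phi$ is in hand, Lusin's theorem in its general form for measurable maps valued in a Polish space produces, for every $\epsilon>0$, a compact set $\Omega_\epsilon\subset\Omega$ with $\Lcal^n(\Omega\setminus\Omega_\epsilon)<\epsilon$ on which the restriction $\Phi|_{\Omega_\epsilon}$ is continuous. To translate this back into joint continuity of $f$, I would fix a convergent sequence $(s_k,x_k)\to(s,x)$ in $\R_+\times\Omega_\epsilon$, observe that $\Phi(x_k)\to\Phi(x)$ uniformly on the compact set $\{s_k:k\in\N\}\cup\{s\}\subset\R_+$, and combine this with the continuity of $\Phi(x)$ at $s$ to deduce that $f(s_k,x_k)=\Phi(x_k)(s_k)\to\Phi(x)(s)=f(s,x)$. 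A preliminary reduction to the case where $\Omega$ is bounded, obtained by exhausting $\Omega$ with balls of increasing radius and summing measure deficits in a geometric series, ensures that standard Lusin applies and that the resulting $\Omega_\epsilon$ is compact rather than merely closed.

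The main obstacle is the measurability of $\Phi$: one has to bridge the gap between the separate measurability-in-$x$ and continuity-in-$t$ assumptions and a genuine Borel-measurability statement in an infinite-dimensional function space. If this functional-analytic route is deemed too abstract, an alternative strategy would be to apply Lusin's theorem directly to each slice $f(t_k,\cdot)$ along a countable dense set of times with summable deficits $\epsilon/2^{k+1}$, intersect the resulting compact sets, and then invoke an Egorov-type argument to upgrade pointwise continuity in $t$ to local uniform continuity in $t$ valid on a slightly smaller compact subset -- at the cost of substantially more delicate bookkeeping, which is why I favour the Polish-lifting approach.
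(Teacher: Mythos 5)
The paper does not supply a proof of Theorem \ref{thm:ScorzaDragoni}: it is stated as a known result and attributed to the Berliocchi--Lasry reference cited in the text, with no internal argument given. There is therefore nothing in the paper to compare your attempt against, and it must be judged on its own. On those terms, your Polish-lifting argument is correct and is one of the cleanest routes from Lusin to Scorza--Dragoni. The three facts you lean on are all standard: that $C(\R_+,\Scal)$ is Polish under the topology of local uniform convergence when $\Scal$ is Polish and $\R_+$ is $\sigma$-compact locally compact; that the Borel $\sigma$-algebra of this space is generated by the evaluation maps at a countable dense set of times, since the local-uniform metric can be written as a countable supremum over dense times and continuous maps are determined by their values there; and that Lusin's theorem holds for Lebesgue-measurable maps from a finite-measure Borel subset of $\R^n$ into any Polish space, producing a compact set by inner regularity. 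Your back-translation from continuity of $\Phi\vert_{\Omega_\epsilon}$ to joint continuity of $f$ on $\R_+\times\Omega_\epsilon$ via the triangle inequality over the compact set $\{s_k\}_k\cup\{s\}$ is also carried out correctly.

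Two small points deserve to be made explicit rather than left implicit. First, the conclusion of the theorem can only hold when $\Lcal^n(\Omega)<+\infty$: if $\Omega$ has infinite measure, no compact $\Omega_\epsilon$ can satisfy $\Lcal^n(\Omega\setminus\Omega_\epsilon)<\epsilon$. Your exhaustion-by-balls reduction tacitly assumes this finiteness, and without it the geometric-series bookkeeping does not produce a compact set absorbing all but $\epsilon$ of the mass. The paper's statement shares this omission, and in all of its applications $\Omega$ is $I$ or $I\times I$, so it is harmless there, but a self-contained proof should say so. Second, the phrase ``extended arbitrarily on the exceptional null set'' hides a small wrinkle: if the compact set produced by Lusin happens to intersect the null set on which you redefined $\Phi$, then on those points the equality $\Phi(x)=f(\cdot,x)$ fails and the final step does not literally give continuity of $f$. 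The clean fix is to first choose a Borel null set $\tilde N\subset\Omega$ containing every $x$ for which $f(\cdot,x)$ is not continuous, apply Lusin to $\Phi$ restricted to $\Omega\setminus\tilde N$, and observe that the resulting compact set is automatically disjoint from $\tilde N$, so that $\Phi(x)=f(\cdot,x)$ genuinely holds on $\Omega_\epsilon$ and the identification with $f$ is unambiguous. Neither point changes the substance of your argument, which is sound.
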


\begin{thm}[Danskin]
\label{thm:Danskin}
Let $\Omega \subset \R^n$ be a compact set and $f : \R_+ \times \Omega \rightarrow \R$ be a continuous function such that $t \in \R_+ \mapsto f(t,x) \in \R$ is differentiable for all $x \in \Omega$. Then, the application $g : t \in \R_+ \mapsto \max_{x \in \Omega} f(t,x) \in \R$ is differentiable $\Lcal^1$-almost everywhere, with
\begin{equation*}
\tderv{}{t}{} g(t) = \max_{x \in \overline{\Omega}(t)} \partial_t f(t,x)
\end{equation*}
for $\Lcal^1$-almost every $t \geq 0$, where we introduced the notation $\overline{\Omega}(t) := \underset{x \in \Omega}{\argmax} \, f(t,x)$.  
\end{thm}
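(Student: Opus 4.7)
The plan is to combine two ingredients: (i) the almost-everywhere differentiability of $g$, obtained through local Lipschitz continuity and Rademacher's theorem; and (ii) the envelope inequality $g(t+h) \geq f(t+h, x^{\ast})$ which, once paired with the identity $g(t) = f(t, x^{\ast})$ for every maximizer $x^{\ast} \in \overline{\Omega}(t)$, pins down $\tderv{}{t} g(t)$ from both sides at every differentiability point.

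First I would establish the regularity of $g$. Joint continuity of $f$ on any compact set $[0,T] \times \Omega$ yields continuity of $g$ via the classical maximum theorem, together with upper hemicontinuity and nonempty-compactness of the argmax correspondence $t \mapsto \overline{\Omega}(t)$. Invoking the local boundedness of $\partial_t f$ on compact subsets of $\R_+ \times \Omega$ (the implicit working hypothesis in such simplified versions of Danskin), the elementary bound
\begin{equation*}
|g(t+h) - g(t)| \leq \max_{x \in \Omega} |f(t+h, x) - f(t, x)|
\end{equation*}
shows that $g$ is locally Lipschitz on $\R_+$, and Rademacher's theorem then secures its differentiability $\Lcal^1$-almost everywhere.

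Next, at a differentiability point $t$ of $g$ and for any fixed $x^{\ast} \in \overline{\Omega}(t)$, the envelope inequality
\begin{equation*}
g(t+h) - g(t) \geq f(t+h, x^{\ast}) - f(t, x^{\ast})
\end{equation*}
holds for every small $h$. Dividing by $h > 0$ and letting $h \downarrow 0$ gives $\tderv{}{t} g(t) \geq \partial_t f(t, x^{\ast})$; dividing by $h < 0$ reverses the inequality, and letting $h \uparrow 0$ yields $\tderv{}{t} g(t) \leq \partial_t f(t, x^{\ast})$. Hence $\tderv{}{t} g(t) = \partial_t f(t, x^{\ast})$ for \emph{every} maximizer $x^{\ast}$, which in particular establishes the Danskin formula, the maximum over $\overline{\Omega}(t)$ being trivially attained.

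The main obstacle is the almost-everywhere differentiability of $g$, which genuinely requires some form of local boundedness of $\partial_t f$ to yield the Lipschitz estimate above. In the applications found throughout the paper this is automatically granted by additional smoothness of $f$ together with Heine--Cantor on $[0,T] \times \Omega$; in more abstract settings one may alternatively invoke Theorem~\ref{thm:ScorzaDragoni} on $\partial_t f$ to replace it by a jointly continuous representative on a subset of arbitrarily large measure, and then carry out the same Lipschitz estimate locally.
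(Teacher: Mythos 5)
The paper states this result without proof, citing Danskin's monograph, so there is no in-paper argument to compare against; the proposal must be judged on its own. Your envelope argument is the standard and correct one: at any differentiability point $t$ of $g$ and for any $x^{\ast} \in \overline{\Omega}(t)$, the inequality $g(t+h)-g(t) \geq f(t+h,x^{\ast})-f(t,x^{\ast})$ combined with the right- and left-sided difference quotients pins down $\tderv{}{t}g(t) = \partial_t f(t,x^{\ast})$. This actually yields the slightly stronger conclusion that $\partial_t f(t,\cdot)$ is constant on $\overline{\Omega}(t)$ at almost every $t$, from which the displayed formula follows with the maximum trivially attained.

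You are also right that the theorem's stated hypotheses (joint continuity of $f$ and pointwise-in-$x$ differentiability of $f(\cdot,x)$) do not by themselves deliver the uniform bound $\sup_{x \in \Omega}|f(t+h,x)-f(t,x)| \leq C|h|$ needed to make $g$ locally Lipschitz before invoking Rademacher: one must additionally assume that the family $\{f(\cdot,x)\}_{x \in \Omega}$ is equi-locally Lipschitz, equivalently that $\partial_t f$ is locally bounded uniformly in $x$. In all the paper's uses of this theorem (Propositions \ref{prop:WellPosed}, \ref{prop:ConvexHull}, Theorem \ref{thm:DiamContraction}) this is automatic, since $\partial_t x(t,i)$ is uniformly bounded in $i$ over compact time intervals, and the paper even phrases it explicitly there as ``the pointwise maximum of a family of equi-locally Lipschitz functions.'' One caveat about your proposed fallback: applying Scorza-Dragoni to $\partial_t f$ gives a compact $\Omega_{\epsilon} \subset \Omega$ on which $\partial_t f$ is continuous, but that controls $\max_{x \in \Omega_{\epsilon}} f(t,x)$ rather than $g(t)$ itself, so one would still need a Lemma~\ref{lem:SupConv}-type passage to the limit $\epsilon \to 0^+$; it is cleaner to simply state the equi-Lipschitz hypothesis directly. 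With that hypothesis made explicit, the proof is complete.
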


In the remainder of this article, we will often use both theorems in conjunction with the following convergence result for restricted essential supremums. Therein, we denote by $\Ppazo(\Omega)$ the power set generated by some arbitrary $\Omega \subset \R^n$. 

\begin{lem}[Quantitative interior estimates for essential supremums]
\label{lem:SupConv}
Let $\Omega \subset \R^n$ be a compact set and $f \in L^{\infty}(\Omega,\R^d)$. Then for every $\delta > 0$, there exists $\epsilon > 0$ such that 
\begin{equation}
\label{eq:EsssupIneq}
\NormL{f}{\infty}{\Omega ,\R^d} - \delta \leq \, \NormL{f}{\infty}{\Omega_{\epsilon},\R^d} \, \leq \, \NormL{f}{\infty}{\Omega,\R^d}, 
\end{equation}
whenever $\Omega_{\epsilon} \subset \Omega$ is a measurable set satisfying $\Lcal^n(\Omega \setminus \Omega_{\epsilon}) < \epsilon$. In particular, it holds that 
\begin{equation*}
\NormL{f}{\infty}{\Omega_{\epsilon},\R^d} ~\underset{\epsilon \rightarrow 0^+}{\longrightarrow}~ \NormL{f}{\infty}{\Omega,\R^d},
\end{equation*}
for each family of sets $(\Omega_{\epsilon})_{\epsilon > 0} \subset \Ppazo(\Omega)$ satisfying these properties. 
\end{lem}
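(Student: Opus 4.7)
The upper bound $\NormL{f}{\infty}{\Omega_{\epsilon},\R^d} \leq \NormL{f}{\infty}{\Omega,\R^d}$ in \eqref{eq:EsssupIneq} is immediate from the monotonicity of the essential supremum with respect to set inclusion: any $\Lcal^n$-almost everywhere bound for $|f|$ on $\Omega$ restricts to an $\Lcal^n$-almost everywhere bound for $|f|$ on the measurable subset $\Omega_{\epsilon} \subset \Omega$. The whole content of the lemma therefore lies in the lower bound, together with the fact that the choice of $\epsilon > 0$ can be made uniform over all admissible measurable sets $\Omega_{\epsilon}$.

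My plan is to proceed directly from the definition of the essential supremum. Setting $M := \NormL{f}{\infty}{\Omega,\R^d}$ and fixing $\delta > 0$, the superlevel set
\begin{equation*}
S_{\delta} := \big\{ x \in \Omega \, : \, |f(x)| > M - \delta \big\}
\end{equation*}
must have strictly positive Lebesgue measure, for otherwise $|f| \leq M - \delta$ would hold $\Lcal^n$-almost everywhere on $\Omega$, contradicting the definition of $M$ as the essential supremum. I would then choose $\epsilon := \Lcal^n(S_{\delta}) > 0$. For any measurable set $\Omega_{\epsilon} \subset \Omega$ with $\Lcal^n(\Omega \setminus \Omega_{\epsilon}) < \epsilon$, the elementary inclusion–exclusion estimate
\begin{equation*}
\Lcal^n(S_{\delta} \cap \Omega_{\epsilon}) \geq \Lcal^n(S_{\delta}) - \Lcal^n(\Omega \setminus \Omega_{\epsilon}) > \Lcal^n(S_{\delta}) - \epsilon = 0
\end{equation*}
shows that $|f(x)| > M - \delta$ holds on a subset of $\Omega_{\epsilon}$ of positive Lebesgue measure. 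By the definition of the essential supremum applied on $\Omega_{\epsilon}$, this yields $\NormL{f}{\infty}{\Omega_{\epsilon},\R^d} \geq M - \delta$, which is exactly the desired left-hand inequality in \eqref{eq:EsssupIneq}.

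The final convergence assertion follows by applying the quantitative estimate to a decreasing sequence $\delta_k \to 0^+$: each $\delta_k$ provides some $\epsilon_k > 0$, and for any family $(\Omega_{\epsilon})_{\epsilon > 0}$ with $\Lcal^n(\Omega \setminus \Omega_{\epsilon}) < \epsilon$, one has $\NormL{f}{\infty}{\Omega_{\epsilon},\R^d} \geq M - \delta_k$ as soon as $\epsilon \leq \epsilon_k$, which combined with the upper bound gives the announced limit as $\epsilon \rightarrow 0^+$.

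I do not anticipate a genuine obstacle here; the only subtle point is the standard distinction between pointwise and essential notions, which is handled automatically by working with the superlevel set $S_{\delta}$, whose positive measure is a measurable (i.e. representative-independent) property. The argument is purely measure-theoretic and does not use any structure of $\R^d$ beyond the existence of the norm $|\cdot|$, so the same proof works verbatim for any separable Banach space target.
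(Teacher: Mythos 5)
Your proof is correct and follows essentially the same approach as the paper: identify a positive-measure superlevel set where $|f|$ is within $\delta$ of the essential supremum, then show it must intersect any $\Omega_{\epsilon}$ of sufficiently large measure. The only cosmetic difference is that you obtain $\Lcal^n(S_{\delta} \cap \Omega_{\epsilon}) > 0$ directly by an inclusion--exclusion inequality, whereas the paper argues by contradiction via $\Lcal^n(\Omega_{\delta} \cup \Omega_{\epsilon}) > \Lcal^n(\Omega)$; both are one-line equivalent observations.
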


\begin{proof}
The second inequality in \eqref{eq:EsssupIneq} is trivially satisfied for every set $\Omega_{\epsilon} \subset \Omega$. Concerning the first inequality, observe that by definition of the $L^{\infty}$-norm, there exists for every $\delta > 0$ a set $\Omega_{\delta} \subset \Omega$ of positive measure such that 
\begin{equation*}
|f(x)| \geq \, \NormL{f}{\infty}{\Omega,\R^d} - \, \delta
\end{equation*}
for $\Lcal^n$-almost every $x \in \Omega_{\delta}$. Hence, by choosing $\epsilon > 0$ satisfying $\epsilon < \Lcal^n(\Omega_{\delta})$ and considering any closed set $\Omega_{\epsilon} \subset \Omega$ with $\Lcal^n(\Omega \setminus \Omega_{\epsilon}) < \epsilon$, it necessarily holds that $\Lcal^n(\Omega_{\delta} \cap \Omega_{\epsilon}) > 0$. Indeed assuming the converse by contradiction, one would have
\begin{equation*}
\Lcal^n(\Omega_{\delta} \cup \Omega_{\epsilon}) = \Lcal^n(\Omega_{\delta}) + \Lcal^n(\Omega_{\epsilon}) > \Lcal^n(\Omega), 
\end{equation*}
which is absurd since both sets are contained in $\Omega$. Thus by construction, one has that 
\begin{equation*}
|f(x)| \geq \, \NormL{f}{\infty}{\Omega,\R^n} - \, \delta, 
\end{equation*}
for $\Lcal^n$-almost every $x \in \Omega_{\delta} \cap \Omega_{\epsilon}$, which concludes the proof of our claim.
\end{proof}


\subsection{Graph and spectral theory} 
\label{subsection:Spectral}

In this second preliminary section, we recollect more specifically elementary notions pertaining to graph theory, together with elements of spectral analysis for bounded linear operators defined over Hilbert spaces. We point to the reference monographs \cite{West2001} and \cite{ReedI1981} respectively for a detailed and accessible treatment of these topics.

\paragraph*{Introductory notions of graph theory.}

Given an integer $N \geq 1$, we say that $\Ab_N := (a_{ij})_{1 \leq i,j \leq N} \in \R^{N \times N}$ is the \textit{adjacency matrix} of a directed graph -- or digraph -- with vertex labels $\{1,\dots,N\}$ provided that $a_{ij} \in [0,1]$ and $a_{ii} = 1$ for each $i,j \in \{1,\dots,N\}$. In the context of cooperative dynamics, the set of labels will represent the agents of the system, while each coefficient $a_{ij} \in [0,1]$ stands for the magnitude of communication flowing from agent $j$ to agent $i$. In particular, $a_{ij} > 0$ if and only if agent $j$ influences the dynamics of agent $i$. We recall below standard notions of connectivity for general digraphs, which are illustrated in Figure \ref{fig:Connected}

\begin{Def}[Strong and simple digraph connectivity]
\label{def:Connected}
Let $\Ab_N \in [0,1]^{N \times N}$ be an adjacency matrix. Then, the underlying digraph is \textnormal{strongly connected} if for every pair of indices $i,j \in \{1,\dots,N\}$, there exist an integer $m \geq 1$ and a finite sequence $(l_k)_{1 \leq k \leq m} \subset \{1,\dots,N\}$ such that $i = l_1$, $j = l_m$ and $a_{l_k l_{k+1}} > 0$ for each $k \in \{1,\dots,m-1\}$.

In addition, a digraph is said to be \textnormal{simply connected} if its symmetric part -- which is defined as the undirected graph supported by the adjacency matrix $\tfrac{1}{2}(\Ab_N + \Ab_N^{\top})$ --, is strongly connected. In particular, an undirected graph is connected if and only if it is strongly connected. 
\end{Def} 

\begin{figure}[!ht]
\centering
\begin{tikzpicture}
\draw (1,-1) node[shape=circle, draw] {$1$}; 
\draw (2,0) node[shape=circle, draw] {$2$};
\draw (2,-2) node[shape=circle, draw] {$3$};
\draw (0,0) node[shape=circle, draw] {$5$};
\draw (0,-2) node[shape=circle, draw] {$4$};
\draw[->, line width = 0.75, blue] (0.75,-0.75)--(0.25,-0.25);
\draw[->, line width = 0.75, blue] (0,-0.35)--(0,-1.65); 
\draw[->, line width = 0.75, blue] (0.25,-1.75)--(0.75,-1.25);
\draw[<->, line width = 0.75, blue] (1.25,-0.75)--(1.75,-0.25);  
\draw[<->, line width = 0.75, blue] (1.75,-1.75)--(1.25,-1.25); 
\begin{scope}[xshift = 6cm]
\draw (1,-1) node[shape=circle, draw] {$1$}; 
\draw (2,0) node[shape=circle, draw] {$2$};
\draw (2,-2) node[shape=circle, draw] {$3$};
\draw (0,0) node[shape=circle, draw] {$5$};
\draw (0,-2) node[shape=circle, draw] {$4$};
\draw[->, line width = 0.75, blue] (0.75,-0.75)--(0.25,-0.25);
\draw[->, line width = 0.75, blue] (0,-0.35)--(0,-1.65); 
\draw[->, line width = 0.75, blue] (0.25,-1.75)--(0.75,-1.25);
\draw[<-, line width = 0.75, blue] (1.25,-0.75)--(1.75,-0.25);  
\draw[<-, line width = 0.75, blue] (1.75,-1.75)--(1.25,-1.25);
\end{scope}
\begin{scope}[xshift = 12cm]
\draw (1,-1) node[shape=circle, draw] {$1$}; 
\draw (2,0) node[shape=circle, draw] {$2$};
\draw (2,-2) node[shape=circle, draw] {$3$};
\draw (0,0) node[shape=circle, draw] {$5$};
\draw (0,-2) node[shape=circle, draw] {$4$};
\draw[<->, line width = 0.75, blue] (0,-0.35)--(0,-1.65); 
\draw[<-, line width = 0.75, blue] (2,-0.35)--(2,-1.65); 
\draw[<-, line width = 0.75, blue] (1.25,-0.75)--(1.75,-0.25);  
\draw[<-, line width = 0.75, blue] (1.75,-1.75)--(1.25,-1.25);
\end{scope}
\end{tikzpicture}
\caption{{\small \textit{Illustration of the notions of strong and simple connectivity on graphs with $N = 5$ vertices. The digraph on the left is strongly connected, while that in the center is simply connected. The digraph on the right is the disjoint union of two strongly connected components formed respectively by $\{1,2,3\}$ and $\{4,5\}$.}}}
\label{fig:Connected}
\end{figure}
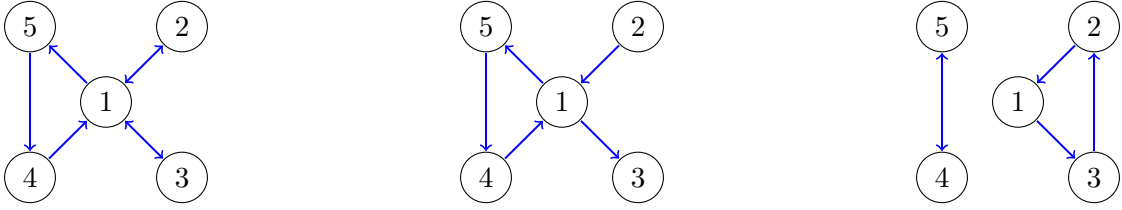

\paragraph*{Elements of spectral theory for bounded operators.} 

Let $(\Hpazo,\langle \cdot , \cdot \rangle_{\Hpazo})$ be an arbitrary finite- or infinite-dimensional Hilbert space, and $\Lpazo(\Hpazo)$ denote the space of all bounded linear operators from $\Hpazo$ into itself. We will say that an element $T \in \Lpazo(\Hpazo)$ is \textit{symmetric} if $T = T^*$, where  $T^* \in \Lpazo(\Hpazo)$ is the adjoint of $T$ with respect to $\langle \cdot,\cdot \rangle_{\Hpazo}$. We also use the notation $T^{\sym} := \tfrac{1}{2}(T + T^*)$ for its symmetric part. Notice that a symmetric operator which is bounded -- namely whose domain is the whole space -- is automatically self-adjoint in the usual sense. Writing $\Id$ for the identity operator over $\Hpazo$, we recall below several concepts pertaining to the \textit{spectrum} of an element of $\Lpazo(\Hpazo)$. 

\begin{Def}[The spectrum of bounded linear operators and its decomposition]
\label{def:Spectrum}
Given an element $T \in \Lpazo(\Hpazo)$, its \textnormal{spectrum} $\sigma(T)$ is defined as the set of all the real numbers $\lambda \in \R$ such that 
\begin{equation*}
(T - \lambda \, \Id) \in \Lpazo(\Hpazo) \qquad \text{is not bijective}. 
\end{equation*}
In addition, the spectrum of a bounded linear operator $T$ admits the following decomposition 
\begin{equation*}
\sigma(T) = \sigma_{\disc}(T) \cup \sigma_{\ess}(T), 
\end{equation*}
where the union is disjoint. Therein, $\sigma_{\disc}(T)$ denotes the \textnormal{discrete spectrum} of $T$, defined by 
\begin{equation*}
\begin{aligned}
\sigma_{\disc}(T) := \bigg\{ \lambda \in \sigma(T) ~\text{s.t.}~ & \text{there exists an open set $\Npazo_{\lambda} \subset \R$ satisfying $\Npazo_{\lambda} \cap \sigma(T) = \{ \lambda\}$} \\
& \text{and the space of all $f \in \Hpazo$ such that $T f = \lambda f$ is finite-dimensional } \bigg\}, 
\end{aligned}
\end{equation*}
while $\sigma_{\ess}(T) := \sigma(T) \setminus \sigma_{\disc}(T)$ is the \textnormal{essential spectrum} of $T$. 
\end{Def}

We will use the notation $I := [0,1]$ for the unit interval representing the continuum of indices, and denote by $\LcalI$ the restriction of the standard $1$-dimensional Lebesgue measure thereon. We shall say that $\Apazo \in \Lpazo(L^2(I,\R^d))$ is an \textit{adjacency operator} if there exists a map $a \in L^{\infty}(I\times I,[0,1])$ such that 
\begin{equation}
\label{eq:AdjacencyOp}
(\Apazo \, x)(i) := \INTDom{a(i,j)x(j)}{I}{j} \qquad \text{for $\LcalI$-almost every $i \in I$},
\end{equation}
for each $x \in L^2(I,\R^d)$. Analogously given a map $d \in L^{\infty}(I,[0,1])$, we define the corresponding \textit{multiplication operator} $\Mpazo_d \in \Lpazo(L^2(I,\R^d))$ by
\begin{equation}
\label{eq:MultiplicationOp}
(\Mpazo_d \, x)(i) := d(i) x(i) \qquad \text{for $\LcalI$-almost every $i \in I$},
\end{equation}
for each $x \in L^2(I,\R^d)$. We end this preliminary section by recalling the following standard structure result on the spectra of adjacency and multiplication operators, for which we point to \cite[Theorem VI.15]{ReedI1981} and \cite[Proposition 3.2]{Hardt1996} respectively. 

\begin{prop}[Spectra of adjacency and multiplication operators]
\label{prop:Spectrum}
Let $\Apazo,\Mpazo_d \in \Lpazo(L^2(I,\R^d))$ be an adjacency and a multiplication operator defined as in \eqref{eq:AdjacencyOp} and \eqref{eq:MultiplicationOp} respectively. Then, one has
\begin{equation*}
\sigma(\Apazo) = \sigma_{\disc}(\Apazo) = \{ \lambda_n \}_{n=1}^{+\infty} \qquad \text{and} \qquad \sigma(\Mpazo_d) = \sigma_{\ess}(\Mpazo_d) = \rg(d),
\end{equation*}
where $(\lambda_n) \subset [0,1]$ is an at most countable family of real numbers which can only accumulate at zero.
\end{prop}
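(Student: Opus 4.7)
The plan is to treat the two operators by different spectral-theoretic routes, since $\Apazo$ is compact whereas $\Mpazo_d$ is an archetypal example of a purely continuous-spectrum operator. Both arguments are largely self-contained and rest on the structural results of abstract operator theory combined with the definitions recalled above.

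For the adjacency operator, the first step is to observe that $a \in L^\infty(I \times I,[0,1])$ combined with $\Lcal^2(I \times I) = 1$ yields $a \in L^2(I \times I, \R)$, so that $\Apazo$ is a Hilbert--Schmidt operator on the scalar space $L^2(I,\R)$ and therefore also on $L^2(I,\R^d)$, since the vector-valued operator acts componentwise through the same scalar kernel. In particular, $\Apazo$ is compact, and the Riesz--Schauder theorem for compact operators on a separable infinite-dimensional Hilbert space immediately gives that every nonzero point of $\sigma(\Apazo)$ is an isolated eigenvalue of finite algebraic multiplicity, hence belongs to $\sigma_{\disc}(\Apazo)$, and that these eigenvalues form an at most countable set whose only possible accumulation point is $0$. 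The bound $|\lambda_n| \leq \|\Apazo\|_{\Lpazo(L^2(I,\R^d))} \leq \|a\|_{L^\infty} \leq 1$ follows from a direct application of Cauchy--Schwarz to $(\Apazo x)(i)$.

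For the multiplication operator, I would argue directly by proving the two inclusions $\sigma(\Mpazo_d) \subset \rg(d)$ and $\rg(d) \subset \sigma(\Mpazo_d)$ separately. For the first one, take $\lambda \notin \rg(d)$; by definition of the essential range there exists $\delta > 0$ with $|d(i) - \lambda| \geq \delta$ for $\LcalI$-almost every $i \in I$, so that $g := 1/(d - \lambda) \in L^\infty(I,\R)$ and the multiplication operator $\Mpazo_g$ is a bounded two-sided inverse of $\Mpazo_d - \lambda \Id$. For the reverse inclusion, if $\lambda \in \rg(d)$ then for every $\delta > 0$ the set $E_\delta := \{ i \in I : |d(i) - \lambda| < \delta \}$ has positive Lebesgue measure; fixing any unit vector $e \in \R^d$, the normalised function $x_\delta := \mathbf{1}_{E_\delta} \, e / \sqrt{\Lcal^1(E_\delta)}$ satisfies $\|x_\delta\|_{L^2} = 1$ and $\|(\Mpazo_d - \lambda \Id) x_\delta\|_{L^2} < \delta$, so that $\Mpazo_d - \lambda \Id$ fails to be bounded below, hence is not invertible. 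To rule out the discrete spectrum it remains to observe that if $\lambda \in \rg(d)$ is an isolated point of $\rg(d)$, then $\{ i \in I : d(i) = \lambda \}$ must carry positive Lebesgue measure, which makes the corresponding eigenspace infinite-dimensional; while if $\lambda$ is not isolated in $\rg(d) = \sigma(\Mpazo_d)$, it is excluded from $\sigma_{\disc}(\Mpazo_d)$ by the isolation clause in its definition. In either case $\sigma_{\disc}(\Mpazo_d) = \emptyset$, whence $\sigma_{\ess}(\Mpazo_d) = \rg(d)$.

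The main subtlety I expect to encounter lies in the discrete-spectrum analysis of $\Mpazo_d$, where one must combine carefully the definition of the essential range with the fact that any isolated value of $\rg(d)$ is attained on a set of strictly positive measure, which is the mechanism forcing the associated eigenspace to be infinite-dimensional. A minor point worth flagging is that, for a general non-symmetric kernel $a$, the eigenvalues of $\Apazo$ are complex in principle, so the statement $\{\lambda_n\} \subset [0,1]$ should be interpreted as $|\lambda_n| \leq 1$, with non-negativity of the $\lambda_n$ requiring a separate Perron--Frobenius-type argument once $a$ is assumed symmetric and non-negative.
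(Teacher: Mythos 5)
Your proof is correct and follows essentially the same route as the references the paper cites for this standard result (Reed--Simon, Theorem VI.15, for the Hilbert--Schmidt and Riesz--Schauder theory underlying the claim about $\Apazo$; and the standard essential-range analysis of multiplication operators for $\Mpazo_d$). The key mechanism you supply for ruling out $\sigma_{\disc}(\Mpazo_d)$ -- namely that any isolated value $\lambda$ of $\rg(d)$ must be attained on a set of positive $\LcalI$-measure, since $d$ essentially only takes values in $\rg(d)$ and the sets $\{i : |d(i)-\lambda| < \delta\}$ have positive measure by the definition of the essential range, forcing the eigenspace $L^2(\{d = \lambda\},\R^d)$ to be infinite-dimensional -- is correct and exactly the argument one finds in the cited source. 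Your closing remark about the inclusion $\{\lambda_n\} \subset [0,1]$ is also well taken: a pointwise bound on the kernel $a$ yields $|\lambda_n| \leq \|\Apazo\|_{\Lpazo(L^2)} \leq 1$, but nonnegativity of the $\lambda_n$ requires additional structure (symmetry plus positive semi-definiteness of $\Apazo$) not imposed in the hypotheses, and moreover the paper restricts $\sigma(T)$ to real $\lambda$ by definition, so the general cleaner assertion is the bound $|\lambda_n| \leq 1$. A further small imprecision in the paper's own statement -- which you do not flag but which is not a defect of your argument -- is that the identity $\sigma(\Apazo) = \sigma_{\disc}(\Apazo)$ cannot hold literally for a compact operator on an infinite-dimensional space: $0 \in \sigma(\Apazo)$ always, whereas $0 \in \sigma_{\disc}(\Apazo)$ would require $\Apazo$ to simultaneously have finite rank and finite-dimensional kernel, which is impossible on $L^2(I,\R^d)$. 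The accurate statement is $\sigma(\Apazo) \setminus \{0\} = \sigma_{\disc}(\Apazo)$ with $\sigma_{\ess}(\Apazo) = \{0\}$, and this is exactly what your Riesz--Schauder argument establishes.
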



\subsection{Multi-agent systems and graphon models}

In this section, we recall some of the properties of the multi-agent systems under consideration throughout this article, both at the discrete and macroscopic level. We point the reader to \cite{Choi2014,Golse} for a detailed account on mean-field limits for particle systems, and to \cite{Lovasz2012,Medvedev2014} for a thorough analysis of graph limits. 


\paragraph*{Finite-dimensional cooperative systems.} We start by considering a system of $N \geq 1$ agents, represented by a collection $ \xb(\cdot) := (x_1(\cdot),\dots,x_N(\cdot)) \in C^0(\R_+,(\R^d)^N)$ of curves in $\R^d$. Given a set $(x_1^0,\dots,x_N^0) \in (\R^d)^N$ of initial data, their evolution in time is prescribed by the family of ODEs
\begin{equation}
\label{eq:DiscreteDynamics}
\left\{
\begin{aligned}
& \dot x_i(t) = \frac{1}{N} \sum_{j=1}^N a_{ij}(t) \phi(|x_i(t) - x_j(t)|)(x_j(t) - x_i(t)), \\
& x_i(0) = x_i^0,
\end{aligned}
\right.
\end{equation}
written for every $i \in \{1,\dots,N\}$. Here, the positive nonlinear interaction kernel $\phi(\cdot)$ accounts for the magnitude of the pairwise interactions between agents -- depending on their relative distance --, while the maps $(a_{ij}(\cdot))_{1 \leq i,j \leq N }$ encode the time-varying interaction topology of the system. 

In the sequel, we make the following assumptions on the data of \eqref{eq:DiscreteDynamics}. 

\begin{taggedhyp}{\textbn{(MA)}}
\label{hyp:MA}
Assume that the following holds. 
\begin{enumerate}
\item[$(i)$] The weight functions $a_{ij} : \R_+ \rightarrow [0,1]$ are $\Lcal^1$-measurable for all $i,j \in \{1,\dots,N\}$. Moreover, one has that $a_{ii}(t) = 1$ for $\Lcal^1$-almost every $t \geq 0$ and each $i \in \{1,\dots,N\}$.
\item[$(ii)$] The nonlinear kernel $\phi : \R_+ \rightarrow \R_+^*$ is locally Lipschitz continuous, positive and bounded. 
\end{enumerate}
\end{taggedhyp}
Let it be noted that the weight functions $a_{ij}(\cdot)$ could more generally be allowed to take any positive value as long as they are bounded, but that the corresponding range can always be normalised to $[0,1]$ e.g. by suitably rescaling the nonlinear kernel $\phi(\cdot)$. Under this series of assumptions -- which are very standard in the literature devoted to consensus problems, see e.g. \cite{Caponigro2015,Motsch2014} --, we have the following  well-posedness result for \eqref{eq:DiscreteDynamics}. 

\begin{prop}[Well-posedness of the discrete dynamics]
Let $\xb^0 := (x_1^0,\dots,x_N^0)$ and suppose that hypotheses \ref{hyp:MA} hold. Then, there exists a unique curve $\xb \in \Lip_{\loc}(\R_+,(\R^d)^N)$  solution of \eqref{eq:DiscreteDynamics}. 
\end{prop}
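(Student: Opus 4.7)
The plan is to recast \eqref{eq:DiscreteDynamics} as a Carathéodory ODE on $(\R^d)^N$ and invoke the standard existence-and-uniqueness theorem for such systems. Setting
$$F_i(t,\xb) := \frac{1}{N}\sum_{j=1}^N a_{ij}(t)\,\phi(|x_i - x_j|)\,(x_j - x_i)$$
for each $i \in \{1,\dots,N\}$, I would rewrite the system as $\dot{\xb} = F(t,\xb)$ with $\xb(0) = \xb^0$, and verify the Carathéodory conditions. For each fixed $\xb \in (\R^d)^N$, the map $t \mapsto F(t,\xb)$ is $\Lcal^1$-measurable, since each $a_{ij}$ is measurable by \ref{hyp:MA}$(i)$ and $\phi$ is continuous. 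For each fixed $t$, the map $\xb \mapsto F(t,\xb)$ is locally Lipschitz uniformly in $t$: on a ball of radius $R$, a direct expansion using $|a_{ij}(t)| \leq 1$, $\|\phi\|_\infty < +\infty$, and the Lipschitz constant of $\phi$ on $[0,2R]$ furnished by \ref{hyp:MA}$(ii)$ yields a constant $L_R$ independent of $t$.

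The Carathéodory-Picard-Lindelöf theorem then produces a unique maximal absolutely continuous solution $\xb : [0,T^*) \to (\R^d)^N$. To upgrade this to a global solution, I would establish an a priori bound preventing finite-time blow-up. Since $|a_{ij}| \leq 1$ and $\phi$ is bounded,
$$|F_i(t,\xb)| \leq \|\phi\|_\infty \max_{1 \leq j \leq N}|x_j - x_i| \leq 2\|\phi\|_\infty |\xb|,$$
so $|F(t,\xb)| \leq C(1+|\xb|)$ with $C := 2\sqrt{N}\|\phi\|_\infty$. A standard Grönwall argument applied to $t \mapsto |\xb(t)|$ then forces $T^* = +\infty$. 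Because $F$ is uniformly bounded on bounded subsets of $(\R^d)^N$, the resulting global solution automatically belongs to $\Lip_{\loc}(\R_+, (\R^d)^N)$, as required.

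There is no genuine obstacle here; the only technical point is the joint measurability of $(t,\xb) \mapsto F(t,\xb)$, which is immediate from the measurability of each $a_{ij}$ together with the continuity of $\phi$ and the polynomial structure of $F$. As a side remark, the linear-growth bound above could be replaced by the sharper a priori estimate $|x_i(t)| \leq \max_k |x_k^0|$ stemming from the forthcoming observation that the convex hull of $\{x_i(t)\}_{i=1}^N$ is nonincreasing along the flow; either way, one rules out blow-up and recovers the announced global well-posedness.
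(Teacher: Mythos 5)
Your proof is correct and follows essentially the same route as the paper: both verify $\Lcal^1$-measurability in $t$, local Lipschitz continuity in $\xb$, and sublinear growth, and then invoke the classical Carathéodory/Filippov well-posedness theory. You simply spell out the Grönwall argument for global existence that the paper subsumes under "classical well-posedness results."
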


\begin{proof}
It can be easily checked that under hypotheses \ref{hyp:MA}, the mapping 
\begin{equation*}
(t,\xb) \in \R_+ \times (\R^d)^N \mapsto \bigg( \frac{1}{N} \sum\limits_{j=1}^N a_{ij}(t) \phi(|x_i-x_j|)(x_j-x_i) \bigg)_{1 \leq i \leq N} \in (\R^d)^N, 
\end{equation*}
is $\Lcal^1$-measurable with respect to $t \geq 0$ as well as locally Lipschitz continuous and sublinear with respect to $\xb \in (\R^d)^N$. Whence, by classical well-posedness results for ordinary differential equations (see e.g. \cite[Chapter 1]{Filippov2013}), there exists a unique solution $\xb(\cdot) \in \Lip_{\loc}(\R_+,(\R^d)^N)$ to \eqref{eq:DiscreteDynamics}.
\end{proof}


\paragraph*{Macroscopic approximations of multi-agent systems.} In the classical mean-field setting, macroscopic versions of the discrete dynamics \eqref{eq:DiscreteDynamics} are expressed by curves of empirical measures. However, as explained in the Introduction, this approach is only operational when the agents are \textit{indistinguishable}, or equivalently provided that the dynamics \eqref{eq:DiscreteDynamics} satisfies some invariance properties with respect to the agent labels $i \in \{ 1,\dots,N\}$. To circumvent this intrinsic limitation, one can instead consider \textit{graph limit} approximations, which are built as follows. Consider the piecewise constant mappings 
\begin{equation}
\label{eq:PiecewiseState}
x^N(t,i) := \sum_{k=1}^N \mathds{1}_{ \big[ \tfrac{k-1}{N}, \tfrac{k}{N} \big)}(i) \, x_k(t)
\end{equation}
and 
\begin{equation}
\label{eq:PiecewiseWeight}
a^N(t,i,j) := \sum_{k=1}^N \sum_{l=1}^N \mathds{1}_{ \big[ \tfrac{k-1}{N}, \tfrac{k}{N} \big)}(i) \mathds{1}_{ \big[ \tfrac{l-1}{N}, \tfrac{l}{N} \big)}(j) \, a_{kl}(t), 
\end{equation}
defined for $\Lcal^1$-almost every $t \geq 0$ and every $i,j \in I$. Then, one can show that the curve $x^N(\cdot) \in C^0(\R_+,L^2(I,\R^d))$ satisfies the following. 

\begin{prop}[Dynamics of the discrete graphon model]
Let $N \geq 1$ and $\xb(\cdot) \in \Lip_{\loc}(\R_+,(\R^d)^N)$ be a solution of \eqref{eq:DiscreteDynamics}. Then, the curve $x^N(\cdot) \in \Lip_{\loc}(\R_+,L^2(I,\R^d))$ defined in \eqref{eq:PiecewiseState} is a solution of the Cauchy problem 
\begin{equation}
\label{eq:DiscGraphonDyn}
\left\{
\begin{aligned}
& \partial_t x^N(t,i) = \INTDom{a^N(t,i,j) \phi(|x^N(t,i) - x^N(t,j)|)(x^N(t,j) - x^N(t,i))}{I}{j}, \\
& x^N(0,i) = x^0(i),
\end{aligned}
\right.
\end{equation}
formulated for $\LcalI$-almost every $i \in I$, where $a^N \in L^{\infty}(\R_+ \times I \times I,[0,1])$ is defined as in \eqref{eq:PiecewiseWeight}.
\end{prop}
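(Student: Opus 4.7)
The plan is a direct verification: the piecewise constant structure of $x^N(\cdot)$ and $a^N(\cdot)$ reduces the Bochner integral on the right-hand side of \eqref{eq:DiscGraphonDyn} to the discrete sum in \eqref{eq:DiscreteDynamics}, evaluated at the unique index $k \in \{1,\dots,N\}$ such that $i \in [\tfrac{k-1}{N},\tfrac{k}{N})$.

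First I would establish the regularity of $x^N(\cdot)$ as an element of $\Lip_{\loc}(\R_+,L^2(I,\R^d))$. Since each coordinate $x_k(\cdot)$ is locally Lipschitz in $\R^d$ by the previous proposition, for every $T > 0$ there exists $L_T > 0$ with $|x_k(t) - x_k(s)| \leq L_T |t-s|$ for all $s,t \in [0,T]$ and every $k$. Plugging into the explicit expression \eqref{eq:PiecewiseState} and using orthogonality of the indicator functions $\mathds{1}_{[(k-1)/N,k/N)}$ in $L^2(I,\R)$ then yields
\begin{equation*}
\NormL{x^N(t) - x^N(s)}{2}{I,\R^d}^2 = \frac{1}{N} \sum_{k=1}^N |x_k(t) - x_k(s)|^2 \leq L_T^2 |t-s|^2,
\end{equation*}
which furthermore shows that $\partial_t x^N$ exists for $\Lcal^1$-almost every $t \geq 0$, and $(\partial_t x^N)(t,i) = \dot x_k(t)$ whenever $i \in [\tfrac{k-1}{N},\tfrac{k}{N})$.

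Next I would evaluate the right-hand side of \eqref{eq:DiscGraphonDyn}. Fix $t \geq 0$ at which all $\dot x_k(t)$ exist and satisfy \eqref{eq:DiscreteDynamics}, and let $i \in [\tfrac{k-1}{N},\tfrac{k}{N})$. By the indicator structure of \eqref{eq:PiecewiseState} and \eqref{eq:PiecewiseWeight}, for each $l \in \{1,\dots,N\}$ and every $j \in [\tfrac{l-1}{N},\tfrac{l}{N})$ one has $x^N(t,j) = x_l(t)$ and $a^N(t,i,j) = a_{kl}(t)$. Splitting the integral over $I$ into the disjoint subintervals $[\tfrac{l-1}{N},\tfrac{l}{N})$, each of Lebesgue measure $\tfrac{1}{N}$, gives
\begin{equation*}
\INTDom{a^N(t,i,j) \phi(|x^N(t,i) - x^N(t,j)|)(x^N(t,j) - x^N(t,i))}{I}{j} = \frac{1}{N} \sum_{l=1}^N a_{kl}(t) \phi(|x_k(t) - x_l(t)|)(x_l(t) - x_k(t)).
\end{equation*}
By \eqref{eq:DiscreteDynamics}, this equals $\dot x_k(t) = (\partial_t x^N)(t,i)$, as required. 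The initial condition $x^N(0,\cdot) = x^0(\cdot)$ is immediate from the definition of $x^N$ upon setting $x^0 := \sum_{k=1}^N \mathds{1}_{[(k-1)/N,k/N)} \, x_k^0$.

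I do not anticipate any genuine obstacle: this is a bookkeeping argument, and the only care needed is in handling the $\Lcal^1$-null set of times on which the $\dot x_k$ may fail to exist, which is dealt with cleanly because the union of $N$ null sets is null. The identification $(\partial_t x^N)(t,i) = \dot x_k(t)$ is to be understood in the Bochner (or equivalently coordinatewise a.e.) sense, consistent with the Lipschitz regularity established in the first step.
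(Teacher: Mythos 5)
Your proof is correct and follows essentially the same route as the paper: a direct bookkeeping verification that splits the integral over $I$ into the $N$ subintervals $[\tfrac{l-1}{N},\tfrac{l}{N})$, where the piecewise-constant structure of $x^N$ and $a^N$ collapses the Bochner integral to the discrete sum in \eqref{eq:DiscreteDynamics}. The one genuine addition in your write-up is the explicit verification, via the $L^2$-orthogonality of the indicators, that $x^N(\cdot)$ indeed lies in $\Lip_{\loc}(\R_+,L^2(I,\R^d))$; the paper asserts this regularity in the statement but leaves it implicit in the proof, so this is a welcome clarification rather than a new idea.
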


\begin{proof}
Observe first that, for $\LcalI$-almost every $i \in I$, the map $t \in \R_+ \mapsto x^N(t,i) \in \R^d$ is differentiable $\Lcal^1$-almost everywhere. Then, by combining \eqref{eq:DiscreteDynamics} together with \eqref{eq:PiecewiseState}, one has that
\begin{equation*}
\begin{aligned}
\partial_t x^N(t,i) & = \sum_{k=1}^N \mathds{1}_{ \big[ \tfrac{k-1}{N}, \tfrac{k}{N} \big)}(i) \, \dot x_k(t) \\
& = \sum_{k=1}^N \mathds{1}_{ \big[ \tfrac{k-1}{N}, \tfrac{k}{N} \big)}(i) \Bigg(\frac{1}{N} \sum_{l=1}^N  \, a_{kl}(t) \phi(|x_k(t) - x_l(t)|)(x_l(t) - x_k(t)) \Bigg) \\
& = \sum_{l=1}^N \INTSeg{\Bigg( \sum_{k=1}^N \mathds{1}_{ \big[ \tfrac{k-1}{N}, \tfrac{k}{N} \big)}(i) \, a_{kl}(t) \phi(|x_k(t) - x_l(t)|)(x_l(t) - x_k(t)) \Bigg)}{j}{(l-1)/N}{l/N} \\
& = \INTDom{a^N(t,i,j) \phi(|x^N(t,i)-x_N (t,j)|)(x^N(t,j) - x^N(t,i))}{I}{j}, 
\end{aligned}
\end{equation*}
for $\Lcal^1$-almost every $t \geq 0$ and $\LcalI$-almost every $i \in I$, which concludes the proof of our claim. 
\end{proof}

\begin{rmk}[Subordination of finite multi-agent systems to graphon models]
It is worth noting that any finite-dimensional multi-agent system of the form \eqref{eq:DiscreteDynamics} can be recast as a graphon dynamics via \eqref{eq:PiecewiseState} and \eqref{eq:PiecewiseWeight}. This implies in particular that all the results that are discussed hereinbelow for general graphon models -- including the well-posedness results and sufficient conditions for various types of consensus formation -- have suitable counterparts in the discrete setting. 
\end{rmk}


\paragraph*{Well-posedness and elementary properties of graphon models.} While the dynamics in \eqref{eq:DiscGraphonDyn} describes the evolution of a piecewise constant macroscopic approximation of \eqref{eq:DiscreteDynamics}, it can more generally be seen as an infinite-dimensional integro-differential equation. Therefore in the sequel, given an initial datum $x^0 \in L^2(I,\R^d)$, we shall study the well-posedness of \textit{graphon dynamics} of the form
\begin{equation}
\label{eq:GraphonDynamics}
\left\{
\begin{aligned}
& \partial_t x(t,i) = \INTDom{a(t,i,j) \phi(|x(t,i)| - x(t,j)|)(x(t,j) - x(t,i))}{I}{j}, \\
& x(0) = x^0.
\end{aligned}
\right.
\end{equation}
In this context, the time-varying interaction topology is encoded by a measurable mapping $a \in L^{\infty}(\R_+ \times I \times I , [0,1])$, while the distance-based communications between agents remain described by a nonlinear interaction kernel $\phi(\cdot) \in \Lip_{\loc}(\R_+,\R_+^*)$. 

Throughout this manuscript, we will make the following assumptions on the data of the graphon dynamics \eqref{eq:GraphonDynamics}.

\begin{taggedhyp}{\textbn{(GD)}}
\label{hyp:GD}
Assume that the following holds. 
\begin{enumerate}
\item[$(i)$] The weight function $ a : \R_+ \times I \times I \rightarrow [0,1]$ is $\Lcal^1 \times \LcalI \times \LcalI$-measurable.  
\item[$(ii)$] The nonlinear kernel $\phi : \R_+ \rightarrow \R_+^*$ is locally Lipschitz continuous, positive and bounded.
\end{enumerate}
\end{taggedhyp}

We refer the reader to \cite{Ayi2021} for a detailed and pedagogical introduction to graphon models in the context of multi-agent systems.  

\begin{prop}[Well-posedness and estimates for graphon dynamics]
\label{prop:WellPosed}
Let $x^0 \in L^2(I,\R^d)$ and assume that hypotheses \ref{hyp:GD} hold. Then, there exists a unique solution $x(\cdot) \in \Lip_{\loc}(\R_+,L^2(I,\R^d))$ to the Cauchy problem \eqref{eq:GraphonDynamics}. If in addition it holds that $x^0 \in L^{\infty}(I,\R^d)$, one then has 
\begin{equation}
\label{eq:NormInequalityGraphon}
\NormL{x(t)}{\infty}{I,\R^d} \, \leq \, \NormL{x^0}{\infty}{I,\R^d}, 
\end{equation}
for all times $t \geq 0$. 
\end{prop}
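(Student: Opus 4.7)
The plan is to cast the Cauchy problem \eqref{eq:GraphonDynamics} as an ordinary differential equation in the Hilbert space $L^2(I,\R^d)$, apply a Carath\'eodory-type Cauchy--Lipschitz theorem to obtain existence and uniqueness, and finally derive \eqref{eq:NormInequalityGraphon} via a maximum-principle argument relying on the Scorza-Dragoni and Danskin theorems recalled in Section \ref{section:Preliminaries}.

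The first step is to introduce the vector field $F : \R_+ \times L^2(I,\R^d) \to L^2(I,\R^d)$ defined by
\[
F(t,x)(i) := \INTDom{a(t,i,j) \phi(|x(i)-x(j)|) (x(j)-x(i))}{I}{j}
\]
for $\LcalI$-almost every $i \in I$. The joint measurability of $a$ together with Fubini's theorem ensures that $t \mapsto F(t,x)$ is $\Lcal^1$-measurable for every fixed $x$. Using $a \leq 1$ and the uniform bound on $\phi$, standard applications of Minkowski's and Jensen's inequalities (remembering that $\LcalI(I) = 1$) yield a sublinear estimate of the form $\NormL{F(t,x)}{2}{I,\R^d} \leq C \NormL{x}{2}{I,\R^d}$, together with a local Lipschitz estimate for $F(t,\cdot)$ uniform in $t$, the latter exploiting the local Lipschitz continuity of $\phi$. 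The classical Carath\'eodory--Cauchy--Lipschitz theorem in Banach spaces (see e.g.~\cite[Chapter 1]{Filippov2013}) then produces a unique local solution $x \in \Lip_{\loc}([0,T^{\star}), L^2(I,\R^d))$, which extends to all of $\R_+$ by a standard Gr\"onwall argument applied to the sublinear growth estimate.

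To establish \eqref{eq:NormInequalityGraphon}, assume $x^0 \in L^{\infty}(I,\R^d)$ and set $M(t) := \NormL{x(t)}{\infty}{I,\R^d}$. The main technical obstacle is that the dynamics are formulated in $L^2(I,\R^d)$ whereas the maximum-principle argument behind the $L^{\infty}$ bound is fundamentally pointwise; this mismatch is precisely what the Scorza-Dragoni/Danskin machinery is designed to handle. Concretely, fix $T > 0$, choose a $\Lcal^1 \otimes \LcalI$-measurable representative of $(t,i) \mapsto x(t,i)$, and apply Theorem \ref{thm:ScorzaDragoni} to obtain, for each $\epsilon > 0$, a compact set $I_{\epsilon} \subset I$ with $\LcalI(I \setminus I_{\epsilon}) < \epsilon$ on which this representative is continuous. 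The function $g_{\epsilon}(t) := \max_{i \in I_{\epsilon}} |x(t,i)|^2$ is then well defined, and Theorem \ref{thm:Danskin} gives for $\Lcal^1$-almost every $t \in [0,T]$ the identity
\[
\tderv{}{t} g_{\epsilon}(t) = 2 \max_{i^{\star} \in \overline{I}_{\epsilon}(t)} \Big\langle x(t,i^{\star}), \partial_t x(t,i^{\star}) \Big\rangle,
\]
with $\overline{I}_{\epsilon}(t) := \argmax_{i \in I_{\epsilon}} |x(t,i)|^2$. For any maximizer $i^{\star}$, the Cauchy--Schwarz inequality yields $\langle x(t,i^{\star}), x(t,j) - x(t,i^{\star}) \rangle \leq |x(t,i^{\star})| (|x(t,j)| - |x(t,i^{\star})|) \leq 0$ whenever $j \in I_{\epsilon}$. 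Splitting the integral defining $\partial_t x(t,i^{\star})$ into contributions over $I_{\epsilon}$ and $I \setminus I_{\epsilon}$ and bounding the latter via Cauchy--Schwarz produces a differential inequality of the shape
\[
\tderv{}{t} \sqrt{g_{\epsilon}(t)} \leq \NormL{\phi}{\infty}{\R_+} \NormL{x(t)}{2}{I,\R^d} \sqrt{\LcalI(I \setminus I_{\epsilon})},
\]
whose right-hand side vanishes as $\epsilon \to 0^+$, since $\NormL{x(\cdot)}{2}{I,\R^d}$ is bounded on $[0,T]$. Integrating in time, passing to the limit $\epsilon \to 0^+$ and invoking Lemma \ref{lem:SupConv} to identify $\sqrt{g_{\epsilon}(\cdot)}$ with $M(\cdot)$ in the limit then yields $M(t) \leq M(0)$ for all $t \geq 0$, which is \eqref{eq:NormInequalityGraphon}.
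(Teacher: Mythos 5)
Your proof is correct and follows the same overall route as the paper's: reduce existence/uniqueness to a Carath\'eodory--Cauchy--Lipschitz argument in $L^2(I,\R^d)$, then establish the $L^\infty$ bound via the Scorza-Dragoni/Danskin machinery of Section \ref{section:Preliminaries} combined with the convergence result of Lemma \ref{lem:SupConv}. There is, however, one genuine (and favourable) difference in the treatment of the error term over $I \setminus I_\epsilon$. The paper bounds this contribution by $c_\phi \, \epsilon \, \NormL{x(t)}{\infty}{I,\R^d}^2$, which only makes sense after a preliminary Gr\"onwall argument (its displays \eqref{eq:RadiusEst1}--\eqref{eq:RadiusEst2}) showing that $x(t)$ remains in $L^\infty(I,\R^d)$ with some crude exponential growth in time. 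You instead bound the contribution over $I\setminus I_\epsilon$ via Cauchy--Schwarz by $c_\phi \, \sqrt{g_\epsilon(t)} \, \NormL{x(t)}{2}{I,\R^d} \sqrt{\LcalI(I \setminus I_\epsilon)}$, which after dividing by $\sqrt{g_\epsilon(t)}$ involves only the $L^2$-norm of $x(t)$ — a quantity already known to be locally bounded from the existence theory. This bypasses the preliminary a priori $L^\infty$ estimate entirely, because the resulting inequality $\sqrt{g_\epsilon(t)} \leq \sqrt{g_\epsilon(0)} + C\sqrt{\epsilon}$ is uniform in $\epsilon$, which by itself forces $x(t) \in L^\infty(I,\R^d)$ with the right bound as $\epsilon \to 0^+$. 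So your version is slightly leaner. One presentational caveat: when invoking Scorza-Dragoni you should be explicit that $I_\epsilon$ is chosen to make $(t,i) \mapsto x(t,i)$ \emph{jointly} continuous on $[0,T]\times I_\epsilon$ (not merely continuous in $i$ for each $t$), since that joint continuity is precisely what Danskin's theorem requires; the paper is careful about this and your phrasing could be tightened accordingly.
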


\begin{proof}
The existence and uniqueness of a solution $x(\cdot) \in \Lip_{\loc}(\R_+,L^2(I,\R^d))$ to \eqref{eq:GraphonDynamics} can be recovered by repeating the method of \cite[Section 3.1]{Ayi2021}, up to some minor modifications. Assume now that $x^0 \in L^{\infty}(I,\R^d)$, and observe that by integrating \eqref{eq:GraphonDynamics} with respect to $t \geq 0$, one has 
\begin{equation}
\label{eq:RadiusEst1} 
\begin{aligned}
|x(t,i)| & \leq |x^0(i)| + \INTSeg{\INTDom{a(s,i,j) \phi(|x(s,i) - x(s,j)|) | x(s,j) - x(s,i) |}{I}{j}}{s}{0}{t}  \\
& \leq \,  \NormL{x^0}{\infty}{I,\R^d} + \, c_{\phi} \INTSeg{\bigg( |x(s,i)| + \INTDom{|x(s,j)|}{I}{j} \bigg)}{s}{0}{t}, 
\end{aligned}
\end{equation}
for all times $t \geq 0$ and $\LcalI$-almost every $i \in I$, where $c_{\phi} := \sup_{r \in \R_+} \phi(r) < +\infty$. By integrating \eqref{eq:RadiusEst1} with respect to $i \in I$ and applying Fubini's theorem along with Gr\"onwall's lemma, we obtain
\begin{equation}
\label{eq:MomentumEst}
\INTDom{|x(t,i)|}{I}{i} \leq \, \NormL{x^0}{\infty}{I,\R^d} \exp \big(2 c_{\phi} \hspace{0.025cm} t \big), 
\end{equation}
for all times $t \geq 0$. Then, plugging \eqref{eq:MomentumEst} into \eqref{eq:RadiusEst1} while resorting again to Gr\"onwall's lemma yields 
\begin{equation}
\label{eq:RadiusEst2} 
|x(t,i)| \leq \, \NormL{x^0}{\infty}{I,\R^d} \Big( 1 + \exp \big( 2 c_{\phi} \hspace{0.025cm} t \big) \Big) \frac{\exp \big( c_{\phi} \hspace{0.025cm} t \big)}{2}, 
\end{equation}
for $\LcalI$-almost every $i \in I$, which implies that $x(t) \in L^{\infty}(I,\R^d)$ for all times $t \geq 0$. 

Our goal now is to prove the sharper stability estimate displayed \eqref{eq:NormInequalityGraphon}. To this end, fix a real number $\epsilon > 0$, and observe that by Scorza-Dragoni's theorem (see Theorem \ref{thm:ScorzaDragoni} above), there exists a compact subset of indices $I_{\epsilon} \subset I$ satisfying $\LcalI(I \setminus I_{\epsilon}) < \epsilon$ such that the mapping $x : \R_+ \times I_{\epsilon} \rightarrow  \R^d$ is continuous. We then define for all times $t \geq 0$ the family of restricted supremum norms 
\begin{equation*}
L_{\epsilon}(t) := \max_{i \in I_{\epsilon}} |x(t,i)|.
\end{equation*}
The mapping $L_{\epsilon}(\cdot)$ is locally Lipschitz as the pointwise maximum of a family of equi-locally Lipschitz functions, and is therefore differentiable $\Lcal^1$-almost everywhere by Rademacher's theorem (see e.g. \cite[Theorem 3.2]{EvansGariepy}). Moreover by Danskin's theorem (see Theorem \ref{thm:Danskin} above), one has that
\begin{equation}
\label{eq:DerivativeLinfty}
\tfrac{1}{2} \tderv{}{t}{} L_{\epsilon}(t)^2 = \max_{i \in J_{\epsilon}(t)} \langle \partial_t x(t,i) , x(t,i) \rangle, 
\end{equation}
where $J_{\epsilon}(t) := \argmax_{i \in I_{\epsilon}} |x(t,i)|$ for $\Lcal^1$-almost every $t \geq 0$. Taking an arbitrary $i \in J_{\epsilon}(t)$, one observes that 
\begin{equation}
\label{eq:RelaxedRadEst1}
\begin{aligned}
\langle \partial_t x(t,i) , x(t,i) \rangle & = \INTDom{a(t,i,j) \phi(|x(t,i) - x(t,j)|) \big\langle x(t,i) , x(t,j) - x(t,i) \rangle}{I}{j} \\
& \leq \INTDom{a(t,i,j) \phi(|x(t,i) - x(t,j)|) L_{\epsilon}(t) \big( |x(t,j)| - L_{\epsilon}(t) \big) }{I_{\epsilon}}{j} \\
& \hspace{0.45cm} + \INTDom{a(t,i,j) \phi(|x(t,i) - x(t,j)|) L_{\epsilon}(t) \big( |x(t,j)| - L_{\epsilon}(t) \big)}{I \setminus I_{\epsilon}}{j} \\
& \leq c_{\phi} \epsilon \NormL{x(t)}{\infty}{I,\R^d}^2, 
\end{aligned}
\end{equation}
where we used the facts that $|x(t,j)| \leq L_{\epsilon}(t)$ for each $j \in I_{\epsilon}$ and $\LcalI(I \setminus I_{\epsilon}) < \epsilon$ by construction. By merging \eqref{eq:DerivativeLinfty} together with \eqref{eq:RelaxedRadEst1} and applying Gr\"onwall's lemma, we further obtain the estimate 
\begin{equation}
\label{eq:RelaxedRadEst2}
\tfrac{1}{2} L_{\epsilon}(t)^2 \leq \tfrac{1}{2} L_{\epsilon}(0)^2 + c_{\phi} \epsilon \INTSeg{\NormL{x(s)}{\infty}{I,\R^d}^2}{s}{0}{t},
\end{equation}
which holds for all times $t \geq 0$. Observing that by Lemma \ref{lem:SupConv}, we have the pointwise convergence
\begin{equation*}
L_{\epsilon}(t) ~\underset{\epsilon \rightarrow 0^+}{\longrightarrow}~ \NormL{x(t)}{\infty}{\Omega,\R^n},
\end{equation*}
we can pass to the limit as $\epsilon \rightarrow 0^+$ in \eqref{eq:RelaxedRadEst2} and conclude that 
\begin{equation*}
\NormL{x(t)}{\infty}{I,\R^d} \, \leq \, \NormL{x^0}{\infty}{I,\R^d}, 
\end{equation*}
for all times $t \geq 0$, which ends the proof of our claim. 
\end{proof}

Another feature of finite-dimensional multi-agent systems that still holds for graphon models is that the \textit{closed convex hull} of the essential range of the initial datum $x^0 \in L^{\infty}(I,\R^d)$, defined by 
\begin{equation*}
\Cpazo(x^0)= \co(\rg(x^0)) := \overline{\bigcap \bigg\{ K \subset \R^d ~\text{s.t.}~ \text{$K \subset \R^d$ is convex and $\rg(x^0) \subset K$} \bigg\}}, 
\end{equation*}
is invariant under the dynamics \eqref{eq:GraphonDynamics}, as shown in the following proposition.

\begin{prop}[Invariance of the convex hull]
\label{prop:ConvexHull}
Let $x^0 \in L^{\infty}(I,\R^d)$ and $x(\cdot) \in \Lip_{\loc}(\R_+,L^2(I,\R^d))$ be the corresponding solution of the graphon dynamics \eqref{eq:GraphonDynamics}. Then, the closed convex hulls satisfy the inclusion $\Cpazo(x(t)) \subset \Cpazo(x^0)$ for all times $t \geq 0$.
\end{prop}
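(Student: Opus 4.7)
The plan is to reduce the set-valued claim to a family of scalar one-sided estimates by dualising. Since $x^0 \in L^{\infty}(I,\R^d)$, the set $\Cpazo(x^0)$ is compact, and by the Hahn-Banach theorem it coincides with the intersection of all closed half-spaces containing $\rg(x^0)$, namely
\begin{equation*}
\Cpazo(x^0) = \bigcap_{v \in \R^d} \Big\{ y \in \R^d : \langle y, v \rangle \leq M_v(0) \Big\}, \qquad M_v(t) := \underset{i \in I}{\textnormal{ess sup}} \, \langle x(t,i), v \rangle.
\end{equation*}
It is therefore enough to prove the scalar bound $M_v(t) \leq M_v(0)$ for every $v \in \R^d$ and every $t \geq 0$. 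Once this is granted, choosing a countable dense family $(v_n) \subset \R^d$ yields $\langle x(t,i) , v_n \rangle \leq M_{v_n}(0)$ for $\LcalI$-almost every $i \in I$ simultaneously for all $n \geq 1$, and a density argument gives $\rg(x(t)) \subset \Cpazo(x^0)$, hence $\Cpazo(x(t)) \subset \Cpazo(x^0)$.

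To prove $M_v(t) \leq M_v(0)$, I plan to adapt verbatim the argument establishing the $L^{\infty}$-stability estimate \eqref{eq:NormInequalityGraphon} in Proposition \ref{prop:WellPosed}, replacing the modulus $|x(t,\cdot)|$ by the signed linear functional $\langle x(t,\cdot), v \rangle$. Namely, fix $\epsilon > 0$ and invoke Scorza-Dragoni's theorem (Theorem \ref{thm:ScorzaDragoni}) to obtain a compact set $I_{\epsilon} \subset I$ with $\LcalI(I \setminus I_{\epsilon}) < \epsilon$ on which $x : \R_+ \times I_{\epsilon} \to \R^d$ is continuous, then introduce
\begin{equation*}
L_{\epsilon,v}(t) := \max_{i \in I_{\epsilon}} \langle x(t,i), v \rangle.
\end{equation*}
As the pointwise maximum of a family of equi-locally Lipschitz scalar functions, $L_{\epsilon,v}(\cdot)$ is locally Lipschitz, and Danskin's theorem (Theorem \ref{thm:Danskin}) yields
\begin{equation*}
\tderv{}{t}{} L_{\epsilon,v}(t) = \max_{i \in J_{\epsilon,v}(t)} \langle \partial_t x(t,i), v \rangle, \qquad J_{\epsilon,v}(t) := \underset{i \in I_{\epsilon}}{\argmax} \, \langle x(t,i), v \rangle,
\end{equation*}
for $\Lcal^1$-almost every $t \geq 0$.

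The crucial observation -- and the heart of the argument -- is that for $i \in J_{\epsilon,v}(t)$, splitting the integral defining $\partial_t x(t,i)$ into contributions over $I_{\epsilon}$ and $I \setminus I_{\epsilon}$, the interior part is non-positive: indeed, the weight $a(t,i,j)\phi(|x(t,i)-x(t,j)|)$ is nonnegative, while for $j \in I_{\epsilon}$ one has $\langle x(t,j) - x(t,i), v \rangle = \langle x(t,j), v \rangle - L_{\epsilon,v}(t) \leq 0$. The remainder over $I \setminus I_{\epsilon}$ is controlled by $2 c_{\phi} |v| \NormL{x^0}{\infty}{I,\R^d} \, \epsilon$, thanks to the uniform bound \eqref{eq:NormInequalityGraphon} which is already available. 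Integrating in time and sending $\epsilon \to 0^+$ via a signed variant of Lemma \ref{lem:SupConv} (whose proof goes through unchanged for the functional $\langle x(t,\cdot), v \rangle$) produces the desired inequality $M_v(t) \leq M_v(0)$.

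I do not anticipate any substantive obstacle: the Hahn-Banach reduction is standard, and the dynamical work is essentially a recycling of the proof of \eqref{eq:NormInequalityGraphon}. The only delicate point -- already present in the proof of Proposition \ref{prop:WellPosed} -- is the need to circumvent the lack of joint continuity of $x(\cdot,\cdot)$ via the compact-exhaustion trick based on Scorza-Dragoni, which is exactly what the ``$\epsilon$-trimming'' mechanism above accomplishes.
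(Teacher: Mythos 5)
Your proposal is correct and follows essentially the same route as the paper: Scorza--Dragoni to exhaust $I$ by compact sets on which $x$ is jointly continuous, Danskin's theorem to differentiate the resulting relaxed maximum, and the separation theorem (Hahn--Banach) to dualise the convex-hull inclusion into scalar one-sided estimates, followed by integration in time and the passage $\epsilon \to 0^+$ via Lemma~\ref{lem:SupConv}. The only cosmetic difference lies in how the interior contribution over $I_\epsilon$ is shown to be non-positive: you observe directly that the integrand $a(t,i,j)\phi(|x(t,i)-x(t,j)|)\langle x(t,j)-x(t,i),v\rangle$ is pointwise $\leq 0$ for $j \in I_\epsilon$ when $i$ realises the maximum, whereas the paper packages the same sign information through the weighted barycenter $\bar{X}_\epsilon(t,i) \in \Cpazo_\epsilon(x(t))$ and the half-space characterisation \eqref{eq:ConvexHullCharac1}; your version is slightly more economical, as it also dispenses with the case split on whether the local in-degree $\INTDom{a(t,i,j)\phi}{I_\epsilon}{j}$ vanishes.
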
 

\begin{proof}
Similarly to what was done in the proof of Proposition \ref{prop:WellPosed}, we start by fixing  $\epsilon > 0$ and let $I_{\epsilon} \subset I$ be a compact subset given by Scorza-Dragoni's theorem (see Theorem \ref{thm:ScorzaDragoni} above). We also consider the restricted closed convex hulls, defined by  
\begin{equation*}
\Cpazo_{\epsilon}(x(t)) := \bigcap \bigg\{ K \subset \R^d ~\text{s.t.}~ \text{$K \subset \R^d$ is convex and $x(t,I_{\epsilon}) \subset K$} \bigg\}
\end{equation*}
for all times $t \geq 0$, which are compact sets because $x(t,I_{\epsilon})$ is compact. Furthermore, recall that as a consequence of the separation theorem (see e.g. \cite[Theorem 2.4.2]{Aubin1990}), the closed convex hull of the compact sets $x(t,I_{\epsilon}) \subset \R^d$ and $\rg(x(t))$ can be characterised respectively by the identities
\begin{equation}
\label{eq:ConvexHullCharac1}
\Cpazo_{\epsilon}(x(t)) = \bigg\{ z \in \R^d ~\text{s.t.}~ \langle p , z \rangle \leq ~ \max_{i \in I_{\epsilon}} \langle p , x(t,i)  \rangle ~~ \text{for each $p \in \R^d$} \bigg\}, 
\end{equation}
and
\begin{equation}
\label{eq:ConvexHullCharac2}
\Cpazo(x(t)) = \bigg\{ z \in \R^d ~\text{s.t.}~ \langle p , z \rangle \leq ~ \sup_{i \in I} \langle p , x(t,i)  \rangle ~~ \text{for each $p \in \R^d$} \bigg\}, 
\end{equation}
where we used the fact that the essential supremum of a real-valued measurable map coincides with the maximum value of its essential range. 

Given an element $p \in \R^d$, set $J_{\epsilon,p}(t) := \argmax_{i \in I_{\epsilon}} \langle p , x(t,i) \rangle$ and observe that by Danskin's theorem (see Theorem \ref{thm:Danskin} above), it holds that
\begin{equation*}
\tderv{}{t}{} \max_{i \in I_{\epsilon}} \langle p , x(t,i)  \rangle = \max_{i \in J_{\epsilon,p}(t)} \langle p , \partial_t x(t,i)  \rangle, 
\end{equation*}
for $\Lcal^1$-almost every $t \geq 0$. For an arbitrary $i \in J_{\epsilon,p}(t)$, one further has 
\begin{equation}
\label{eq:ConvexHullIneq1}
\begin{aligned}
\langle p , \partial_t x(t,i)  \rangle & = \INTDom{a(t,i,j) \phi(|x(t,i) - x(t,j)|) \big\langle p , x(t,j) - x(t,i) \big\rangle}{I}{j} \\
& = \INTDom{a(t,i,j) \phi(|x(t,i) - x(t,j)|) \big\langle p , x(t,j) - x(t,i) \big\rangle}{I_{\epsilon}}{j} + \epsilon \hspace{0.025cm} C_p(t), 
\end{aligned}
\end{equation}
where $C_p(\cdot) \in L^{\infty}_{\loc}(\R_+,\R)$. Notice now that the under hypotheses \ref{hyp:GD}, the identity 
\begin{equation}
\label{eq:NullWeight}
\INTDom{a(t,i,j) \phi(|x(t,i) - x(t,j)|)}{I_{\epsilon}}{j} = 0, 
\end{equation}
can only hold true provided that $a(t,i,j) = 0$ for $\LcalI$-almost every $j \in I_{\epsilon}$. In that case, \eqref{eq:ConvexHullIneq1} can be simply rewritten as 
\begin{equation}
\label{eq:ConvexHullIneq2}
\langle p , \partial_t x(t,i) \rangle = \epsilon \hspace{0.025cm} C_p(t), 
\end{equation}
for $\Lcal^1$-almost every $t \geq 0$. On the contrary, upon assuming that \eqref{eq:NullWeight} does not hold, one then has 
\begin{equation}
\label{eq:ConvexHullIneq3}
\langle p , \partial_t x(t,i)  \rangle = \bigg( \INTDom{a(t,i,j) \phi(|x(t,i)-x(t,j)|) }{I_{\epsilon}}{j} \bigg) \big \langle p , \bar{X}_{\epsilon}(t,i) - x(t,i) \big \rangle + \epsilon \hspace{0.025cm} C_p(t), 
\end{equation}
where the weighted barycenter point, defined by 
\begin{equation}
\bar{X}_{\epsilon}(t,i) = \INTDom{ \bigg( \frac{a(t,i,j) \phi(|x(t,i)-x(t,j)|) x(t,j)}{\INTDom{a(t,i,k) \phi(|x(t,i)-x(t,k)|)}{I_{\epsilon}}{k}} \bigg)}{I_{\epsilon}}{j}, 
\end{equation}
is an element of $\Cpazo_{\epsilon}(x(t))$ by the standard convexification property of the integral (see e.g. \cite[Chapter 5, Theorem 3]{Aubin1984}). Whence, using the characterisation \eqref{eq:ConvexHullCharac1} of the convex hull together with the definition of the set of indices $J_{\epsilon,p}(t)$ in the identity \eqref{eq:ConvexHullIneq3} finally yields 
\begin{equation}
\label{eq:ConvexHullIneq4}
\langle p , \partial_t x(t,i) \rangle \leq \epsilon \hspace{0.025cm} C_p(t), 
\end{equation}
for all times $t \geq 0$ and any $i \in J_{\epsilon,p}(t)$. Thus by combining \eqref{eq:ConvexHullIneq2} and \eqref{eq:ConvexHullIneq4} and performing an integration with respect to the time variable, we recover the pointwise inequality 
\begin{equation*}
\max_{i \in I_{\epsilon}} \langle p , x(t,i) \rangle \leq \max_{i \in I_{\epsilon}} \langle p , x^0(i) \rangle + \epsilon \INTSeg{C_p(s)}{s}{0}{t}, 
\end{equation*}
for all times $t \geq 0$. By letting $\epsilon \rightarrow 0^+$ and applying the convergence result of Lemma \ref{lem:SupConv}, it thus holds 
\begin{equation*}
\sup_{i \in I} \langle p , x(t,i) \rangle \leq \sup_{i \in I} \langle p , x^0(i) \rangle, 
\end{equation*}
and in particular, by choosing an element $y \in \Cpazo(x(t))$, one has that 
\begin{equation*}
\langle p , y \rangle \leq \sup_{i \in I} \langle p , x(t,i) \rangle \leq \sup_{i \in I} \langle p , x^0(i) \rangle, 
\end{equation*}
for every $p \in \R^d$ as a consequence of \eqref{eq:ConvexHullCharac2}. We have thus shown that $\Cpazo(x(t)) \subset \Cpazo(x^0)$ for all times $t \geq 0$, which concludes the proof of our claim. 
\end{proof}

\begin{rmk}[Performing differential estimates combining Scorza-Dragoni and Danskin theorems]
In the setting of discrete multi-agent models, the essential suprema over $I = [0,1]$ reduce to simple maxima of a finite collection of equi-locally Lipschitz mappings. Hence, these latter are always reached along \textnormal{piecewise constant} curves of indices $t \in \R_+ \mapsto i(t) \in \{1,\dots,N\}$, which greatly simplifies the analysis of differential estimates involving various geometric quantities of the system. In the graph limit framework, however, these suprema are a priori not attained, and while it may be possible to build piecewise constant curves of indices $t \in \R_+ \mapsto i(t) \in I$ whose images lie arbitrarily close to the desired quantity, the underlying construction would rely on careful adaptations of measurable selection theorems (see e.g. \cite[Theorem 8.3.1]{Aubin1990}) to the preimages of $\epsilon$-tubes built around curves of suprema.

In this context, the application of Scorza-Dragoni's theorem allows to consider well-defined maxima of continuous functions defined over compact sets instead of essential suprema. The time-derivatives of these latter can then be estimated in a fashion similar to that of classical finite-dimensional approaches, up to using Danskin's theorem for the differentiation part, and there only remains to properly account for the error terms stemming from the inner approximation while passing to the limit as $\epsilon \rightarrow 0^+$.
\end{rmk}


\section{Consensus formation via diameter estimates}
\label{section:DiamConsensus}
\setcounter{equation}{0} \renewcommand{\theequation}{\thesection.\arabic{equation}}

In this section, we study sufficient conditions for the exponential decay of the diameter of a graphon model. We start in Section \ref{subsection:Scrambling} by a preliminary discussion on scrambling coefficients, and study the formation of $L^{\infty}$-consensus in Section \ref{subsection:ConsensusDiam}.


\subsection{Generalised scrambling coefficient for graphon models}
\label{subsection:Scrambling}

As already discussed in the Introduction, the quantity that usually dictates the asymptotic behaviour of the diameter of a multi-agent system is the so-called \textit{scrambling coefficient}. We point the reference monograph \cite{Seneta1979} for a detailed overview of its properties. 


\paragraph*{Scrambling coefficients in finite and infinite dimension.} Given an adjacency matrix $\Ab_N := (a_{ij})_{1 \leq i,j \leq N} \in [0,1]^{N \times N}$ encoding the interactions of a discrete system of $N \geq 1$ agents, the scrambling coefficient is defined by
\begin{equation}
\label{eq:FiniteDimScramb}
\eta(\Ab_N) := \min_{1 \leq i,j \leq N} \frac{1}{N} \sum_{k=1}^N \min\{ a_{ik} , a_{jk} \}.
\end{equation}
Since the self-interactions of the agents are set to be $a_{ii} = 1$ for each $i \in \{1,\dots,N\}$, the latter can be equivalently rewritten as 
\begin{equation}
\label{eq:FiniteDimScraAis}
\eta(\Ab_N) = \min_{1 \leq i,j \leq N} \frac{1}{N} \bigg( \sum_{k=1, k \neq i,j}^N \min\{ a_{ik} , a_{jk} \} + a_{ij} + a_{ji} \bigg).
\end{equation}
What this formula expresses is that the scrambling coefficient of an interaction topology is positive if and only if, for every pair of agents $i,j \in \{1,\dots,N\}$, either $i$ and $j$ are communicating with one another, or there exists a third party agent $k \in \{1,\dots,N\} \setminus \{i,j\}$ which is followed by both $i$ and $j$. We provide in Figure \ref{fig:Scrambling} three simple examples of interaction topologies with positive scrambling.

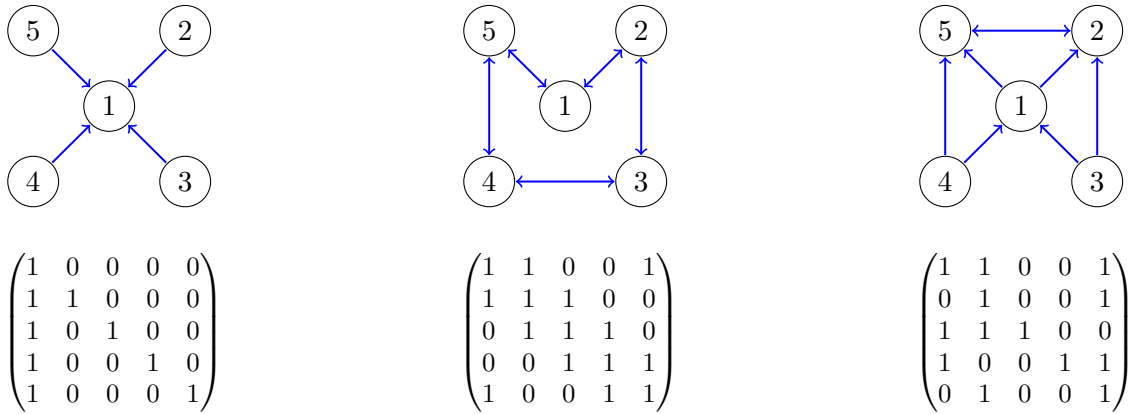
\begin{figure}[!ht]
\centering
\begin{tikzpicture}
\draw (1,-1) node[shape=circle, draw] {$1$}; 
\draw (2,0) node[shape=circle, draw] {$2$};
\draw (2,-2) node[shape=circle, draw] {$3$};
\draw (0,0) node[shape=circle, draw] {$5$};
\draw (0,-2) node[shape=circle, draw] {$4$};
\draw[<-, line width = 0.75, blue] (0.75,-0.75)--(0.25,-0.25);
\draw[->, line width = 0.75, blue] (0.25,-1.75)--(0.75,-1.25);
\draw[<-, line width = 0.75, blue] (1.25,-0.75)--(1.75,-0.25);  
\draw[->, line width = 0.75, blue] (1.75,-1.75)--(1.25,-1.25);
\draw (1.05,-4) node {\small $
\begin{pmatrix} 
1 & 0 & 0 & 0 & 0 \\ 
1 & 1 & 0 & 0 & 0 \\
1 & 0 & 1 & 0 & 0 \\
1 & 0 & 0 & 1 & 0 \\
1 & 0 & 0 & 0 & 1 \\
\end{pmatrix}$};
\begin{scope}[xshift = 6cm]
\draw (1,-1) node[shape=circle, draw] {$1$}; 
\draw (2,0) node[shape=circle, draw] {$2$};
\draw (2,-2) node[shape=circle, draw] {$3$};
\draw (0,0) node[shape=circle, draw] {$5$};
\draw (0,-2) node[shape=circle, draw] {$4$};
\draw[<->, line width = 0.75, blue] (0.75,-0.75)--(0.25,-0.25);
\draw[<->, line width = 0.75, blue] (0,-0.35)--(0,-1.65); 
\draw[<->, line width = 0.75, blue] (1.25,-0.75)--(1.75,-0.25);  
\draw[<->, line width = 0.75, blue] (2,-0.35)--(2,-1.65); 
\draw[<->, line width = 0.75, blue] (1.65,-2)--(0.35,-2);
\draw (1.05,-4) node {\small $
\begin{pmatrix} 
1 & 1 & 0 & 0 & 1 \\ 
1 & 1 & 1 & 0 & 0 \\
0 & 1 & 1 & 1 & 0 \\
0 & 0 & 1 & 1 & 1 \\
1 & 0 & 0 & 1 & 1 \\
\end{pmatrix}$};
\end{scope}
\begin{scope}[xshift = 12cm]
\draw (1,-1) node[shape=circle, draw] {$1$}; 
\draw (2,0) node[shape=circle, draw] {$2$};
\draw (2,-2) node[shape=circle, draw] {$3$};
\draw (0,0) node[shape=circle, draw] {$5$};
\draw (0,-2) node[shape=circle, draw] {$4$};
\draw[->, line width = 0.75, blue] (0.75,-0.75)--(0.25,-0.25);
\draw[<-, line width = 0.75, blue] (0,-0.35)--(0,-1.65); 
\draw[->, line width = 0.75, blue] (0.25,-1.75)--(0.75,-1.25);
\draw[->, line width = 0.75, blue] (1.25,-0.75)--(1.75,-0.25);  
\draw[<-, line width = 0.75, blue] (2,-0.35)--(2,-1.65); 
\draw[->, line width = 0.75, blue] (1.75,-1.75)--(1.25,-1.25);
\draw[<->, line width = 0.75, blue] (1.65,0)--(0.35,0);
\draw (1.05,-4) node {\small $
\begin{pmatrix} 
1 & 1 & 0 & 0 & 1 \\ 
0 & 1 & 0 & 0 & 1 \\
1 & 1 & 1 & 0 & 0 \\
1 & 0 & 0 & 1 & 1 \\
0 & 1 & 0 & 0 & 1 \\
\end{pmatrix}$};
\end{scope}
\end{tikzpicture}
\caption{{\small \textit{Three examples of interaction topologies with positive scrambling for graphs with $N = 5$ vertices. The interaction graphs are represented on top, and the corresponding adjacency matrices right below.}}}
\label{fig:Scrambling}
\end{figure}

Given an adjacency operator $\Apazo \in \Lpazo(L^2(I,\R^d))$ with kernel $a \in L^{\infty}(I \times I,[0,1])$, we introduce the following infinite-dimensional generalisation of the scrambling coefficient
\begin{equation}
\label{eq:InfiniteDimScramb}
\eta(\Apazo) := \inf_{i,j \in I} \INTDom{\min \{ a(i,k) , a(j,k) \}}{I}{k}. 
\end{equation}
As we shall see in Theorem \ref{thm:DiamContraction} below, this macroscopic quantity does indeed quantify the decay of the diameter of a graphon dynamics of the form \eqref{eq:GraphonDynamics}. It is also worth noting that if $\Apazo_N \in \Lpazo(L^2(I,\R^d))$ is the adjacency operator associated with a piecewise constant kernel $a^N \in L^{\infty}(I \times I,[0,1])$ of the form \eqref{eq:PiecewiseWeight} for some $N \geq 1$, then $\eta(\Apazo_N) = \eta(\Ab_N)$ as in \eqref{eq:FiniteDimScramb}, where $\Ab_N \in \R^{N \times N}$ is the adjacency matrix of the underlying digraph. This shows that the definition proposed in \eqref{eq:InfiniteDimScramb} is indeed a proper generalisation of the usual notion of scrambling coefficient.

\paragraph*{The role of stochasticity in the literature.} In most of the existing works studying the asymptotic properties of the diameter in multi-agent dynamics (see e.g. \cite{MPPD,Motsch2011,Motsch2014} and references therein), the proof of the decay estimates explicitly rely on the assumption that the adjacency matrices are \textit{stochastic}, namely that they satisfy
\begin{equation*}
\frac{1}{N} \sum_{j=1}^N a_{ij} = 1, 
\end{equation*}
for each $i \in \{1,\dots,N\}$. Indeed, observing that a multi-agent dynamics of the form 
\begin{equation*}
\dot x_i(t) = \frac{1}{N} \sum_{j=1}^N a_{ij} (x_j(t) - x_i(t)), 
\end{equation*}
is independent of the values of the diagonal coefficients $(a_{11},\dots,a_{NN})$, one can equivalently consider that any of its solutions is driven by the stochastic interaction weights $(\tilde{a}_{ij})_{1 \leq i,j \leq N}$, defined by 
\begin{equation}
\label{eq:StoReparam}
\tilde{a}_{ii} := N \, - \sum_{k =1, k \neq i}^N a_{ik} \qquad \text{and} \qquad \tilde{a}_{ij} := a_{ij}, 
\end{equation}
for every $i,j \in \{1,\dots,N\}$. In the following proposition, we show that the scrambling coefficient of an interaction topology does not change when the communication weights are redefined using \eqref{eq:StoReparam}. While this result is not difficult to establish, it is crucial in order to ensure that the latter transformation does not modify important qualitative properties of the dynamics.

\begin{prop}[Invariance of scrambling coefficients under stochastic reparametrisation]
Let $\Ab_N := (a_{ij})_{1 \leq i,j \leq N} \in [0,1]^{N \times N}$ be the adjacency matrix of an interaction graph, and $\tilde{\Ab}_N := (\tilde{a}_{ij})_{1 \leq i,j \leq N}$ be its stochastic reparametrisation obtained via the transformation \eqref{eq:StoReparam}. Then, $\eta(\tilde{\Ab}_N) = \eta(\Ab_N)$. 
\end{prop}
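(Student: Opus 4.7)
My plan is to exploit the fact that the reparametrisation \eqref{eq:StoReparam} only alters the diagonal entries of $\Ab_N$, and to check that these diagonal changes preserve the value of each summand in the scrambling formula for every off-diagonal pair $i \neq j$, while the diagonal pairs $i = j$ contribute nothing relevant to the outer minimisation.

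First, I would note that since $a_{ik} \in [0,1]$ for all $i,k \in \{1,\dots,N\}$, the reparametrised diagonal entries obey $\tilde{a}_{ii} = N - \sum_{k \neq i} a_{ik} \geq N-(N-1) = 1$, while the off-diagonal entries are left unchanged. Fixing a pair $i \neq j$, I would then split the sum $\frac{1}{N}\sum_{k=1}^{N}\min\{\tilde{a}_{ik},\tilde{a}_{jk}\}$ according to whether $k \in \{i,j\}$ or not. For $k \notin \{i,j\}$ the terms agree trivially since $\tilde{a}_{ik} = a_{ik}$ and $\tilde{a}_{jk} = a_{jk}$. For $k = i$, the inequality $\tilde{a}_{ii} \geq 1 \geq a_{ji}$ forces $\min\{\tilde{a}_{ii},\tilde{a}_{ji}\} = a_{ji}$, and by symmetry $\min\{\tilde{a}_{ij},\tilde{a}_{jj}\} = a_{ij}$ for $k = j$. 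This matches precisely the rewriting \eqref{eq:FiniteDimScraAis} applied to $\Ab_N$, whose derivation likewise uses $a_{ii} = a_{jj} = 1$. Hence the summands indexed by $i \neq j$ coincide for $\Ab_N$ and $\tilde{\Ab}_N$.

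Next, I would dispose of the diagonal pairs $i = j$ in the outer minimisation of \eqref{eq:FiniteDimScramb}. For any $i$ and any $j \neq i$ one has $\min\{a_{ik},a_{jk}\} \leq a_{ik}$, so summing over $k$ yields $\frac{1}{N}\sum_{k}\min\{a_{ik},a_{jk}\} \leq \frac{1}{N}\sum_{k} a_{ik}$, with the analogous inequality holding for $\tilde{\Ab}_N$. Provided $N \geq 2$, this shows that the outer minima in \eqref{eq:FiniteDimScramb} are both attained at some off-diagonal pair, so that the previous step is enough to conclude $\eta(\tilde{\Ab}_N) = \eta(\Ab_N)$. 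The case $N = 1$ is trivial since $\tilde{\Ab}_1 = \Ab_1$.

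The only mild subtlety is this last book-keeping step of discarding the diagonal pairs from the minimisation: the reparametrisation genuinely alters their individual values (typically from $\frac{1}{N}\sum_k a_{ik}$ to $1$), so one has to verify separately that neither matrix realises its minimum at a diagonal pair before invoking the off-diagonal computation.
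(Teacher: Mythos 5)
Your proof is correct and follows essentially the same route as the paper's: split the sum over $k$ at $k\in\{i,j\}$, note the off-diagonal entries are unchanged, and use $\tilde a_{ii}\geq 1\geq a_{ji}$ (and symmetrically) to evaluate the two boundary $\min$'s, recovering the rewriting \eqref{eq:FiniteDimScraAis}. The one thing you do more carefully is the closing book-keeping step ruling out the diagonal pairs $i=j$ from the outer minimisation; the paper leaves this implicit when it invokes \eqref{eq:FiniteDimScraAis}, whose term-by-term derivation in fact only holds for $i\neq j$, so your extra check is a welcome clarification rather than a different argument.
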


\begin{proof}
Observe first that since $(a_{ij})_{1 \leq i,j \leq N} \in [0,1]^{N \times N}$, it necessarily holds  
\begin{equation*}
a_{ij} \leq 1 \leq N \, - \sum_{k=1, k \neq i}^N a_{ik} \qquad \text{and} \qquad  a_{ji} \leq 1 \leq N \, - \sum_{k=1, k \neq j}^N a_{jk}, 
\end{equation*}
for every $i,j \in \{1,\dots,N\}$. Thus by definition \eqref{eq:StoReparam} of the coefficients of the modified adjacency matrix $\tilde{\Ab}_N$, one has that
\begin{equation*}
\begin{aligned}
\eta(\tilde{\Ab}_N) & = \min_{1 \leq i,j \leq N} \frac{1}{N} \Bigg( \sum_{k=1, k \neq i,j}^N \min\{ a_{ik} , a_{jk} \} + \min \bigg\{ a_{ij} , N \, - \mathsmaller{\sum\limits_{k=1,k \neq i}^N a_{ik}} \bigg\} \\
& \hspace{6.15cm} + \min \bigg\{N \, - \mathsmaller{\sum\limits_{k=1,k \neq j}^N a_{jk}} ,  a_{ji} \bigg\} \Bigg) \\
& = \min_{1 \leq i,j \leq N} \frac{1}{N} \bigg( \sum_{k=1, k \neq i,j}^N \min\{ a_{ik} , a_{jk} \} + a_{ij} + a_{ji} \bigg) \\
& = \eta(\Ab_N), 
\end{aligned}
\end{equation*}
where we used the alternative expression \eqref{eq:FiniteDimScraAis} of the scrambling coefficient.
\end{proof}

\begin{rmk}[Obstructions to stochastic reparametrisation for graphons]
While the approach described above is very useful in order to simplify computations when considering finite-dimensional multi-agent systems, it is not readily applicable to graphon dynamics. Indeed, one can easily check that the transformation of the adjacency matrix displayed in \eqref{eq:StoReparam} does not admit a well-defined pointwise limit as $N \rightarrow +\infty$. On the other hand, it can be shown that the corresponding piecewise constant interaction maps defined as in \eqref{eq:PiecewiseWeight} would converge in the sense of distributions towards the sum of a kernel $a \in L^{\infty}(I \times I,[0,1])$ and a Dirac measure supported on the diagonal. Therefore, the resulting right-hand side falls outside the scope of classical graphon dynamics, in which the adjacency operator is simply given by the integral of an $\LcalI$-measurable kernel. 
\end{rmk}


\subsection{Exponential consensus formation for topologies with persistent scramblings}
\label{subsection:ConsensusDiam}

In Theorem \ref{thm:DiamContraction} below, we derive a general contraction estimate on the diameter of graphon dynamics, expressed in terms of the generalised scrambling coefficient. In what follows for $\Lcal^1$-almost every $t \geq 0$, we denote by $\Apazo(t) \in \Lpazo(L^2(I,\R^d))$ the adjacency operator with kernel $a(t) \in L^{\infty}(I \times I,[0,1])$. 

\begin{thm}[A general contraction estimate for the diameter]
\label{thm:DiamContraction}
Fix a radius $R > 0$ and an initial datum $x^0 \in L^{\infty}(I,\R^d)$ satisfying $\NormL{x^0}{\infty}{I,\R^d} \leq R$. Moreover, assume that hypotheses \ref{hyp:GD} hold, and consider a solution $x(\cdot) \in \Lip_{\loc}(\R_+,L^2(I,\R^d))$ of \eqref{eq:DiscGraphonDyn}.

Then for all times $t \geq 0$, it holds that
\begin{equation*}
\Dpazo(t) \leq \Dpazo(0) \exp \bigg( - \gamma_R \INTSeg{\eta(\Apazo(s))}{s}{0}{t} \bigg), 
\end{equation*}
with $\gamma_R := \min_{r \in [0,2R]} \phi(r)$, and where the scrambling coefficient $\eta(\Apazo(s))$ generated by the communication weights is defined as in \eqref{eq:InfiniteDimScramb} for $\Lcal^1$-almost every $s \geq 0$.  
\end{thm}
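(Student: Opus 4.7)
The overall strategy mirrors the Scorza-Dragoni / Danskin machinery used in the proofs of Proposition~\ref{prop:WellPosed} and Proposition~\ref{prop:ConvexHull}, replacing the essential supremum defining $\Dpazo(t)$ by a maximum over a compact inner approximation of $I \times I$ and then passing to the limit. First I would note that, by Proposition~\ref{prop:WellPosed}, $\NormL{x(t)}{\infty}{I,\R^d} \leq R$ for all $t \geq 0$, so every distance $|x(t,i)-x(t,j)|$ lies in $[0,2R]$, the continuous positive map $\phi$ is bounded below on this interval by $\gamma_R>0$, and $\Dpazo(t) \leq 2R$. Fix $\epsilon>0$ and apply Theorem~\ref{thm:ScorzaDragoni} to $x:\R_+\times I \to \R^d$, intersecting the resulting set with the full-measure subset of $I\times I$ on which $\int_I \min\{a(t,i,k),a(t,j,k)\}\, \mathrm{d}k \geq \eta(\Apazo(t))$ for a.e.~$t$, to obtain a compact $I_\epsilon \subset I$ with $\LcalI(I\setminus I_\epsilon)<\epsilon$ along which $x$ is jointly continuous and the scrambling lower bound holds. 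Introduce the relaxed squared diameter
\begin{equation*}
\Dpazo_\epsilon(t)^2 := \max_{(i,j)\in I_\epsilon\times I_\epsilon} |x(t,i)-x(t,j)|^2,
\end{equation*}
which is locally Lipschitz in time, and let $J_\epsilon(t)$ denote its set of maximisers.

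Next I would differentiate via Theorem~\ref{thm:Danskin}: for $\Lcal^1$-a.e.~$t\geq 0$ and any $(i^*,j^*)\in J_\epsilon(t)$, writing $d:=x(t,i^*)-x(t,j^*)$, $u_k:=x(t,k)-x(t,i^*)$ and $v_k:=x(t,k)-x(t,j^*)=u_k+d$,
\begin{equation*}
\tfrac{1}{2}\tderv{}{t}\Dpazo_\epsilon(t)^2 = \max_{(i^*,j^*)\in J_\epsilon(t)} \langle \partial_t x(t,i^*) - \partial_t x(t,j^*),\, d\rangle.
\end{equation*}
The key geometric lemma alluded to in the Introduction is then the elementary observation that, because $(i^*,j^*)$ realises the maximum over $I_\epsilon\times I_\epsilon$, one has $|u_k|\leq|d|$ and $|v_k|\leq|d|$ for every $k\in I_\epsilon$, which by expanding $|v_k|^2=|u_k|^2+2\langle u_k,d\rangle+|d|^2$ and symmetrically for $|u_k|^2$ forces
\begin{equation*}
\langle u_k,d\rangle \leq -\tfrac{1}{2}|u_k|^2 \leq 0, \qquad \langle v_k,d\rangle \geq \tfrac{1}{2}|v_k|^2 \geq 0.
\end{equation*}

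The heart of the argument is the pointwise bound on the integrand. Writing $\alpha_k:=\langle u_k,d\rangle/|d|^2\in[-1,0]$, so that $\langle v_k,d\rangle=(\alpha_k+1)|d|^2$, and using $\phi\geq\gamma_R$ on $[0,2R]$ together with the signs of $\alpha_k$ and $\alpha_k+1$, one finds
\begin{equation*}
\phi(|u_k|)\langle u_k,d\rangle - \phi(|v_k|)\langle v_k,d\rangle \leq \bigl(\alpha_k\gamma_R - (\alpha_k+1)\gamma_R\bigr)|d|^2 = -\gamma_R|d|^2.
\end{equation*}
Combining with the sign inequalities above, the coefficients $a(t,i^*,k)$ and $a(t,j^*,k)$ multiplying the (nonpositive) first term and the (nonpositive, after the $-$ sign) second term can each be bounded below by $b(t,k):=\min\{a(t,i^*,k),a(t,j^*,k)\}$. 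Splitting the original integrals over $I$ into $I_\epsilon$ and $I\setminus I_\epsilon$, the residual over $I\setminus I_\epsilon$ is dominated by $\epsilon\cdot C(R,\phi)\Dpazo_\epsilon(t)^2$ (using $|u_k|,|v_k|\leq 2R$ and $\phi\leq c_\phi$), which yields
\begin{equation*}
\tfrac{1}{2}\tderv{}{t}\Dpazo_\epsilon(t)^2 \leq -\gamma_R \Dpazo_\epsilon(t)^2 \int_{I_\epsilon} b(t,k)\,\mathrm{d}k + \epsilon\, C(R,\phi)\Dpazo_\epsilon(t)^2.
\end{equation*}
By construction of $I_\epsilon$, $\int_{I_\epsilon}b(t,k)\,\mathrm{d}k \geq \eta(\Apazo(t))-\epsilon$ for a.e.~$t$.

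Finally, applying Grönwall's lemma to $\Dpazo_\epsilon(t)^2$ gives
\begin{equation*}
\Dpazo_\epsilon(t)^2 \leq \Dpazo_\epsilon(0)^2 \exp\Bigl(-2\gamma_R\INTSeg{\eta(\Apazo(s))}{s}{0}{t} + O(\epsilon)(1+t)\Bigr),
\end{equation*}
and letting $\epsilon\to 0^+$ via Lemma~\ref{lem:SupConv} applied to the $\R^d\times\R^d$-valued map $(i,j)\mapsto(x(t,i),x(t,j))$ upgrades $\Dpazo_\epsilon(t)$ to $\Dpazo(t)$ on both sides and yields the claimed estimate. The main obstacle I anticipate is the bookkeeping around the scrambling bound at the maximising pair $(i^*,j^*)$: since $\eta(\Apazo(t))$ is an essential infimum in $(i,j)$ while the maximiser is a specific pair, one must be careful to absorb the exceptional null set into the Scorza-Dragoni selection of $I_\epsilon$ so that the lower bound $\int_{I_\epsilon}b(t,k)\,\mathrm{d}k \geq \eta(\Apazo(t))-\epsilon$ holds uniformly in the maximisers for a.e.~$t$.
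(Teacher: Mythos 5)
Your proof follows essentially the same strategy as the paper's: Scorza-Dragoni to replace essential suprema by maxima over a compact $I_\epsilon$, Danskin to differentiate the relaxed diameter, a geometric observation about maximisers of the diameter to control the signs of the projections, then Grönwall and $\epsilon\to 0^+$ via Lemma~\ref{lem:SupConv}. Your geometric observation — that $(i^*,j^*)\in J_\epsilon(t)$ forces $\langle u_k,d\rangle\leq 0$ and $\langle v_k,d\rangle\geq 0$ — is precisely the content of the paper's Lemma~\ref{lem:ScalarIneq}, derived by an equivalent quadratic expansion. The one place where your bookkeeping is genuinely cleaner than the paper's is the pointwise cancellation: using $u_k - v_k = -d$ you obtain $\phi(|u_k|)\langle u_k,d\rangle - \phi(|v_k|)\langle v_k,d\rangle \leq \gamma_R\langle u_k - v_k, d\rangle = -\gamma_R|d|^2$ directly, whereas the paper introduces the auxiliary kernel $\eta_{ij}(t,k)=\min\{a\phi,a\phi\}$ and splits the integrand into four pieces with max/min bounds before the cancellation appears. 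Both yield the same factor $-\gamma_R\,\eta(\Apazo(t))\,\Dpazo_\epsilon(t)^2$ in the end.

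One small quantitative slip: the residual over $I\setminus I_\epsilon$ is not $O(\epsilon)\Dpazo_\epsilon(t)^2$. For $k\notin I_\epsilon$ you only control $|u_k|,|v_k|$ by $2R$ (from Proposition~\ref{prop:WellPosed}), not by $\Dpazo_\epsilon(t)$, so the residual is of size $\epsilon\, c_\phi R\, \Dpazo_\epsilon(t)$, i.e.\ additive rather than multiplicative in $\Dpazo_\epsilon^2$. This is exactly the $\epsilon\, C(t)$ term in the paper's display \eqref{eq:DepsDecay}, and it still vanishes as $\epsilon\to 0^+$ after Grönwall, so the conclusion is unaffected — but the Grönwall inequality should be stated with both a multiplicative $O(\epsilon)$ and an additive $O(\epsilon)$ error, as the paper does. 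Finally, the measure-theoretic subtlety you flag (the scrambling coefficient is an essential infimum while the maximiser $(i^*,j^*)$ is a specific pair) is indeed a real technical point, but it is the same one implicitly handled in the paper's estimate \eqref{eq:DiamGraphon2}; your remark that it must be absorbed into the Scorza-Dragoni selection is a fair acknowledgement that mirrors what the published proof glosses over.
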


Before proving Theorem \ref{thm:DiamContraction}, we establish a useful geometric lemma which allows for the  estimation of the time-derivative of the diameter without requiring the stochasticity of the adjacency operator. 

\begin{lem}[A scalar product inequality]
\label{lem:ScalarIneq}
Let $J \subset I$ be a closed set, $x \in C^0(J,\R^d)$ and $i,j \in J$ be a pair of indices such that $\max_{k,l \in J} |x(k) - x(l)| = |x(i) - x(j)| $. Then, one has that
\begin{equation}
\label{eq:ScalarProdIneq}
\begin{aligned}
\langle x(j) , x(i) - x(j) \rangle & = \min_{k \in J} \langle x(k) , x(i) - x(j) \rangle \\
& \leq \max_{k \in J} \langle x(k) , x(i) - x(j) \rangle = \langle x(i) , x(i) - x(j) \rangle.
\end{aligned}
\end{equation}
\end{lem}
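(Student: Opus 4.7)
The statement amounts to a clean geometric fact about diameter-realising pairs in a bounded set: if $x(i)$ and $x(j)$ achieve the maximal pairwise distance inside the image $x(J)$, then no other point of the set can lie ``beyond'' $x(j)$ in the direction of $v := x(i)-x(j)$, nor ``beyond'' $x(i)$ in the direction of $-v$. My plan is to convert this intuition into an elementary argument by contradiction, and then read the two equalities off directly.

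First, I would note that $J$, being a closed subset of the compact interval $I$, is itself compact, so continuity of $x$ ensures that the quantities $\min_{k\in J}\langle x(k),v\rangle$ and $\max_{k\in J}\langle x(k),v\rangle$ in \eqref{eq:ScalarProdIneq} are actually attained. This also ensures that the maximiser pair $(i,j)$ of $|x(k)-x(l)|$ genuinely exists.

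Next, I would establish the two key vectorial inequalities
\[
\langle x(k)-x(j), x(i)-x(j)\rangle \geq 0 \qquad \text{and} \qquad \langle x(i)-x(k), x(i)-x(j)\rangle \geq 0,
\]
which both hold for every $k\in J$. For the first, suppose by contradiction that some $k\in J$ satisfies $\langle x(k)-x(j), v\rangle < 0$. Writing $w := x(k)-x(j)$, I would expand
\[
|x(i)-x(k)|^2 = |v-w|^2 = |v|^2 - 2\langle w,v\rangle + |w|^2 > |v|^2 = |x(i)-x(j)|^2,
\]
contradicting the maximality assumption on $|x(i)-x(j)|$. The second inequality is obtained symmetrically by expanding $|x(k)-x(j)|^2 = |v-(x(i)-x(k))|^2$ and again invoking maximality.

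Finally, rewriting the two inequalities as
\[
\langle x(k),v\rangle \geq \langle x(j),v\rangle \qquad \text{and} \qquad \langle x(k),v\rangle \leq \langle x(i),v\rangle
\]
for every $k\in J$, and observing that both bounds are attained (at $k=j$ and $k=i$ respectively, which belong to $J$), one reads off precisely the identities
\[
\min_{k\in J}\langle x(k),v\rangle = \langle x(j),v\rangle, \qquad \max_{k\in J}\langle x(k),v\rangle = \langle x(i),v\rangle,
\]
whose juxtaposition is exactly \eqref{eq:ScalarProdIneq}. I do not anticipate any genuine obstacle: the proof is a one-line contradiction based on the parallelogram-type expansion of a squared distance, and the only subtlety worth recording explicitly is the compactness of $J$ guaranteeing attainment of the extrema.
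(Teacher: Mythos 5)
Your proof is correct. The only thing to check carefully is the sign chase: with $v := x(i)-x(j)$ and $w := x(k)-x(j)$, one has $x(i)-x(k) = v - w$, so $|x(i)-x(k)|^2 = |v|^2 - 2\langle v,w\rangle + |w|^2$; the term $-2\langle v,w\rangle$ is strictly positive under your contradiction hypothesis and $|w|^2 \geq 0$, so the displayed strict inequality holds and contradicts the maximality of $|x(i)-x(j)|$ over pairs in $J$. The symmetric expansion handles the other bound, and the attainment at $k=j$ and $k=i$ converts the two inequalities into the two equalities of \eqref{eq:ScalarProdIneq}.

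Your route is genuinely different from the paper's. The paper first invokes the classical fact that the diameter of a set in $\R^d$ coincides with the diameter of its closed convex hull, then interpolates $x_{\lambda}(k) := (1-\lambda)x(i) + \lambda x(k)$, expands $|x_{\lambda}(k)-x(j)|^2 - |x(i)-x(j)|^2$ as a quadratic in $\lambda$, divides by $\lambda$, and sends $\lambda \to 0^+$ to extract the linear term as a first-order optimality condition. You instead test the definition of a diameter-realising pair directly against the points $x(k)$ themselves, with no interpolation, no limit, and no appeal to the convex-hull characterisation of diameter. The payoff is a shorter and more self-contained argument; what the paper's variational presentation buys is a template that recurs throughout the article (Danskin-type estimates at an $\argmax$), so their choice is more about consistency of technique than necessity. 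Both arguments are elementary expansions of a squared norm and yield exactly the same conclusion.
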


\begin{proof}
It is a classical result in geometric analysis that the diameter of any subset of $\R^d$ coincides with the diameter of its convex hull.  Thus, defining $x_{\lambda}(k) := (1-\lambda)x(i) + \lambda x(k)$ for some $k \in J$ and every $\lambda \in [0,1]$, it necessarily holds
\begin{equation}
\label{eq:DiamInterpIneq}
|x_{\lambda}(k) - x(j) |^2 \leq |x(i) - x(j)|^2,
\end{equation}
by construction of the indices $i,j \in J$. An easy computation then shows that
\begin{equation*}
\begin{aligned}
|x_{\lambda}(k) - x(j) |^2 - |x(i) - x(j)|^2 & = \Big\langle (x_{\lambda}(k) - x(j)) - (x(i) - x(j)) \, , \, (x_{\lambda}(k) - x(j)) + (x(i) - x(j))   \Big\rangle \\
& = \lambda^2 |x(i) - x(k)|^2 + 2 \lambda \big\langle x(k) - x(i) , x(i) - x(j)  \big\rangle, 
\end{aligned}
\end{equation*}
so that \eqref{eq:DiamInterpIneq} can be equivalently reformulated as 
\begin{equation}
\label{eq:DiamInterpIneqBis}
\lambda^2 |x(i) - x(k)|^2 + 2 \lambda \big\langle x(k) - x(i) , x(i) - x(j)  \big\rangle \leq 0, 
\end{equation}
for every $\lambda \in [0,1]$. Dividing by $\lambda \in (0,1]$ and letting $\lambda \to 0^+$ in \eqref{eq:DiamInterpIneqBis}, we therefore obtain 
\begin{equation*}
\langle x(k) , x(i) - x(j) \rangle \leq \langle x(i) , x(i) - x(j) \rangle, 
\end{equation*}
for every $k \in J$, which is equivalent to stating that $\langle x(i) , x(i) - x(j) \rangle = \max_{k \in J} \langle x(k) , x(i) - x(j) \rangle$. The other inequality in \eqref{eq:ScalarProdIneq} can be proven by repeating the same arguments.  
\end{proof}

\begin{proof}[Proof of Theorem \ref{thm:DiamContraction}]
Let $x \in \Lip_{\loc}(\R_+,L^2(I,\R^d))$ be a solution of \eqref{eq:DiscGraphonDyn}, and fix a parameter $\epsilon > 0$. Then by Scorza-Dragoni's theorem (see Theorem \ref{thm:ScorzaDragoni} above), there exists a compact set $I_{\epsilon} \subset I$ with $\LcalI(I \setminus I_{\epsilon}) < \epsilon$ such that the restriction $x : \R_+ \times I_{\epsilon} \rightarrow \R^d$ is a continuous map. We then consider the restricted diameters, defined by 
\begin{equation*}
\Dpazo_{\epsilon}(t) := \max_{i,j \in I_{\epsilon}} |x(t,i) - x(t,j)|
\end{equation*}
for all times $t \geq 0$, and introduce the nonempty subset of pairs of indices $\Pi_{\epsilon}(t) \subset I_{\epsilon} \times I_{\epsilon}$ for which this maximum is reached. 

Observe first that $\Dpazo_{\epsilon}(\cdot)$ is defined as the pointwise maximum of a family of equi-locally Lipschitz mappings, and is therefore a locally Lipschitz function. Hence by Rademacher's theorem (see e.g. \cite[Theorem 3.2]{EvansGariepy}), it is differentiable $\Lcal^1$-almost everywhere, and it holds by Danskin's theorem (see Theorem \ref{thm:Danskin} above) that  
\begin{equation}
\label{eq:DanskinDer}
\tfrac{1}{2} \tderv{}{t} \Dpazo_{\epsilon}(t)^2 = \max_{(i,j) \in \Pi_{\epsilon}(t)} \big\langle \partial_t x(t,i) - \partial_t x(t,j) , x(t,i) - x(t,j) \big\rangle.
\end{equation}
We now fix a pair of indices $(i,j) \in \Pi_{\epsilon}(t)$, and aim at evaluating the right-hand side of \eqref{eq:DanskinDer}. Recalling that $x(\cdot)$ solves \eqref{eq:DiscGraphonDyn}, one has that
\begin{equation}
\label{eq:DiamGraphon1}
\begin{aligned}
& \big\langle \partial_t x(t,i) - \partial_t x(t,j) , x(t,i) - x(t,j) \big\rangle \\
& = \INTDom{a(t,i,k) \phi(|x(t,i) - x(t,k)|) \big\langle x(t,k) - x(t,i) , x(t,i) - x(t,j) \big\rangle}{I}{k}  \\
& \hspace{0.45cm} - \INTDom{a(t,j,k) \phi(|x(t,j) - x(t,k)|) \big\langle x(t,k) - x(t,j) , x(t,i) - x(t,j)  \big\rangle}{I}{k} \\
& = \INTDom{a(t,i,k) \phi(|x(t,i) - x(t,k)|) \big\langle x(t,k) - x(t,i) , x(t,i) - x(t,j) \big\rangle}{I_{\epsilon}}{k}  \\
& \hspace{0.45cm} - \INTDom{a(t,j,k) \phi(|x(t,j) - x(t,k)|) \big\langle x(t,k) - x(t,j) , x(t,i) - x(t,j)  \big\rangle}{I_{\epsilon}}{k} + \epsilon C(t),
\end{aligned}
\end{equation}
where $C(\cdot) \in L^{\infty}_{\loc}(\R_+,\R)$. Upon defining the quantity
\begin{equation*}
\eta_{ij}(t,k) := \min \Big\{ a(t,i,k) \phi(|x(t,i)-x(t,k)|) \, , \, a(t,j,k) \phi(|x(t,j)-x(t,k)|) \Big\}, 
\end{equation*}
for $\Lcal^1$-almost every $t \geq 0$ and $\LcalI$-almost every $k \in I$, one can rewrite \eqref{eq:DiamGraphon1} as 
\begin{equation}
\label{eq:DiamGraphon1Bis}
\begin{aligned}
& \big\langle \partial_t x(t,i) - \partial_t x(t,j) , x(t,i) - x(t,j) \big\rangle \\
& = \INTDom{a(t,i,k) \phi(|x(t,i) - x(t,k)|) \big\langle x(t,k) , x(t,i) - x(t,j) \big\rangle}{I_{\epsilon}}{k} \\
& \hspace{1cm} - \INTDom{a(t,j,k) \phi(|x(t,j) - x(t,k)|) \big\langle x(t,k) , x(t,i) - x(t,j)  \big\rangle}{I_{\epsilon}}{k} \\
& \hspace{1cm} - \Bigg( \INTDom{a(t,i,k) \phi(|x(t,i) - x(t,k)|)}{I_{\epsilon}}{k}  \Bigg) \big\langle x(t,i) , x(t,i) - x(t,j) \big\rangle \\
& \hspace{1cm} + \Bigg( \INTDom{a(t,j,k) \phi(|x(t,j) - x(t,k)|) }{I_{\epsilon}}{k} \Bigg) \big\langle x(t,j) , x(t,i) - x(t,j) \big\rangle + \epsilon C(t) \\
& = \INTDom{ \Big( a(t,i,k) \phi(|x(t,i) - x(t,k)|) - \eta_{ij}(t,k) \Big) \big\langle x(t,k) , x(t,i) - x(t,j) \big\rangle}{I_{\epsilon}}{k} \\
& \hspace{1cm} + \INTDom{ \Big( \eta_{ij}(t,k) - a(t,j,k) \phi(|x(t,j) - x(t,k)|) \Big) \big\langle x(t,k) , x(t,i) - x(t,j) \big\rangle}{I_{\epsilon}}{k} \\
& \hspace{1cm} - \bigg( \INTDom{a(t,i,k) \phi(|x(t,i) - x(t,k)|)}{I_{\epsilon}}{k} \bigg) \langle x(t,i) , x(t,i) - x(t,j) \rangle \\ 
& \hspace{1cm} + \bigg( \INTDom{a(t,j,k) \phi(|x(t,j) - x(t,k)|) }{I_{\epsilon}}{k} \bigg) \langle x(t,j) , x(t,i) - x(t,j) \rangle + \epsilon C(t), 
\end{aligned}
\end{equation}
for $\Lcal^1$-almost every $t \geq 0$. Observe now that by construction of the parameters $\eta_{ij}(t,k)$, the first two terms in the right-hand side of \eqref{eq:DiamGraphon1Bis} can be estimated as follows
\begin{equation}
\label{eq:DiamGraphon2}
\begin{aligned}
& \INTDom{ \Big( a(t,i,k) \phi(|x(t,i) - x(t,k)|) - \eta_{ij}(t,k) \Big) \big\langle x(t,k) , x(t,i) - x(t,j) \big\rangle}{I_{\epsilon}}{k} \\
& \hspace{3.3cm} + \INTDom{ \Big( \eta_{ij}(t,k) - a(t,j,k) \phi(|x(t,j) - x(t,k)|) \Big) \big\langle x(t,k) , x(t,i) - x(t,j) \big\rangle}{I}{k} \\
& \leq  \bigg( \INTDom{ \Big( a(t,i,k) \phi(|x(t,i) - x(t,k)|) - \eta_{ij}(t,k) \Big)}{I_{\epsilon}}{k} \bigg) \max_{k \in I_{\epsilon}} \big\langle x(t,k) , x(t,i) - x(t,j) \big\rangle \\
& \hspace{3.3cm} - \bigg( \INTDom{ \Big( a(t,j,k) \phi(|x(t,j) - x(t,k)|) - \eta_{ij}(t,k) \Big)}{I_{\epsilon}}{k} \bigg) \min_{k \in I_{\epsilon}} \big\langle x(t,k) , x(t,i) - x(t,j) \big\rangle \\
& \leq \bigg( - \INTDom{\eta_{ij}(t,k)}{I_{\epsilon}}{k} \bigg) \max_{k,l \in I_{\epsilon}} \big\langle x(t,k) - x(t,l) , x(t,i) - x(t,j) \big\rangle \\
& \hspace{0.45cm} + \bigg( \INTDom{a(t,i,k) \phi(|x(t,i) - x(t,k)|) }{I_{\epsilon}}{k} \bigg) \max_{k \in I_{\epsilon}} \big\langle x(t,k), x(t,i) - x(t,j) \big\rangle \\
& \hspace{0.45cm} - \bigg( \INTDom{a(t,j,k) \phi(|x(t,j) - x(t,k)|)}{I_{\epsilon}}{k} \bigg) \min_{k \in I_{\epsilon}} \big\langle x(t,k), x(t,i) - x(t,j) \big\rangle \\
& \leq \bigg( - \inf_{i,j \in I} \INTDom{\eta_{ij}(t,k)}{I}{k} + c_{\phi} \epsilon \bigg)  \Dpazo_{\epsilon}(t)^2 \\
& \hspace{0.45cm} + \bigg( \INTDom{a(t,i,k) \phi(|x(t,i) - x(t,k)|)}{I_{\epsilon}}{k} \bigg) \max_{k \in I_{\epsilon}} \big\langle x(t,k), x(t,i) - x(t,j) \big\rangle \\
& \hspace{0.45cm} - \bigg( \INTDom{a(t,j,k) \phi(|x(t,j) - x(t,k)|)}{I_{\epsilon}}{k} \bigg) \min_{k \in I_{\epsilon}} \big\langle x(t,k), x(t,i) - x(t,j) \big\rangle, 
\end{aligned}
\end{equation}
for $\Lcal^1$-almost every $t \geq 0$, where we recall that $c_{\phi} = \sup_{r \in \R_+} \phi(r) < +\infty$ is finite as a consequence of \ref{hyp:GD}-$(i)$. Remark now that by Lemma \ref{lem:ScalarIneq}, it further holds
\begin{equation}
\label{eq:DiamGraphon3}
\begin{aligned}
\langle x(t,j) , x(t,i) - x(t,j) \rangle & = \min_{k \in I_{\epsilon}} \big\langle x(t,k), x(t,i) - x(t,j) \big\rangle \\
& \leq \max_{k \in I_{\epsilon}} \big\langle x(t,k), x(t,i) - x(t,j) \big\rangle = \langle x(t,i) , x(t,i) - x(t,j) \rangle,
\end{aligned}
\end{equation}
for $\Lcal^1$-almost every $t \geq 0$ and $\LcalI$-almost every $k \in I_{\epsilon}$, where we used the fact that $(i,j) \in \Pi_{\epsilon}(t)$. Hence, by plugging  \eqref{eq:DiamGraphon1Bis}, \eqref{eq:DiamGraphon2} and \eqref{eq:DiamGraphon3} into \eqref{eq:DanskinDer} while observing that 
\begin{equation*}
\eta_{ij}(t,k) \geq \gamma_R \min\big\{ a(t,i,k) , a(t,j,k) \big\}, 
\end{equation*}
and also that the former series of estimates hold true for every pair of indices $(i,j) \in \Pi_{\epsilon}(t)$, we recover the differential inequality  
\begin{equation*}
\tfrac{1}{2} \tderv{}{t} \Dpazo_{\epsilon}(t)^2 \leq \Big( -\gamma_R \hspace{0.05cm} \eta(\Apazo(t)) + c_{\phi} \epsilon \Big)  \Dpazo_{\epsilon}(t)^2 + \epsilon C(t), 
\end{equation*}
for $\Lcal^1$-almost every $t \geq 0$. By applying the differential version of Gr\"onwall's lemma, we then deduce 
\begin{equation}
\label{eq:DepsDecay}
\begin{aligned}
\Dpazo_{\epsilon}(t)^2 & \leq \Dpazo_{\epsilon}(0)^2 \exp \bigg( - 2\gamma_R \INTSeg{\eta(\Apazo(s))}{s}{0}{t} + 2 c_{\phi} \epsilon \hspace{0.05cm} t \bigg) \\
& \hspace{0.45cm}+ 2 \epsilon \INTSeg{C(s) \exp \bigg( -2\gamma_R \INTSeg{\eta(\Apazo(\sigma))}{\sigma}{s}{t} - 2 c_{\phi} \epsilon \hspace{0.05cm} (t-s) \bigg)}{s}{0}{t}, 
\end{aligned}
\end{equation}
for all times $t \geq 0$. Recall now that by Lemma \ref{lem:SupConv}, we have the pointwise convergence 
\begin{equation*}
\Dpazo_{\epsilon}(t) ~\underset{\epsilon \rightarrow 0^+}{\longrightarrow}~ \Dpazo(t), 
\end{equation*}
and thus by passing to the limit as $\epsilon \rightarrow 0^+$ in \eqref{eq:DepsDecay}, we recover the desired exponential decay estimate
\begin{equation*}
\Dpazo(t) \leq \Dpazo(0) \exp \bigg( - \gamma_R \INTSeg{\eta(\Apazo(s))}{s}{0}{t} \bigg), 
\end{equation*}
for all times $t \geq 0$, which concludes the proof of Theorem \ref{thm:DiamContraction}.
\end{proof}

\begin{rmk}[Comparing our approach with previous diameter estimates in the literature]
As mentioned above, the existing proofs of diameter contractions for finite-dimensional multi-agent systems involving scrambling coefficient (see e.g. \cite{Motsch2011,Motsch2014}) presuppose that the adjacency matrices $\Ab_N(t)$ of the system are stochastic for all times $t \geq 0$. In practice, this simplifying assumption allows to write meaningful differential estimates on the quantity $t \in \R_+ \mapsto |x_i(t)-x_j(t)| \in \R_+$ for every pair of indices $i,j \in \{1,\dots,N\}$. Hereinabove on the contrary, we only estimate the derivative of this quantity for pairs of indices which realise the (relaxed) diameter, and use the geometric information of Lemma \ref{lem:ScalarIneq} to take care of the terms that were usually simplified using the stochasticity assumption. In this sense, our proof strategy provides an alternative and perhaps more general diameter estimate for multi-agent systems, as it is more natural and valid for both finite- and infinite-dimensional models. 
\end{rmk}

Based on this general diameter estimate, it is possible to establish the formation of $L^{\infty}$-consensus in the case where the scrambling coefficients are persistent in a suitable sense. 

\begin{thm}[Exponential $L^{\infty}$-consensus formation under persistent scrambling]
\label{thm:ConsensusDiam}
Assume that the hypotheses of Theorem \ref{thm:DiamContraction} are satisfied, and suppose that there exists a pair of coefficients $(\tau,\mu) \in \R_+^* \times (0,1]$ such that the  \textnormal{persistence condition}
\begin{equation}
\label{eq:PersistenceScramb}
\frac{1}{\tau} \INTSeg{\eta(\Apazo(s))}{s}{t}{t+\tau} \geq \mu, 
\end{equation}
holds for all times $t \geq 0$. 

Then, there exists an element $x^{\infty} \in \Cpazo(x^0)$ and constants $\alpha,\gamma > 0$ depending only on $(\phi(\cdot),R,\tau,\mu)$ such that
\begin{equation}
\label{eq:ThmConsensusLinfty}
\NormL{x(t)- x^{\infty}}{\infty}{I,\R^d}  \leq \alpha \Dpazo(0) \exp \big( \hspace{-0.1cm} -\gamma \mu \hspace{0.01cm} t \big), 
\end{equation}
for all times $t \geq 0$. In particular, the solutions of \eqref{eq:DiscGraphonDyn} exponentially converge to consensus in the $L^{\infty}(I,\R^d)$-norm topology. 
\end{thm}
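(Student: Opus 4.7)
The plan is to combine the contraction estimate of Theorem \ref{thm:DiamContraction} with the persistence hypothesis to obtain genuine exponential decay of the diameter, and then to use this decay together with the ODE structure to prove that $x(t)$ is Cauchy in the $L^\infty$-topology. The limit point must then be shown to be constant (i.e.\ a true consensus) and to lie in the closed convex hull $\Cpazo(x^0)$.

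First, I would bootstrap the persistence inequality \eqref{eq:PersistenceScramb} into an unconditional lower bound on the integral $\INTSeg{\eta(\Apazo(s))}{s}{0}{t}$. Writing $t = k\tau + r$ with $k \in \N$ and $r \in [0,\tau)$, summing the estimate \eqref{eq:PersistenceScramb} over the $k$ consecutive windows $[j\tau,(j+1)\tau]$ and using the nonnegativity of $\eta(\Apazo(\cdot))$ on the remainder gives
\begin{equation*}
\INTSeg{\eta(\Apazo(s))}{s}{0}{t} \geq k \mu \tau \geq \mu (t - \tau).
\end{equation*}
Plugging this into Theorem \ref{thm:DiamContraction} yields $\Dpazo(t) \leq e^{\gamma_R \mu \tau}\Dpazo(0) \exp(-\gamma_R \mu \hspace{0.025cm} t)$, so the diameter decays exponentially at rate $\gamma_R \mu$.

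Next, to construct $x^\infty$, I would exploit the fact that along a solution of \eqref{eq:GraphonDynamics} one has the pointwise estimate
\begin{equation*}
|\partial_t x(t,i)| \leq c_\phi \hspace{0.05cm} \Dpazo(t),
\end{equation*}
valid for $\Lcal^1$-a.e.\ $t \geq 0$ and $\LcalI$-a.e.\ $i \in I$, where $c_\phi = \sup_{r \in \R_+} \phi(r)$. Integrating this between $s$ and $t$ and inserting the exponential diameter bound shows that the family $\{x(t,\cdot)\}_{t \geq 0}$ is Cauchy in $L^\infty(I,\R^d)$, with a quantitative estimate of the form $\NormL{x(t) - x(s)}{\infty}{I,\R^d} \leq \tfrac{c_\phi e^{\gamma_R \mu \tau}}{\gamma_R \mu}\hspace{0.025cm}\Dpazo(0)\hspace{0.025cm} e^{-\gamma_R \mu \hspace{0.025cm} s}$ for $t \geq s$. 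Consequently the limit $x_\star := \lim_{t \to +\infty} x(t,\cdot)$ exists in $L^\infty(I,\R^d)$. Moreover since $|x_\star(i) - x_\star(j)| \leq \lim_{t\to +\infty}\Dpazo(t) = 0$, the function $x_\star$ is essentially constant, giving a single limit vector $x^\infty \in \R^d$.

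Finally, sending $s \to +\infty$ in the Cauchy estimate (with $t$ fixed, or equivalently letting $t \to +\infty$ in the reverse direction) yields directly
\begin{equation*}
\NormL{x(t) - x^\infty}{\infty}{I,\R^d} \leq \tfrac{c_\phi e^{\gamma_R \mu \tau}}{\gamma_R \mu}\hspace{0.025cm} \Dpazo(0)\hspace{0.025cm} e^{-\gamma_R \mu \hspace{0.025cm} t},
\end{equation*}
which is \eqref{eq:ThmConsensusLinfty} with appropriate constants $\alpha,\gamma>0$. The membership $x^\infty \in \Cpazo(x^0)$ follows from Proposition \ref{prop:ConvexHull}: for each $t \geq 0$ and $\LcalI$-a.e.\ $i \in I$, $x(t,i)$ lies in the closed set $\Cpazo(x^0)$, and this closed property passes to the $L^\infty$-limit. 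I do not expect a genuine obstacle here; the only subtlety is to carry out the Cauchy argument uniformly in $i$, which is why the diameter-based $L^\infty$ bound on $\partial_t x(t,\cdot)$ (not merely an integrated $L^2$ bound) is essential.
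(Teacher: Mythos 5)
Your proposal is correct and follows essentially the same route as the paper: combine the contraction estimate of Theorem~\ref{thm:DiamContraction} with the telescoping bound $\INTSeg{\eta(\Apazo(s))}{s}{0}{t} \geq \mu(t-\tau)$ from \eqref{eq:PersistenceScramb}, observe that $|\partial_t x(t,i)| \leq c_\phi \Dpazo(t)$ integrates to a Cauchy-in-time estimate that is uniform in $i$, deduce existence of the limit $x^\infty$, then use the vanishing of $\Dpazo(t)$ to show it is constant and Proposition~\ref{prop:ConvexHull} to place it in $\Cpazo(x^0)$. The only (cosmetic) divergence is in the final quantitative bound: you read it off from the tail of the Cauchy estimate, producing the constant $\alpha = c_\phi e^{\gamma_R\mu\tau}/(\gamma_R\mu)$, whereas the paper notes that $x^\infty$ belongs to the closed convex hull of $\rg(x(t))$ for every $t$ (by applying Proposition~\ref{prop:ConvexHull} with $x(t)$ as initial datum and passing to the limit), so that $|x(t,i)-x^\infty| \leq \Dpazo(t)$ directly and $\alpha = e^{\gamma_R\mu\tau}$; both give the same decay rate $\gamma = \gamma_R$ and both are valid. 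Your phrase ``sending $s\to+\infty$ with $t$ fixed'' should read ``keeping the lower time $t$ fixed and sending the upper time to $+\infty$,'' since the Cauchy bound is stated for $t \geq s$; the parenthetical shows you meant the right thing.
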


\begin{proof}
Start by fixing an element $i \in I$, and observe that for every $0 \leq t_1 \leq t_2 < +\infty$, one has that
\begin{equation}
\label{eq:IneqPersistence1}
\begin{aligned}
|x(t_2,i) - x(t_1,i)| & \leq \INTSeg{\INTDom{a(s,i,j) \phi(|x(s,i)-x(s,j)|)|x(s,j)-x(s,i)|}{I}{j}}{s}{t_1}{t_2} \\
& \leq c_{\phi} \INTSeg{\Dpazo(s)}{s}{t_1}{t_2} \\
& \leq c_{\phi} \INTSeg{\Dpazo(0) \exp \bigg( - \gamma_R \INTSeg{\eta(\Apazo(\sigma))}{\sigma}{0}{s} \bigg)}{s}{t_1}{+\infty}, 
\end{aligned}
\end{equation}
where we recall that $c_{\phi} = \sup_{r \in \R_+} \phi(r) < +\infty$. Then, as a consequence of the persistence condition \eqref{eq:PersistenceScramb}, it further holds that
\begin{equation}
\label{eq:IneqPersistence2}
\begin{aligned}
\INTSeg{\eta(\Apazo(\sigma))}{\sigma}{0}{s} & \geq \INTSeg{\eta(\Apazo(\sigma))}{\sigma}{0}{\lfloor s / \tau \rfloor \tau} \\
& \geq \sum_{k=0}^{\lfloor s / \tau \rfloor -1} \INTSeg{\eta(\Apazo(\sigma))}{\sigma}{k \tau}{(k+1) \tau} ~\geq~ \mu (s-\tau), 
\end{aligned}
\end{equation}
for all times $s \in [t_1,t_2]$, where $\lfloor \bullet \rfloor$ denotes the lower integer part of a real number. Thus, by plugging \eqref{eq:IneqPersistence2} into \eqref{eq:IneqPersistence1}, we obtain
\begin{equation}
\label{eq:IneqPersistence3}
\begin{aligned}
|x(t_2,i) - x(t_1,i)| & \leq c_{\phi} \INTSeg{\Dpazo(0) \exp \Big( - \gamma_R \mu (s-\tau) \Big)}{s}{t_1}{+\infty} ~\underset{t_1,t_2 \rightarrow +\infty}{\longrightarrow}~ 0, 
\end{aligned}
\end{equation}
which yields the existence of a map $x^{\infty} \in L^{\infty}(I,\R^d)$ such that $x(t,i) \rightarrow x^{\infty}(i)$ as $t \rightarrow +\infty$ for $\LcalI$-almost every $i \in I$. Upon noticing that the convergence in \eqref{eq:IneqPersistence3} is uniform with respect to $i \in I$, we further have 
\begin{equation}
\label{eq:ConsensusFormationProof}
\NormL{x(t) - x^{\infty}}{\infty}{I,\R^d} ~\underset{t \rightarrow +\infty}{\longrightarrow}~ 0.
\end{equation}
We now choose any pair of indices $i,j \in I$, and remark that for all times $t \geq 0$, it holds
\begin{equation*}
|x^{\infty}(i) - x^{\infty}(j)| \leq |x^{\infty}(i) - x(t,i)| + |x(t,j) - x^{\infty}(j)| + \Dpazo(t).
\end{equation*}
By combining the diameter contraction of Theorem \ref{thm:DiamContraction} together with the convergence results of  \eqref{eq:IneqPersistence3} and \eqref{eq:ConsensusFormationProof}, we obtain that $x^{\infty}(i) = x^{\infty}(j)$ for $\LcalI$-almost every $i,j \in I$. Therefore, the map $x^{\infty}$ is in fact constant, and by Proposition \ref{prop:ConvexHull} it also belongs to the initial convex hull $\Cpazo(x^0)$ of the system.

Observing that $x^{\infty} \in \Cpazo(x^0)$, and using the fact that the diameter of a set coincides with that of its convex hull together with the series of estimates \eqref{eq:IneqPersistence2}, we finally get 
\begin{equation*}
|x(t,i) - x^{\infty}| \leq \Dpazo(t) \leq \Dpazo(0) \exp \Big( - \gamma_R \mu (t - \tau) \Big),  
\end{equation*}
for all times $t \geq 0$ and $\LcalI$-almost every $i \in I$. Taking the essential supremum over $I$ in the previous expression then yields the exponential contraction estimate \eqref{eq:ThmConsensusLinfty} with $\alpha := \exp(\gamma_R \, \mu \tau)$ and $\gamma := \gamma_R$. 
\end{proof}

\begin{rmk}[On the nature of the asymptotic consensus point]
Observe that, in general, the emerging consensus $x^{\infty} \in \Cpazo(x^0)$ -- whose existence follows from Theorem \ref{thm:ConsensusDiam} -- does not necessarily coincide with the initial barycenter $\bar{x}^0 := \INTDom{x^0(i)}{I}{i}$ of the system. This is due to the fact that the latter may not be an intrinsic constant of motion for asymmetric topologies (see also Section \ref{subsubsection:StronglyConnected} below). 
\end{rmk}


\section{Consensus formation via energy estimates}
\label{section:ConsensusL2}
\setcounter{equation}{0} \renewcommand{\theequation}{\thesection.\arabic{equation}}

In this section, we investigate another family of sufficient conditions ensuring the asymptotic formation of $L^2$-consensus for time-dependent graphon models. We first start in Section \ref{subsection:Algebraic} by a general discussion on the notion of \textit{algebraic connectivity}, along with a generalisation of this classical object to graphon models. We then study in Section \ref{subsection:L2Consensus} the exponential convergence to consensus for topologies that are symmetric, balanced, and made of a disjoint union of strongly connected components.


\subsection{Generalised algebraic connectivity for graphon models}
\label{subsection:Algebraic}

In this section, we start by a discussion on the notion of algebraic connectivity for finite-dimensional collective dynamics, and then propose a natural generalisation for graphon models. 

\paragraph*{Graph-Laplacians in the finite-dimensional setting.} 

It is widely known (see e.g. \cite{Moreau2005,Olfati2004} along with the monographs \cite{BeardRen,Egerstedt2010}) that in order to study the formation of consensus with respect to the $\ell^2$-norm for discrete systems of the form \eqref{eq:DiscreteDynamics}, it is often insightful to introduce the \textit{graph-Laplacian operator} $\Lb_N : \R_+ \times (\R^d)^N \rightarrow \Lpazo((\R^d)^N)$ associated with the interaction topology, defined by 
\begin{equation}
\big( \Lb_N(t,\xb) \yb \big)_i := \frac{1}{N} \sum_{j=1}^N a_{ij}(t) \phi(|x_i - x_j|) (y_i - y_j) \qquad \text{for each $i \in \{1,\dots,N\}$}, 
\end{equation}
for $\Lcal^1$-almost every $t \in \R_+$ and all $\xb,\yb \in (\R^d)^N$. This permits to rewrite \eqref{eq:DiscreteDynamics} as the semilinear evolution equation
\begin{equation}
\label{eq:DiscreteLapDyn}
\dot \xb(t) = - \Lb_N(t,\xb(t)) \xb(t), \qquad \xb(0) = \xb^0, 
\end{equation}
posed in the configuration space $(\R^d)^N$, and to study the convergence to consensus in terms of the eigenvalues of the matrices $\Lb_N(t,\xb(t))$ using techniques of stability theory (see e.g. \cite{Narendra1989}). A first key observation about graph-Laplacians is that their kernel is never trivial, as it can be checked that $\Lb_N(t,\xb) \yb = 0$ whenever $\yb \in (\R^d)^N$ belongs to the so-called \textit{consensus manifold}, that is defined by 
\begin{equation}
\label{eq:ConsensusManifoldDisc}
\Ccal_N := \Big\{ \xb \in (\R^d)^N ~\text{s.t.}~ x_1 = \dots = x_N \Big\}.
\end{equation}
This implies in particular that $0$ is always a nontrivial eigenvalue of $\Lb_N(t,\xb)$ with multiplicity at least $d \geq 1$. Because the consensus manifold is also the set of equilibrium points of the dynamics \eqref{eq:DiscreteDynamics}, the convergence of a solution $\xb(\cdot) \in \Lip_{\loc}(\R_+,(\R^d)^N)$ towards consensus can be quantified in terms of the $(d+1)$-th smallest eigenvalue of the matrix $\Lb_N(t,\xb(t))$, counted with multiplicity.


\paragraph*{Various notions of algebraic connectivity for discrete systems.} 

The notion of \textit{algebraic connectivity} was originally introduced in the seminal work \cite{Fiedler1973} for symmetric adjacency matrices $\Ab_N := (a_{ij})_{1 \leq i,j \leq N} \in [0,1]^{N \times N}$ -- namely when $a_{ij} = a_{ji}$ for every pair of indices $i,j \in \{1,\dots,N\}$ --, and defined as the $(d+1)$-th smallest eigenvalue of the corresponding graph-Laplacian operator $\Lb_N \in \Lpazo((\R^d)^N)$, counted with multiplicity. By the Courant-Fischer min-max theorem (see e.g. \cite[Theorem 4.2.12]{Horn1985}), the latter could be alternatively defined as
\begin{equation}
\label{eq:AlgebraicConnectivity1}
\lambda_2(\Lb_N) := \min_{\xb \in \Ccal_N^{\perp}} \frac{\langle \Lb_N \xb , \xb \rangle}{|\xb|^2} \geq 0, 
\end{equation}
where the orthogonal complement of the consensus manifold can be characterised explicitly by 
\begin{equation}
\label{eq:OrthogonalConsensusDis}
\Ccal_N^{\perp} = \Big\{ \xb \in (\R^d)^N ~\text{s.t.}~ \bar{\xb} = 0 \Big\}.
\end{equation}
Here, $\bar{\xb} := \tfrac{1}{N} \sum_{i=1}^N x_i \in \R^d$ stands for the mean value of an element $\xb \in (\R^d)^N$. It has since been a classical result in algebraic graph theory that $\lambda_2(\Lb_N) > 0$ if and only if the underlying undirected graph is \textit{connected} in the sense of Definition \ref{def:Connected} above, and that the multiplicity of the zero eigenvalue of $\Lb_N$ is equal to the product between the number of connected components in the topology and the dimension $d$ of the state space (see e.g. the reference paper \cite{Mohar1991}).

When the interaction topology of the system is asymmetric, it is still possible to adapt the notion of algebraic connectivity in a meaningful way. In \cite{Wu2005}, it was shown that the definition given in \eqref{eq:AlgebraicConnectivity1} still makes sense in the class of \textit{balanced topologies}, which are characterised by the fact that 
\begin{equation*}
\frac{1}{N} \sum_{j=1}^N a_{ij} = \frac{1}{N} \sum_{j=1}^N a_{ji} 
\end{equation*}
for each $i \in \{1,\dots,N\}$. Indeed, it directly follows in this context that $\Lb_N^{\top} \xb = 0$ for each $\xb \in \Ccal_N$, and that the algebraic connectivity of the interaction topology is positive if and only if the interaction graph is strongly connected (see e.g. \cite[Lemma 17]{Wu2005}). We point the reader to Figure \ref{fig:Balanced} for some examples of balanced topologies. 

\begin{figure}[!ht]
\centering
\begin{tikzpicture}
\draw (1,-1) node[shape=circle, draw] {$1$}; 
\draw (2,0) node[shape=circle, draw] {$2$};
\draw (2,-2) node[shape=circle, draw] {$3$};
\draw (0,0) node[shape=circle, draw] {$5$};
\draw (0,-2) node[shape=circle, draw] {$4$};
\draw[<-, line width = 0.75, blue] (0.75,-0.75)--(0.25,-0.25);
\draw[<-, line width = 0.75, blue] (0,-0.35)--(0,-1.65); 
\draw[->, line width = 0.75, blue] (1.25,-0.75)--(1.75,-0.25);  
\draw[->, line width = 0.75, blue] (2,-0.35)--(2,-1.65); 
\draw[->, line width = 0.75, blue] (1.65,-2)--(0.35,-2);
\draw (1.05,-4) node {\small $
\begin{pmatrix} 
1 & 1 & 0 & 0 & 0 \\ 
0 & 1 & 1 & 0 & 0 \\
0 & 0 & 1 & 1 & 0 \\
0 & 0 & 0 & 1 & 1 \\
1 & 0 & 0 & 0 & 1 \\
\end{pmatrix}$};
\begin{scope}[xshift = 6cm]
\draw (1,-1) node[shape=circle, draw] {$1$}; 
\draw (2,0) node[shape=circle, draw] {$2$};
\draw (2,-2) node[shape=circle, draw] {$3$};
\draw (0,0) node[shape=circle, draw] {$5$};
\draw (0,-2) node[shape=circle, draw] {$4$};
\draw[->, line width = 0.75, blue] (0.75,-0.75)--(0.25,-0.25);
\draw[->, line width = 0.75, blue] (0,-0.35)--(0,-1.65); 
\draw[->, line width = 0.75, blue] (0.25,-1.75)--(0.75,-1.25);
\draw[->, line width = 0.75, blue] (1.25,-0.75)--(1.75,-0.25);  
\draw[->, line width = 0.75, blue] (2,-0.35)--(2,-1.65); 
\draw[->, line width = 0.75, blue] (1.75,-1.75)--(1.25,-1.25);
\draw (1.05,-4) node {\small $
\begin{pmatrix} 
1 & 1 & 0 & 0 & 1 \\ 
0 & 1 & 1 & 0 & 0 \\
1 & 0 & 1 & 0 & 0 \\
1 & 0 & 0 & 1 & 0 \\
0 & 0 & 0 & 1 & 1 \\
\end{pmatrix}$};
\end{scope}
\begin{scope}[xshift = 12cm]
\draw (1,-1) node[shape=circle, draw] {$1$}; 
\draw (2,0) node[shape=circle, draw] {$2$};
\draw (2,-2) node[shape=circle, draw] {$3$};
\draw (0,0) node[shape=circle, draw] {$5$};
\draw (0,-2) node[shape=circle, draw] {$4$};
\draw[<->, line width = 0.75, blue] (0,-0.35)--(0,-1.65); 
\draw[->, line width = 0.75, blue] (2,-0.35)--(2,-1.65); 
\draw[->, line width = 0.75, blue] (1.25,-0.75)--(1.75,-0.25);  
\draw[->, line width = 0.75, blue] (1.75,-1.75)--(1.25,-1.25);
\draw (1.05,-4) node {\small $
\begin{pmatrix} 
1 & 1 & 0 & 0 & 0 \\ 
0 & 1 & 1 & 0 & 0 \\
1 & 0 & 1 & 0 & 0 \\
0 & 0 & 0 & 1 & 1 \\
0 & 0 & 0 & 1 & 1 \\
\end{pmatrix}$};
\end{scope}
\end{tikzpicture}
\caption{{\small \textit{Three examples of balanced topologies for graphs with $N = 5$ vertices. The graphs on the left and in the center are strongly connected, while that on the right is a disjoint union of two strongly connected components. The corresponding adjacency matrices are represented right below each graph.}}}
\label{fig:Balanced}
\end{figure}
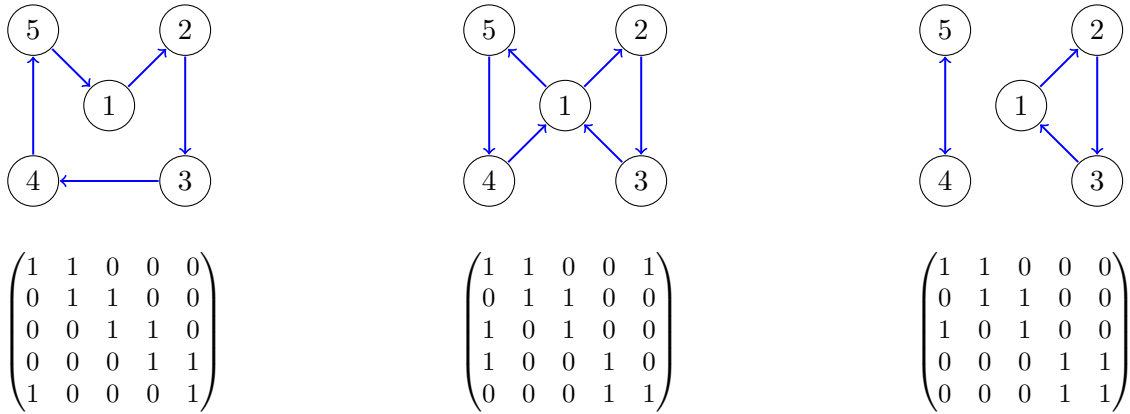

In the subsequent work \cite{Wu2005bis}, it was finally shown that the notion of algebraic connectivity could be further generalised to interaction topologies that can be written as \textit{disjoint unions of strongly connected components}, in the sense of Definition \ref{def:Connected} above. In this context, the definition of algebraic connectivity relies on the following structural result of Perron-Frobenius theory.

\begin{thm}[Algebraic characterisation of strong connectivity]
\label{thm:Perron}
Let $\Ab_N := (a_{ij})_{1 \leq i,j \leq N} \in [0,1]^{N \times N}$ be an adjacency matrix and $\Lb_N \in \Lpazo((\R^d)^N)$ be the corresponding graph-Laplacian operator. Then, the underlying interaction graph is a disjoint union of strongly connected components \textnormal{if and only if} there exists an element $v := (v_i)_{1 \leq i \leq N} \in (\R_+^*)^N$ such that
\begin{equation}
\Lb_N^{\top} \vb = 0 \qquad \text{and} \qquad \frac{1}{N} \sum_{i=1}^N v_i = 1, 
\end{equation} 
where $\vb := (v_1,\dots,v_1,\dots,v_N,\dots,v_N) \in (\R^d)^N$. 
\end{thm}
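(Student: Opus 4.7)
The plan is to reduce the assertion to a scalar left null-space question for the graph-Laplacian matrix $L_N \in \R^{N \times N}$ associated to $\Ab_N$, then to invoke Perron-Frobenius in one direction and finite-state Markov chain theory in the other. Since $\Lb_N \in \Lpazo((\R^d)^N)$ acts block-diagonally on the $d$ coordinates and the vector $\vb$ is built by replicating each scalar $v_i$ exactly $d$ times, the condition $\Lb_N^{\top}\vb = 0$ reduces to $L_N^{\top} v = 0$ for $v = (v_1,\dots,v_N)$, which a direct expansion of the entries of $L_N$ rewrites explicitly as
\begin{equation}\label{eq:dualLap}
v_i \sum_{k\neq i} a_{ik} = \sum_{j\neq i} a_{ji} v_j \qquad \text{for all } i \in \{1,\dots,N\}.
\end{equation}

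For the direct implication, I would first relabel the vertices so that $\Ab_N$, and hence $L_N$, becomes block-diagonal with blocks $L^{(1)},\dots,L^{(K)}$ associated to the strongly connected components $G_1,\dots,G_K$. For each $k$ I would choose $c>0$ large enough so that $M^{(k)} := c\, \Id - L^{(k)}$ is entry-wise non-negative, and observe that strong connectivity of $G_k$ makes $M^{(k)}$ irreducible. The Perron-Frobenius theorem applied to $M^{(k)}$ then produces a strictly positive left eigenvector associated to its spectral radius, and since $L^{(k)}\mathbf{1}=0$ identifies this spectral radius with $c$, that eigenvector is in fact a strictly positive left null vector $v^{(k)}$ of $L^{(k)}$. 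Extending each $v^{(k)}$ by zero outside $G_k$, summing, and normalising so that the mean value equals $1$ yields the desired $v \in (\R_+^*)^N$.

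For the converse implication, I would assume the existence of $v \in (\R_+^*)^N$ satisfying \eqref{eq:dualLap} and extract the structural consequences via a stochastic-matrix reformulation. Any vertex $i$ with $\sum_{k\neq i} a_{ik} = 0$ must, by positivity of $v$ combined with \eqref{eq:dualLap}, also satisfy $a_{ji}=0$ for all $j\neq i$, so such a vertex is isolated and forms a strongly connected component on its own. On the remaining vertices, the entries $\tilde P_{ij} := a_{ij}/\sum_{k\neq i} a_{ik}$ for $i \neq j$ and $\tilde P_{ii}:=0$ define a row-stochastic transition matrix, and \eqref{eq:dualLap} translates into $\pi_i := v_i \sum_{k\neq i} a_{ik}$ being a strictly positive stationary distribution of $\tilde P$. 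The classical decomposition of finite-state Markov chains then forces every state of $\tilde P$ to be positive recurrent, which in turn means that the digraph of $\tilde P$ --- and therefore the off-diagonal support of $\Ab_N$ --- is a disjoint union of closed communicating classes, i.e., of strongly connected components.

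The main obstacle is the converse direction, specifically the step that turns positivity of the stationary distribution into the non-existence of transient states and of edges between distinct recurrent classes: one must argue that any edge escaping a strongly connected class would prevent the class from being closed, and thus force $\pi$ to vanish at some node, contradicting $v > 0$. The forward direction is comparatively routine, the main point of care being to verify the irreducibility of $M^{(k)}$ inside each strongly connected component, so that Perron-Frobenius delivers a \emph{strictly} positive left eigenvector rather than a merely non-negative one.
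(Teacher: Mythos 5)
Your proposal is correct and complete, and it takes a different route from the paper's: the paper disposes of the $d=1$ case by citing \cite[Lemma 1]{Wu2005bis} and then only records the trivial lift to $d \geq 1$ by replicating the scalar $v_i$ across the $d$ coordinates, whereas you give a self-contained two-directional argument. Your forward direction (block-diagonalise along the strongly connected components, shift each block $L^{(k)}$ by $c\,\textnormal{Id}$ to land in the nonnegative irreducible regime, use Perron--Frobenius to extract a strictly positive left eigenvector, and observe that $L^{(k)}\mathbf{1}=0$ forces the corresponding eigenvalue to be the spectral radius $c$) is exactly the standard route and is carried out correctly, including the case of isolated singleton blocks. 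Your converse direction is the more delicate one, and you handle it cleanly: the dual equation $v_i\sum_{k\neq i}a_{ik}=\sum_{j\neq i}a_{ji}v_j$ combined with $v>0$ forces any node with vanishing out-degree to also have vanishing in-degree, so such nodes are fully isolated singleton components; on the remaining nodes the normalised matrix $\tilde{P}$ is row-stochastic (since, as your isolated-node analysis shows, no edges can point into an isolated node), $\pi_i := v_i\sum_{k\neq i}a_{ik}$ is a strictly positive stationary distribution, positivity of $\pi$ rules out transient states in the finite chain, and the remaining state space therefore decomposes into closed communicating classes, i.e.\ strongly connected components with no escaping edges. The one point worth making explicit is the exact mechanism you flagged as the main obstacle: an edge from a recurrent class to its exterior would make that class non-closed, hence its states non-recurrent, hence $\pi$ would vanish there by the ergodic theorem for finite chains — contradicting $v>0$; your write-up gestures at this correctly, and spelling it out would close the argument fully. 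Compared to the paper's citation, your proof makes the role of Perron--Frobenius in one direction and of Markov-chain recurrence theory in the other completely transparent, which is arguably more informative to the reader.
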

 
\begin{proof}
The proof of this result for $d=1$ can be found e.g. in \cite[Lemma 1]{Wu2005bis}. Then for an arbitrary state-space dimensions $d \geq 1$, it is sufficient to define $\vb \in (\R^d)^N$ as indicated above. 
\end{proof}

\begin{rmk}[$1$-dimensional versus $d$-dimensional linear systems]
When the right-hand side of a multi-agent system is linear, there is no coupling between the dynamics of the components $(x_i^k)_{1 \leq k \leq d}$ of the agents for $i \in \{1,\dots,N\}$. This explains why the canonical eigenvector $\vb \in (\R^d)^N$ for the $d$-dimensional system is simply obtained in this context by concatenating $d$ copies of that generated by the action of $\Lb_N$ on a $1$-dimensional space. 
\end{rmk}

Assuming that the interaction topology described by the adjacency matrix $\Ab_N$ is a disjoint union of strongly connected components, we consider the following weighted version of the graph-Laplacian
\begin{equation*}
\Lb_N^{\vb} := \textnormal{diag}(\vb) \Lb_N, 
\end{equation*}
where $\textnormal{diag}(\vb) \in \Lpazo((\R^d)^N)$ is the diagonal matrix whose entries are the components of $\vb \in (\R^d)^N$. Then, the algebraic connectivity of the underlying topology is defined by
\begin{equation}
\label{eq:AlgebraicConnectivity2}
\lambda_2(\Lb_N^{\vb}) = \min_{\xb \in \Ccal_N^{\perp}} \frac{\langle \Lb_N^{\vb} \xb , \xb \rangle}{|\xb|^2} \geq 0, 
\end{equation}
and it can be shown that this definition carries with it all the interesting properties of $\lambda_2(\cdot)$ (see \cite{Wu2005ter,Wu2005bis}). In particular, it is shown in  \cite[Section 4]{Wu2005bis} that in this context, $\lambda_2(\Lb_N^{\vb}) > 0$ if and only if the interaction graph is strongly connected. It is also worth noting that in the particular case of balanced topologies, the canonical vector given by Theorem \ref{thm:Perron} is simply $\vb := (1,\dots,1) \in (\R^d)^N$, so that the definitions of \eqref{eq:AlgebraicConnectivity1} and \eqref{eq:AlgebraicConnectivity2} coincide.    

\begin{rmk}[Comparison between the algebraic connectivity and the scrambling coefficient]
Given a symmetric adjacency matrix $\Ab_N \in [0,1]^{N \times N}$, it is a known fact that $\lambda_2(\Lb_N) \geq \eta(\Ab_N)$, namely that the scrambling coefficient provides a lower-bound on the algebraic connectivity of the corresponding interaction topology (see e.g. \cite[Section 2.2, Example 1]{Motsch2014}). This, however, is no longer true when the adjacency matrix is asymmetric. Indeed, the balanced interaction topologies displayed at the left and center positions in Figure \ref{fig:Balanced} are such that $\eta(\Ab_N) =0$, as in both situations the vertices $\{2,5\}$ are neither connected directly to one another nor to a common third party vertex, yet it can be checked fairly easily that $\lambda_2(\Lb_N) > 0$ for both of these graphs. Conversely, the interaction topology exposed at the left of Figure \ref{fig:Scrambling} is such that $\eta(\Ab_N) > 0$, but it does not admit a well-defined algebraic connectivity. 
\end{rmk}


\paragraph*{Infinite-dimensional graph-Laplacians.} 

Similarly to what was done above for finite-dimensional cooperative systems, the graphon dynamics \eqref{eq:GraphonDynamics} can be reformulated as an ordinary differential equation in the configuration space $L^2(I,\R^d)$. To this end, we introduce the \textit{graph-Laplacian operator} $\Lbb : \R_+ \times L^2(I,\R^d) \rightarrow \Lpazo(L^2(I,\R^d))$, defined by
\begin{equation}
\label{eq:GraphLap}
\big( \Lbb(t,x)y \big)(i) := \INTDom{a(t,i,j) \phi(|x(i) - x(j)|) (y(i) - y(j))}{I}{j} \qquad \text{for $\LcalI$-almost every $i \in I$},  	
\end{equation}
for $\Lcal^1$-almost every $t \geq 0$ and every $x,y \in L^2(I,\R^d)$. In keeping with the notions introduced in Section \ref{subsection:Spectral}, observe that $\Lbb(t,x)$ can be recast as the difference between the multiplication operator $\Mpazo_d(t,x) \in \Lpazo(L^2(I,\R^d))$ by the so-called \textit{in-degree function} $d(t,x) \in L^{\infty}(I,[0,1])$, defined by 
\begin{equation*}
(\Mpazo_d(t,x) y)(i) :=  \bigg( \INTDom{a(t,i,j) \phi(|x(i) - x(j)|)}{I}{j} \bigg) y(i) \qquad \text{for $\LcalI$-almost every $i \in I$},
\end{equation*}
to which one subtracts a kernel-type adjacency operator $\Apazo(t,x) \in \Lpazo(L^2(I,\R^d))$ that writes 
\begin{equation*}
\big( \Apazo(t,x) y \big)(i) := \INTDom{a(t,i,j) \phi(|x(i) - x(j)|) x(j)}{I}{j} \qquad \text{for $\LcalI$-almost every $i \in I$}, 
\end{equation*}
where both expressions are understood for $\Lcal^1$-almost every $t \geq 0$ and each $x,y \in L^2(I,\R^d)$. This allows us to rewrite the graphon counterpart of the dynamics \eqref{eq:DiscreteLapDyn} as the semilinear Cauchy problem 
\begin{equation}
\label{eq:GraphonLapDyn}
\dot x(t) = - \Lbb(t,x(t))x(t), \qquad x(0) = x^0,  
\end{equation}
formulated in the Hilbert space $L^2(I,\R^d)$. In the sequel, we will sometimes consider the partial graph-Laplacian operators $\Lbb^a(\cdot)$ generated solely by the kernel $a \in L^{\infty}(\R_+ \times I \times I,[0,1])$, defined by 
\begin{equation}
\label{eq:PartialGraphLap}
(\Lbb^a(t) y)(i) := \INTDom{a(t,i,j)(y(i) - y(j))}{I}{j} \qquad \text{for $\LcalI$-almost every $i \in I$}
\end{equation}
for $\Lcal^1$-almost every $t \geq 0$ and each $y \in L^2(I,\R^d)$.


\paragraph*{Generalised algebraic connectivity for graphon models.}

Just as finite-dimensional graph-Laplacians always admit a nontrivial kernel, their infinite-dimensional counterparts introduced in \eqref{eq:GraphLap} also vanish on the consensus manifold, which in the context of graphon dynamics is defined by 
\begin{equation}
\label{eq:ConsensusPerp}
\Ccal := \Big\{ x \in L^2(I,\R^d) ~\text{s.t. $x$ is constant over $I$} \Big\}.
\end{equation}
In other words, the kernel of a graph-Laplacian operator is a nontrivial subspace of dimension at least $d \geq 1$. Similarly to the finite-dimensional situation, the configuration space $L^2(I,\R^d)$ admits a canonical decomposition as the orthogonal sum $\Ccal \oplus \Ccal^{\perp}$. In this context, one also has the characterisation
\begin{equation}
\label{eq:ConsensusInfPerp}
\Ccal^{\perp} = \Big\{ x \in L^2(I,\R^d) ~\text{s.t.}~ \bar{x} = 0 \Big\}, 
\end{equation}
where $\bar{x} := \INTDom{x(i)}{I}{i} \in \R^d$ denotes the \textit{barycenter} of an element $x \in L^2(I,\R^d)$. We start our discussion on algebraic connectivities for graphon models by stating in the following definition a natural generalisation of the notion of balanced topology. 

\begin{Def}[Balanced interaction topologies]
\label{def:Balanced}
The interaction topology described by an adjacency operator $\Apazo \in \Lpazo(L^2(I,\R^d))$ is said to be \textnormal{balanced} provided that
\begin{equation*}
\INTDom{a(i,j)}{I}{i} = \INTDom{a(j,i)}{I}{i}, 
\end{equation*}
for $\LcalI$-almost every $i \in I$, or equivalently if the corresponding graph-Laplacian $\Lbb \in \Lpazo(L^2(I,\R^d))$ satisfies $\Lbb^* x = 0$ for each $x \in \Ccal$.
\end{Def}

Given a general adjacency operator $\Apazo \in \Lpazo(L^2(I,\R^d))$ with kernel $a \in L^{\infty}(I \times I,[0,1])$, we define the corresponding in-degree function as 
\begin{equation}
\label{eq:InDegreeDef}
d(i) := \INTDom{a(i,j)}{I}{j},
\end{equation}
for $\LcalI$-almost every $i \in I$, and denote by $\Mpazo_d \in \Lpazo(L^2(I,\R^d))$ the multiplication operator by $d \in L^{\infty}(I,[0,1])$. Then, the graph-Laplacian of the topology simply writes as 
\begin{equation}
\label{eq:GraphLapLin}
\Lbb := \Mpazo_d - \Apazo \in \Lpazo(L^2(I,\R^d)).
\end{equation}
In this context, we define the algebraic connectivity of a balanced topology by 
\begin{equation}
\label{eq:AlgebraicConnectivityInf1}
\lambda_2(\Lbb) := \inf_{x \in \Ccal^{\perp}} \frac{\langle \Lbb \, x , x \rangle_{L^2(I,\R^d)}}{\NormL{x}{2}{I,\R^d}^2} \geq 0,  
\end{equation}
and we shall see that it plays for $(-\Lbb)$ a role that is similar to that of the usual \textit{spectral bound} for semigroup generators (see e.g. \cite[Chapter IV, Definition 2.1]{Engel2001}). Just as for scrambling coefficients, the notion of algebraic connectivity that we propose is stable under discretisation. Indeed, if $\Lbb_N \in \Lpazo(L^2(I,\R^d))$ is the graph-Laplacian operator associated with a piecewise constant kernel $a^N \in L^{\infty}(I \times I,[0,1])$ of the form \eqref{eq:PiecewiseWeight}, then it will hold that $\lambda_2(\Lbb_N) = \lambda_2(\Lb_N)$, where $\Lb_N \in \Lpazo((\R^d)^N)$ denotes the usual graph-Laplacian matrix of the underlying digraph.

Similarly to the situation described for discrete multi-agent systems, it is possible to define a suitable notion of strong connectivity for graphons, which involves the \textit{Lebesgue points} (see e.g. \cite[Definition 1.24]{EvansGariepy}) of the interaction kernel. We recall below the definition of this concept, which is borrowed from the recent article \cite{Boudin2022}.

\begin{Def}[Strongly connected interaction topologies]
\label{def:StrongCon}
The interaction topology described by an adjacency operator $\Apazo \in \Lpazo(L^2(I,\R^d))$ is said to be \textnormal{strongly connected} if the following conditions hold. 
\begin{enumerate}
\item[(a)] For $\LcalI$-almost every $i \in I$ and for each Lebesgue point $j \in I \setminus \{i\}$ of $a(i,\cdot) \in L^{\infty}(I,[0,1])$, there exists an integer $m \geq 1$ and a finite sequence $(l_k)_{1 \leq k \leq m} \subset I$ satisfying $i = l_1$, $j = l_m$ and such that $l_{k+1} \in \supp(a(l_k,\cdot))$ is a Lebesgue point of $a(l_k,\cdot)$ for each $k \in \{1,\dots,m-1\}$.
\item[(b)] One has that $\delta := \inf_{i \in I} \INTDom{a(i,j)}{I}{j} > 0$. 
\end{enumerate}
Analogously, we say that an interaction topology is a \textnormal{disjoint union of strongly connected components} if there exists an at most countable family of sets $\{I_n\}_{n =1}^{+\infty} \subset \Ppazo(I)$ such that $I = \cup_{n \geq 1}^{+\infty} I_n$, which satisfies
\begin{equation*}
\LcalI(I_n) > 0, \quad \LcalI(I_n \cap I_m) = 0 \quad \text{and} \quad \supp(a(i_n,\cdot)) \subset I_n ~~ \text{for $\LcalI$-almost every $i_n \in I_n$},  
\end{equation*}
for each $m,n \geq 1$ with $m \neq n$, and on which the restricted operators $\Apazo_n \in \Lpazo(L^2(I_n,\R^d))$ define strongly connected topologies whose in-degree function comply with the quantitative lower-bound 
\begin{equation}
\label{eq:LowerBoundDisjoint}
\inf_{i_n \in I_n} \tfrac{1}{\LcalI(I_n)} \INTDom{a(i_n,j)}{I_n}{j} \geq \delta, 
\end{equation}
for a given uniform constant $\delta > 0$. 
\end{Def}

\begin{rmk}[On the lower-bound \eqref{eq:LowerBoundDisjoint} on the in-degree function]
Observe first that the uniform lower-bound \eqref{eq:LowerBoundDisjoint} imposed on each of the strongly connected components $I_n$ is automatically satisfied when the family of sets $\{I_n\}_{n =1}^{+\infty} \subset \Ppazo(I)$ is finite. When the latter is infinite countable, it transcribes the fact that while the measure of the sets $I_n$ must eventually vanish, the in-degree function remains lower-bounded in average. A typical example of topology expressed as an infinite union of strongly connected components satisfying this condition is given by the symmetric kernel defined by 
\begin{equation*}
a(i,j) := \sum_{n =1}^{+\infty} \mathds{1}_{I_n}(i) \mathds{1}_{I_n}(j),
\end{equation*}
for every $i,j \in I$, where the family of sets $\{I_n\}_{n =1}^{+\infty}$ is given by $I_n := \big( \tfrac{1}{n+1} , \tfrac{1}{n} \big]$ for each $n \geq 1$. 
\end{rmk}

When an interaction topology is decomposable into a disjoint union of strongly connected components, it is yet again possible to define a proper notion of algebraic connectivity, by resorting to the following adaptation of one of the main results from \cite{Boudin2022}. The latter can be seen as a generalisation of the ``only if'' part of Theorem \ref{thm:Perron} for adjacency operators.

\begin{thm}[Algebraic characterisation of strong connectivity for graphons]
\label{thm:StrongConGraphon}
Let $\Apazo \in \Lpazo(L^2(I,\R^d))$ be an adjacency operator and $\Lbb \in \Lpazo(L^2(I,\R^d))$ be the corresponding graph-Laplacian operator defined as in \eqref{eq:GraphLapLin}. Then, if the underlying  interaction topology is a disjoint union of strongly connected components in the sense of Definition \ref{def:StrongCon}, there exists an element $v \in L^{\infty}(I,\R_+^*)$ such that 
\begin{equation*}
\Lbb^* v = 0, \qquad \INTDom{v(i)}{I}{i} = 1 \qquad \text{and} \qquad \NormL{v}{\infty}{I,\R_+^*} \, \leq \, \frac{1}{\delta},
\end{equation*}
where we write $\Lbb^* v = 0$ to implicitly mean that $\Lbb^*(v,\dots,v) = 0$ in $L^2(I,\R^d)$. If in addition one imposes that $\INTDom{v(i)}{I_n}{i} = \LcalI(I_n)$ for every $n \geq 1$, then $v \in L^{\infty}(I,\R_+^*)$ is uniquely determined.  
\end{thm}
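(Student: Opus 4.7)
The plan is to reduce the general disjoint-union case to the single strongly connected case, apply the Perron-type construction of \cite{Boudin2022} on each component, and then glue and normalise. The key structural observation is that the support condition $\supp(a(i_n,\cdot))\subset I_n$ forces the adjoint Laplacian to split component-wise. Indeed, writing $\Lbb^* = \Mpazo_d - \Apazo^{\top}$, where $\Apazo^{\top} \in \Lpazo(L^2(I,\R^d))$ has kernel $a^{\top}(i,j) := a(j,i)$, one has for $\LcalI$-almost every $i \in I_n$
\begin{equation*}
(\Lbb^* y)(i) \, = \, d(i)\, y(i) - \INTDom{a(j,i) \, y(j)}{I}{j} \, = \, d(i) \, y(i) - \INTDom{a(j,i)\, y(j)}{I_n}{j},
\end{equation*}
since $a(j,i) = 0$ whenever $j \in I_m$ with $m \neq n$, and analogously $d(i) = \INTDom{a(i,j)}{I_n}{j}$. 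Hence $(\Lbb^* y)|_{I_n}$ depends only on $y|_{I_n}$ and coincides with $\Lbb_n^* (y|_{I_n})$, where $\Lbb_n \in \Lpazo(L^2(I_n,\R^d))$ is the graph-Laplacian associated with the restricted operator $\Apazo_n$.

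For each $n \geq 1$, the restricted adjacency operator $\Apazo_n$ defines a strongly connected topology in the sense of Definition \ref{def:StrongCon}. For this single-component setting, \cite{Boudin2022} provides the existence of a canonical weight $v_n \in L^{\infty}(I_n,\R_+^*)$ satisfying $\Lbb_n^* v_n = 0$, and I would fix the normalisation by demanding $\INTDom{v_n(i)}{I_n}{i} = \LcalI(I_n)$. The same reference yields the uniqueness of $v_n$ under this normalisation, since any other element in $\ker \Lbb_n^*$ must be proportional to $v_n$. I then define the candidate weight $v \in L^{\infty}(I,\R_+^*)$ globally by $v(i) := v_n(i)$ for $i \in I_n$. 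The component-wise decomposition of $\Lbb^*$ recorded above immediately gives $\Lbb^* v = 0$, while summing the normalisations over $n$ yields $\INTDom{v(i)}{I}{i} = \sum_n \LcalI(I_n) = 1$, and the supplementary normalisation $\INTDom{v(i)}{I_n}{i} = \LcalI(I_n)$ is built in by construction.

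It remains to establish the uniform $L^{\infty}$-bound $\NormL{v}{\infty}{I,\R_+^*} \leq 1/\delta$, which is where the in-degree assumption \eqref{eq:LowerBoundDisjoint} enters. Using $\Lbb_n^* v_n = 0$ together with $a(j,i) \leq 1$ and the chosen normalisation, one gets for $\LcalI$-almost every $i \in I_n$
\begin{equation*}
d(i) \, v(i) = \INTDom{a(j,i) \, v_n(j)}{I_n}{j} \leq \INTDom{v_n(j)}{I_n}{j} = \LcalI(I_n),
\end{equation*}
and combining this with the lower bound $d(i) = \INTDom{a(i,j)}{I_n}{j} \geq \delta\, \LcalI(I_n)$ coming from \eqref{eq:LowerBoundDisjoint} gives the pointwise estimate $v(i) \leq 1/\delta$. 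Uniqueness of $v$ under the componentwise normalisation then follows from uniqueness on each $I_n$.

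The main obstacle is the single strongly connected case that I import from \cite{Boudin2022}: proving both existence and uniqueness of $v_n$ requires a genuinely infinite-dimensional Perron-Frobenius argument, based on the Lebesgue-point formulation of connectivity in Definition \ref{def:StrongCon}(a), and ensuring positivity on a full measure subset of $I_n$ rather than merely non-negativity. Everything else — decomposition of $\Lbb^*$, normalisation, $L^{\infty}$-bound, and the gluing procedure — is then straightforward bookkeeping, provided the countable union $I = \bigcup_{n \geq 1} I_n$ is handled carefully in the global integration.
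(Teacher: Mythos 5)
Your proposal is correct and follows essentially the same route as the paper: reduce to the single strongly connected case via the component-wise splitting of $\Lbb^*$ induced by the support condition, import the Perron-type existence and uniqueness result from \cite{Boudin2022} on each $I_n$ with the normalisation $\INTDom{v_n(i)}{I_n}{i} = \LcalI(I_n)$, glue, and derive the $L^{\infty}$-bound from $\Lbb_n^* v_n = 0$ combined with $a \leq 1$ and the in-degree lower bound \eqref{eq:LowerBoundDisjoint}. The only cosmetic difference is that you spell out the decomposition $(\Lbb^* y)|_{I_n} = \Lbb_n^*(y|_{I_n})$ explicitly (which the paper leaves implicit) and omit the trivial lift from $d=1$ to general $d$ via $i \mapsto (v(i),\dots,v(i))$.
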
 

\begin{proof}
In the case where $d = 1$, the results of \cite[Theorem 1]{Boudin2022} yield the existence of a countable family of uniquely determined functions $(v_n)_{n \geq 1} \subset  L^2(I,\R_+^*)$ which satisfy
\begin{equation*}
\supp(v_n) = I_n, \qquad \Lbb^*_n v_n = 0 \qquad \text{and} \qquad \INTDom{v_n(i)}{I_n}{i} = \LcalI(I_n),
\end{equation*}
for each $n \geq 1$. Here, $\Lbb^*_n \in \Lpazo(L^2(I_n,\R))$ denotes the restriction of $\Lbb^*$ to $L^2(I_n,\R)$, namely 
\begin{equation*}
(\Lbb^*_n \, x)(i_n) = \INTDom{\Big( a(i_n,j) x(i_n) - a(j,i_n)x(j) \Big)}{I_n}{j} \qquad \text{for $\LcalI$-almost every $i_n \in I_n$}, 
\end{equation*}
for each $x \in L^2(I_n,\R)$. Then, one can show that the map defined by 
\begin{equation}
\label{eq:vdef}
v(i) := \sum_{n=1}^{+\infty} \mathds{1}_{I_n}(i) v_n(i), 
\end{equation}
for $\LcalI$-almost every $i \in I$ satisfies the conclusions of Theorem \ref{thm:StrongConGraphon}. Observe then that by \eqref{eq:LowerBoundDisjoint}, the fact that $\Lbb^*_n v_n = 0$ directly yields 
\begin{equation*}
v_n(i_n) = \frac{\INTDom{a(j,i)v_n(j)}{I_n}{j}}{\INTDom{a(i_n,j)}{I_n}{j}} \leq \frac{\INTDom{v_n(j)}{I_n}{j}}{\delta \LcalI(I_n)} = \frac{1}{\delta}, 
\end{equation*}
for $\LcalI$-almost every $i_n \in I_n$ and each $n \geq 1$. Because the sets $(I_n)_{n \geq 1}$ form a disjoint partition of $I$, it follows easily from the definition of $v$ in \eqref{eq:vdef} that
\begin{equation*}
\NormL{v}{\infty}{I,\R_+^*} = \sup_{n \geq 1} \NormL{v_n}{\infty}{I_n,\R_+^*} \, \leq \,  \frac{1}{\delta}.
\end{equation*}
Finally when $d \geq 1$ is arbitrary, the claims of Theorem \ref{thm:StrongConGraphon} simply follow by defining the mapping $\V : i \in I \mapsto (v(i),\dots,v(i)) \in \R^d$ with $v \in L^{\infty}(I,\R_+^*)$ being given as in \eqref{eq:vdef}.
\end{proof}

Given an adjacency operator $\Apazo \in \Lpazo(L^2(I,\R^d))$ defining a disjoint union of strongly connected components in the sense of Definition \ref{def:StrongCon}, we introduce the rescaled Laplacian operator
\begin{equation*}
\Lbb_v := \Mpazo_v \, \Lbb, 
\end{equation*}
where the function $v \in L^{\infty}(I,\R_+^*)$ appearing in the multiplication operator is the one given by Theorem \ref{thm:StrongConGraphon}. Then, we define the algebraic connectivity of the underlying interaction topology by 
\begin{equation}
\label{eq:AlgebraicConnectivityInf2}
\lambda_2(\Lbb_v) := \inf_{x \in \Ccal^{\perp}} \frac{\langle \Lbb_v \, x , x \rangle_{L^2(I,\R^d)}}{\Norm{x}_{L^2(I,\R^d)}^2} \geq 0,
\end{equation}
and we will see in Section \ref{subsection:L2Consensus} that this quantity does allow to quantify consensus formation for strongly connected topologies. Therein, we will also prove that the rescaled Laplacian operator is indeed a positive semi-definite operator over $L^2(I,\R^d)$. 

\begin{open}
Is the converse implication of Theorem \ref{thm:Perron} still valid in the context of graphon models described by Theorem \ref{thm:StrongConGraphon}? Namely, is it true that an interaction topology is a countable disjoint union of strongly connected components whenever there exists an element $v \in L^{\infty}(I,\R_+^*)$ such that $\Lbb^* v = 0$? 
\end{open}


\subsection{Exponential consensus formation for algebraically persistent topologies}
\label{subsection:L2Consensus}

In this section, we investigate the asymptotic formation of $L^2$-consensus for graphon models whose interaction topologies are persistent in a suitable sense, quantified by means of the algebraic connectivity. We start by exposing in Section \ref{subsubsection:Symmetric} a general convergence result for symmetric topologies whose time-averages have lower-bounded algebraic connectivity. We then proceed in Section \ref{subsubsection:Balanced} by studying consensus formation in balanced topologies, under the condition that the algebraic connectivity itself is persistent. We finally adapt the underlying strategy to study piecewise-constant (in time) disjoint unions of strongly connected components in Section \ref{subsubsection:StronglyConnected}. 


\subsubsection{Symmetric nonlinear interaction topologies}
\label{subsubsection:Symmetric}

In what follows, we shall prove the following result, which describes the exponential formation of $L^2$-consensus for symmetric graphon dynamics, under the condition that the time-averages of the corresponding graph-Laplacian operators over time-windows of fixed length have a uniformly lower-bounded algebraic connectivity. 

\begin{thm}[Exponential $L^2$-consensus formation for symmetric topologies]
\label{thm:ConsensusSym}
Fix an initial datum $x^0 \in L^{\infty}(I,\R^d)$ with $\NormL{x^0}{\infty}{I,\R^d} \leq R
$ for some $R > 0$, assume that hypotheses \ref{hyp:GD} hold, and that $a(t) \in L^{\infty}(I \times I,[0,1])$ defines a \textnormal{symmetric} interaction topology for $\Lcal^1$-almost every $t \geq 0$. Moreover, suppose that there exists a pair of coefficients $(\tau,\mu) \in \R_+^* \times (0,1]$ such that the persistence condition 
\begin{equation}
\label{eq:PersistenceTopo1}
\lambda_2 \bigg( \frac{1}{\tau} \INTSeg{\Lbb^a(s)}{s}{t}{t+\tau}\bigg) \geq \mu,
\end{equation}
holds for all times $t \geq 0$, where $\Lbb^a(s) \in \Lpazo(L^2(I,\R^d))$ is the linear part of the graph-Laplacian operator as defined in \eqref{eq:PartialGraphLap} for $\Lcal^1$-almost every $s \geq 0$. 

Then, there exist constants $\alpha,\gamma > 0$ depending only on $(\phi(\cdot),R,\tau,\mu)$ such that 
\begin{equation}
\label{eq:ExpConsensus1}
\NormL{x(t) - \bar{x}^0}{2}{I,\R^d} \leq \alpha \NormL{x(0) - \bar{x}^0}{2}{I,\R^d} \exp \big( - \gamma \mu \hspace{0.01cm} t \big), 
\end{equation}
for all times $t \geq 0$. In particular, the solutions of \eqref{eq:GraphonLapDyn} exponentially converge to consensus in the $L^2(I,\R^d)$-norm topology.
\end{thm}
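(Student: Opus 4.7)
I would exploit the symmetry of the topology to reduce the problem to a coercive flow on the affine subspace $\bar{x}^0 + \Ccal^{\perp}$, then convert the averaged algebraic-connectivity condition \eqref{eq:PersistenceTopo1} into a per-window geometric contraction of the energy $V(t) := \tfrac{1}{2} \|x(t) - \bar{x}^0\|_{L^2(I,\R^d)}^2$. First, symmetry of $a(t,\cdot,\cdot)$ makes the integrand of \eqref{eq:GraphonDynamics} antisymmetric under $i \leftrightarrow j$, so that $\tfrac{d}{dt} \bar{x}(t) = 0$ and $\bar{x}(t) = \bar{x}^0$ for all $t \geq 0$. Setting $y(t) := x(t) - \bar{x}^0 \in \Ccal^{\perp}$, and using that $\Lbb(t,x(t))$ annihilates constants, one obtains $\dot y(t) = -\Lbb(t,x(t)) y(t)$. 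Symmetry further renders $\Lbb(t,x(t))$ self-adjoint and positive semi-definite, with quadratic form
\[
\langle \Lbb(t,x(t))\, y, y \rangle_{L^2} = \tfrac{1}{2} \INTDom{\INTDom{a(t,i,j)\, \phi(|x(t,i)-x(t,j)|)\, |y(i)-y(j)|^2}{I}{j}}{I}{i}.
\]
Combining this with Proposition \ref{prop:WellPosed} (which yields $\NormL{x(t)}{\infty}{I,\R^d} \leq R$) and the definition of $\gamma_R$, one derives the pointwise comparison $\langle \Lbb(t,x(t)) y, y\rangle \geq \gamma_R \, \langle \Lbb^a(t) y, y\rangle$, so that $\dot V(t) \leq -\gamma_R \, \langle \Lbb^a(t) y(t), y(t)\rangle \leq 0$ and $V$ is non-increasing.

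The heart of the argument is a differential bookkeeping over an arbitrary window $[t, t+\tau]$. Set $I_{\text{dis}}(t) := \INTSeg{\langle \Lbb(s,x(s)) y(s), y(s)\rangle_{L^2}}{s}{t}{t+\tau}$, so that $V(t+\tau) = V(t) - I_{\text{dis}}(t)$. Elementary integral estimates yield the uniform operator norm bounds $\|\Lbb(s,x(s))\|_{\mathrm{op}} \leq 2 c_{\phi}$ and $\|\Lbb^a(s)\|_{\mathrm{op}} \leq 2$; combined with self-adjointness and positive semi-definiteness of $\Lbb(s,x(s))$, these furnish the interpolation inequality
\[
\|\dot y(s)\|^2 = \langle \Lbb(s,x(s))^2 y(s), y(s)\rangle \leq 2 c_{\phi} \langle \Lbb(s,x(s)) y(s), y(s)\rangle.
\]
Letting $r(s) := y(s) - y(t)$, Cauchy-Schwarz then gives the uniform drift bound $\|r(s)\|^2 \leq 2 c_{\phi} \tau\, I_{\text{dis}}(t)$ on $[t,t+\tau]$, hence $\INTSeg{\|r(s)\|}{s}{t}{t+\tau} \leq \sqrt{2 c_{\phi} \tau^3\, I_{\text{dis}}(t)}$. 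Expanding $\langle \Lbb^a(s) y(s), y(s)\rangle$ around $y(t)$, discarding the non-negative cross term $\langle \Lbb^a(s) r(s), r(s)\rangle$, and using $\|\Lbb^a(s)\|_{\mathrm{op}} \leq 2$, one obtains
\[
\langle \Lbb^a(s) y(s), y(s)\rangle \geq \langle \Lbb^a(s) y(t), y(t)\rangle - 4 \|y(t)\|\, \|r(s)\|.
\]
Integrating over $[t,t+\tau]$ and invoking the persistence condition \eqref{eq:PersistenceTopo1} applied to the \emph{frozen} vector $y(t) \in \Ccal^{\perp}$ produces the self-referential estimate
\[
I_{\text{dis}}(t)/\gamma_R + 4\, \|y(t)\| \sqrt{2 c_{\phi} \tau^3\, I_{\text{dis}}(t)} \geq \tau \mu \|y(t)\|^2.
\]

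A dichotomy on this quadratic inequality in $\sqrt{I_{\text{dis}}(t)}$ forces $I_{\text{dis}}(t) \geq C_1 \|y(t)\|^2$, where $C_1 := \min\bigl\{ \gamma_R \tau \mu / 2,\; \mu^2 / (128 \, c_{\phi} \tau)\bigr\} > 0$ depends only on $(\phi, R, \tau, \mu)$. Consequently $V(t+\tau) \leq \max\{0,\, 1 - 2 C_1\}\, V(t)$, and iterating on consecutive windows $[k\tau, (k+1)\tau]$ while using the monotonicity of $V$ on remainder intervals of length less than $\tau$ delivers the claimed bound \eqref{eq:ExpConsensus1}, with constants $\alpha, \gamma > 0$ that can be read off explicitly in terms of $(\phi, R, \tau, \mu)$. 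The main obstacle lies in the middle paragraph: persistence only controls the averaged Laplacian against a \emph{frozen} vector, so one must absorb the drift term $\int \|r(s)\|\, ds$ back into the dissipation $I_{\text{dis}}(t)$ itself. Symmetry is crucial here, as it simultaneously supplies the self-adjointness used in the interpolation $\|\dot y\|^2 \leq 2 c_{\phi} \langle \Lbb y, y\rangle$ and the positive semi-definiteness that licenses discarding the $\langle \Lbb^a r, r\rangle$ cross-term; the dichotomy then extracts a geometric contraction per window without requiring $\tau$ to be small compared to $\mu / c_{\phi}$.
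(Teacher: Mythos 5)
Your proof is correct and follows a genuinely different route from the paper. The paper constructs a \emph{strict} Lyapunov functional $\Xpazo_\tau(t) := \lambda X(t) + \sqrt{\B(\Psi_\tau(t)x(t),x(t))}$, with $\Psi_\tau(t)$ a double time-integral of the graph-Laplacians, tunes the free parameters $\lambda,\epsilon$, and closes via a single continuous-time differential inequality $\tderv{}{t}\Xpazo_\tau(t) \leq -c\,\Xpazo_\tau(t)$ after Cauchy--Schwarz, Young, Jensen, and the square-root lemma. You instead run a windowed dissipation bookkeeping: freeze $y(t)$ at the left endpoint of a window of length $\tau$, apply the averaged-connectivity condition to the frozen vector, and absorb the drift term $\INTSeg{\|r(s)\|}{s}{t}{t+\tau}$ back into the accumulated dissipation via the self-referential bound furnished by $\|\dot y\|^2 = \langle\Lbb^2 y,y\rangle \leq 2c_\phi\langle\Lbb y,y\rangle$ (self-adjointness and positive semi-definiteness of $\Lbb$ enter here in exactly the same places the paper uses them, cf.\ \eqref{eq:XpazoDervEst4}). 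The resulting quadratic inequality in the square root of the dissipation, resolved by dichotomy, yields a geometric per-window contraction of $V(t)$, and iterating gives the exponential estimate. Both arguments require symmetry in an essential way, and both give decay exponents degrading like $\mu^2$ as $\mu \to 0^+$, which is permitted since $\gamma$ is allowed to depend on $\mu$. Yours is arguably more elementary and avoids the auxiliary operator $\Psi_\tau$; the paper's strict Lyapunov design yields a differential inequality controlling the standard deviation at every intermediate time without a separate monotonicity step, and is perhaps better suited to extensions where such a pointwise-in-time estimate is desired. As a small cleanup: since $\gamma_R \leq c_\phi$ and $\mu \leq 1$, the product of your two candidate constants satisfies $\tfrac{\gamma_R\tau\mu}{2}\cdot\tfrac{\mu^2}{128c_\phi\tau} = \tfrac{\gamma_R\mu^3}{256c_\phi} \leq \tfrac{1}{256}$, so $C_1 = \min\{\cdot,\cdot\} < \tfrac{1}{2}$ automatically and the $\max\{0,\cdot\}$ in your final contraction factor is superfluous.
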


Taking inspiration from some of the seminal works \cite{Caponigro2013,Caponigro2015} on the mathematical formalisation of consensus and flocking analysis for discrete symmetric multi-agent systems, we introduce the \textit{variance bilinear form} over $L^2(I,\R^d)$, which is defined by 
\begin{equation}
\label{eq:BilinearInf}
\B(x,y) := \INTDom{\langle x(i) , y(i) \rangle}{I}{i} - \langle \bar{x} , \bar{y} \rangle, 
\end{equation}
for any $x,y \in L^2(I,\R^d)$, and we observe that the evaluation $\B(x,x)$ coincides exactly with the $L^2$-distance between $x \in L^2(I,\R^d)$ and the consensus manifold $\Ccal$. Based on this observation, we consider for every solution $x(\cdot) \in \Lip_{\loc}(\R_+,L^2(I,\R^d))$ of \eqref{eq:GraphonLapDyn} the \textit{standard deviation} map
\begin{equation}
\label{eq:StandardDev1}
X(t) :=  \sqrt{\B(x(t),x(t))} = \,  \NormL{x(t) - \bar{x}(t)}{2}{I,\R^d}, 
\end{equation}
that is defined for all times $t \geq 0$. In the following proposition, we state some of the useful properties of the variance bilinear form, notably in conjunction with graph-Laplacians. 

\begin{prop}[Elementary properties of $\B$]
\label{prop:Bilinear}
The map $x \in L^2(I,\R^d) \mapsto \B(x,x) \in \R_+$ defines a seminorm over $L^2(I,\R^d)$, which satisfies the Cauchy-Schwarz type inequality 
\begin{equation}
\label{eq:CSBilinear}
\B(x,y) \leq \sqrt{\B(x,x)} \sqrt{\B(y,y)}, 
\end{equation}
for each $x,y \in L^2(I,\R^d)$. In addition, let $\Lbb \in \Lpazo(L^2(I,\R^d))$ be the graph-Laplacian associated with an adjacency operator $\Apazo \in \Lpazo(L^2(I,\R^d))$ defining a balanced topology in the sense of Definition \ref{def:Balanced}. Then, one has that $\Lbb \, x \in \Ccal^{\perp}$ as well as $\Lbb^* x \in \Ccal^{\perp}$, and the algebraic connectivity can be rewritten as
\begin{equation}
\label{eq:AlternativeLambda2}
\lambda_2(\Lbb) = \inf_{x \notin \Ccal} \frac{\B(\Lbb \, x , x)}{\B(x,x)} \geq 0.
\end{equation}
\end{prop}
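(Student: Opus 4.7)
The plan is to build everything from the identity
\begin{equation*}
\B(x,y) = \INTDom{\big\langle x(i) - \bar{x} , y(i) - \bar{y} \big\rangle}{I}{i},
\end{equation*}
obtained by expanding the inner product inside the definition of $\B$ and simplifying via $\INTDom{x(i)}{I}{i} = \bar{x}$. This expresses $\B$ as the restriction of the $L^2$-inner product to the projected arguments $x - \bar{x}$ and $y - \bar{y}$, hence as a positive semidefinite symmetric bilinear form. The seminorm property and the Cauchy--Schwarz inequality \eqref{eq:CSBilinear} are then immediate consequences of the corresponding properties of $\langle \cdot , \cdot \rangle_{L^2(I,\R^d)}$; in particular, $\B(x,x) = \NormL{x - \bar{x}}{2}{I,\R^d}^2$ vanishes precisely when $x \in \Ccal$.

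Next, I would prove the inclusions $\Lbb \, x , \Lbb^* x \in \Ccal^{\perp}$ by checking that their barycenters vanish, in view of the characterisation \eqref{eq:ConsensusInfPerp}. Applying Fubini's theorem to the definition \eqref{eq:PartialGraphLap} of $\Lbb$, one directly finds
\begin{equation*}
\INTDom{(\Lbb \, x)(i)}{I}{i} = \INTDom{x(i) \bigg( \INTDom{a(i,j)}{I}{j} - \INTDom{a(j,i)}{I}{j} \bigg)}{I}{i},
\end{equation*}
and this is precisely where the balancedness assumption of Definition \ref{def:Balanced} enters to force the bracketed difference to vanish $\LcalI$-almost everywhere. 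For $\Lbb^*$, whose action on $z \in L^2(I,\R^d)$ reads $(\Lbb^* z)(i) = \big(\INTDom{a(i,j)}{I}{j}\big) z(i) - \INTDom{a(j,i) z(j)}{I}{j}$, an identical Fubini computation yields the conclusion; alternatively, one can observe that balancedness is preserved under the transposition $a(i,j) \leftrightarrow a(j,i)$, so $\Lbb^*$ is itself the graph-Laplacian of a balanced topology, to which the previous argument applies verbatim.

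Finally, for the alternative Rayleigh characterisation \eqref{eq:AlternativeLambda2}, the key remark is that every graph-Laplacian annihilates constants by construction, so $\Lbb \, x = \Lbb (x - \bar{x})$ for every $x \in L^2(I,\R^d)$. Combining this with the previous step, which yields $\overline{\Lbb \, x} = 0$, I obtain
\begin{equation*}
\B(\Lbb \, x , x) = \big\langle \Lbb \, x , x - \bar{x} \big\rangle_{L^2(I,\R^d)} = \big\langle \Lbb (x - \bar{x}) , x - \bar{x} \big\rangle_{L^2(I,\R^d)},
\end{equation*}
while $\B(x,x) = \NormL{x - \bar{x}}{2}{I,\R^d}^2$. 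The substitution $y := x - \bar{x}$ then realises a surjection from $L^2(I,\R^d) \setminus \Ccal$ onto $\Ccal^{\perp} \setminus \{0\}$ that preserves the value of the Rayleigh quotient, which immediately equates the two infima defining $\lambda_2(\Lbb)$. I expect no serious obstacle here: the only point of care will be invoking balancedness cleanly at the step where $\overline{\Lbb \, x}$ is shown to vanish, with the remainder of the argument being essentially a bookkeeping of the definitions.
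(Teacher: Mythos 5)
Your approach tracks the paper's almost line for line: both extract the key identity $\B(x,y) = \langle x-\bar{x}, y-\bar{y}\rangle_{L^2(I,\R^d)}$ to get the seminorm and Cauchy--Schwarz properties, both use Fubini plus the balanced condition to show $\overline{\Lbb x} = 0$, and both then pass from the $\Ccal^\perp$-infimum of \eqref{eq:AlgebraicConnectivityInf1} to the full-space Rayleigh quotient via $\B(\Lbb x,x) = \langle \Lbb(x-\bar{x}), x-\bar{x}\rangle_{L^2}$ and the substitution $y = x-\bar{x}$. The inclusion for $\Lbb^*$ is also handled the same way (your side remark that $\Lbb^*$ becomes a graph-Laplacian of a balanced kernel is correct, but it implicitly uses the balanced condition to identify $d = d^\top$; the direct Fubini computation is safer).

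There is one genuine omission. The proposition also asserts the inequality $\lambda_2(\Lbb) \geq 0$, and this is not ``bookkeeping'': positive semi-definiteness of $\Lbb$ on $\Ccal^\perp$ can fail without the balanced hypothesis (for a digraph with a single directed edge and no returning path, $\langle \Lb_N y, y\rangle$ can be negative even on mean-zero $y$). Your proposal equates the two Rayleigh-quotient infima but never shows the numerator is nonnegative. The paper closes this with a dedicated computation
\begin{equation*}
\B(\Lbb\, x, x)
= \tfrac{1}{2}\INTDom{\INTDom{a(i,j)\big(|x(i)-x(j)|^2 + |x(i)|^2 - |x(j)|^2\big)}{I}{j}}{I}{i}
= \tfrac{1}{2}\INTDom{\INTDom{a(i,j)|x(i)-x(j)|^2}{I}{j}}{I}{i} \geq 0,
\end{equation*}
where the cross terms cancel again by Fubini and balancedness. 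You should add this step; everything else in your sketch is sound.
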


\begin{proof}
Proving that $\B(\cdot,\cdot)$ induces a seminorm over $L^2(I,\R^d)$ is a matter of elementary computations. Concerning the Cauchy-Schwarz inequality, let it first be noted that 
\begin{equation*}
\B(x,y) = \B(x-\bar{x},y) = \B(x,y- \bar{y}) = \B(x-\bar{x},y-\bar{y}) = \langle x - \bar{x} , y-\bar{y}\rangle_{L^2(I,\R^d)},  
\end{equation*}
for any $x,y \in L^2(I,\R^d)$. The inequality \eqref{eq:CSBilinear} then follows by an application of the standard Cauchy-Schwarz inequality for the scalar product of $L^2(I,\R^d)$. Let now $\Lbb \in \Lpazo(L^2(I,\R^d))$ be the graph-Laplacian operator associated with a balanced graphon, and notice that 
\begin{equation}
\label{eq:AverageGraphLap}
\begin{aligned}
\INTDom{(\Lbb \, x)(i)}{I}{i} & = \INTDom{\INTDom{a(i,j) (x(i) - x(j))}{I}{j}}{I}{i} \\
& = \INTDom{\bigg( \INTDom{(a(i,j) - a(j,i))}{I}{j} \bigg) x(i)}{I}{i} = 0, 
\end{aligned}
\end{equation}
for every $x \in L^2(I,\R^d)$, where we used Fubini's theorem along with Definition \ref{def:Balanced}. This together with \eqref{eq:ConsensusInfPerp} implies that $\Lbb \, x \in \Ccal^{\perp}$, and  one can show that $\Lbb^* x \in \Ccal^{\perp}$ in the very same way. The alternative characterisation \eqref{eq:AlternativeLambda2} of the algebraic connectivity simply follows from the observation that
\begin{equation*}
\B(\Lbb \, x,x) = \langle \Lbb \, x,x \rangle_{L^2(I,\R^d)}, 
\end{equation*}
for any $x \in L^2(I,\R^d)$, as a consequence of \eqref{eq:AverageGraphLap}. Finally, one can obtain the positive semi-definiteness of $\Lbb$ with respect to $\B(\cdot,\cdot)$ by checking that  
\begin{equation*}
\begin{aligned}
\B(\Lbb \, x,x) & = \INTDom{\INTDom{a(i,j) \langle x(i) , x(i) - x(j) \rangle}{I}{j}}{I}{i} \\
& = \frac{1}{2} \INTDom{\INTDom{ a(i,j) \Big( |x(i)-x(j)|^2 + |x(i)|^2 - |x(j)|^2 \Big)}{I}{j}}{I}{i} \\
& = \frac{1}{2} \INTDom{\INTDom{ a(i,j) |x(i)-x(j)|^2}{I}{j}}{I}{i} + \frac{1}{2} \INTDom{ \bigg( \INTDom{a(i,j)}{I}{j} - \INTDom{a(j,i)}{I}{j} \bigg) |x(i)|^2}{I}{i}  \\
& = \frac{1}{2} \INTDom{\INTDom{ a(i,j) |x(i)-x(j)|^2}{I}{j}}{I}{i} \geq 0,  
\end{aligned}
\end{equation*}
where we again used Fubini's theorem and the fact that the interaction topology is balanced. 
\end{proof}

The proof of Theorem \ref{thm:ConsensusSym} will be conveyed by building a \textit{strictly dissipative} Lyapunov function for \eqref{eq:GraphonLapDyn}, following a methodology developed in \cite{CSComFail} for finite-dimensional symmetric multi-agent systems. Taking inspiration from the literature of strict Lyapunov design for persistent systems, we define given a trajectory $x(\cdot) \in \Lip_{\loc}(\R_+,L^2(I,\R^d))$ of \eqref{eq:GraphonLapDyn} the time-dependent linear operator
\begin{equation}
\label{eq:PsiDef}
\Psi_{\tau}(t) := (1 + c_{\phi}) \tau \Id - \frac{1}{\tau} \INTSeg{\INTSeg{\Lbb(\sigma,x(\sigma))}{\sigma}{t}{s}}{s}{t}{t+\tau} ~\in~ \Lpazo(L^2(I,\R^d)), 
\end{equation}
for all times $t \geq 0$, where we recall that $c_{\phi} = \sup_{r \in \R_+} \phi(r) < +\infty$ under hypotheses \ref{hyp:GD}.

\begin{lem}[Properties of the operators $\Psi_{\tau}$]
\label{lem:Psi}
The operators $\Psi_{\tau}(t) \in \Lpazo(L^2(I,\R^d))$ are symmetric for all times $t \geq 0$, and satisfy the estimates  
\begin{equation}
\label{eq:PsiIneq}
\tau \B(y,y) \leq \B(\Psi_{\tau}(t)y,y) \leq (1+c_{\phi}) \tau \B(y,y), 
\end{equation}
for each $y \in L^2(I,\R^d)$. Moreover, the map $t \in \R_+ \mapsto \Psi_{\tau}(t) \in \Lpazo(L^2(I,\R^d))$ is Lipschitz continuous and differentiable $\Lcal^1$-almost everywhere, with 
\begin{equation}
\label{eq:PsiDerv}
\tderv{}{t}{} \Psi_{\tau}(t) = \Lbb(t,x(t)) - \frac{1}{\tau} \INTSeg{\Lbb(s,x(s))}{s}{t}{t+\tau}, 
\end{equation}
for $\Lcal^1$-almost every $t \geq 0$.
\end{lem}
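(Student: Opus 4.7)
The plan is to verify the four assertions in sequence directly from the definition
$$\Psi_{\tau}(t) = (1 + c_{\phi}) \tau \Id - \frac{1}{\tau} \int_{t}^{t+\tau}\int_{t}^{s} \Lbb(\sigma,x(\sigma))\,d\sigma\,ds,$$
the key input being the identity of the type computed in the proof of Proposition \ref{prop:Bilinear}, adapted to the nonlinear Laplacian. Namely, in the present symmetric setting where $a(\sigma,i,j)=a(\sigma,j,i)$ and $\phi(|x(\sigma,i)-x(\sigma,j)|)$ is automatically symmetric in $i,j$, a symmetrisation $i \leftrightarrow j$ yields, for every $y \in L^2(I,\R^d)$,
$$\B\big(\Lbb(\sigma,x(\sigma))y,y\big) = \frac{1}{2}\int_I\int_I a(\sigma,i,j)\,\phi(|x(\sigma,i)-x(\sigma,j)|)\,|y(i)-y(j)|^2\,dj\,di,$$
which is both nonnegative and, since $a \leq 1$ and $\phi \leq c_\phi$, bounded above by $c_\phi \B(y,y)$, using also that $\tfrac{1}{2}\int_I\int_I |y(i)-y(j)|^2\,dj\,di = \B(y,y)$.

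For the symmetry of $\Psi_\tau(t)$, the same symmetrisation argument shows that each $\Lbb(\sigma,x(\sigma))$ is symmetric on $L^2(I,\R^d)$; the Bochner integral of a measurable family of symmetric operators remains symmetric, and adding the self-adjoint multiple of identity $(1+c_\phi)\tau\Id$ preserves this. For the two-sided inequality in \eqref{eq:PsiIneq}, I would compute
$$\B(\Psi_\tau(t)y,y) = (1+c_\phi)\tau \B(y,y) - \frac{1}{\tau}\int_t^{t+\tau}\int_t^s \B\big(\Lbb(\sigma,x(\sigma))y,y\big)\,d\sigma\,ds,$$
observe $\int_t^{t+\tau}\int_t^s 1 \,d\sigma\,ds = \tau^2/2$, and apply the above pointwise estimate $0 \leq \B(\Lbb(\sigma,x(\sigma))y,y) \leq c_\phi \B(y,y)$. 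The upper bound in \eqref{eq:PsiIneq} follows at once from the lower bound of the integrand, while the lower bound gives
$$\B(\Psi_\tau(t)y,y) \geq (1+c_\phi)\tau \B(y,y) - \tfrac{1}{2}c_\phi\tau \B(y,y) = \big(1+\tfrac{c_\phi}{2}\big)\tau \B(y,y) \geq \tau\B(y,y).$$

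For the regularity statement and \eqref{eq:PsiDerv}, I would introduce the antiderivative $H(u) := \int_0^u \Lbb(\sigma,x(\sigma))\,d\sigma$ in $\Lpazo(L^2(I,\R^d))$, which makes sense as a Bochner integral since $\sigma \mapsto \Lbb(\sigma,x(\sigma))$ is strongly measurable and uniformly bounded in operator norm (the estimate $\|\Lbb(\sigma,x(\sigma))y\|_{L^2} \leq 2c_\phi \|y\|_{L^2}$ follows directly from $a\leq 1$ and $\phi \leq c_\phi$). Then Fubini yields
$$\int_t^{t+\tau}\int_t^s \Lbb(\sigma,x(\sigma))\,d\sigma\,ds = \int_t^{t+\tau}(H(s)-H(t))\,ds = \int_t^{t+\tau} H(s)\,ds - \tau H(t),$$
so $\Psi_\tau(t) = (1+c_\phi)\tau\Id - \tfrac{1}{\tau}\int_t^{t+\tau}H(s)\,ds + H(t)$. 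Both $t \mapsto H(t)$ and $t \mapsto \int_t^{t+\tau} H(s)\,ds$ are Lipschitz continuous since $H$ is Lipschitz with constant $2c_\phi$; hence $\Psi_\tau(\cdot)$ is Lipschitz continuous. By the Lebesgue differentiation theorem applied componentwise via the scalar functions $t \mapsto \langle \Psi_\tau(t)y,z\rangle$, $\Psi_\tau$ is differentiable $\Lcal^1$-a.e., and a direct computation gives
$$\tderv{}{t}{}\Psi_\tau(t) = -\tfrac{1}{\tau}\big(H(t+\tau)-H(t)\big) + H'(t) = \Lbb(t,x(t)) - \tfrac{1}{\tau}\int_t^{t+\tau}\Lbb(s,x(s))\,ds.$$

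None of the steps involves a real obstacle; the only point requiring some care is the symmetrisation giving the quadratic form identity for $\Lbb(\sigma,x(\sigma))$ in the nonlinear case, as the $\phi$-factor must be retained inside the symmetrisation, which is precisely why the full symmetry hypothesis $a(\sigma,i,j) = a(\sigma,j,i)$ (rather than mere balancedness) is used here.
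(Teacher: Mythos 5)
Your proof is correct. For the symmetry of $\Psi_\tau(t)$ and the two-sided bound \eqref{eq:PsiIneq} you follow essentially the same route as the paper: reduce to the pointwise estimate $0 \leq \B(\Lbb(\sigma,x(\sigma))y,y) \leq c_\phi\B(y,y)$ via the symmetrisation identity for the quadratic form, then integrate over the triangular region. (Your intermediate lower bound $(1+c_\phi/2)\tau\B(y,y)$ is in fact sharper than the stated $\tau\B(y,y)$; the paper's stated lower bound is the same deliberate weakening.) Where you genuinely diverge from the paper is in the Lipschitz-regularity and derivative part. The paper estimates $\|\Psi_\tau(t_2)-\Psi_\tau(t_1)\|$ directly by splitting the difference of the two iterated integrals into three pieces and bounding each via Fubini and $\|\Lbb\|\leq c_\phi$, then invokes a Rademacher-type result and asserts \eqref{eq:PsiDerv} ``by standard computations.'' Your introduction of the antiderivative $H(u)=\int_0^u \Lbb(\sigma,x(\sigma))\,d\sigma$ and the algebraic reduction $\Psi_\tau(t)=(1+c_\phi)\tau\Id-\tfrac{1}{\tau}\int_t^{t+\tau}H(s)\,ds+H(t)$ is cleaner: the Lipschitz bound becomes a triviality (sum and moving average of a Lipschitz map), and the derivative formula drops out of the fundamental theorem of calculus without any triple-integral bookkeeping. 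The one spot worth tightening is the a.e.\ differentiability claim: testing against $\langle\Psi_\tau(t)y,z\rangle$ only yields a $y,z$-dependent null set, so one should pass through a countable dense family in $L^2(I,\R^d)$ and use the uniform Lipschitz bound to extract a single null set and to identify the limit as a bounded operator --- though to be fair, the paper's own appeal to a Hilbert-space Rademacher theorem for the non-Hilbert target $\Lpazo(L^2(I,\R^d))$ is at least as loose, and both arguments are adequate for the downstream use, which only requires differentiating the scalar quantity $\B(\Psi_\tau(t)x(t),x(t))$.
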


\begin{proof}
The fact that the operators $\Psi_{\tau}(t) \in \Lpazo(L^2(I,\R^d))$ are symmetric for every $t \geq 0$ is a simple consequence of the linearity of the Bochner integral. Given $y \in L^2(I,\R^d)$, one can show that
\begin{equation}
\label{eq:PsiEstProof1}
0 \leq \B \big( \Lbb(\sigma,x(\sigma)) y,y \big) \leq c_{\phi} \B(y,y), 
\end{equation}
where the first inequality follows from the positive semi-definiteness of $\Lbb(\sigma,x(\sigma))$ with respect to $\B(\cdot,\cdot)$, provided by Proposition \ref{prop:Bilinear} for $\Lcal^1$-almost every $\sigma \in [t,t+\tau]$, while the second one can be obtained by remarking that 
\begin{equation*}
\begin{aligned}
\B \big( \Lbb(\sigma,x(\sigma))y,y \big) & = \frac{1}{2} \INTDom{\INTDom{a(\sigma,i,j) \phi(|x(\sigma,i) - x(\sigma,j)|) |y(i) - y(j)|^2}{I}{j}}{I}{i} \\
& \leq \frac{1}{2} \INTDom{\INTDom{ c_{\phi} |y(i) - y(j)|^2}{I}{j}}{I}{i} \\
& = c_{\phi} \, \B(y,y),
\end{aligned}
\end{equation*}
by symmetry of the weight functions $a(\sigma) \in L^{\infty}(I \times I,[0,1])$. Upon integrating \eqref{eq:PsiEstProof1} with respect to $\sigma \in [t,s]$ and then with respect to $s \in [t,t+\tau]$ while merging the resulting expression with the definition \eqref{eq:PsiDef} of $\Psi_{\tau}(t) \in \Lpazo(L^2(I,\R^d))$, we obtain the lower- and upper-bounds 
\begin{equation*}
\tau \B(y,y) \leq \B(\Psi_{\tau}(t)y,y) \leq (1 + c_{\phi}) \tau \B(y,y),
\end{equation*}
for every $y \in L^2(I,\R^d)$ and all times $t \geq 0$. 

We now shift our attention to the regularity properties of the map $t \in \R_+ \mapsto \Psi_{\tau}(t) \in \Lpazo(L^2(I,\R^d))$. Let $t_2 \geq t_1 \geq 0$ be such that $|t_2 - t_1| \leq \tau$, and fix an element $y \in L^2(I,\R^d)$. Then, one has that
\begin{equation*}
\begin{aligned}
& \NormL{(\Psi_{\tau}(t_2) - \Psi_{\tau}(t_1))y}{2}{I,\R^d} \\
& \leq \frac{1}{\tau} \INTSeg{\INTSeg{\NormL{\Lbb(\sigma,x(\sigma)) y}{2}{I,\R^d}}{\sigma}{t_1}{s}}{s}{t_1}{t_2} + \frac{1}{\tau} \INTSeg{\INTSeg{\NormL{\Lbb(\sigma,x(\sigma)) y}{2}{I,\R^d}}{\sigma}{t_1}{t_2}}{s}{t_2}{t_1+\tau} \\
& \hspace{6.75cm} + \frac{1}{\tau} \INTSeg{\INTSeg{\NormL{\Lbb(\sigma,x(\sigma)) y}{2}{I,\R^d}}{\sigma}{t_2}{s}}{s}{t_1+\tau}{t_2+\tau} \\
& \leq (t_2 - t_1) \Bigg( \frac{1}{\tau} \INTSeg{\NormL{\Lbb(s,x(s)) y}{2}{I,\R^d}}{s}{t_1}{t_2} + \frac{1}{\tau} \INTSeg{\NormL{\Lbb(s,x(s)) y}{2}{I,\R^d}}{s}{t_2}{t_1+\tau}\\
& \hspace{7.15cm}  + \frac{1}{\tau} \INTSeg{\NormL{\Lbb(s,x(s)) y}{2}{I,\R^d}}{s}{t_2}{t_2+\tau} \Bigg) \\
& \leq 2 c_{\phi} |t_2 - t_1| \NormL{y}{2}{I,\R^d},
\end{aligned}
\end{equation*}
where we used Fubini's theorem and the fact that $\Norm{\Lbb(\sigma,x(\sigma))}_{\Lpazo(L^2(I,\R^d))} \leq c_{\phi}$. Therefore, we obtain
\begin{equation*}
\Norm{\Psi_{\tau}(t_2) - \Psi_{\tau}(t_1)}_{\Lpazo(L^2(I,\R^d))} \, \leq 2 c_{\phi} |t_2 - t_1|, 
\end{equation*}
for each time instants $t_2 \geq t_1 \geq 0$ satisfying $|t_2 - t_1| \leq \tau$. As the Lipschitz constant does not depend on $t_1,t_2 \geq 0$, we obtain that the  map $\Psi_{\tau} : \R_+ \rightarrow \Lpazo(L^2(I,\R^d))$ is Lipschitz continuous by covering the real line with intervals of length $\tau$. In particular, it is differentiable $\Lcal^1$-almost everywhere as a consequence of the generalisation of Rademacher's theorem for maps with values in Hilbert spaces (see e.g. \cite[Theorem 1.3]{Heinrich1982}). Finally, the expression \eqref{eq:PsiDerv} of its time-derivative can be obtained by standard computations.
\end{proof}

Now that we have introduced these technical tools, we can move to the proof of Theorem \ref{thm:ConsensusSym}. 

\begin{proof}[Proof of Theorem \ref{thm:ConsensusSym}]
Let $x \in \Lip_{\loc}(\R_+,L^2(I,\R^d))$ be a solution of \eqref{eq:GraphonLapDyn}, and assume without loss of generality that $x(t) \notin \Ccal$ for all times $t \geq 0$. Given an arbitrary parameter $\lambda > 0$ whose value will be fixed later on, we consider the candidate Lyapunov functional defined by  
\begin{equation*}
\Xpazo_{\tau}(t) := \lambda X(t) + \sqrt{\B\big( \Psi_{\tau}(t)x(t),x(t) \big)}
\end{equation*}
for every $t \geq 0$, and it can be checked directly as a consequence of \eqref{eq:PsiIneq} that 
\begin{equation}
\label{eq:XpazoBound}
(\lambda + \sqrt{\tau}) X(t) \leq \Xpazo_{\tau}(t) \leq \Big( \lambda + \sqrt{(1 + c_{\phi}) \tau} \, \Big) X(t)
\end{equation}
for all times $t  \geq 0$. Moreover, owing to the regularity of $\Psi_{\tau}(\cdot)$, the mapping $t \in \R_+ \mapsto \Xpazo_{\tau}(t) \in \R_+$ is differentiable $\Lcal^1$-almost everywhere, with 
\begin{equation*}
\begin{aligned}
\tderv{}{t}{} \Xpazo_{\tau}(t) & = -\frac{\lambda}{X(t)} \B \big( \Lbb(t,x(t))x(t),x(t) \big) + \frac{\B \big( \tderv{}{t}{}\Psi_{\tau}(t)x(t),x(t) \big) - 2 \B \big( \Psi_{\tau}(t)x(t),\Lbb(t,x(t))x(t) \big)}{2 \sqrt{\B\big( \Psi_{\tau}(t)x(t),x(t) \big)}}  \\
& =  -\frac{\lambda}{X(t)} \B \big( \Lbb(t,x(t))x(t),x(t) \big) - \frac{1}{2 \sqrt{\B\big( \Psi_{\tau}(t)x(t),x(t) \big)}} \B \bigg( \Big( \tfrac{1}{\tau} \mathsmaller{\INTSeg{\Lbb(s,x(s))}{s}{t}{t+\tau}} \Big) x(t),x(t) \bigg) \\
& \hspace{0.45cm} + \frac{1}{2 \sqrt{\B \big( \Psi_{\tau}(t)x(t),x(t) \big)}} \bigg( \B \big( \Lbb(t,x(t))x(t),x(t) \big) - 2 \B \big( \Psi_{\tau}(t)x(t),\Lbb(t,x(t))x(t) \big) \bigg),
\end{aligned}
\end{equation*}
for $\Lcal^1$-almost every $t \geq 0$, where we used the fact that $\Psi_{\tau}(t)$ is symmetric. In addition, as a consequence of the persistence condition \eqref{eq:PersistenceTopo1}, one has that
\begin{equation*}
\begin{aligned}
& \B \bigg( \Big( \tfrac{1}{\tau} \mathsmaller{\INTSeg{\Lbb(s,x(s))}{s}{t}{t+\tau}} \Big) x(t),x(t) \bigg) \\
& \hspace{0.4cm} = \frac{1}{2} \INTDom{\INTDom{ \Big( \tfrac{1}{\tau} \mathsmaller{\INTSeg{a(s,i,j) \phi(|x(s,i)-x(s,j)|)}{s}{t}{t+\tau}} \Big) |x(t,i) - x(t,j)|^2}{I}{j}}{I}{i} \\
& \hspace{0.4cm} \geq \frac{1}{2} \INTDom{\INTDom{ \gamma_R \Big( \tfrac{1}{\tau} \mathsmaller{\INTSeg{a(s,i,j)}{s}{t}{t+\tau}} \Big) |x(t,i) - x(t,j)|^2}{I}{j}}{I}{i} \\
& \hspace{0.4cm} = \gamma_R \, \B \bigg( \Big( \tfrac{1}{\tau} \mathsmaller{\INTSeg{\Lbb^a(s)}{s}{t}{t+\tau}} \Big) x(t),x(t) \bigg) \\
& \hspace{0.4cm} \geq \gamma_R \, \mu X(t)^2, 
\end{aligned}
\end{equation*}
where $\gamma_R := \min_{r \in [0,2R]} \phi(r)$. This latter expression along with the positive semi-definiteness of $\Lbb(t,x(t))$ with respect to $\B(\cdot,\cdot)$ and the bounds of \eqref{eq:PsiIneq} further yields 
\begin{equation}
\label{eq:XpazoDervEst1}
\begin{aligned}
\tderv{}{t}{} \Xpazo_{\tau}(t) & \leq - \frac{\gamma_R \, \mu}{2 \sqrt{(1+c_{\phi}) \tau}} X(t) + \frac{1}{X(t)} \bigg( \frac{1}{\sqrt{\tau}} - \sqrt{(1+c_{\phi}) \tau} - \lambda \bigg) \B(\Lbb(t,x(t))x(t),x(t)) \\
& \hspace{0.45cm} + \frac{1}{\sqrt{\B(\Psi_{\tau}(t)x(t),x(t))}} \, \B \bigg( \Big( \tfrac{1}{\tau} \mathsmaller{\INTSeg{\INTSeg{\Lbb(\sigma,x(\sigma))}{\sigma}{t}{s}}{s}{t}{t+\tau}} \Big) x(t) , \Lbb(t,x(t))x(t) \bigg), 
\end{aligned}
\end{equation}
for $\Lcal^1$-almost every $t \geq 0$. We now need to estimate the last term in the right-hand side of the previous inequality. As consequence of the Cauchy-Schwarz inequality supported by $\B(\cdot,\cdot)$ and Young's inequality, it holds for every $\epsilon > 0$ that
\begin{equation}
\label{eq:XpazoDervEst2}
\begin{aligned}
& \B \bigg( \Big( \tfrac{1}{\tau} \mathsmaller{\INTSeg{\INTSeg{\Lbb(\sigma,x(\sigma))}{\sigma}{t}{s}}{s}{t}{t+\tau}} \Big) x(t) , \Lbb(t,x(t))x(t) \bigg) \\
& \leq \sqrt{\B \bigg( \tfrac{1}{\tau} \mathsmaller{\INTSeg{\INTSeg{\Lbb(\sigma,x(\sigma))}{\sigma}{t}{s}}{s}{t}{t+\tau}} \Big) x(t) , \tfrac{1}{\tau} \mathsmaller{\INTSeg{\INTSeg{\Lbb(\sigma,x(\sigma))}{\sigma}{t}{s}}{s}{t}{t+\tau}} \Big) x(t) \bigg)} \sqrt{\B \big(\Lbb(t,x(t))x(t),\Lbb(t,x(t))x(t) \big)} \\
& \leq \big\| \tfrac{1}{\tau} \mathsmaller{\INTSeg{\INTSeg{\Lbb(\sigma,x(\sigma))}{\sigma}{t}{s}}{s}{t}{t+\tau}} \big\|_{\B}^{1/2}  X(t) \Big( \sqrt{\B \big(\Lbb(t,x(t))x(t),\Lbb(t,x(t))x(t) \big)} \, \Big) \\
& \leq \frac{\epsilon}{2} \big\| \tfrac{1}{\tau} \mathsmaller{\INTSeg{\INTSeg{\Lbb(\sigma,x(\sigma))}{\sigma}{t}{s}}{s}{t}{t+\tau}} \big\|_{\B} \, X(t)^2 + \frac{1}{2\epsilon} \B \big( \Lbb(t,x(t))x(t), \Lbb(t,x(t))x(t) \big), 
\end{aligned}
\end{equation}
where $\|\cdot\|_{\B}$ denotes the operator semi-norm induced by $\B(\cdot,\cdot)$ on $\Lpazo(L^2(I,\R^d))$. By Jensen's inequality, one can further estimate the latter as
\begin{equation}
\label{eq:XpazoDervEst3}
\big\| \tfrac{1}{\tau} \mathsmaller{\INTSeg{\INTSeg{\Lbb(\sigma,x(\sigma))}{\sigma}{t}{s}}{s}{t}{t+\tau}} \big\|_{\B} \leq \tfrac{1}{\tau} \mathsmaller{\INTSeg{\INTSeg{ \|\Lbb(\sigma,x(\sigma)) \|_{\B}}{\sigma}{t}{s}}{s}{t}{t+\tau}} \leq c_{\phi} \tau, 
\end{equation}
since $\| \Lbb(\sigma,x(\sigma)) \|_{\B} \leq c_{\phi}$ for $\Lcal^1$-almost every $\sigma \in [t,t+\tau]$ owing to \eqref{eq:PsiEstProof1}. By the square-root lemma for positive semi-definite symmetric operators (see e.g. \cite[Theorem VI.9]{ReedI1981}), it also holds that
\begin{equation}
\begin{aligned}
\label{eq:XpazoDervEst4}
\B \big( \Lbb(t,x(t))x(t) , \Lbb(t,x(t))x(t) \big) & \leq \sqrt{c_{\phi}} ~ \B \Big( (\Lbb(t,x(t))^* \Lbb(t,x(t)))^{1/2} x(t),x(t) \Big) \\
& = \sqrt{c_{\phi}} ~ \B(\Lbb(t,x(t))x(t),x(t)),
\end{aligned}
\end{equation}
by the symmetry of $\Lbb(t,x(t))$ for $\Lcal^1$-almost every $t \geq 0$. Hence, by merging the estimates of \eqref{eq:XpazoDervEst2}, \eqref{eq:XpazoDervEst3} and \eqref{eq:XpazoDervEst4} into the differential inequality \eqref{eq:XpazoDervEst1}, one recovers that
\begin{equation}
\label{eq:XpazoDervEst5}
\begin{aligned}
\tderv{}{t}{} \Xpazo_{\tau}(t) & \leq \bigg( \frac{\epsilon}{2 \sqrt{\tau}} - \frac{\gamma_R \, \mu}{2 \sqrt{(1+c_{\phi}) \tau}} \bigg) X(t) \\
& \hspace{0.45cm} + \frac{1}{X(t)} \bigg( \frac{1}{\sqrt{\tau}} + \frac{1}{2 \sqrt{\tau} \epsilon} - \sqrt{(1+c_{\phi}) \tau} - \lambda \bigg) \B(\Lbb(t,x(t))x(t),x(t)),
\end{aligned}
\end{equation}
for $\Lcal^1-$almost every $t \geq 0$. 

Our goal now is to choose the free parameters $\lambda,\epsilon > 0$ in such a way that \eqref{eq:XpazoDervEst5} yields a strictly dissipative estimate on the standard deviation $X(\cdot)$. To this end, we fix 
\begin{equation}
\label{eq:ParameterDef}
\epsilon := \frac{\gamma_R \, \mu}{2 \sqrt{(1 + c_{\phi})}} \qquad \text{and} \qquad \lambda := \frac{1}{\sqrt{\tau}} + \frac{1}{2 \sqrt{\tau} \epsilon} - \sqrt{(1+c_{\phi})} \tau, 
\end{equation}
which together with the estimates of \eqref{eq:XpazoBound} allows us to rewrite \eqref{eq:XpazoDervEst5} as
\begin{equation}
\label{eq:XpazoderEst6}
\tderv{}{t}{} \Xpazo_{\tau}(t) \leq - \frac{\mu}{4 \sqrt{(1+c_{\phi}) \tau} \Big( \lambda + \sqrt{(1 + c_{\phi})\tau} \, \Big)} \Xpazo_{\tau}(t).
\end{equation}
By an application of Gr\"onwall's lemma together with yet another estimation using the bounds of \eqref{eq:XpazoBound}, we can conclude that 
\begin{equation*}
X(t) \leq \Big( \lambda + \sqrt{(1 + c_{\phi})\tau} \, \Big) X(0) \exp \Bigg( - \frac{\mu}{4 \sqrt{(1+c_{\phi}) \tau} \Big( \lambda + \sqrt{(1 + c_{\phi})\tau} \, \Big)} \, t \Bigg), 
\end{equation*}
for all times $t \geq 0$. Recalling the definition \eqref{eq:StandardDev1} of the standard deviation and defining the constants 
\begin{equation*}
\alpha := \Big( \lambda + \sqrt{(1 + c_{\phi})\tau} \, \Big) \qquad \text{and} \qquad \gamma := \frac{1}{4 \sqrt{(1+c_{\phi}) \tau} \Big( \lambda + \sqrt{(1 + c_{\phi})\tau} \, \Big)}, 
\end{equation*}
with $\lambda > 0$ being given as in \eqref{eq:ParameterDef}, we obtain the exponential estimate claimed in \eqref{eq:ExpConsensus1}, which concludes the proof of Theorem \ref{thm:ConsensusSym}.
\end{proof}


\subsubsection{Balanced linear interactions topologies}
\label{subsubsection:Balanced}

In this second section devoted to the investigation of $L^2$-consensus formation, we prove the following theorem, which shows that linear balanced graphon dynamics exponentially converge to consensus when the underlying algebraic connectivity is persistent. Throughout this section, we assume that there are no nonlinear interactions between agents -- namely $\phi(\cdot) \equiv 1$ --, so that $\Lbb(t,x) = \Lbb(t)$ for $\Lcal^1$-almost every $t \geq 0$ and any $x \in L^2(I,\R^d)$. 

\begin{thm}[Exponential $L^2$-consensus formation for balanced topologies]
\label{thm:ConsensusBalanced}
Fix an initial datum $x^0 \in L^2(I,\R^d)$, assume that hypothesis \ref{hyp:GD}-$(i)$ holds, and that the interaction kernel $t \in \R_+ \mapsto a(t) \in L^{\infty}(I \times I,[0,1])$ defines \textnormal{balanced} interaction topologies in the sense of Definition \ref{def:Balanced} for $\Lcal^1$-almost every $t \geq 0$. Moreover, suppose that there exists a pair of coefficients $(\tau,\mu) \in \R_+^* \times (0,1]$ such that the persistence condition 
\begin{equation}
\label{eq:PersistenceTopo2}
\frac{1}{\tau} \INTSeg{ \lambda_2(\Lbb(s))}{s}{t}{t+\tau} \geq \mu,
\end{equation}
holds for all times $t \geq 0$. 

Then, there exists a constant $\alpha > 0$ depending only on $(\tau,\mu)$ such that 
\begin{equation}
\label{eq:ExpConsensus2}
\NormL{x(t) - \bar{x}^0}{2}{I,\R^d} \leq \alpha \NormL{x(0) - \bar{x}^0}{2}{I,\R^d} \exp \big(\hspace{-0.05cm} - \mu \hspace{0.02cm} t \big), 
\end{equation}
for all times $t \geq 0$. In particular, the solutions of \eqref{eq:GraphonLapDyn} exponentially converge to consensus in the $L^2(I,\R^d)$-norm topology.
\end{thm}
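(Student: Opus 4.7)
The plan is to exploit the fact that, in the balanced linear setting, one can derive a \emph{pointwise} dissipation inequality controlled directly by $\lambda_2(\Lbb(t))$, thereby avoiding the augmented Lyapunov construction of Theorem \ref{thm:ConsensusSym}. The key structural input is that for a balanced topology, Proposition \ref{prop:Bilinear} yields $\Lbb(t) y \in \Ccal^\perp$ for every $y \in L^2(I,\R^d)$ and $\Lcal^1$-almost every $t \geq 0$. Applied to a solution of $\dot x(t) = -\Lbb(t) x(t)$, this implies the conservation of the barycenter:
\begin{equation*}
\tderv{}{t}{} \bar{x}(t) = -\INTDom{(\Lbb(t) x(t))(i)}{I}{i} = 0,
\end{equation*}
so that $\bar x(t) = \bar{x}^0$ and, consequently, $x(t) - \bar{x}^0 \in \Ccal^\perp$ for all times $t \geq 0$.

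Next, I would introduce the standard deviation $X(t) := \NormL{x(t) - \bar{x}^0}{2}{I,\R^d}$. Since $\Lbb(t) \bar{x}^0 = 0$ and $t \mapsto x(t)$ is locally Lipschitz in $L^2(I,\R^d)$ (as the vector field is linear with uniformly bounded operator norm), the squared deviation is differentiable $\Lcal^1$-almost everywhere with
\begin{equation*}
\tfrac{1}{2} \tderv{}{t}{} X(t)^2 = \big\langle \dot{x}(t), x(t) - \bar{x}^0 \big\rangle_{L^2(I,\R^d)} = -\big\langle \Lbb(t)(x(t) - \bar{x}^0), x(t) - \bar{x}^0 \big\rangle_{L^2(I,\R^d)}.
\end{equation*}
Since $x(t) - \bar{x}^0 \in \Ccal^\perp$, the Rayleigh-quotient definition \eqref{eq:AlgebraicConnectivityInf1} of $\lambda_2(\Lbb(t))$ yields the differential inequality $\tderv{}{t}{} X(t)^2 \leq -2\lambda_2(\Lbb(t)) X(t)^2$, whence by Grönwall's lemma
\begin{equation*}
X(t) \leq X(0) \exp\bigg(-\INTSeg{\lambda_2(\Lbb(s))}{s}{0}{t}\bigg),
\end{equation*}
for all $t \geq 0$.

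To conclude, I would convert the sliding-window persistence condition \eqref{eq:PersistenceTopo2} into a lower bound on the integrated algebraic connectivity. For $t \geq 0$, writing $t = k\tau + r$ with $k \in \N$ and $r \in [0,\tau)$, a telescoping argument over the disjoint intervals $[j\tau, (j+1)\tau)$ gives
\begin{equation*}
\INTSeg{\lambda_2(\Lbb(s))}{s}{0}{t} \geq \sum_{j=0}^{k-1} \INTSeg{\lambda_2(\Lbb(s))}{s}{j\tau}{(j+1)\tau} \geq k \mu \tau \geq \mu(t-\tau),
\end{equation*}
which combined with the previous estimate yields the announced exponential decay \eqref{eq:ExpConsensus2} with $\alpha := \exp(\mu \tau)$. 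I do not foresee a substantial obstacle: unlike the symmetric nonlinear situation addressed by Theorem \ref{thm:ConsensusSym}, the persistence condition \eqref{eq:PersistenceTopo2} bears directly on $\lambda_2(\Lbb(s))$ (rather than on $\lambda_2$ of the time-averaged operator), which is why no operator-valued corrector of the form \eqref{eq:PsiDef} needs to be introduced. The only mild technical point is justifying the chain rule used to differentiate $X(\cdot)^2$, which follows routinely from the Lipschitz regularity of $x(\cdot)$ in $L^2(I,\R^d)$ together with the smoothness of the squared norm.
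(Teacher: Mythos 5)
Your proposal is correct and follows essentially the same route as the paper: conservation of the barycenter for balanced topologies, a Grönwall-type dissipation estimate on the standard deviation controlled by $\lambda_2(\Lbb(t))$, and a telescoping decomposition of the persistence condition to obtain the exponential rate with $\alpha = \exp(\mu\tau)$. The only cosmetic difference is that you differentiate $X(t)^2$ directly (thereby sidestepping the division by $X(t)$ and the need to assume $x(t)\notin\Ccal$), and you phrase the Rayleigh-quotient bound via $\langle\Lbb(t)(x(t)-\bar{x}^0),x(t)-\bar{x}^0\rangle$ rather than the variance bilinear form $\B(\Lbb(t)x(t),x(t))$, but these two expressions coincide by Proposition~\ref{prop:Bilinear}.
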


\begin{proof}
Similarly to what was done in Section \ref{subsubsection:Symmetric} for symmetric graphon models, we will derive dissipative decay estimates on the standard deviation map $X(\cdot)$ defined along solutions of \eqref{eq:GraphonLapDyn} as in \eqref{eq:StandardDev1}. For balanced dynamics, it can be easily checked that
\begin{equation*}
\tderv{}{t}{} \bar{x}(t) = -\INTDom{(\Lbb(t)x(t))(i)}{I}{i} = 0, 
\end{equation*}
as a consequence of Proposition \ref{prop:Bilinear}, which implies that $\bar{x}(t) = \bar{x}^0$ for all times $t \geq 0$. Thus, the time-derivative of the standard deviation can be written as 
\begin{equation}
\label{eq:XdervEst1}
\tderv{}{t}{} X(t) = - \frac{1}{X(t)} \B(\Lbb(t)x(t),x(t)) \leq - \lambda_2(\Lbb(t)) X(t), 
\end{equation}
for $\Lcal^1$-almost every $t \geq 0$, where we assumed without loss of generality that $x(t) \notin \Ccal$. By applying Gr\"onwall's lemma and adapting the computations of \eqref{eq:IneqPersistence2} in the proof of Theorem \ref{thm:ConsensusDiam}, we obtain
\begin{equation*}
X(t) \leq X(0) \exp(- \mu(t-\tau)), 
\end{equation*}
for all times $t \geq 0$. Observing finally that $X(t) = \NormL{x(t) - \bar{x}^0}{2}{I,\R^d}$ in our context, we recover the exponential convergence to consensus \eqref{eq:ExpConsensus2} with $\alpha := \exp(\mu \tau)$.  
\end{proof}

It is important to notice that the persistence conditions imposed in Theorem \ref{thm:ConsensusSym} and Theorem \ref{thm:ConsensusBalanced} are very different in nature. While the latter, written in \eqref{eq:PersistenceTopo2}, expresses the fact that $\lambda_2(\Lbb(\cdot))$ is positive sufficiently often on each time window of length $\tau> 0$, the former merely requires that the algebraic connectivity of the \textit{average} topology is lower-bounded. In particular, it is perfectly possible to build time-dependent signals satisfying \eqref{eq:PersistenceTopo1} for which $\lambda_2(\Lbb(\cdot)) \equiv 0$ (see e.g. \cite[Section 4]{CSComFail} for a finite-dimensional example, as well as Section \ref{section:Discussion} below). This practical and heuristic comparison between \eqref{eq:PersistenceTopo1} and \eqref{eq:PersistenceTopo2} can be made rigorous by means of the following general result. 

\begin{prop}[Concavity of the algebraic connectivity]
\label{prop:Concavity}
The algebraic connectivity defined as in \eqref{eq:AlgebraicConnectivityInf1} and seen as a mapping $\lambda_2 : \Lpazo(L^2(I,\R^d)) \rightarrow \R$ is concave. In particular, given a time-dependent operator $T \in L^2_{\loc}(\R_+,\Lpazo(L^2(I,\R^d)))$ and a coefficient $\tau \in \R_+^*$, it holds that
\begin{equation}
\label{eq:IntegralIneqConcave}
 \frac{1}{\tau}  \INTSeg{\lambda_2(T(s))}{s}{t}{t+\tau} \leq \lambda_2 \bigg( \frac{1}{\tau} \INTSeg{T(s)}{s}{t}{t+\tau}  \bigg) , 
\end{equation}
for all times $t \geq 0$.
\end{prop}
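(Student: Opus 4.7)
The proof I have in mind is short because $\lambda_2$ is, by its very definition in \eqref{eq:AlgebraicConnectivityInf1}, an infimum of affine functionals of the operator variable, and an infimum of affine functions is automatically concave.

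More precisely, for every fixed $x \in \Ccal^{\perp} \setminus \{0\}$ the Rayleigh-type functional
\begin{equation*}
\ell_x : T \in \Lpazo(L^2(I,\R^d)) \mapsto \frac{\langle T x , x \rangle_{L^2(I,\R^d)}}{\NormL{x}{2}{I,\R^d}^2} \in \R
\end{equation*}
is linear (hence affine) in $T$. The first step of the plan is to notice that $\lambda_2(T) = \inf_{x \in \Ccal^{\perp}\setminus\{0\}} \ell_x(T)$, so that $\lambda_2$ is a pointwise infimum of affine maps on $\Lpazo(L^2(I,\R^d))$. A standard calculation then shows that for any $T_1, T_2 \in \Lpazo(L^2(I,\R^d))$ and $\theta \in [0,1]$, one has $\ell_x(\theta T_1 + (1-\theta) T_2) = \theta \ell_x(T_1) + (1-\theta) \ell_x(T_2) \geq \theta \lambda_2(T_1) + (1-\theta) \lambda_2(T_2)$, and taking the infimum over $x$ on the left yields concavity of $\lambda_2$.

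To prove the integral inequality \eqref{eq:IntegralIneqConcave}, I would not invoke an abstract Jensen-type theorem but rather reproduce the same argument directly at the integral level. Given $T \in L^2_{\loc}(\R_+,\Lpazo(L^2(I,\R^d)))$, the linearity of the Bochner integral against the continuous linear functional $T \mapsto \langle T x, x \rangle_{L^2(I,\R^d)}$ gives, for each fixed $x \in \Ccal^{\perp}\setminus\{0\}$,
\begin{equation*}
\ell_x \bigg( \frac{1}{\tau}\INTSeg{T(s)}{s}{t}{t+\tau} \bigg) = \frac{1}{\tau}\INTSeg{\ell_x(T(s))}{s}{t}{t+\tau} \geq \frac{1}{\tau}\INTSeg{\lambda_2(T(s))}{s}{t}{t+\tau},
\end{equation*}
where the inequality comes from the definition $\lambda_2(T(s)) \leq \ell_x(T(s))$ applied $\Lcal^1$-almost everywhere on $[t,t+\tau]$. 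Taking the infimum over $x \in \Ccal^{\perp}\setminus\{0\}$ of the left-hand side then produces exactly $\lambda_2\big( \tfrac{1}{\tau}\INTSeg{T(s)}{s}{t}{t+\tau} \big)$ on top, which is the required bound.

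There is no real obstacle here: the only minor subtlety worth verifying carefully is that the scalar map $s \mapsto \lambda_2(T(s))$ is $\Lcal^1$-measurable so that its integral on the right-hand side makes sense. This follows from the fact that $\lambda_2$ is expressed as an infimum over a separable index set (one may restrict the infimum in \eqref{eq:AlgebraicConnectivityInf1} to a countable dense family of $\Ccal^{\perp}$), which makes it a countable infimum of continuous -- hence measurable -- functionals of $T(s)$, and therefore Borel measurable as a composition. Once this measurability is acknowledged, the chain of inequalities above completes the proof.
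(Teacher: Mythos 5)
Your proof is correct and follows essentially the same route as the paper's: both rest on the observation that $\lambda_2$ is an infimum of maps $T \mapsto \langle T x , x \rangle / \|x\|^2$ that are linear in $T$. The paper unpacks this into an explicit $\epsilon$-argument for concavity and then appeals to Jensen's inequality to get \eqref{eq:IntegralIneqConcave}, whereas you state the "infimum of affine maps is concave" fact directly and bypass Jensen by running the inequality chain through the Bochner integral for each fixed $x$ before taking the infimum. Your version has the minor advantage of avoiding any measurability hypothesis needed to invoke Jensen, and you are right to flag and settle the measurability of $s \mapsto \lambda_2(T(s))$ (via a countable dense family in $\Ccal^{\perp}$), a point the paper leaves implicit; note though that for your own chain of inequalities to conclude, you need the right-hand integral $\INTSeg{\lambda_2(T(s))}{s}{t}{t+\tau}$ to be well defined, so that measurability check is doing real work in your argument too, not just pedantry.
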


\begin{proof}
Let $\zeta \in [0,1]$ be a fixed parameter, $T_1,T_2 \in \Lpazo(L^2(I,\R^d))$ be two bounded operators and $\epsilon > 0$ be given. By the definition \eqref{eq:AlgebraicConnectivityInf1} of the algebraic connectivity, there exists $x_{\epsilon} \in \Ccal^{\perp}$ such that 
\begin{equation}
\label{eq:ConcavityEst1}
\frac{\big\langle \big( (1-\zeta) T_1 + \zeta T_2 \big) x_{\epsilon} , x_{\epsilon} \big\rangle_{L^2(I,\R^d))}}{\NormL{x_{\epsilon}}{2}{I,\R^d}^2} \leq \lambda_2 \Big( (1-\zeta) T_1 + \zeta T_2 \Big) + \epsilon. 
\end{equation}
Because $x_{\epsilon} \in \Ccal^{\perp}$, it also holds 
\begin{equation}
\label{eq:ConcavityEst2}
\lambda_2(T_1) \leq \frac{\big\langle T_1 x_{\epsilon} , x_{\epsilon} \big\rangle_{L^2(I,\R^d))}}{\NormL{x_{\epsilon}}{2}{I,\R^d}^2} \qquad \text{and} \qquad \lambda_2(T_2) \leq \frac{\big\langle T_2 x_{\epsilon} , x_{\epsilon} \big\rangle_{L^2(I,\R^d))}}{\NormL{x_{\epsilon}}{2}{I,\R^d}^2}, 
\end{equation}
and by combining the estimates of \eqref{eq:ConcavityEst1} and \eqref{eq:ConcavityEst2}, we recover the inequality 
\begin{equation*}
(1-\zeta) \lambda_2(T_1) + \zeta \lambda_2(T_2) \leq \lambda_2 \Big( (1-\zeta) T_1 + \zeta T_2 \Big) + \epsilon, 
\end{equation*}
which holds for every $\zeta \in [0,1]$ and each $T_1,T_2 \in \Lpazo(L^2(I,\R^d))$. Since $\epsilon > 0$ is arbitrary, we conclude that $\lambda_2 : \Lpazo(L^2(I,\R^d)) \rightarrow \R$ is concave. Given a curve $T \in L^2(\R_+,\Lpazo(I,\R^d))$ of operators, the identity \eqref{eq:IntegralIneqConcave} simply follows from an application of the measure-theoretic version of Jensen's inequality (see e.g. \cite[Lemma 1.15]{AmbrosioFuscoPallara}). 
\end{proof}

\begin{open}[Exponential consensus for algebraically persistent balanced topologies]
Do the conclusions of Theorem \ref{thm:ConsensusBalanced} still hold if one replaces the pointwise persistence condition \eqref{eq:PersistenceTopo2} on the algebraic connectivity by the more general one \eqref{eq:PersistenceTopo1} appearing in Theorem \ref{thm:ConsensusSym} ?
\end{open}


\subsubsection{Disjoint unions of strongly connected linear topologies}
\label{subsubsection:StronglyConnected}

In this third and last section dealing with the formation of $L^2$-consensus, we prove that a result similar to Theorem \ref{thm:ConsensusBalanced} holds for piecewise constant switching topologies describing disjoint unions of strongly connected components. As in Section \ref{subsubsection:Balanced} above, we make the assumption that $\phi(\cdot) \equiv 1$, so that $\Lbb(t,x) = \Lbb(t)$ for $\Lcal^1$-almost every $t \geq 0$ and any $x \in L^2(I,\R^d)$. 

In what follows, we assume that there exists an increasing sequence $(t_k)_{k \geq 0} \subset \R_+$, satisfying
\begin{equation}
\label{eq:DwellTime}
t_0 = 0 \qquad \text{and} \qquad t_{k+1} - t_k \geq \tau_d, 
\end{equation}
for a given \textit{dwell-time} $\tau_d > 0$, and such that $t \in [t_k,t_{k+1}) \mapsto a(t) \in L^{\infty}(I \times I,[0,1])$ is constant for each $k \geq 0$. We point to the reference monograph \cite[Section 3.2]{Liberzon2003} for a precise discussion of the role of dwell-times in the stability of switched systems.

\begin{thm}[Exponential $L^2$-consensus formation for strongly connected topologies]
\label{thm:ConsensusStrong}
Fix an initial datum $x^0 \in L^2(I,\R^d)$, assume that hypothesis \ref{hyp:GD}-$(i)$ holds and that the signal $t \in \R_+ \mapsto a(t) \in L^{\infty}(I \times I,[0,1])$ is piecewise constant and satisfies the dwell-time condition \eqref{eq:DwellTime}. Moreover, suppose that $a(t)$ defines a \textnormal{disjoint union of strongly connected components} in the sense of Definition \ref{def:Balanced} for $\Lcal^1$-almost every $t \geq 0$. Denoting by $v(t) \in L^{\infty}(I,\R_+^*)$ the canonical vector associated with $a(t) \in L^{\infty}(I \times I,[0,1])$ via Theorem \ref{thm:StrongConGraphon}, we further assume that there exists a triplet of coefficients $(\tau,\mu,\nu) \in \R_+^* \times (0,1]$ satisfying the inequality  
\begin{equation}
\label{eq:PersistenceTopo31}
\frac{2}{\mu \nu^2} \log \Big( \frac{1}{\nu} \Big) < \tau_d, 
\end{equation}
and which are such that
\begin{equation}
\label{eq:PersistenceTopo32}
\frac{1}{\tau} \INTSeg{\lambda_2(\Lbb_v(s))}{s}{t}{t+\tau} \geq \mu, 
\end{equation}
and
\begin{equation}
\label{eq:PersistenceTopo33}
\hspace{5.25cm} \nu \leq v(t,i) \leq \frac{1}{\nu} \qquad \text{for $\LcalI$-almost every $i \in I$}, 
\end{equation}
for all times $t \geq 0$. Here $\Lbb_v(s) := \Mpazo_{v(s)} \Lbb(s) \in \Lpazo(L^2(I,\R^d))$ and the algebraic connectivity $\lambda_2(\Lbb_v(s))$ is defined as in \eqref{eq:AlgebraicConnectivityInf2} for $\Lcal^1$-almost every $s \in [t,t+\tau]$. 

Then, there exists an element $x^{\infty} \in \Cpazo(x^0)$ and a constant $\alpha > 0$ depending only on $(\tau,\mu,\nu)$ such that 
\begin{equation}
\label{eq:ExpConsensus3}
\Norm{x(t) - x^{\infty}}_{L^2(I,\R^d)} \, \leq \alpha \NormL{x(0) - \bar{x}^0}{2}{I,\R^d} \exp \Big( - \big( \mu \nu^2 - \tfrac{2}{\tau_d} \log \big( \tfrac{1}{\nu} \big) \big) t \, \Big), 
\end{equation}
for all times $t \geq 0$. In particular, the solutions of \eqref{eq:GraphonLapDyn} exponentially converge to consensus in the $L^2(I,\R^d)$-norm topology. 
\end{thm}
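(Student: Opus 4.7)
The proof strategy extends the Lyapunov-based methodology of Theorem \ref{thm:ConsensusBalanced} to the present piecewise-constant switching setting, with the key added difficulties that the relevant Lyapunov function changes from one switching interval to the next, and that the standard barycenter $\bar{x}(t)$ is no longer conserved by the dynamics (since strongly connected topologies need not be balanced). The plan proceeds in three main stages: interval-wise dissipation, switching-time bookkeeping, and identification of the consensus point.

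\textbf{Stage 1 (dissipation on intervals).} On each subinterval $[t_k, t_{k+1})$ on which $a(\cdot)$ is constant, let $v_k \in L^{\infty}(I, \R_+^*)$ be the canonical Perron weight provided by Theorem \ref{thm:StrongConGraphon}, and set $c_k := \int_I v_k(i) x(t_k,i) \, \textnormal{d}i$ together with $V_k(t) := \int_I v_k(i) |x(t,i) - c_k|^2 \, \textnormal{d}i$ for $t \in [t_k, t_{k+1})$. Since $\Lbb^*(s) v_k = 0$ throughout this interval, the $v_k$-weighted average of $x(t,\cdot)$ is a constant of motion equal to $c_k$. Differentiating $V_k$ along the solution, using that $\Lbb_v$ annihilates constants to write $\langle \Lbb_v x, x\rangle_{L^2} = \langle \Lbb_v(x - \bar{x}), x - \bar{x}\rangle_{L^2} \geq \lambda_2(\Lbb_v) \NormL{x - \bar{x}}{2}{I,\R^d}^2$, and then using the pointwise bounds $\nu \leq v_k \leq 1/\nu$ in \eqref{eq:PersistenceTopo33} together with the minimization property of $\bar{x}_{v_k}$ to derive the comparison $\NormL{x - \bar{x}}{2}{I,\R^d}^2 \geq \nu V_k$, I would obtain the differential inequality $\dot V_k(t) \leq -2 \nu \lambda_2(\Lbb_v(t)) V_k(t)$, hence exponential decay of $V_k$ on $[t_k, t_{k+1})$.

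\textbf{Stage 2 (cost at switches and accumulation).} At each switching instant $t_{k+1}$, the optimality of $c_{k+1}$ in the $v_{k+1}$-weighted norm combined with \eqref{eq:PersistenceTopo33} yields the jump estimate
\[
V_{k+1}(t_{k+1}) \leq \int_I v_{k+1}(i) |x(t_{k+1},i) - c_k|^2 \, \textnormal{d}i \leq \nu^{-2} V_k(t_{k+1}^-).
\]
Iterating the interval dissipation of Stage 1 together with these multiplicative jumps, and using the persistence condition \eqref{eq:PersistenceTopo32} together with the dwell-time constraint \eqref{eq:DwellTime} -- which bounds the number $N(t)$ of switches up to time $t$ by at most $t/\tau_d + 1$ -- produces an inequality of the form $V_{N(t)}(x(t)) \leq V_0(x(0)) \exp(-R t + C)$ for some constant $C$ and an effective decay rate $R$ whose strict positivity is guaranteed precisely by \eqref{eq:PersistenceTopo31}. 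This balance between persistent dissipation and accumulated switching cost is the quantitative heart of the argument.

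\textbf{Stage 3 (convergence and conclusion).} To build the limit $x^{\infty}$, I would show that the sequence of weighted barycenters $(c_k)_{k \geq 0}$ is Cauchy: using the identity $\int_I (v_{k+1} - v_k) \, \textnormal{d}i = 0$ to rewrite $c_{k+1} - c_k = \int_I (v_{k+1}(i) - v_k(i))(x(t_{k+1},i) - c_k) \, \textnormal{d}i$ and combining the Cauchy-Schwarz inequality with the weight bounds yields $|c_{k+1} - c_k| \leq C_\nu \sqrt{V_k(t_{k+1}^-)}$ for some explicit constant $C_\nu > 0$, a sequence which is summable thanks to Stage 2. The limit $x^{\infty} := \lim_k c_k$ then belongs to $\Cpazo(x^0)$ because each $c_k$ is a convex combination (with weights $v_k(i) \, \textnormal{d}i$ integrating to one) of values in $\rg(x(t_k)) \subset \Cpazo(x^0)$, the latter inclusion being provided by Proposition \ref{prop:ConvexHull}, and $\Cpazo(x^0)$ is closed. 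Finally, the triangle-inequality decomposition $\NormL{x(t) - x^{\infty}}{2}{I,\R^d}^2 \leq 2 \nu^{-1} V_{N(t)}(x(t)) + 2 |c_{N(t)} - x^{\infty}|^2$ converts the exponential decay of both right-hand terms into the claimed bound \eqref{eq:ExpConsensus3}. The main obstacle will be the careful bookkeeping in Stage 2, where the interplay between the multiplicative $\nu^{-2}$ growth at switches, the dwell-time lower bound $\tau_d$, and the averaged lower bound $\mu$ on $\lambda_2(\Lbb_v)$ must be managed so as to recover the precise exponential rate $\mu \nu^2 - (2/\tau_d) \log(1/\nu)$ appearing in \eqref{eq:ExpConsensus3}.
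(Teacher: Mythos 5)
Your proposal follows the same Lyapunov blueprint as the paper's proof: the paper also builds a $v$-weighted variance $X_v(t)^2 = \B_{v(t)}(x(t),x(t))$, which on $[t_k,t_{k+1})$ coincides with your $V_k(t) := \int_I v_k|x(t)-c_k|^2\,\textnormal{d}i$ since the $v_k$-weighted average is conserved there and $c_k$ is precisely that average; the dissipation-plus-jump iteration and the dwell-time/persistence bookkeeping are likewise the same. The two small variations are worth noting. First, your comparison between $V_k$ and the unweighted variance uses the minimizing property of $c_k$ to get $V_k \le \nu^{-1}\|x-\bar x\|^2_{L^2}$, whereas the paper uses the pairwise-difference representation $X_v^2 = \tfrac12\int\int v(i)v(j)|x(i)-x(j)|^2$ and the two-sided bound on $v$ to get only $V_k \le \nu^{-2}\|x-\bar x\|^2_{L^2}$; your constant is sharper, so the stated decay rate $\mu\nu^2 - \tfrac{2}{\tau_d}\log(1/\nu)$ follows a fortiori (you would in fact obtain the better rate $\mu\nu - \tfrac{1}{\tau_d}\log(1/\nu)$). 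Second, you identify $x^\infty$ as the limit of the discrete sequence of weighted barycenters $(c_k)$, summing the jump increments with the identity $\int_I(v_{k+1}-v_k) = 0$, whereas the paper tracks the classical barycenter $\bar x(t)$ and bounds its total oscillation by $\int_{t_1}^{\infty} X(s)\,\textnormal{d}s$; both are correct and comparably short. One caveat: your appeal to Proposition \ref{prop:ConvexHull} to place $x^\infty$ in $\Cpazo(x^0)$ assumes $x^0 \in L^\infty(I,\R^d)$, which is not part of the theorem's hypotheses (this is a gap the paper's own proof also glosses over); for $x^0$ merely in $L^2$ the claim about $\Cpazo(x^0)$ would need a separate argument or a restriction of the hypothesis.
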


A discussion shedding light on the lower-bound imposed on $v(t)$ in \eqref{eq:PersistenceTopo33} is proposed in Remark \ref{rmk:LowerBound}, based on the results of Section \ref{subsection:Connectivity}.

\begin{proof}[Proof of Theorem \ref{thm:ConsensusStrong}]
Let it first be noted that $\Lbb_v(t) x \in \Ccal^{\perp}$ for $\Lcal^1$-almost every $t \geq 0$ and each $x \in L^2(I,\R^d)$, since 
\begin{equation}
\label{eq:v_average}
\begin{aligned}
\INTDom{(\Lbb_v(t)x)(i)}{I}{i} & = \INTDom{\INTDom{v(t,i)a(t,i,j)(x(i) - x(i))}{I}{j}}{I}{i} \\
& = \INTDom{\bigg(  \INTDom{\Big( a(t,i,j)v(t,i) - a(t,j,i) v(t,j) \Big)}{I}{j} \bigg) x(i)}{I}{i} = 0, 
\end{aligned}
\end{equation}
by Fubini's theorem along with the fact that $\Lbb(t)^*v(t) = 0$. Therefore, the weighted barycenters  
\begin{equation*}
\bar{x}_v : t \in \R_+ \mapsto \INTDom{v(t,i)x(t,i)}{I}{i} \in \R^d, 
\end{equation*}
associated with $v(\cdot)$ are constant on each time interval of the form $[t_k,t_{k+1})$ with $k \geq 0$. Following the proof strategy developed for Theorem \ref{thm:ConsensusBalanced}, we introduce weighted variance bilinear forms 
\begin{equation*}
\B_{v(t)}(x,y) := \langle \Mpazo_{v(t)} \, x,y \rangle_{L^2(I,\R^d)} - \langle \bar{x}_{v(t)}, \bar{y}_{v(t)} \rangle, 
\end{equation*}
defined for each $x,y \in L^2(I,\R^d)$ and $\Lcal^1$-almost every $t \geq 0$. In this context, one can check that
\begin{equation*}
\B_{v(t)} \Big( x(t) - \bar{x}_v(t),x(t) - \bar{x}_v(t) \Big) = \B_{v(t)}(x(t),x(t)), 
\end{equation*}
where both equalities rely on the fact that $\INTDom{v(t,i)}{I}{i} = 1$ for all times $t \geq 0$ by construction. Similarly, one can again use \eqref{eq:v_average} to verify that
\begin{equation*}
\langle \Lbb_v(t) x,x \rangle_{L^2(I,\R^d)} = \B_{v(t)}(\Lbb(t)x,x),  
\end{equation*}
for each $x \in L^2(I,\R^d)$ and $\Lcal^1$-almost every $t \geq 0$, so that the algebraic connectivity defined in \eqref{eq:AlgebraicConnectivityInf2} can be rewritten as 
\begin{equation*}
\lambda_2(\Lbb_v(t)) = \inf_{x \notin \Ccal} \frac{\B_{v(t)}(\Lbb(t)x,x)}{\B(x,x)} \geq 0.
\end{equation*}
The inequality on the right comes from the fact that $\Lbb(t)$ is positive semi-definite with respect to $\B_{v(t)}(\cdot,\cdot)$, which can be shown by a small modification of the argument in the proof of Proposition \ref{prop:Bilinear}. Finally, we introduce the piecewise constant weighted standard deviations, which are defined by 
\begin{equation*}
X_v(t) := \sqrt{\B_{v(t)}(x(t),x(t))}, 
\end{equation*}
for all times $t \geq 0$. Using again the fact that $\INTDom{v(t,i)}{I}{i} = 1$ for all times $t \geq 0$, one can show via routine computations that the weighted standard deviation can be alternatively expressed as 
\begin{equation*}
X_v(t) = \bigg( \INTDom{\INTDom{v(t,i) v(t,j) |x(t,i) - x(t,j)|^2}{I}{j}}{I}{i} \bigg)^{1/2}.
\end{equation*} 
Together with the bounds \eqref{eq:PersistenceTopo33} imposed on $v(t) \in L^{\infty}(I,\R_+^*)$, this allows us to recover the estimates
\begin{equation}
\label{eq:StrongExponential3}
\nu X(t) \leq X_v(t) \leq \frac{1}{\nu} X(t), 
\end{equation}
for all times $t \geq 0$, where $X(\cdot)$ stands for the usual standard deviation defined as in \eqref{eq:StandardDev1}.

Observe now that for each $k \geq 0$, the map $X_v(\cdot)$ is differentiable over $[t_k,t_{k+1})$, with 
\begin{equation}
\label{eq:StrongExponential1}
\begin{aligned}
\tderv{}{t}{} X_v(t) & = - \frac{1}{X_v(t)} \B_{v(t)}(\Lbb(t)x(t),x(t)) \leq - \frac{1}{X_v(t)} \lambda_2(\Lbb_v(t)) X(t)^2 \leq -  \nu^2 \lambda_2(\Lbb_v(t)) X_v(t). 
\end{aligned}
\end{equation}
Then, by a simple application of Gr\"onwall's lemma, it follows that
\begin{equation}
\label{eq:StrongExponential2}
X_v(t) \leq X_v(t_k) \exp\Big( \hspace{-0.05cm} - \nu^2 \lambda_2(\Lbb_v(t_k))(t-t_k) \Big), 
\end{equation}
for all times $t \in [t_k,t_{k+1})$ and each $k \geq 0$. It is worth noting that $X_v(\cdot)$ is in fact a discontinuous Lyapunov function, so that \eqref{eq:StrongExponential2} cannot be used directly to prove the convergence of the system to consensus. By combining \eqref{eq:StrongExponential2} and \eqref{eq:StrongExponential3}, we obtain the dissipation inequality 
\begin{equation*}
X(t) \leq \frac{1}{\nu} X(t_k) \exp\Big( \hspace{-0.05cm} -\nu^2 \lambda_2(\Lbb_v(t_k))(t-t_k) \Big), 
\end{equation*}
which holds for all times $t \in [t_k,t_{k+1})$ and each $k \geq 0$. Thus, by performing a simple induction argument and repeating the computations in \eqref{eq:IneqPersistence2} involving the persistence condition \eqref{eq:PersistenceTopo32}, we can in turn recover that
\begin{equation}
\label{eq:StrongExponential4}
X(t) \leq X(0) \frac{1}{\nu^{2(k+1)}} \exp \bigg( - \nu \INTSeg{\lambda_2(\Lbb_v(s))}{s}{0}{t} \bigg) \leq \tfrac{\exp(\mu \nu^2 \tau)}{\nu^2} X(0) \frac{\exp \big( - \mu \nu^2 \hspace{0.02cm} t \big)}{\nu^{2 t /\tau_d}}, 
\end{equation}
for all $t \in [t_k,t_{k+1})$, where we used the fact that $t_k \geq k \tau_d$ for each $k \geq 0$ as a consequence of the dwell-time assumption \eqref{eq:DwellTime}. Finally, owing to the constraint \eqref{eq:PersistenceTopo31} imposed on the dwell-time and connectivity coefficients, it holds in particular that
\begin{equation}
\label{eq:StrongExponential5}
X(t) \leq \tfrac{\exp(\mu \nu^2 \tau)}{\nu^2} X(0) \exp \Big( - \big( \mu \nu^2 - \tfrac{2}{\tau_d} \log(\tfrac{1}{\nu} \big) \big) t \, \Big) ~\underset{t \rightarrow +\infty}{\longrightarrow}~ 0, 
\end{equation}
so that the standard deviation $X(\cdot)$ exponentially converges to zero as $t \rightarrow +\infty$.  

Our goal now is to show that $x(\cdot)$ converges to consensus. To this end, consider two arbitrary instants $t_2  \geq t_1 \geq 0$, and observe that the classical barycenters of the curve $x(\cdot)$ are such that
\begin{equation}
\label{eq:StrongExponential6}
\begin{aligned}
|\bar{x}(t_2) - \bar{x}(t_1)| & \leq \INTDom{|x(t_2,i) - x(t_1,i)|}{I}{i} \\
& \leq \INTSeg{ \bigg( \INTDom{\INTDom{a(s,i,j) |x(s,j) - x(s,i)|}{I}{j}}{I}{i} \bigg)}{s}{t_1}{t_2}  \leq \INTSeg{X(s)}{s}{t_1}{+\infty} ~\underset{t_1 \rightarrow +\infty}{\longrightarrow}~ 0, 
\end{aligned} 
\end{equation}
where we used Fubini's theorem and Cauchy-Schwarz's inequality along with the decay estimate of \eqref{eq:StrongExponential5}. This implies that the curve $\bar{x}(\cdot) \in C^0(\R_+,\R^d)$ converges exponentially to some limit point $x^{\infty} \in \R^d$ as $t \rightarrow +\infty$. By combining \eqref{eq:StrongExponential5} and \eqref{eq:StrongExponential6}, we can conclude that 
\begin{equation*}
\NormL{x(t) - x^{\infty}}{2}{I,\R^d} \leq X(t) \, + \NormL{\bar{x}(t) - x^{\infty}}{2}{I,\R^d} \leq \alpha X(0) \exp \Big( - \big( \mu \nu^2 - \tfrac{2}{\tau_d} \log(\tfrac{1}{\nu} \big) \big) t \, \Big), 
\end{equation*}
for a given constant $\alpha > 0$ that only depends on the coefficients $(\tau,\mu,\nu)$, so that solutions of \eqref{eq:GraphonLapDyn} exponentially converge to consensus in the $L^2(I,\R^d)$-norm topology. 
\end{proof}

\begin{open}[Asymptotic consensus formation without dwell-times]
Would a weaker form of Theorem \ref{thm:ConsensusStrong} still hold for general time-varying signals that do not satisfy a dwell-time condition? 
\end{open}


\section{Further discussions around connectivity and consensus formation}
\label{section:Discussion}
\setcounter{equation}{0} \renewcommand{\theequation}{\thesection.\arabic{equation}}

In what follows, we elaborate on the properties of the algebraic connectivity defined in Section \ref{subsection:Algebraic}, as well as on its interplay with consensus formation. In Section \ref{subsection:Connectivity}, we prove that interaction topologies which are either symmetric or written as disjoint unions of strongly connected components are strongly connected if and only if their algebraic connectivity is positive. Then, in Section \ref{subsection:L2Linfty}, we prove that the formation of $L^2$- and $L^{\infty}$-consensus are equivalent when the in-degree function is persistent in a suitable sense, and close the manuscript by a series of numerical illustrations presented in Section \ref{subsection:Numerics} 


\subsection{Link between algebraic and graphon connectivity}
\label{subsection:Connectivity}

In Section \ref{subsection:L2Consensus}, we have shown that the notion of algebraic connectivity proposed in Section \ref{subsection:Algebraic} allows for a systematic investigation of consensus formations for various kinds of time-dependent graphon models. As recalled above, the classical notion of algebraic connectivity encodes important structural properties of the underlying graph, including information about its connectivity. Our aim in this section is to discuss the counterpart of these results for graphon models. 

We start by a technical lemma concerning the existence of a largest strongly connected component containing a point in a graphon. In the sequel, for the sake of conciseness, we shall say that two points $i,j \in I$ are \textit{$m$-connected} if $i$ can be linked to $j$ and reciprocally $j$ can be linked to $i$ in the sense of Definition \ref{def:StrongCon}-$(a)$ by directed chains of Lebesgue points of length at most $m \geq 1$ . 

\begin{lem}[Existence of largest strongly connected components]
\label{lem:StrongComponent}
Let $\Apazo \in \Lpazo(L^2(I,\R^d))$ and $i \in I$ be a Lebesgue point of the interaction kernel $a \in L^{\infty}(I \times I,[0,1])$. Then, there exists a strongly connected set $\Spazo(i) \subset I$ in the sense of Definition \ref{def:StrongCon}-$(a)$ that contains $i$, and which is maximal in the sense that 
\begin{equation*}
\LcalI(J \setminus \Spazo(i)) = 0, 
\end{equation*}
for any other strongly connected set $J \subset I$ containing $\Spazo(i)$. 
\end{lem}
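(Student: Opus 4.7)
The plan is to construct $\Spazo(i)$ as the ``mutual reachability class'' of $i$ and then verify that it is strongly connected and essentially maximal. Fixing a Lebesgue point $i\in I$ of $a$, I define
\begin{equation*}
\Spazo(i) := \{i\} \cup \Big\{ j \in I \setminus \{i\} ~\text{s.t.}~ \text{$j$ and $i$ are mutually linkable by directed chains as in Definition \ref{def:StrongCon}-(a)} \Big\}.
\end{equation*}
Here, by ``mutually linkable'' I mean that there exist finite sequences $(l_k)_{k=1}^m, (l'_k)_{k=1}^{m'} \subset I$ with $l_1 = i, l_m = j$ (respectively $l'_1 = j, l'_{m'} = i$) such that each successive point is a Lebesgue point lying in the support of the previous slice of $a$. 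By the standard Lebesgue differentiation theorem, the set of Lebesgue points of $a$ has full measure in $I$, so discarding the null set of bad points is harmless.

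The first step is to check that $\Spazo(i)$ is indeed strongly connected in the sense adapted from Definition \ref{def:StrongCon}-(a), namely that for each pair of points $j,k \in \Spazo(i)$ there is a directed chain from $j$ to $k$ whose intermediate vertices all belong to $\Spazo(i)$. Concatenating a chain from $j$ to $i$ with a chain from $i$ to $k$ produces a candidate chain; the only subtle point is verifying that every intermediate vertex $l$ is itself in $\Spazo(i)$. But if $l$ appears on a chain from $j$ to $i$, then $l$ reaches $i$ along the tail of that very chain, while $i$ reaches $l$ by first traveling to $j$ (using $j \in \Spazo(i)$) and then along the head of the chain up to $l$. An analogous argument handles intermediate vertices on a chain from $i$ to $k$, and this gives closure of $\Spazo(i)$ under the chain relation.

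The second step is maximality. Let $J \subset I$ be any other strongly connected set (in the same adapted sense) with $\Spazo(i) \subset J$. For any Lebesgue point $j \in J$, strong connectivity of $J$ provides chains from $j$ to $i$ and from $i$ to $j$ lying entirely in $J$; these chains in particular realise mutual linkability between $j$ and $i$, so $j \in \Spazo(i)$ by definition. Consequently $J \setminus \Spazo(i)$ is contained in the set of non-Lebesgue points of $a$, which has $\LcalI$-measure zero, proving the claimed maximality.

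The main obstacle is essentially bookkeeping: Definition \ref{def:StrongCon}-(a) is stated globally on $I$, so I would have to state upfront the natural adapted notion of strong connectivity for a measurable subset $J \subset I$ (chains required to stay in $J$) and verify that the two notions agree on $I$ itself. A secondary care point is measurability of $\Spazo(i)$; this can be handled by writing $\Spazo(i)$ as a countable union over chain length $m \geq 1$ of sets defined through iterated projections of Borel sets (using that supports of $a(l,\cdot)$ vary measurably in $l$), ensuring $\Spazo(i)$ is $\LcalI$-measurable and thus that the measure-theoretic statement of maximality is meaningful.
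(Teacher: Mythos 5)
Your construction of $\Spazo(i)$ as the mutual reachability class is exactly the set the paper builds, and your maximality argument reaches the same conclusion; the proposal is correct and follows essentially the same approach. The only cosmetic difference is that the paper writes $\Spazo(i)$ as the increasing union $\bigcup_N \Spazo_N(i)$ of ``$m$-connected for some $m \leq N$'' classes and derives maximality via a measure-continuity/limit argument on $\LcalI(J \setminus \Spazo_N(i))$, whereas you argue directly that every Lebesgue point of $J$ is mutually linkable to $i$ and hence already in $\Spazo(i)$ — a slightly more streamlined route to the same conclusion, and you additionally flag (correctly) that Definition~\ref{def:StrongCon}-$(a)$ is phrased globally and would benefit from an explicit restricted notion for subsets.
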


\begin{proof}
Given $N \geq 1$, let $\Spazo_N(i) \in \Ppazo(I)$ be defined as
\begin{equation*}
\Spazo_N(i) := \Big\{ j \in I ~\text{s.t}~~ \text{$i$ and $j$ are $m$-connected for some $m \leq N$} \Big\}, 
\end{equation*}
and consider the union over $N \geq 1$ of these sets
\begin{equation*}
\Spazo(i) := \bigcup_{N \geq 1} \Spazo_N(i).
\end{equation*}
By construction, it is clear that $\Spazo(i)$ is a strongly connected subset of $I$ in the sense of Definition \ref{def:StrongCon}-$(a)$. Let $J \subset I$ be a strongly connected set containing $\Spazo(i)$ and suppose that $\LcalI(J \setminus \Spazo(i)) > 0$, which can be equivalently rewritten as
\begin{equation}
\label{eq:ContradictionStrong}
\LcalI \bigg( J \, \setminus \, \Big( \bigcup_{N \geq 1} \Spazo_N(i) \Big) \bigg) = \LcalI \bigg( \bigcap_{N \geq 1} \Big( J \setminus \Spazo_N(i) \Big) \bigg) = \lim_{N \rightarrow + \infty} \LcalI(J \setminus \Spazo_N(i)) > 0, 
\end{equation}
since the sequence of sets $(\Spazo_N(i))_{N \geq 1} \subset \Ppazo(I)$ is increasing. Then for every $N \geq 1$ large enough, the identity \eqref{eq:ContradictionStrong} implies that there exists a subset $J_N \subset J$ satisfying $\LcalI(J_N) \geq \LcalI(J \setminus \Spazo(i)) > 0$, whose elements cannot be connected to $i$ by a path of length at most $N \geq 1$. This contradicts our primary assumption that $J$ is strongly connected, and shows that $\Spazo(i)$ is maximal.
\end{proof}

This technical result being established, we can provide a first correspondence between the positivity of the algebraic connectivity and graphon connectivity for symmetric interaction topologies. 

\begin{thm}[On the connectivity of undirected graphons]
\label{thm:UndirectedConnectivity}
Let $\Apazo \in \Lpazo(L^2(I,\R^d))$ be a symmetric adjacency operator and $\Lbb \in \Lpazo(L^2(I,\R^d))$ be the corresponding graph-Laplacian. Then, the interaction topology described by $\Apazo$ is strongly connected in the sense of Definition \ref{def:StrongCon} if and only if $\lambda_2(\Lbb) > 0$. 
\end{thm}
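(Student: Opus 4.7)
The plan is to prove the equivalence as two separate implications, using spectral theory together with a rigidity result for $\ker(\Lbb)$ for the forward direction, and explicit constructions of quasi-kernel elements via the contrapositive for the backward direction.

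For the forward direction (strong connectedness $\Rightarrow \lambda_2(\Lbb) > 0$), the first step will be to observe that $\Apazo$ is Hilbert--Schmidt since $a \in L^\infty(I \times I,[0,1])$, hence compact. Weyl's theorem on the stability of essential spectra under compact perturbations, combined with Proposition~\ref{prop:Spectrum}, then yields
\[
\sigma_{\ess}(\Lbb) \,=\, \sigma_{\ess}(\Mpazo_d) \,=\, \rg(d) \,\subset\, [\delta,1],
\]
where the positive lower bound $\delta > 0$ is provided by condition (b) of Definition~\ref{def:StrongCon}; in particular $0 \notin \sigma_{\ess}(\Lbb)$. Since $\Lbb$ is self-adjoint and positive semi-definite by Proposition~\ref{prop:Bilinear} and $\Ccal^\perp$ is $\Lbb$-invariant, one has $\lambda_2(\Lbb) = \min \sigma(\Lbb |_{\Ccal^\perp})$. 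Arguing by contradiction, $\lambda_2(\Lbb) = 0$ would force $0$ to be an isolated eigenvalue producing some $x^\star \in \Ccal^\perp \setminus \{0\}$ with $\Lbb x^\star = 0$. Expanding the associated energy
\[
0 \,=\, \langle \Lbb x^\star, x^\star \rangle_{L^2(I,\R^d)} \,=\, \tfrac{1}{2} \INTDom{\INTDom{a(i,j)\,|x^\star(i) - x^\star(j)|^2}{I}{j}}{I}{i},
\]
it will suffice to argue coordinate-wise that any scalar $y \in L^2(I,\R)$ satisfying $a(i,j)(y(i)-y(j))^2 = 0$ for $(\LcalI \otimes \LcalI)$-a.e.\ $(i,j)$ must be constant almost everywhere; combined with $x^\star \in \Ccal^\perp$ this would deliver $x^\star = 0$, a contradiction.

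For the backward direction I will argue by contrapositive. If condition (b) fails, so that $\inf_I d = 0$, then for every $\epsilon > 0$ there is a measurable set $E \subset I$ with $0 < \LcalI(E) \leq 1/2$ and $d(i) < \epsilon$ for $\LcalI$-a.e.\ $i \in E$. Choosing $x_\epsilon := (\mathds{1}_E - \LcalI(E))\,e$ for any unit vector $e \in \R^d$, a direct calculation using the symmetry of $a$ yields
\[
\langle \Lbb x_\epsilon , x_\epsilon \rangle_{L^2(I,\R^d)} \,=\, \INTDom{\bigg( d(i) - \INTDom{a(i,j)}{E}{j} \bigg)}{E}{i} \,\leq\, \epsilon\,\LcalI(E),
\]
while $\NormL{x_\epsilon}{2}{I,\R^d}^2 = \LcalI(E)(1-\LcalI(E))$, so that $\lambda_2(\Lbb) \leq \epsilon/(1-\LcalI(E)) \to 0$. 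If instead condition (a) fails, the symmetry of $\Apazo$ turns chain-connectivity into a symmetric, reflexive, and transitive equivalence relation; Lemma~\ref{lem:StrongComponent} will then produce a partition (up to null sets) $I = I_1 \sqcup I_2$ with both pieces of positive measure and $a = 0$ a.e.\ on $(I_1 \times I_2) \cup (I_2 \times I_1)$. The function $x := (\LcalI(I_2)\mathds{1}_{I_1} - \LcalI(I_1)\mathds{1}_{I_2})\,e$ will then lie in $\Ccal^\perp \setminus \{0\}$ and satisfy $\Lbb x \equiv 0$ a.e., again forcing $\lambda_2(\Lbb) = 0$.

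The main obstacle will be the rigidity step in the forward direction, namely converting the $(\LcalI \otimes \LcalI)$-a.e.\ annihilation $a(i,j)(y(i)-y(j))^2 = 0$ into the pointwise statement that the level sets of $y$ are unions of strongly connected components. The chains in Definition~\ref{def:StrongCon}-(a) involve specific points $l_k$ that are not measure-theoretically representative, so my plan is to suppose $y$ is nonconstant, fix a threshold $c$ such that $E_1 := \{y > c\}$ and $E_2 := \{y \leq c\}$ both have positive measure (whence $a = 0$ a.e.\ on $E_1 \times E_2 \cup E_2 \times E_1$), and show that for $\LcalI$-a.e.\ $i_0 \in E_1$ the maximal strongly connected component $\Spazo(i_0)$ supplied by Lemma~\ref{lem:StrongComponent} satisfies $\LcalI(\Spazo(i_0) \cap E_2) = 0$. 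The key single-step propagation is that if $a(l_k,\cdot) = 0$ $\LcalI$-a.e.\ on $E_2$ and $l_{k+1} \in E_2$ is simultaneously a density-$1$ point of $E_2$ and a Lebesgue point of $a(l_k,\cdot)$, then Lebesgue differentiation forces $a(l_k,l_{k+1}) = 0$, precluding $l_{k+1} \in \supp(a(l_k,\cdot))$. Judiciously choosing the chain endpoints in density-$1$ subsets of $E_1$ and $E_2$, iterating this propagation along the chain, and invoking strong connectedness should then force $\LcalI(E_2) = 0$, closing the argument.
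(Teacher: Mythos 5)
Your spectral reduction in the forward direction (compactness of $\Apazo$, Weyl's theorem giving $\sigma_{\ess}(\Lbb) = \rg(d) \subset [\delta,1]$, and the conclusion that $\lambda_2(\Lbb)=0$ would produce a nonzero $x^\star \in \Ccal^\perp \cap \ker\Lbb$), as well as your two explicit constructions in the contrapositive direction, are sound and closely track the paper's own argument for the ``converse implication.'' The paper, however, does not prove the rigidity step of the direct implication but defers to the measure-theoretic argument of Boudin et al.\ cited there, and that is precisely where your proposal breaks down.

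The ``single-step propagation'' contains a false inference: from $a(l_k,l_{k+1})=0$ (as Lebesgue value) you deduce $l_{k+1}\notin\supp(a(l_k,\cdot))$, but the essential support of an $L^\infty$ function is a closed set that routinely contains Lebesgue points with zero Lebesgue value. For instance, if $E_2$ is a fat Cantor set (positive measure, empty interior) and $a(l_k,\cdot)=\mathds{1}_{I\setminus E_2}$, then $\supp(a(l_k,\cdot))=I$, so every density-$1$ point of $E_2$ has Lebesgue value $0$ yet lies in the support; the chain requirement $l_{k+1}\in\supp(a(l_k,\cdot))$ is then not violated by your computation. A second, independent difficulty --- which you flagged but did not resolve --- is that the intermediate nodes $l_2,\dots,l_{m-1}$ are produced by the existential quantifier in Definition~\ref{def:StrongCon}-(a) and are not under your control: the hypothesis ``$a(l_k,\cdot)=0$ a.e.\ on $E_2$'' holds only for $\LcalI$-a.e.\ $l_k\in E_1$, with an exceptional null set depending on $l_k$, and nothing forces the given chain to cross from $E_1$ into $E_2$ at a ``good'' node, so the iteration does not close. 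Unless $\supp$ is reinterpreted as the set of Lebesgue points with strictly positive Lebesgue value --- a nonstandard convention the paper does not declare --- the rigidity argument as written does not go through, and one must instead invoke the cited reference or replace the pointwise-chain argument by one that reasons only with measure-theoretic objects.
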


\begin{proof}
\textbf{Converse implication.} We start by assuming that $\Apazo \in \Lpazo(L^2(I,\R^d))$ is an adjacency operator for which $\lambda_2(\Lbb) > 0$, and we will prove that condition $(b)$ of Definition \ref{def:StrongCon} holds, namely $\inf_{i \in I} \INTDom{a(i,j)}{I}{j} > 0$. We recall that the in-degree function $d \in L^{\infty}(I,[0,1])$ associated with $\Apazo$ writes
\begin{equation*}
d(i) := \INTDom{a(i,j)}{I}{j}, 
\end{equation*}
for $\LcalI$-almost every $i \in I$, and that $\sigma_{\ess}(\Mpazo_d) = \rg(d)$ as a consequence of Proposition \ref{prop:Spectrum}. Moreover, let it be noted that  
\begin{equation*}
\sigma_{\ess}(\Lbb) = \sigma_{\ess}(\Mpazo_d - \Apazo) = \sigma_{\ess}(\Mpazo_d), 
\end{equation*}
because $\Apazo \in \Lpazo(L^2(I,\R^d))$ is a compact operator (see e.g. \cite[Section VI.5]{ReedI1981}), and the essential spectrum is invariant under compact perturbations (see e.g. \cite[Example XIII.4.3]{ReedIV1978}). Then, by the min-max theorem for self-adjoint operators (see e.g. \cite[Theorem XIII.1]{ReedIV1978}), we obtain the lower-bound
\begin{equation*}
0 < \lambda_2(\Lbb) \leq \inf \Big\{ \lambda \in \R ~\text{s.t.}~ \lambda \in \sigma_{\ess}(\Lbb) \Big\},
\end{equation*}
on the essential spectrum of the graph-Laplacian operator $\Lbb$. In particular, it necessarily holds that $\inf_{i \in I} d(i) = \inf_{i \in I} \INTDom{a(i,j)}{I}{j} > 0$ whenever $\lambda_2(\Lbb) > 0$. 

We turn our attention to condition $(a)$ of Definition \ref{def:StrongCon}. Assume by contradiction that it is violated, namely that the disconnection set
\begin{equation}
\label{eq:Disconnection}
\begin{aligned}
& \Dcal := \bigg\{ (i,j) \in I \times I ~~\text{s.t. $i$ and $j$ are not $m$-connected for each $m \geq 1$} \bigg\}, 
\end{aligned}
\end{equation}
has positive $\LcalII$-measure. We claim that in this case, there exists a subset $\Spazo \subset I$ such that 
\begin{equation*}
\LcalI(\Spazo) > 0, \quad \LcalI(I \setminus \Spazo) > 0 \quad \text{and} \quad a(i,j) = 0 ~~ \text{for $\LcalII$-almost every $(i,j) \in \Spazo \times I \setminus \Spazo$}.
\end{equation*}
To prove this assertion, we consider for each $i \in I$ the (possibly empty) sliced disconnection set
\begin{equation*}
\Dcal(i) := \Big\{ j \in I ~\text{s.t.}~ (i,j) \in \Dcal \Big\}, 
\end{equation*}
and observe that by Fubini's theorem, there exists a subset $J \subset I$ of positive $\LcalI$-measure such that $\LcalI(\Dcal(i)) > 0$ for each $i \in J$. Given an element $i \in J$, let $\Spazo(i) \subset I$ be the largest strongly connected component containing $i$, which exists by Lemma \ref{lem:StrongComponent}, and notice that 
\begin{equation*}
\LcalI(\Spazo(i)) > 0, 
\end{equation*}
since $\inf_{i \in I} \INTDom{a(i,j)}{I}{j} > 0$ and $a \in L^{\infty}(I \times I,[0,1])$ is symmetric. In addition, one also has that $\LcalI(\Spazo(i) \cap \Dcal(i)) = 0$ by definition \eqref{eq:Disconnection} of the disconnection set, which then implies  
\begin{equation*}
\LcalI(I \setminus \Spazo(i)) > 0.
\end{equation*}
Finally by the maximality of $\Spazo(i)$, it necessarily holds that 
\begin{equation*}
a(k,l) = 0 \qquad \text{for $\LcalII$-almost every $(k,l) \in \Spazo(i) \times I \setminus \Spazo(i)$}, 
\end{equation*}
as we could otherwise build a strictly larger strongly connected set containing $\Spazo(i)$. Armed with this construction, we consider the nontrivial element $x \in \Ccal^{\perp} \subset L^2(I,\R^d)$ defined by 
\begin{equation}
\label{eq:KernelConstruction}
x := \frac{1}{\LcalI(\Spazo(i))} \mathds{1}_{\Spazo(i)} - \frac{1}{\LcalI(I \setminus \Spazo(i))} \mathds{1}_{I \setminus \Spazo(i)}, 
\end{equation}
and observe that 
\begin{equation*}
\begin{aligned}
\langle \Lbb \, x , x \rangle_{L^2(I,\R^d)} & = \INTDom{\INTDom{a(k,l) \langle x(k) , x(k) - x(l) \rangle}{I}{l}}{I}{k} \\
& = \INTDom{\INTDom{a(k,l) \langle x(k) , x(k) - x(l) \rangle}{I}{l}}{\Spazo(i)}{k} + \INTDom{\INTDom{a(k,l) \langle x(k) , x(k) - x(l) \rangle}{I}{l}}{I \setminus \Spazo(i)}{k} \\
& = \INTDom{\INTDom{a(k,l) \langle x(k) , x(k) - x(l) \rangle}{\Spazo(i)}{l}}{\Spazo(i)}{k} + \INTDom{\INTDom{a(k,l) \langle x(k) , x(k) - x(l) \rangle}{I \setminus \Spazo(i)}{l}}{I \setminus \Spazo(i)}{k} \\
& = 0,  
\end{aligned}
\end{equation*}
which contradicts the fact that $\lambda_2(\Lbb) > 0$. Thus, we can conclude that the interaction topology defined by a symmetric adjacency operator $\Apazo \in \Lpazo(L^2(I,\R^d))$ is strongly connected as soon as $\lambda_2(\Lbb) > 0$. 

\paragraph*{Direct implication.} We now assume that the interaction topology defined by the symmetric adjacency operator $\Apazo \in \Lpazo(L^2(I,\R^d))$ is strongly connected in the sense of Definition \ref{def:StrongCon}. By Definition \ref{def:Spectrum} and Proposition \ref{prop:Spectrum}, the fact that $\delta := \inf_{i \in I} \INTDom{a(i,j)}{I}{j} > 0$ implies that $\sigma_{\ess}(\Lbb) \subset [\delta,1]$. Recalling that the elements of $\Ccal$ belong to the kernel of $\Lbb$, it follows from the min-max theorem that either $\lambda_2(\Lbb) > 0$, or there exists a nontrivial element $x \in \Ccal^{\perp}$ for which $\langle \Lbb \, x,x \rangle_{L^2(I,\R^d)} = 0$. By repeating the general argument of \cite[Section 3.1]{Boudin2022}, it can however be shown that the latter case is impossible.
\end{proof}

We end this section by showing how the proof strategy devised for the proof of Theorem \ref{thm:StrongConGraphon} generalises to topologies representable as a disjoint union of strongly connected components. 

\begin{thm}[On the connectivity of directed graphons]
\label{thm:DirectedConnectivity}
Let $\Apazo \in \Lpazo(L^2(I,\R^d))$ be an adjacency operator defining a disjoint union of strongly components in the sense of Definition \ref{def:StrongCon}, and $v \in L^{\infty}(I,\R_+^*)$ be the canonical eigenvector given by Theorem \ref{thm:StrongConGraphon}. Then, the interaction topology described by $\Apazo$ is strongly connected if and only if $\lambda_2(\Lbb_v) > 0$, and in this case it holds that $\inf_{i \in I} v(i) \geq \lambda_2(\Lbb_v)$. 
\end{thm}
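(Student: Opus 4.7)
The strategy is to exploit the canonical vector $v$ to reduce the analysis of $\Lbb_v$ to that of a symmetric graph-Laplacian, and then to invoke Theorem \ref{thm:UndirectedConnectivity}. The first step will be to establish the identity
\begin{equation*}
\langle \Lbb_v \, x, x \rangle_{L^2(I,\R^d)} = \frac{1}{2} \INTDom{\INTDom{v(i) a(i,j) |x(i) - x(j)|^2}{I}{j}}{I}{i},
\end{equation*}
obtained by expanding $\langle x(i), x(i) - x(j)\rangle$ via the polarisation identity and using the relation $v(i) d(i) = \INTDom{v(j) a(j,i)}{I}{j}$ provided by $\Lbb^* v = 0$ to cancel, through Fubini's theorem, the cross terms $|x(i)|^2 - |x(j)|^2$. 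By symmetry of $|x(i) - x(j)|^2$ in $(i,j)$, the same quantity rewrites as $\tfrac{1}{2} \INTDom{\INTDom{\tilde{a}(i,j) |x(i) - x(j)|^2}{I}{j}}{I}{i}$ with $\tilde{a}(i,j) := \tfrac{1}{2}(v(i) a(i,j) + v(j) a(j,i))$ symmetric. Denoting by $\tilde{\Lbb} \in \Lpazo(L^2(I,\R^d))$ the symmetric graph-Laplacian with kernel $\tilde{a}$, one then has $\langle \Lbb_v x, x\rangle = \langle \tilde{\Lbb}\, x, x\rangle$ for every $x \in L^2(I,\R^d)$, and in turn $\lambda_2(\Lbb_v) = \lambda_2(\tilde{\Lbb})$.

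For the converse implication, I will argue the contrapositive: assuming that $\Apazo$ decomposes as the disjoint union of at least two strongly connected components, I select one such component $I_1$ and test the quadratic form against the nontrivial mean-free function
\begin{equation*}
x := \frac{1}{\LcalI(I_1)} \mathds{1}_{I_1} - \frac{1}{\LcalI(I \setminus I_1)} \mathds{1}_{I \setminus I_1} \in \Ccal^{\perp}.
\end{equation*}
The integrand $v(i) a(i,j) |x(i) - x(j)|^2$ then vanishes on pairs inside the same component -- where $x$ is constant -- and on pairs across distinct components -- where $a$ vanishes by Definition \ref{def:StrongCon} -- forcing $\langle \Lbb_v x, x \rangle = 0$ and hence $\lambda_2(\Lbb_v) = 0$. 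For the direct implication, I will apply Theorem \ref{thm:UndirectedConnectivity} to $\tilde{\Lbb}$: since $v > 0$, the support of $\tilde{a}(i,\cdot)$ contains that of $a(i,\cdot)$, so any $a$-chain of Lebesgue points from $i$ to $j$ remains an $\tilde{a}$-chain, and $\INTDom{\tilde{a}(i,j)}{I}{j} = v(i) d(i)$ by yet another use of $\Lbb^* v = 0$, which verifies the quantitative lower bound of Definition \ref{def:StrongCon} as soon as $v$ is uniformly positive.

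The bound $\inf_{i \in I} v(i) \geq \lambda_2(\Lbb_v)$ -- which in turn will provide the uniform positivity of $v$ needed in the step above -- will be extracted from the essential spectrum of the symmetric part $\Lbb_v^{\sym} = \Mpazo_{vd} - \tilde{\Apazo}$. The operator $\tilde{\Apazo}$ being an integral operator with bounded, hence square-integrable, kernel is compact, so Weyl's theorem on the stability of $\sigma_{\ess}$ under compact perturbations combined with Proposition \ref{prop:Spectrum} yields $\sigma_{\ess}(\Lbb_v^{\sym}) = \rg(vd)$. As $\Ccal$ is a closed subspace of finite codimension $d$, the Courant-Fischer min-max principle applied to the bounded self-adjoint operator $\Lbb_v^{\sym}$ gives
\begin{equation*}
\lambda_2(\Lbb_v) = \inf_{x \in \Ccal^{\perp}} \frac{\langle \Lbb_v^{\sym} \, x, x\rangle_{L^2(I,\R^d)}}{\NormL{x}{2}{I,\R^d}^2} \, \leq \, \inf \sigma_{\ess}(\Lbb_v^{\sym}) = \inf_{i \in I} v(i) d(i) \, \leq \, \inf_{i \in I} v(i),
\end{equation*}
where the last inequality uses $d \leq 1$.

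The principal obstacle will be the logical ordering between the direct implication and the bound, since the application of Theorem \ref{thm:UndirectedConnectivity} to $\tilde{a}$ formally requires a priori the uniform positivity of $v$ in order to verify condition (b) of Definition \ref{def:StrongCon}. A natural way around this is to first establish $\inf_i v(i) > 0$ independently via a Perron-Frobenius argument iterating the eigenvector equation $v(i) d(i) = \INTDom{a(j,i) v(j)}{I}{j}$ along chains of Lebesgue points -- exploiting $d \geq \delta$, $\INTDom{v(i)}{I}{i} = 1$ and the strong connectivity of $\Apazo$ -- after which the rest of the argument unfolds as described.
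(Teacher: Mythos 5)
Your symmetrization identity $\langle \Lbb_v \, x, x \rangle_{L^2(I,\R^d)} = \tfrac{1}{2}\INTDom{\INTDom{v(i) a(i,j) |x(i) - x(j)|^2}{I}{j}}{I}{i}$ and the resulting reduction $\lambda_2(\Lbb_v) = \lambda_2(\tilde{\Lbb})$ with symmetric kernel $\tilde{a}(i,j) := \tfrac{1}{2}(v(i)a(i,j) + v(j)a(j,i))$ are correct, as are the contrapositive converse argument (which matches the paper's construction \eqref{eq:KernelConstruction} and even avoids the essential-spectrum step for condition $(b)$ by exploiting the standing decomposition hypothesis) and the essential-spectrum derivation of the bound $\inf_{i \in I} v(i) \geq \lambda_2(\Lbb_v)$, which is precisely what the paper does.

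The genuine gap is in the direct implication. The paper handles this by directly citing the arguments of \cite[Section~3.1]{Boudin2022}, whereas you try to route it through Theorem \ref{thm:UndirectedConnectivity} applied to $\tilde{\Lbb}$. Two problems arise. First, a Lebesgue point of $a(i,\cdot)$ need not be a Lebesgue point of $\tilde{a}(i,\cdot) = \tfrac{1}{2}(v(i)a(i,\cdot) + v(\cdot)a(\cdot,i))$ because of the extra term $j \mapsto v(j)a(j,i)$, so the claim that any $a$-chain remains a $\tilde{a}$-chain requires a verification you skip. Second, and more seriously, Definition \ref{def:StrongCon}-$(b)$ for $\tilde{a}$ reads $\inf_i v(i)d(i) > 0$, which needs $\inf_i v(i) > 0$ as an input. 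You correctly flag the circularity --- the bound $\inf v \geq \lambda_2(\Lbb_v)$ only gives positivity of $\inf v$ once $\lambda_2(\Lbb_v) > 0$ is known, which is exactly the conclusion of the direct implication --- but the proposed resolution is not adequate. Iterating the relation $v(i)d(i) = \INTDom{a(j,i)v(j)}{I}{j}$ along chains of Lebesgue points cannot, as stated, yield a uniform lower bound on $v$: such chains carry no Lebesgue measure, and Definition \ref{def:StrongCon} imposes no uniform control on chain lengths or on the mass of $a$ along them, so the argument would produce pointwise positivity at best. Theorem \ref{thm:StrongConGraphon} only supplies $v > 0$ almost everywhere together with $\NormL{v}{\infty}{I,\R_+^*} \leq 1/\delta$, and nothing in the paper establishes $\inf_i v(i) > 0$ prior to knowing $\lambda_2(\Lbb_v) > 0$. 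Your detour therefore introduces a genuine obstruction that the paper's direct citation never has to face, and the sketch you give does not close it.
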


\begin{proof}
As already explained in the proof of Theorem \ref{thm:UndirectedConnectivity} above, the direct implication of our claim can be deduced from the arguments of \cite[Section 3.1]{Boudin2022} for general strongly connected graphons. Hence, we will focus on the converse one and assume that $\lambda_2(\Lbb_v) > 0$. Observe first that the definition \eqref{eq:AlgebraicConnectivityInf2} of the algebraic connectivity can be rewritten as 
\begin{equation*}
\lambda_2(\Lbb_v) = \inf_{x \in \Ccal^{\perp}} \frac{\langle \Lbb_v \, x , x \rangle_{L^2(I,\R^d)}}{\Norm{x}_{L^2(I,\R^d)}^2} = \inf_{x \in \Ccal^{\perp}} \frac{\langle  \Lbb_v^{\sym} x , x \big\rangle_{L^2(I,\R^d)}}{\Norm{x}_{L^2(I,\R^d)}^2}, 
\end{equation*}
where we denoted by $\Lbb_v^{\sym} = \tfrac{1}{2}(\Lbb_v + \Lbb_v^*) \in \Lpazo(L^2(I,\R^d))$ the symmetric part of the operator $\Lbb_v$. The latter is a self-adjoint operator by construction, whose expression can be given explicitly as 
\begin{equation*}
\Lbb_v^{\sym} = \Mpazo_v \Mpazo_d - \tfrac{1}{2} \Big( \Mpazo_v \, \Apazo + \Apazo^* \Mpazo_v \Big), 
\end{equation*}
where we recall that $\Mpazo_d \in \Lpazo(L^2(I,\R^d))$ is the multiplication operator by the in-degree function defined as in \eqref{eq:InDegreeDef}. Moreover, it can be checked straightforwardly that the operator $\Mpazo_v \, \Apazo + \Apazo^* \Mpazo_v$ is compact, and by reproducing the argument detailed in the proof of Theorem \ref{thm:UndirectedConnectivity} above, one has 
\begin{equation*}
\sigma_{\ess}(\Lbb_v^{\sym}) = \sigma_{\ess}(\Mpazo_v \Mpazo_d) = \rg(v d).
\end{equation*}
Upon noticing that $\Lbb_v^* \, x = 0$ for each $x \in \Ccal$, we obtain by an application of the min-max theorem 
\begin{equation}
\label{eq:DirectedConnectivity1}
0 < \lambda_2(\Lbb_v) \leq \inf_{i \in I} v(i) \INTDom{a(i,j)}{I}{j}.
\end{equation}
Recalling that $\NormL{v}{\infty}{I,\R_+^*} \leq 1/\delta$ where $\delta \in (0,1]$ is such that \eqref{eq:LowerBoundDisjoint} of Definition \ref{def:StrongCon} holds, one has
\begin{equation*}
\inf_{i \in I} \INTDom{a(i,j)}{I}{j} \geq \delta \lambda_2(\Lbb_v) > 0, 
\end{equation*}
which yields Definition \ref{def:StrongCon}-$(b)$. Using the fact that $a \in L^{\infty}(I \times I,[0,1])$, we can also deduce from \eqref{eq:DirectedConnectivity1} that 
\begin{equation*}
\inf_{i \in I} v(i) \geq \lambda_2(\Lbb_v) > 0. 
\end{equation*}
There now remains to show that the conditions of Definition \ref{def:StrongCon}-$(a)$ holds. To this end, we again assume by contradiction that the disconnection set $\Dcal \subset I \times I$ defined as in \eqref{eq:Disconnection} has positive $\LcalII$-measure, and choose an element $i \in I$ for which $\LcalI(\Dcal(i)) > 0$ where $\Dcal(i) = \{ j \in I ~\text{s.t.}~ (i,j) \in \Dcal \}$. Observe that by construction, $i \in I_N$ for some $N \geq 1$. Thus by Lemma \ref{lem:StrongComponent}, the largest strongly connected component $\Spazo(i) \subset I$ containing $i$ is necessarily given by $\Spazo(i) = I_N$, and
\begin{equation*}
\LcalI(\Spazo(i)) = \LcalI(I_N) > 0.
\end{equation*}
Moreover, the fact that $\Lcal(\Spazo(i) \cap \Dcal(i)) = 0$ yields
\begin{equation*}
\LcalI(I \setminus \Spazo(i)) > 0,
\end{equation*}
and our standing assumption that the interaction topology is a disjoint union of strongly connected components further means 
\begin{equation*}
a(k,l) = 0 \qquad \text{for $\LcalII$-almost every $(k,l) \in \Spazo(i) \times I \setminus \Spazo(i)$}. 
\end{equation*}
Then, by defining the non-trivial vector $x \in \Ccal^{\perp}$ as in \eqref{eq:KernelConstruction}, we can conclude that the condition $\lambda_2(\Lbb_v) = 0$ is violated, which ends the proof of our claim. 
\end{proof}

\begin{rmk}[On the lower-bound \eqref{eq:PersistenceTopo33} imposed on the canonical eigenvectors in Theorem \ref{thm:ConsensusStrong}] 
\label{rmk:LowerBound}
By applying Theorem \ref{thm:DirectedConnectivity} on every strongly connected set $I_n$ while using \eqref{eq:DirectedConnectivity1}, it can be checked that 
\begin{equation*}
0 < \tfrac{1}{\LcalI(I_n)} \lambda_2(\Lbb_{v_n}) \leq \inf_{i_n \in I_n} v_n(i_n), 
\end{equation*}
where $\Lbb_{v_n} := \Mpazo_{v_n} \, \Lbb_n \in \Lpazo(L^2(I_n,\R^d))$ denotes the restriction of the weighted graph-Laplacian to $I_n$ for each $n \geq 1$. Thus, the lower-bound \eqref{eq:PersistenceTopo33} holds whenever there exists a constant $\nu > 0$ such that 
\begin{equation*}
\lambda_2(\Lbb_{v_n}) \geq \nu \LcalI(I_n), 
\end{equation*}
for each $n \geq 1$. This condition -- that is in fact reminiscent of \eqref{eq:LowerBoundDisjoint} -- heuristically means that the total amount of interactions occurring within each of the countably many strongly connected components must be comparable to their size.  
\end{rmk}


\subsection{Equivalence between $L^2$- and $L^{\infty}$-consensus for strongly connected graphons}
\label{subsection:L2Linfty}

In this section, we prove an interesting and quite unexpected feature of consensus formation in graphon models, namely that convergence towards consensus in the $L^2$-norm topology implies convergence in the $L^{\infty}$-norm topology as soon as an average version of the lower-bound on the in-degree function imposed in Definition \ref{def:StrongCon}-$(b)$ holds.

\begin{thm}[Equivalence between $L^2$- and $L^{\infty}$-consensus formation]
\label{thm:Equivalence}
Let $x^0 \in L^{\infty}(I,\R^d)$ be such that $\NormL{x^0}{\infty}{I,\R^d} \leq R$ for some $R > 0$, assume that hypotheses \ref{hyp:GD} hold, and denote by $x(\cdot) \in \Lip_{\loc}(\R_+,L^2(I,\R^d))$ the corresponding solution of \eqref{eq:GraphonDynamics}. Moreover, suppose that there exists a pair of coefficients $(\tau,\mu) \in \R_+^* \times (0,1]$ such that the persistence condition
\begin{equation}
\label{eq:PersistenceInfimum}
\frac{1}{\tau} \INTSeg{\INTDom{a(s,i,j)}{I}{j}}{s}{t}{t+\tau} \geq \mu, 
\end{equation}
holds for all times $t \geq 0$ and $\LcalI$-almost every $i \in I$. Then, one has that
\begin{equation}
\label{eq:EquivalenceConvergence}
\NormL{x(t) - x^{\infty}}{\infty}{I,\R^d} ~\underset{t \rightarrow +\infty}{\longrightarrow}~ 0 \qquad \text{if and only if} \qquad \NormL{x(t) - x^{\infty}}{2}{I,\R^d} ~\underset{t \rightarrow +\infty}{\longrightarrow}~ 0,
\end{equation}
for any given element $x^{\infty} \in \Cpazo(x^0)$.
\end{thm}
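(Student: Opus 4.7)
The forward implication $L^\infty \Rightarrow L^2$ is immediate since $\LcalI(I) = 1 < +\infty$, so the content is to show that $L^2$-convergence implies $L^\infty$-convergence. Because $x^\infty \in \Cpazo(x^0) \subset \R^d$ is a constant vector, the translated curve $y(t,i) := x(t,i) - x^\infty$ solves the very same graphon dynamics \eqref{eq:GraphonDynamics} with initial datum $y^0 := x^0 - x^\infty \in L^\infty(I,\R^d)$, since both the magnitudes $|y(t,i) - y(t,j)| = |x(t,i) - x(t,j)|$ and the differences $y(t,j) - y(t,i) = x(t,j) - x(t,i)$ are translation-invariant. Thanks to Proposition \ref{prop:WellPosed}, one has $\NormL{y(t)}{\infty}{I,\R^d} \leq \NormL{y^0}{\infty}{I,\R^d} \leq 2R$ for all $t \geq 0$, so all interagent distances remain in $[0,2R]$ and the constant $\gamma_R := \min_{r \in [0,2R]} \phi(r) > 0$ provides a uniform lower bound on $\phi$ along the trajectory.

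The next step is to derive a pointwise differential inequality on $u(t,i) := |y(t,i)|^2$. Using the same Scorza--Dragoni regularisation procedure as in the proofs of Propositions \ref{prop:WellPosed} and \ref{prop:ConvexHull} to legitimise that $t \mapsto u(t,i)$ is absolutely continuous for $\LcalI$-a.e.\ $i \in I$, and applying the elementary inequality $\langle y(t,j), y(t,i) \rangle \leq \tfrac{1}{2}(|y(t,i)|^2 + |y(t,j)|^2)$ inside the scalar product $\langle \partial_t y(t,i), y(t,i) \rangle$, one readily obtains
\begin{equation*}
\partial_t u(t,i) + \gamma_R \bigg( \INTDom{a(t,i,j)}{I}{j} \bigg) u(t,i) \leq c_\phi \NormL{y(t)}{2}{I,\R^d}^2,
\end{equation*}
for $\Lcal^1$-a.e.\ $t \geq 0$ and $\LcalI$-a.e.\ $i \in I$, where the right-hand side uses $\INTDom{a(t,i,j) \phi(\cdot) |y(t,j)|^2}{I}{j} \leq c_\phi \INTDom{|y(t,j)|^2}{I}{j}$ while the left-hand side exploits $\INTDom{a(t,i,j) \phi(\cdot)}{I}{j} \geq \gamma_R \INTDom{a(t,i,j)}{I}{j}$.

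Applying the integrating-factor method together with the persistence hypothesis \eqref{eq:PersistenceInfimum} -- which via a discretisation identical to \eqref{eq:IneqPersistence2} yields $\INTSeg{\INTDom{a(\sigma,i,j)}{I}{j}}{\sigma}{s}{t} \geq \mu(t - s - \tau)$ for all $0 \leq s \leq t$ and $\LcalI$-a.e.\ $i \in I$ -- leads to the uniform estimate
\begin{equation*}
u(t,i) \leq \NormL{y^0}{\infty}{I,\R^d}^2 \, e^{\gamma_R \mu \tau} e^{-\gamma_R \mu t} + c_\phi \, e^{\gamma_R \mu \tau} \INTSeg{\NormL{y(s)}{2}{I,\R^d}^2 \, e^{-\gamma_R \mu (t-s)}}{s}{0}{t},
\end{equation*}
holding for all $t \geq 0$ and $\LcalI$-a.e.\ $i \in I$. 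The conclusion then follows from a classical convolution argument: for fixed $\epsilon > 0$, one chooses $T \geq 0$ such that $\NormL{y(s)}{2}{I,\R^d}^2 < \epsilon$ for $s \geq T$, and splits the above integral at $T$ to bound the contribution of $[0,T]$ by a quantity of order $e^{-\gamma_R \mu (t-T)}$ and that of $[T,t]$ by $\epsilon / (\gamma_R \mu)$. Passing first to $t \to +\infty$ and then to $\epsilon \to 0^+$ yields $\NormL{y(t)}{\infty}{I,\R^d}^2 \to 0$, which is precisely the desired $L^\infty$-convergence.

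The main technical nuisance -- rather than any deep obstacle -- will be the careful justification of the pointwise absolute continuity of $t \mapsto u(t,i)$ at $\LcalI$-a.e.\ index, and of the measurability in $i$ of the integrands appearing in the Duhamel formula; both are handled exactly via the Scorza--Dragoni machinery developed in Section \ref{section:Preliminaries} and employed repeatedly throughout the manuscript.
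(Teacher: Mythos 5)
Your proof is correct, but it takes a genuinely different route from the one in the paper, even though both hinge on the same key pointwise differential inequality for $u(t,i) := |x(t,i) - x^\infty|^2$ combined with the persistence condition \eqref{eq:PersistenceInfimum}. The difference lies in how the ``off-diagonal'' coupling term $\INTDom{a(t,i,j)\phi(\cdot)\langle y(t,i),y(t,j)\rangle}{I}{j}$ is controlled. The paper first fixes $\epsilon > 0$, uses Chebyshev's inequality to show that the bad index set $I \setminus I_\epsilon(t) := \{ j : |x(t,j) - x^\infty| > \epsilon\}$ has measure at most $\epsilon^2$ for $t \geq T_\epsilon$, splits the integral accordingly to bound this term by a constant of order $\epsilon$, and only then applies Gr\"onwall; the resulting pointwise estimate thus only holds for $t \geq T_\epsilon$. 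You instead apply Young's inequality $2\langle a,b\rangle \leq |a|^2 + |b|^2$ to absorb part of this term into the dissipative one and push the remainder into the source $c_\phi\NormL{y(t)}{2}{I,\R^d}^2$, which is valid for all $t \geq 0$, and then close the argument with a Duhamel representation plus a standard convolution-splitting argument. Your route is arguably cleaner: it avoids the introduction of the sublevel sets $I_\epsilon(t)$ and of Chebyshev's inequality, gives a global-in-time estimate, and makes transparent that the only input from the $L^2$-hypothesis is the decay of $\NormL{y(\cdot)}{2}{I,\R^d}^2$ as a source in a scalar ODE with uniformly persistent damping. Your preliminary translation step $y = x - x^\infty$, justified by the translation invariance of \eqref{eq:GraphonDynamics}, is also a nice simplification the paper does without. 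The technical remarks about absolute continuity of $t \mapsto u(t,i)$ are appropriate, though the Scorza--Dragoni machinery is overkill here: since $x(\cdot) \in \Lip_{\loc}(\R_+, L^2(I,\R^d))$ and the right-hand side of \eqref{eq:GraphonDynamics} holds $\Lcal^1 \times \LcalI$-a.e., Fubini's theorem already implies $t \mapsto x(t,i)$ is locally absolutely continuous for $\LcalI$-a.e.\ $i$, which is what the integration needs.
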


\begin{proof}
The direct implication in \eqref{eq:EquivalenceConvergence} is trivial since the $L^2$-norm is bounded from above by the $L^{\infty}$-norm on the compact set $I := [0,1]$. Hence, we only need to prove the converse implication. To this end, we define for every $\epsilon > 0$ and $\Lcal^1$-almost every $t \geq 0$ the sets 
\begin{equation*}
I_{\epsilon}(t) := \Big\{ i \in I ~\text{s.t.}~ |x(t,i) - x^{\infty}| \leq \epsilon \Big\}, 
\end{equation*}
and observe that by Chebyshev's inequality (see e.g. \cite[Remark 1.18]{AmbrosioFuscoPallara}), one has that
\begin{equation*}
\LcalI(I \setminus I_{\epsilon}(t)) = \LcalI \Big( \Big\{ i \in I ~\text{s.t.}~ |x(t,i) - x^{\infty}| > \epsilon \Big\}\Big) \, \leq \, \frac{1}{\epsilon^2} \INTDom{|x(t,i) - x^{\infty}|^2}{I}{i}, 
\end{equation*}
for every $\epsilon > 0$. Hence by the $L^2$-convergence posited in the right-hand side of \eqref{eq:EquivalenceConvergence}, there exists a time horizon $T_{\epsilon} \geq 0$ such that $\LcalI(I \setminus I_{\epsilon}(t)) \leq \epsilon^2$ for $\Lcal^1$-almost every $t \geq T_{\epsilon}$. Recalling that $x(\cdot)$ is a solution of the graphon dynamics \eqref{eq:GraphonDynamics}, it also holds
\begin{equation*}
\begin{aligned}
\tfrac{1}{2} \tderv{}{t}{} |x(t,i) - x^{\infty}|^2 & = \INTDom{a(t,i,j) \phi(|x(t,i) - x(t,j)|) \big\langle x(t,i) - x^{\infty} , x(t,j) - x(t,i) \big\rangle}{I}{j} \\
& = - \bigg( \INTDom{a(t,i,j) \phi(|x(t,i) - x(t,j)|)}{I}{j} \bigg) |x(t,i) - x^{\infty}|^2 \\
& \hspace{0.45cm} + \INTDom{a(t,i,j) \phi(|x(t,i) - x(t,j)|) \big\langle x(t,i) - x^{\infty} , x(t,j) - x^{\infty} \big\rangle}{I}{j} \\
& \leq -\gamma_R \bigg( \INTDom{a(t,i,j)}{I}{j} \bigg) |x(t,i) - x^{\infty}|^2  \\
& \hspace{0.45cm} + c_{\phi} \bigg( \INTDom{|x(t,j) - x^{\infty}|}{I_{\epsilon}(t)}{j} + \INTDom{|x(t,j) - x^{\infty}|}{I \setminus I_{\epsilon}(t)}{j} \bigg) |x(t,i) - x^{\infty}| \\
& \leq -\gamma_R \bigg( \INTDom{a(t,i,j)}{I}{j} \bigg) |x(t,i) - x^{\infty}|^2 + c_{\phi} \epsilon R(1 + \epsilon R),
\end{aligned} 
\end{equation*}
for $\Lcal^1$-almost every $t \geq T_{\epsilon}$ and $\LcalI$-almost every $i \in I$, where we recall that $c_{\phi} = \sup_{r \in \R_+} \phi(r) < +\infty$ and $\gamma_R = \min_{r \in [0,R]} \phi(2r)$. Then by Gr\"onwall's lemma, one further has
\begin{equation*}
\begin{aligned}
|x(t,i) - x^{\infty}|^2 & \leq |x(T_{\epsilon},i) - x^{\infty}|^2 \exp \bigg( - 2 \gamma_R \INTSeg{\INTDom{a(s,i,j)}{I}{j}}{s}{T_{\epsilon}}{t} \bigg) \\
& \hspace{0.45cm} + c_{\phi} \epsilon R(1 + \epsilon R) \INTSeg{\exp \Bigg( - 2 \gamma_R \INTSeg{\INTDom{a(\sigma,i,j)}{I}{j}}{\sigma}{s}{t}\Bigg)}{s}{T_{\epsilon}}{t}, 
\end{aligned}
\end{equation*}
and by using the persistence condition \eqref{eq:PersistenceInfimum} while repeating the computations of \eqref{eq:IneqPersistence2}, we obtain
\begin{equation}
\label{eq:PointwiseEst1}
\begin{aligned}
|x(t,i) - x^{\infty}|^2 & \leq R^2 \exp \Big( - 2 \gamma_R \mu(t-T_{\epsilon} - \tau)  \Big) \\
& \hspace{0.45cm} +  c_{\phi} \epsilon R(1 + \epsilon R) \exp(\gamma_R \mu \tau) \Bigg( \frac{1 - \exp \big( -2 \gamma_R \mu (t - T_{\epsilon}) \big) }{\gamma_R \mu} \Bigg) \\
& \leq R^2 \exp \Big( - 2 \gamma_R \mu(t-T_{\epsilon} - \tau)  \Big) + \tfrac{\exp(\gamma_R \mu \tau)}{\gamma_R \mu} c_{\phi} \epsilon R(1 + \epsilon R),  
\end{aligned}
\end{equation}
for all times $t \geq T_{\epsilon}$ and $\LcalI$-almost every $i \in I$. Thus given a parameter $\delta > 0$, we can choose $\epsilon > 0$ and then $t \geq T_{\epsilon}$ in such a way that 
\begin{equation*}
\tfrac{\exp(\gamma_R \mu \tau)}{\gamma_R \mu} c_{\phi} \epsilon R(1 + \epsilon R) \leq \frac{\delta^2}{2} \qquad \text{and} \qquad R^2 \exp \Big( - 2 \gamma_R \mu(t-T_{\epsilon} - \tau)  \Big) \leq \frac{\delta^2}{2}, 
\end{equation*}
which means that we can find a time horizon $T_{\delta} \geq T_{\epsilon}$ for which
\begin{equation*}
|x(t,i) - x^{\infty}| \leq \delta \qquad \text{for $\LcalI$-almost every $i \in I$}, 
\end{equation*}
whenever $t \geq T_{\delta}$. This is equivalent to stating that $\NormL{x(t) - x^{\infty}}{\infty}{I,\R^d}$ goes to 0 as $t \rightarrow +\infty$, which concludes the proof of our claim. 
\end{proof}

\begin{rmk}[On the persistence assumption \eqref{eq:PersistenceInfimum}]
It can be checked that the lower-bound \eqref{eq:PersistenceInfimum} on the average value of the in-degree function holds true e.g. when the algebraic connectivity is itself persistent -- as in Theorem \ref{thm:ConsensusBalanced} or Theorem \ref{thm:ConsensusStrong} --, or more generally if the interaction topology is algebraically persistent as in Theorem \ref{thm:ConsensusSym}. This is a again a consequence of the min-max theorem, as we recall that by \eqref{eq:DirectedConnectivity1}, it holds that
\begin{equation*}
\begin{aligned}
\lambda_2(\Lbb_v) & \leq \inf \Big\{\lambda \in \R ~\text{s.t.}~ \lambda \in \sigma_{\ess}(\Lbb_v^{\sym}) \Big\} \\
& = \inf_{i \in I} \INTDom{v(i) a(i,j)}{I}{j} \\
& \leq~ \NormL{v}{\infty}{I,\R^d} \inf_{i \in I} \INTDom{a(i,j)}{I}{j}, 
\end{aligned}
\end{equation*}
for $\Lcal^1$-almost every $t \geq 0$. Here, $v \in L^{\infty}(I,\R_+^*)$ is the canonical eigenvector given either by Theorem \ref{thm:StrongConGraphon} if the topology is a disjoint union of strongly connected components, or by $v \equiv 1$ if the topology is either balanced or symmetric. 
\end{rmk}


\subsection{Numerical illustrations}
\label{subsection:Numerics}

In this last section, we provide illustrations of our results on a series of relevant examples. We start by showing positive exponential consensus results, first for a topology with persistent scrambling, then for a balanced topology with positive connectivity, and finally for a symmetric nonlinear topology that is algebraically persistent. We conclude by an example of balanced topology with null connectivity for which a non-exponential consensus only with respect to the $L^2$-norm seems to arise numerically. 


\paragraph*{Exponential consensus under persistent scrambling.}

In the following paragraphs, we start by exemplifying the consensus result established in Theorem \ref{thm:ConsensusDiam} of Section \ref{section:DiamConsensus}. To this end, we fix a real parameters $T > 0$ and an integer $n \geq 1$, and for any time $t \geq 0$ we introduce the notation ${\texttt{t}} := t-\lfloor t/T \rfloor \in [0,T)$. We then define the interaction function $a \in L^{\infty}(\R_+ \times I \times I,[0,1])$ by 
\begin{equation}
\label{eq:LeaderModel}
a(t,i,j) := \left\{
\begin{aligned}
& 1 - 2 n \bigg\lvert j - \frac{2n \, \texttt{t} + T}{2n T} \bigg\lvert ~~ & \text{if}~~ j \in \Big[ \tfrac{\vt}{T} , \tfrac{\vt+1}{T} \Big] ~~\text{and}~~ \vt \in \Big[ 0 , \tfrac{n-1}{n} T \Big), \\
& 0 ~~ & \text{otherwise},
\end{aligned}
\right.
\end{equation}
for all times $t \geq 0$ and every $i,j \in I$. 

\begin{figure}[!ht]
\hspace{-0.6cm}
\begin{tikzpicture}
\draw (0,0) node {\includegraphics[scale=0.15]{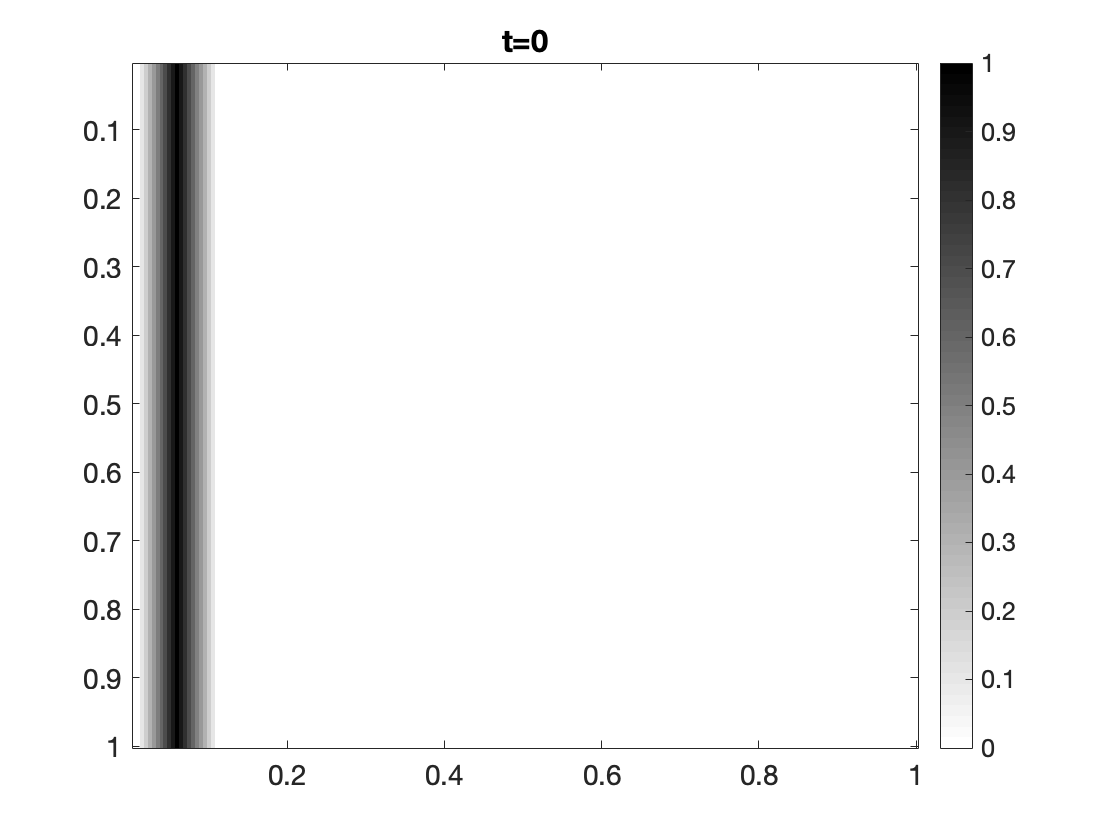}};
\draw (6,0) node {\includegraphics[scale=0.15]{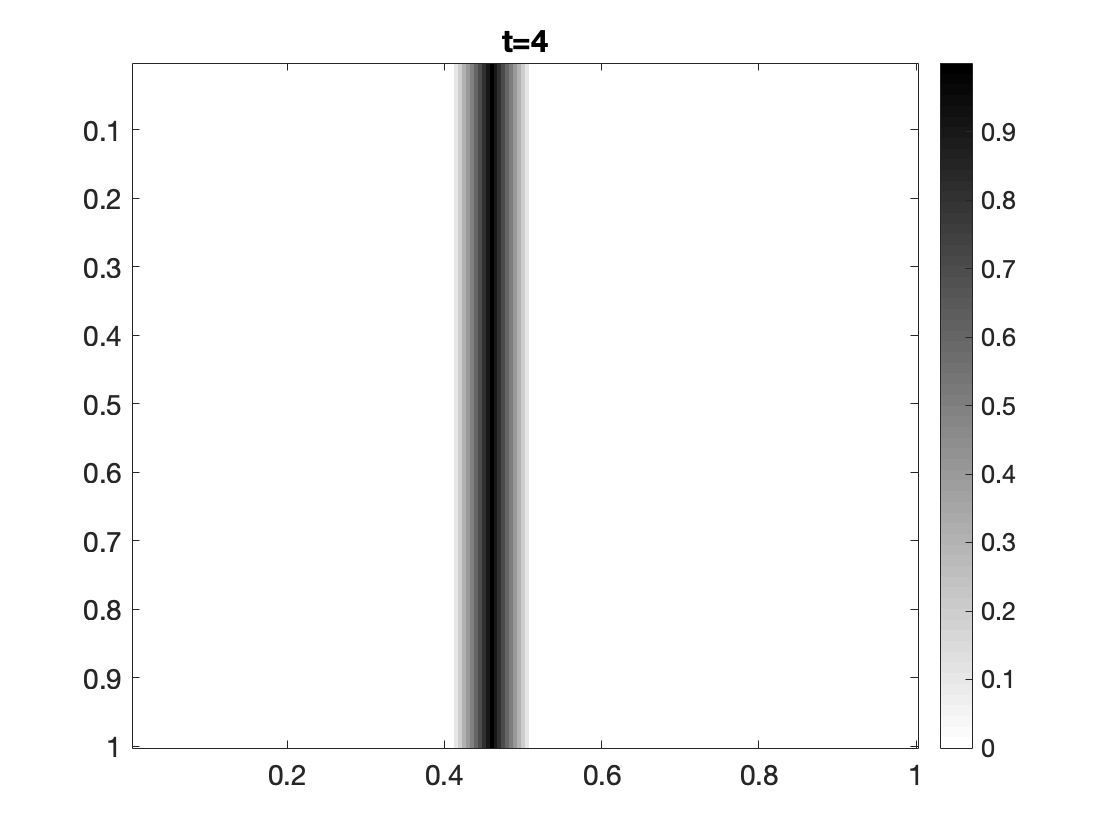}};
\draw (12,0) node {\includegraphics[scale=0.15]{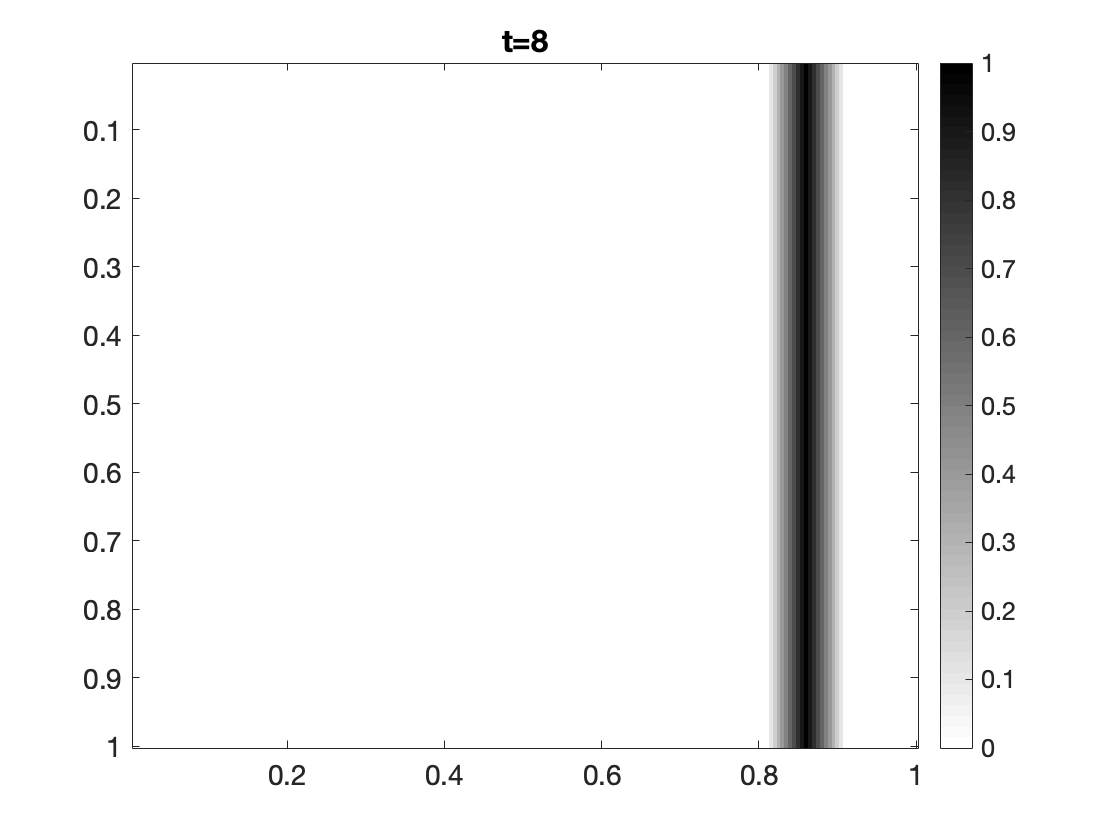}};
\draw (14,0) node {\textcolor{white}{a}}; 
\end{tikzpicture}
\caption{{\small \textit{Representation of the interaction function $(t,i,j) \in \R_+ \times I \times I \mapsto a(t,i,j) \in [0,1]$ defined in \eqref{eq:LeaderModel} with $T = n = 10$ at times $t = 0$ (left), $t = 4$ (center) and $t = 8$ (right).}}}
\label{fig:KernelLeader}
\end{figure}

A first observation is that the value of $a(t,i,j)$ defined in \eqref{eq:LeaderModel} is independent of $i \in I$, so that all the agents follow the same subset of elements at all times $t \geq 0$. As illustrated in Figure \ref{fig:KernelLeader}, this is a natural generalisation to the graphon setting of leader-follower dynamics (compare with the leftmost example in Figure \ref{fig:Scrambling}), in which the leaders are modelled as a time-varying subset of agents with positive measure. Moreover, notice that the signal $t \in \R_+ \mapsto a(t) \in L^{\infty}(I \times I,[0,1])$ is $T$-periodic, and that the graphon is completely disconnected whenever $\vt \in \big( \tfrac{n-1}{n} T , T \big)$. In this context, we can explicitly compute the scrambling coefficients, since
\begin{equation*}
\eta(\Apazo(t)) = \inf_{i,j \in I} \INTDom{\min \big\{ a(t,i,k) , a(t,j,k) \big\}}{I}{k} = \inf_{i \in I} \INTDom{a(t,i,k)}{I}{k} = \frac{T}{2n}, 
\end{equation*}
whenever $t \geq 0$ is such that $\vt \in \big[0,\tfrac{n-1}{n}T \big)$. On the other hand, it is clear that $\eta(\Apazo(t)) = 0$ if $\vt \in \big[ \tfrac{n-1}{n}T , T \big)$. Thus for every parameter $\tau > T/n$, one can check that for all times $t \geq 0$, it holds
\begin{equation*}
\frac{1}{\tau} \INTSeg{\eta(\Apazo(s))}{s}{t}{t+\tau} \, \geq \, \frac{1}{\tau} \INTSeg{\eta(\Apazo(s))}{s}{(n-1)T/n}{T + (\tau - T/n)} \, \geq \, \frac{T(n \tau - T)}{2 \tau n^2}, 
\end{equation*}
so that the persistence condition \eqref{eq:PersistenceScramb} of Theorem \ref{thm:ConsensusDiam} is satisfied. Therefore, we expect that the system will converge exponentially to consensus with respect to the $L^{\infty}$-norm, a fact that is illustrated by the numerical simulations displayed in Figure \ref{fig:ConsensusScramb} below. Therein, we fix the parameters $T = n = 10$ and start from the initial configuration $x^0 : i \in I \mapsto \sin^2(4i) \in [0,1]$.

\begin{figure}[!ht]
\hspace{-0.6cm}
\begin{tikzpicture}
\draw (0,0) node {\includegraphics[scale=0.3]{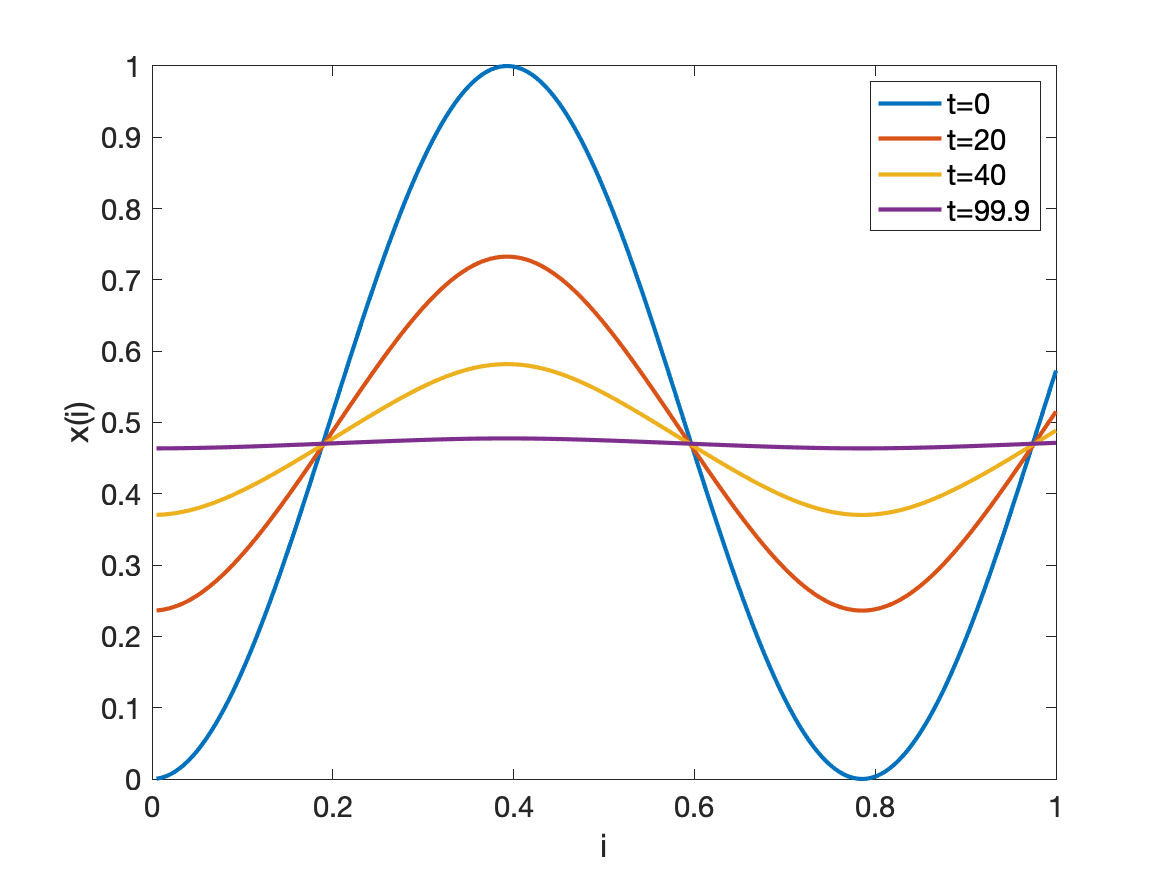}};
\draw (6,0) node {\includegraphics[scale=0.3]{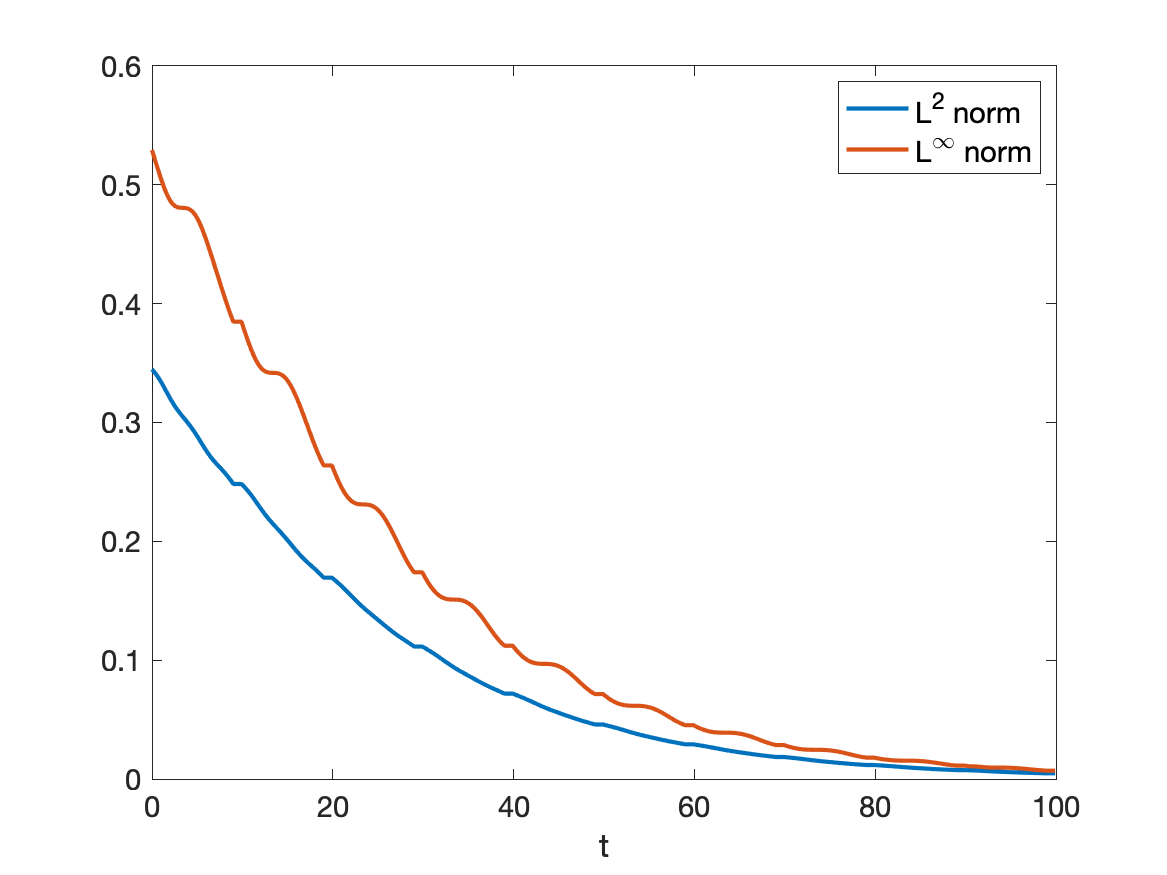}};
\draw (12,0) node {\includegraphics[scale=0.3]{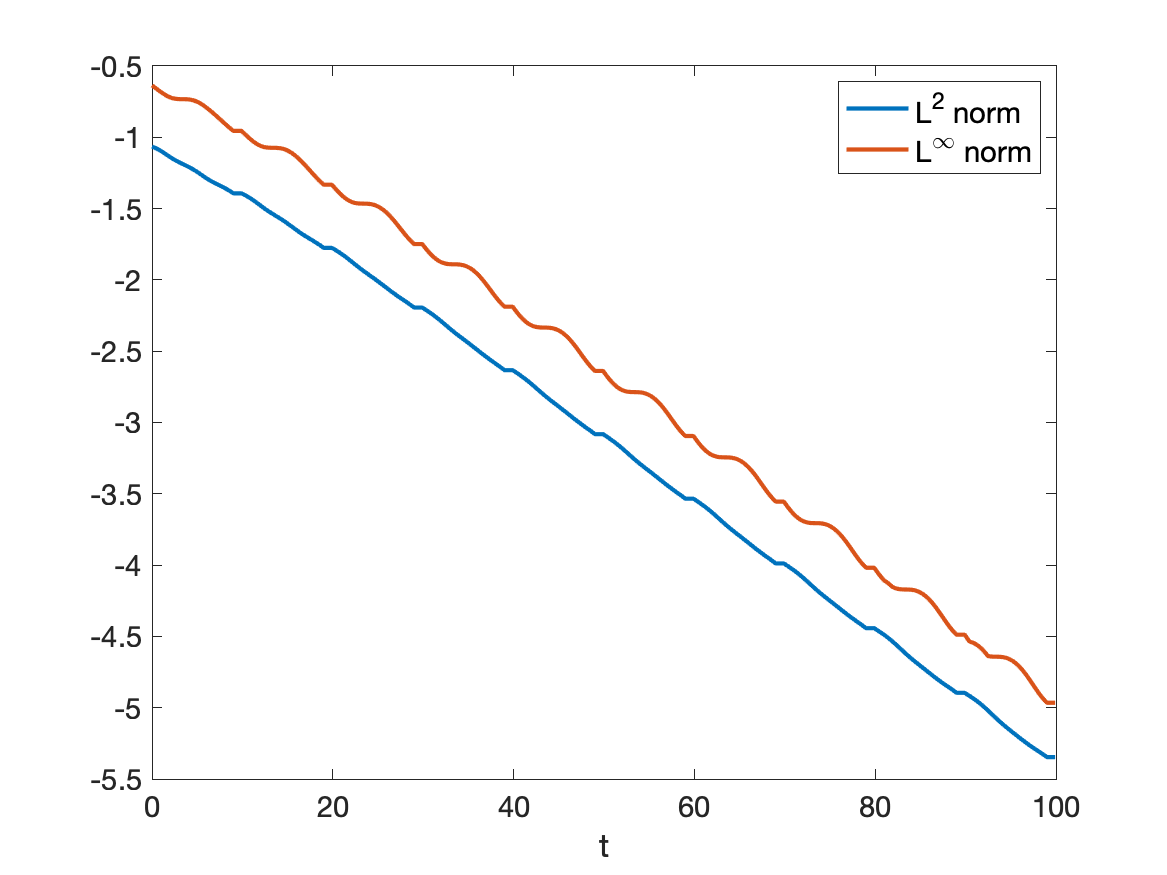}};
\end{tikzpicture}
\caption{{\small \textit{Snapshots of the solution $i \in I \mapsto x(t,i) \in [0,1]$ generated by the communication weights \eqref{eq:LeaderModel} at different instants (left) along with the time-evolution of the $L^2$- and $L^{\infty}$-distance to the consensus point in natural scale (center) and log scale (right).}}}
\label{fig:ConsensusScramb}
\end{figure}


\paragraph*{Exponential consensus for a balanced topology with positive connectivity}

This time, we consider a stationary balanced interaction topology which is defined as follows. Let $\xi \in C^0([0,1])$ be the function given by 
\begin{equation*}
\xi(s) := (1- 4s) \mathds{1}_{\big[ 0,\tfrac{1}{4} \big]}(s) \qquad \text{for each $s \in [0,1]$},  
\end{equation*}
and, by a slight abuse of notation, denote again by $\xi \in C^0(\R,[0,1])$ its $1$-periodisation over the real line. Then, we consider the constant-in-time interaction kernel $a \in L^{\infty}(I \times I,[0,1])$ defined by 
\begin{equation}
\label{eq:BalancedModel1}
a(i,j) := \xi(i-j), 
\end{equation}
for every $i,j \in I$, which is illustrated in Figure \ref{fig:KernelBalanced}. This particular choice of interaction function encodes the notion of directed cycle at the level of graphon models (draw a comparison with the leftmost topology in Figure \ref{fig:Balanced}), where the neighbours are represented by small clusters of agents with positive measure. 

\begin{figure}[!ht]
\centering
\begin{tikzpicture}
\draw (0,0) node {\includegraphics[scale=0.4]{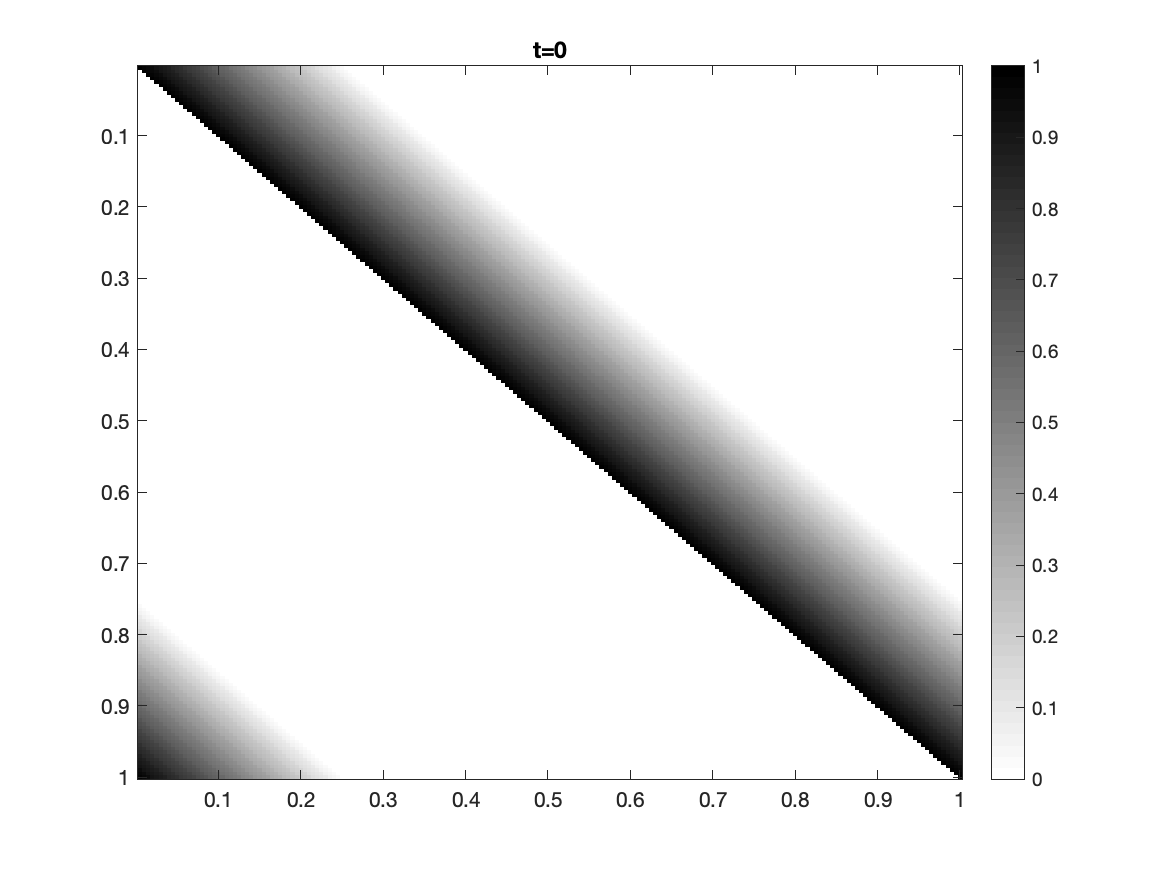}};
\end{tikzpicture}
\caption{{\small Representation of the interaction function $(i,j) \in I \times I \mapsto a(i,j) \in [0,1]$ defined in \eqref{eq:BalancedModel1}.}}
\label{fig:KernelBalanced}
\end{figure}

It can be readily verified that the interaction kernel $a \in L^{\infty}(I \times I,[0,1])$ defined as in \eqref{eq:BalancedModel1} generates a balanced topology, since
\begin{equation}
\label{eq:BalancedTopoCheck}
\begin{aligned}
\INTDom{a(i,j)}{I}{j} = \INTDom{\xi(i-j)}{I}{j} &  = \INTSeg{\xi(i-j)}{j}{0}{i} + \INTSeg{\xi(i-j+1)}{j}{i}{1} \\
& = \INTSeg{\xi(s)}{s}{0}{i} + \INTSeg{\xi(s')}{s'}{i}{1} = \INTSeg{\xi(s)}{s}{0}{1} = \INTDom{a(j,i)}{I}{j}, 
\end{aligned}
\end{equation}
where we used the fact that $\xi(\cdot)$ is $1$-periodic and applied the the changes of coordinates $s = i-j$ and $s' = 1+i-j$ in the first and second integral respectively. We now prove that the topology is strongly connected in the sense of Definition \ref{def:StrongCon}. It can be verified straightforwardly by using \eqref{eq:BalancedTopoCheck} that 
\begin{equation*}
\inf_{i \in I} \INTDom{a(i,j)}{I}{j} = \INTSeg{\xi(s)}{s}{0}{1} = \INTSeg{(1-4s)}{s}{0}{1/4} = \frac{1}{8}, 
\end{equation*}
so that the condition of Definition \ref{def:StrongCon}-$(b)$ holds. Moreover, for every pair $i,j \in I$ with $i < j$, define the finite sequence of indices $(l_{k})_{1 \leq k \leq 8}$ by 
\begin{equation*}
l_1 = i \qquad \text{and} \qquad l_{k+1} = \min \big\{ l_k + \tfrac{1}{8} , j \big\} ~~ \text{for $k \in \{1,\dots,7\}$}, 
\end{equation*}
and observe that $j = l_{K_j}$ for some $K_j \in \{1,\dots,7\}$ with $a(l_k,l_{k+1}) \geq \tfrac{1}{2}$ for each $k \in \{1,\dots,K_j-1\}$. Therefore, the conditions of Definition \ref{def:StrongCon}-$(a)$ are satisfied, and the interaction topology is strongly connected. Hence by Theorem \ref{thm:DirectedConnectivity}, it holds that $\lambda_2(\Lbb) > 0$ and the system should converge exponentially towards consensus in the $L^2$-norm. In addition, because the sufficient condition of Theorem \ref{thm:Equivalence} is satisfied, the convergence should also occur in the $L^{\infty}$-norm topology. The simulations made with the initial datum $x^0 : i \in I \mapsto \sin^2(4i) \in [0,1]$ and displayed in Figure \ref{fig:ConsensusBalanced} confirm these assertions. In addition, let it be noted that the convergence with respect to the $L^{\infty}$-norm also seems to be exponential, which could suggest that a stronger variant of Theorem \ref{thm:Equivalence} stating the equivalence between exponential convergences towards consensus may in fact be true.

\begin{figure}[!ht]
\hspace{-0.6cm}
\begin{tikzpicture}
\draw (0,0) node {\includegraphics[scale=0.15]{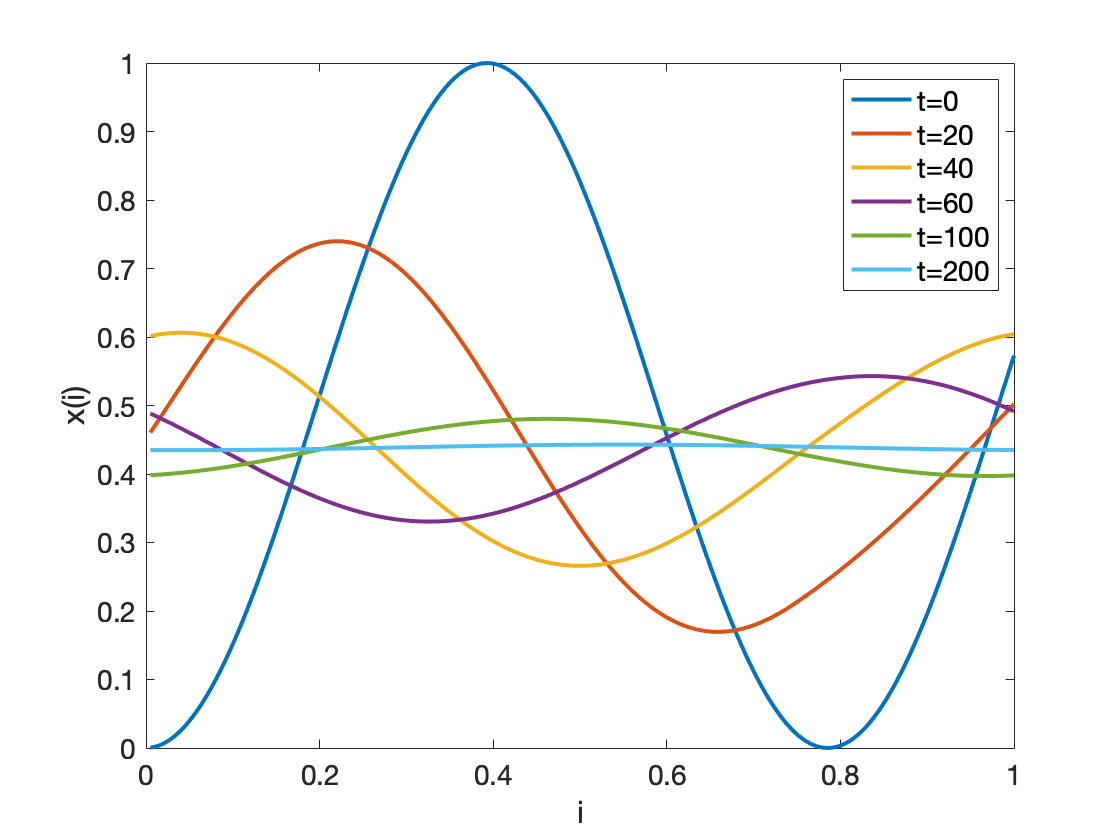}};
\draw (6,0) node {\includegraphics[scale=0.3]{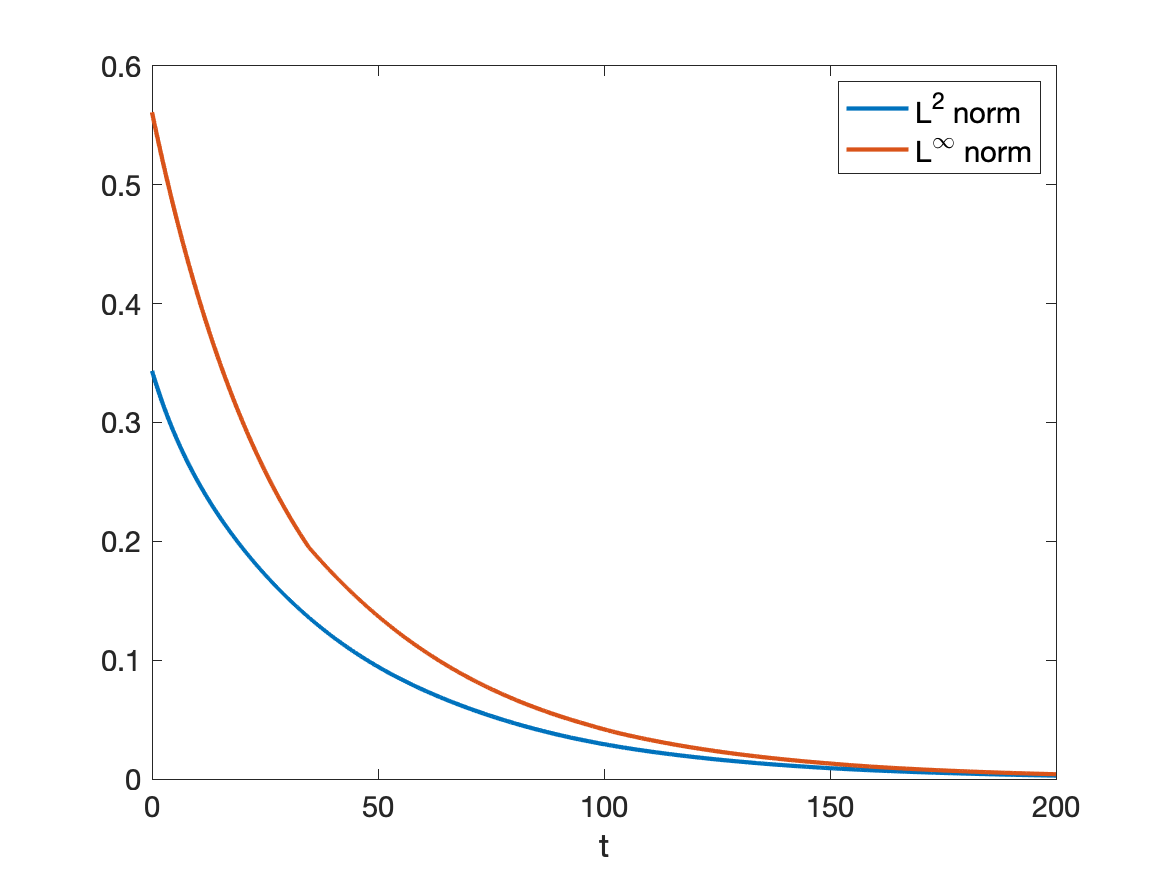}};
\draw (12,0) node {\includegraphics[scale=0.3]{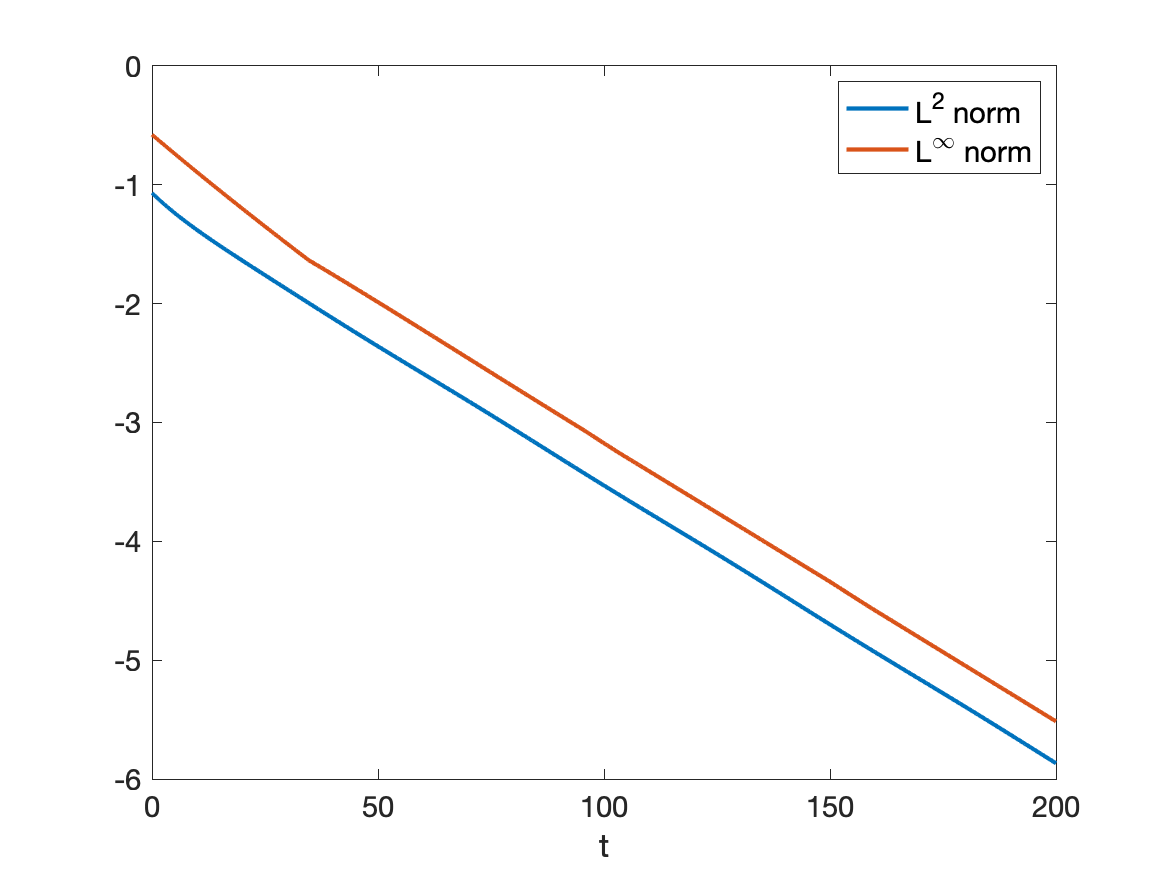}}; 
\end{tikzpicture}
\caption{{\small \textit{Snapshots of the solution $i \in I \mapsto x(t,i) \in [0,1]$ generated by the communication weights \eqref{eq:BalancedModel1} at different instants (left) along with the time-evolution of the $L^2$- and $L^{\infty}$-distance to the consensus point in natural scale (center) and log scale (right).}}}
\label{fig:ConsensusBalanced}
\end{figure}


\paragraph*{Exponential consensus for an algebraically persistent symmetric nonlinear topology.}

Next, we turn our attention to a nonlinear symmetric and time-dependent interaction kernel which defines disconnected topologies at all times $t \geq 0$, but which is algebraically persistent in the sense defined in Theorem \ref{thm:ConsensusSym}. Given a real parameter $T > 0$ and an integer $n \geq 1$, we introduce the notation $\vt := t - \lfloor t/T \rfloor \in [0,T)$. We then define the interaction function $a \in L^{\infty}(\R_+ \times I \times I,[0,1])$ by

\begin{equation}
\label{eq:SymmetricModel}
a(t,i,j) := 
\left\{
\begin{aligned}
& 1 ~~ & \text{if}~~ i,j \in \big[ \tfrac{\vt}{T} , \tfrac{\vt}{T} + \tfrac{1}{n} \big] ~~\text{and}~~ \vt \in \big[ 0, \big( \tfrac{n-1}{n} \big) T \big), \, \\
& 1 ~~ & \text{if}~~ i,j \in \big[ \tfrac{\vt}{T} ,1 \big] \cup \big[ 0,\tfrac{\vt}{T} + \tfrac{1}{n} - 1 \big] ~~\text{and}~~ \vt \in \big[ \big( \tfrac{n-1}{n} \big) T , T \big),  \\
& 0 ~~ & \text{otherwise,} 
\end{aligned}
\right.
\end{equation}
for all times $t \geq 0$ and every $i,j \in I$. The latter is symmetric for all times $t \geq 0$ by construction, and is illustrated in Figure \ref{fig:KernelSymmetric}. We also consider the nonlinear Cucker-Smale type kernel given by 
\begin{equation*}
\phi(r) := \frac{1}{(1+r)^2}, 
\end{equation*}
for each $r \geq 0$, and consider the solution $x(\cdot)$ of \eqref{eq:GraphonLapDyn} with initial condition $x^0 : i \in I \mapsto \sin^2(4i) \in [0,1]$. 

\begin{figure}[!ht]
\hspace{-0.6cm}
\begin{tikzpicture}
\draw (0,0) node {\includegraphics[scale=0.15]{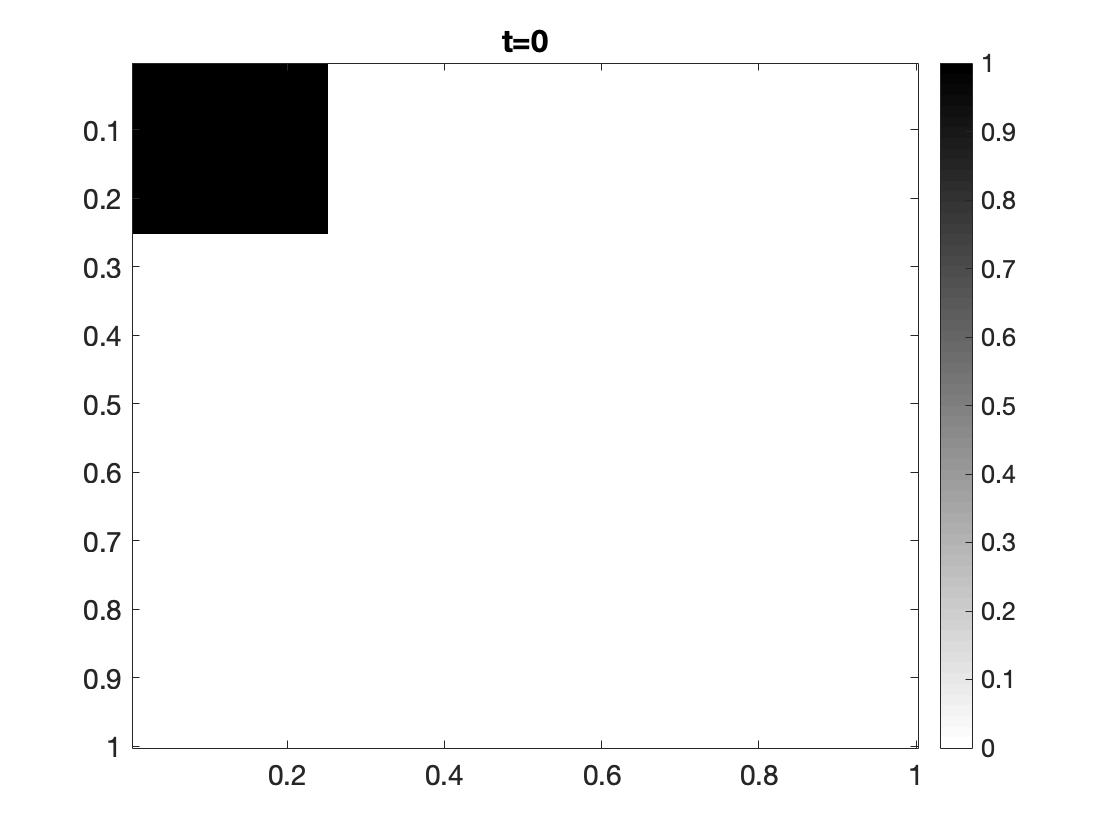}};
\draw (6,0) node {\includegraphics[scale=0.3]{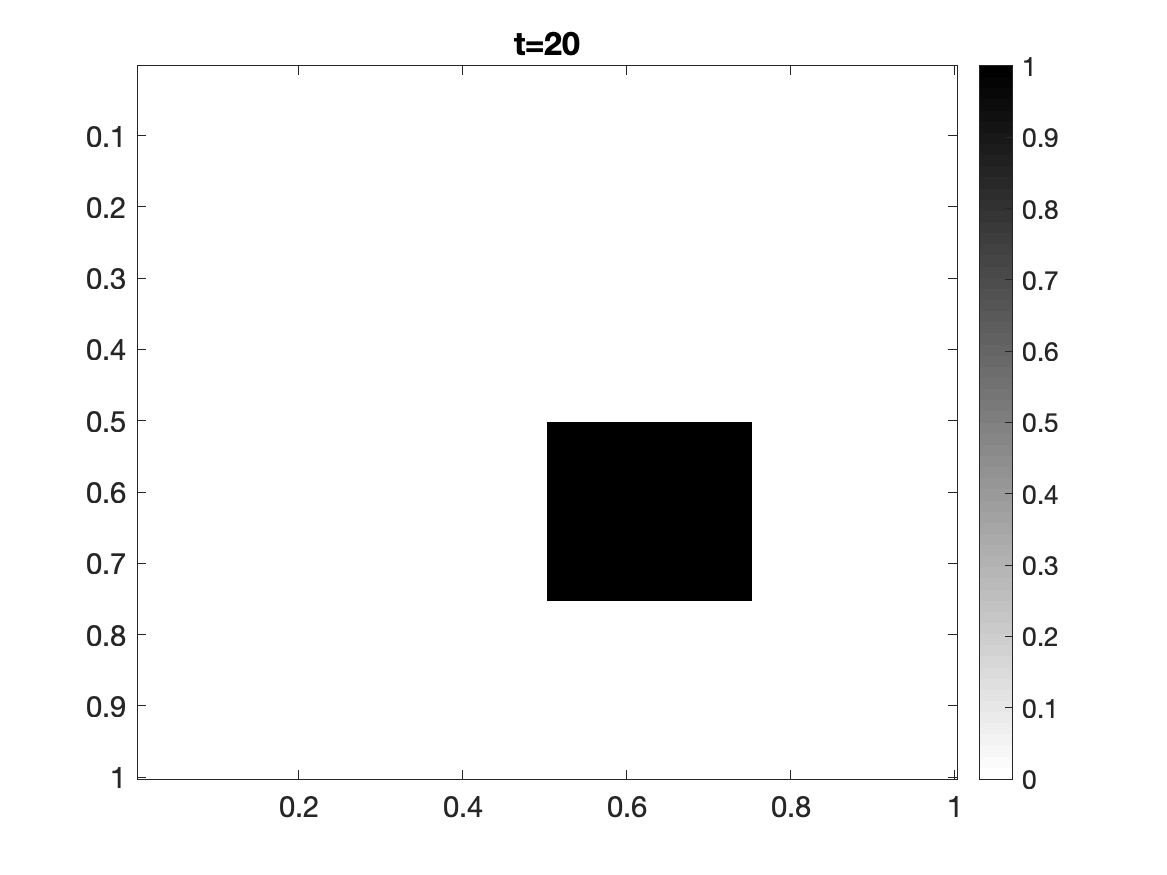}};
\draw (12,0) node {\includegraphics[scale=0.3]{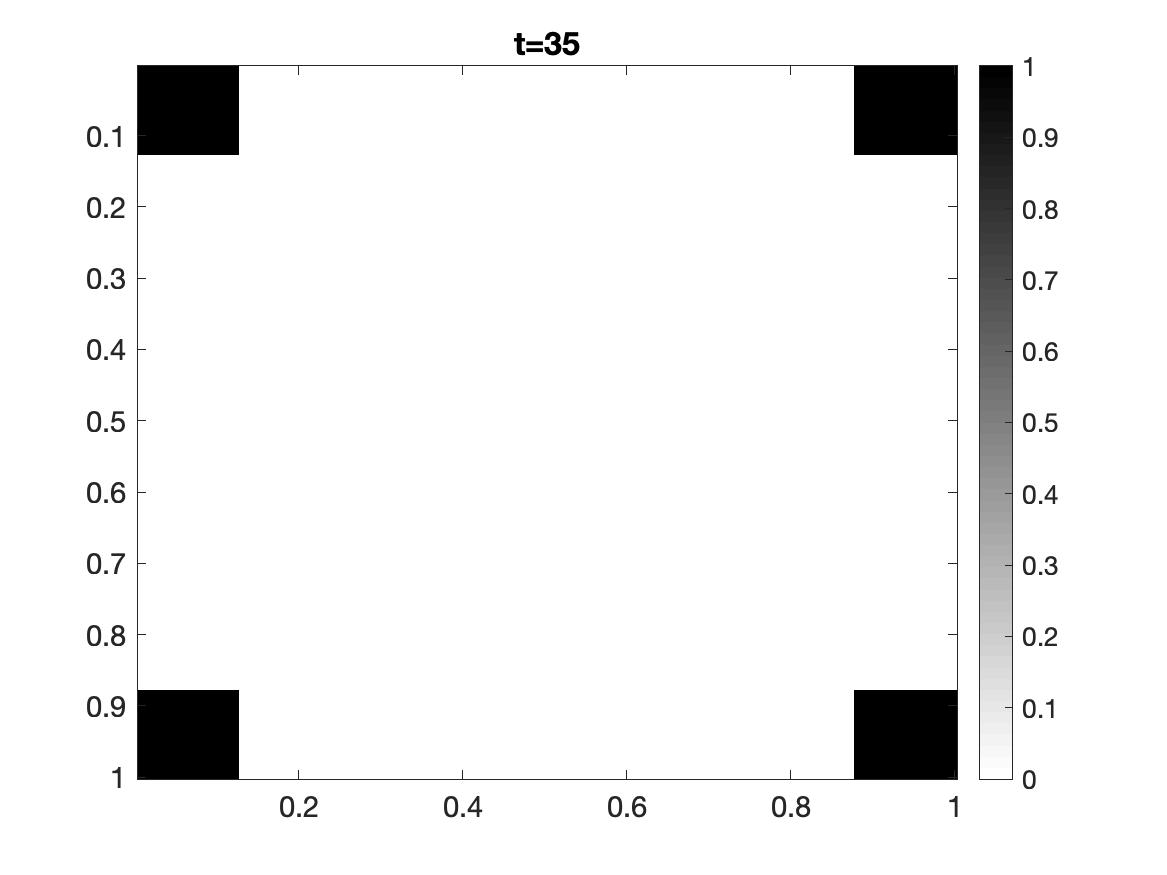}};
\end{tikzpicture}
\caption{{\small \textit{Representation of the interaction function $(t,i,j) \in \R_+ \times I \times I \mapsto a(t,i,j) \in [0,1]$ defined in \eqref{eq:SymmetricModel} with $T = 40$ and $n = 4$ at times $t = 0$ (left), $t = 20$ (center) and $t = 35$ (right).}}}
\label{fig:KernelSymmetric}
\end{figure}

\begin{figure}[!ht]
\centering
\begin{tikzpicture}
\draw (0,0) node {\includegraphics[scale=0.2]{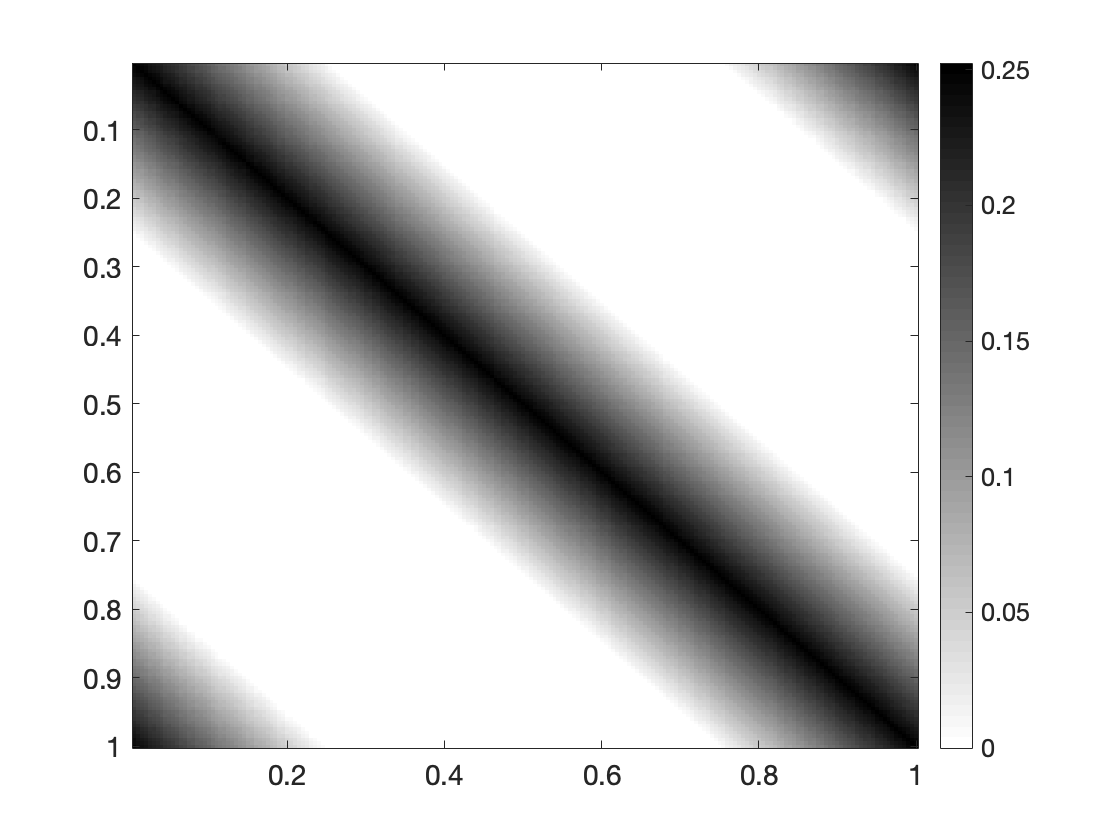}};
\end{tikzpicture}
\caption{{\small Representation of the average over a time-window of length $T = 10$ of the interaction function $(t,i,j) \in I \times I \mapsto a(t,i,j) \in [0,1]$ defined in \eqref{eq:SymmetricModel}.}}
\label{fig:KernelAverageSymmetric}
\end{figure}

It is quite clear from its definition in \eqref{eq:SymmetricModel} that in our example, the interaction topology is disconnected at each instant. However, the average of the interaction function over any time window of length $T$ defines the counterpart for graphon models of an undirected cycle, which is illustrated in Figure \ref{fig:KernelAverageSymmetric} (to be compared with the central picture in Figure \ref{fig:Scrambling}). Indeed, for all $i,j \in I$ satisfying 
\begin{equation*}
j \in \Big[ \max \big\{ 0,i - \tfrac{1}{n} \big\}, \min\big\{ 1, i + \tfrac{1}{n} \big\} \Big],
\end{equation*}
it can be verified that 
\begin{equation*}
\frac{1}{T} \INTSeg{\INTDom{a(s,i,j)}{I}{j}}{s}{t}{t+T} = \frac{1}{T} \INTSeg{\INTDom{a(s,i,j)}{I}{j}}{s}{0}{T} \geq \frac{1}{n^2}, 
\end{equation*}
for all times $t \geq 0$, where we used the fact that signal $t \in \R_+ \mapsto a(t) \in L^{\infty}(I \times I,[0,1])$ is $T$-periodic. Therefore by Theorem \ref{thm:UndirectedConnectivity}, the algebraic connectivity of the averaged interaction topology is positive, and the hypotheses of Theorem \ref{thm:ConsensusSym} are satisfied. Thus, as confirmed by the results displayed in Figure \ref{fig:ConsensusSymmetric}, we expect that the system exponentially converges to consensus in the $L^2$-norm topology. In addition, it can be readily verified that the in-degree function satisfies the persistence condition of Theorem \ref{thm:Equivalence}, so that convergence towards consensus also holds with respect to the $L^{\infty}$-norm. 

\begin{figure}[!ht]
\hspace{-0.6cm}
\begin{tikzpicture}
\draw (0,0) node {\includegraphics[scale=0.3]{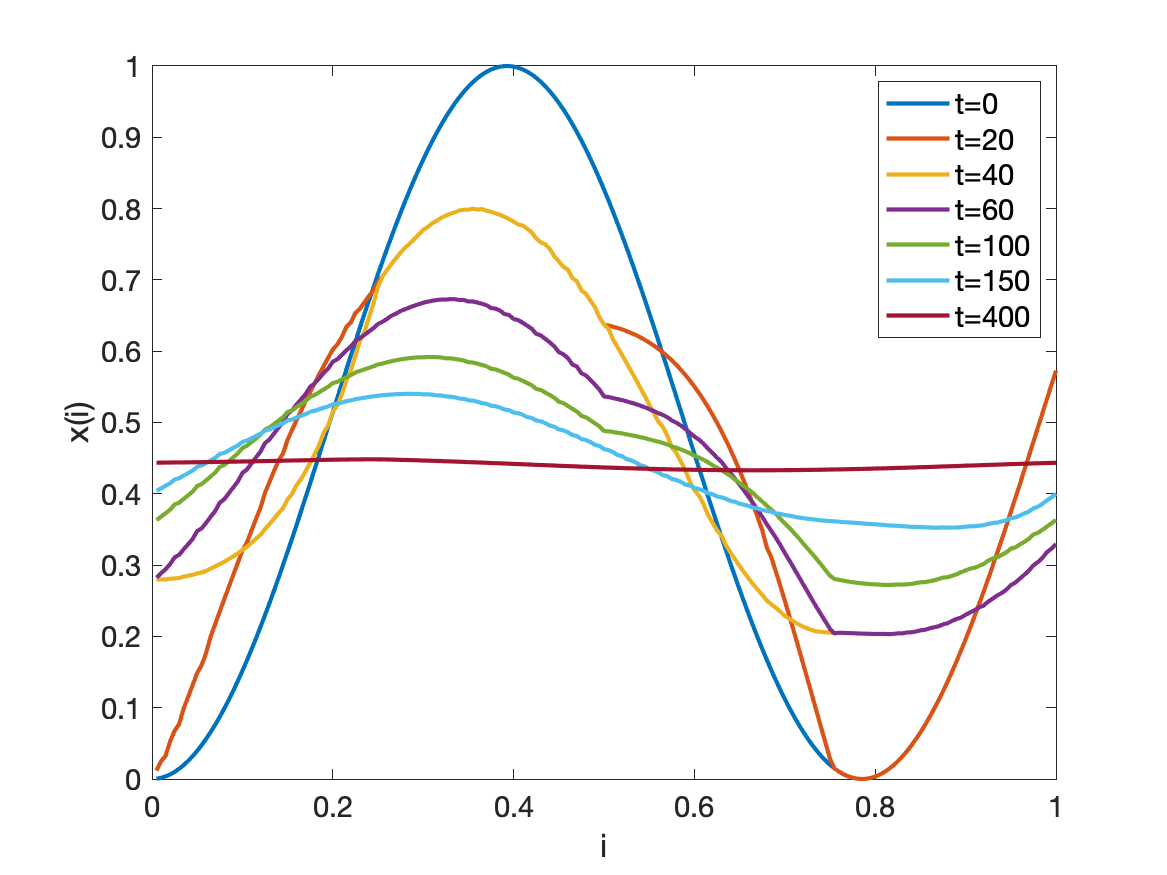}};
\draw (6,0) node {\includegraphics[scale=0.3]{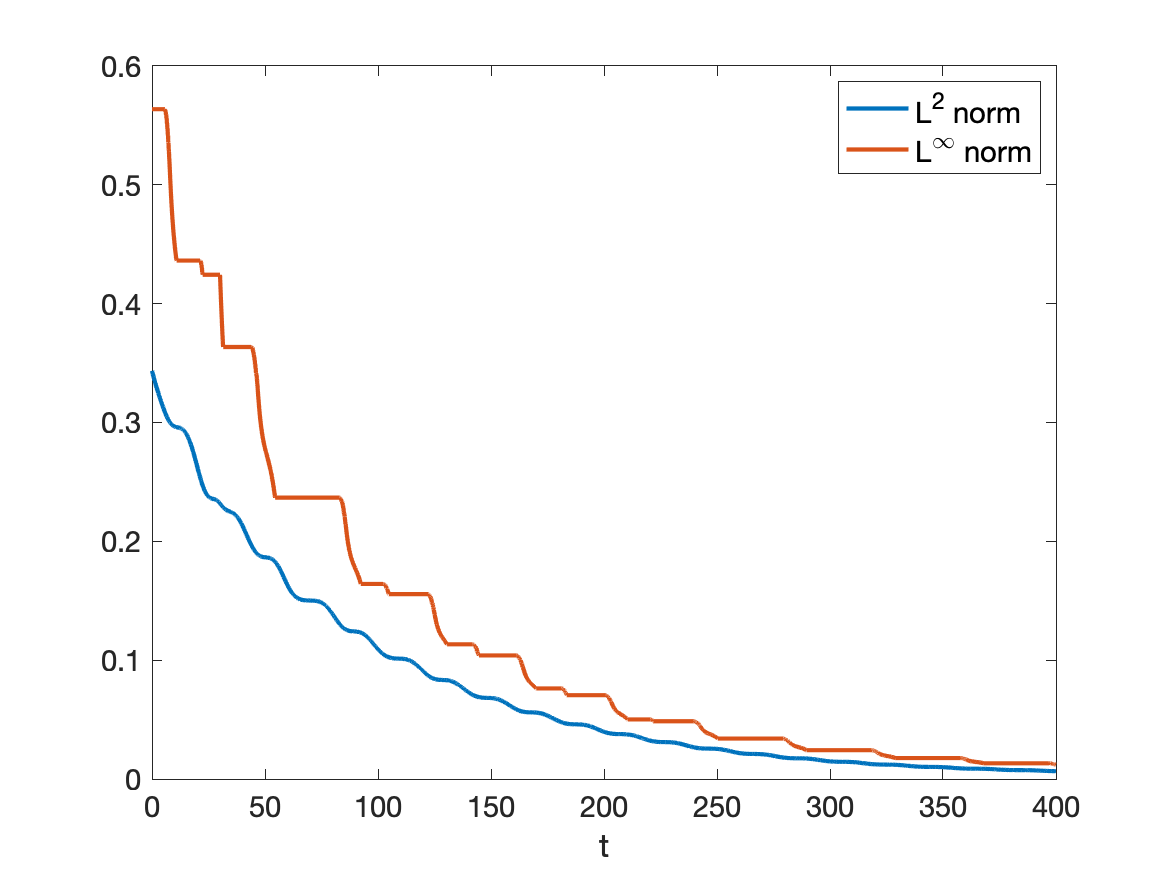}};
\draw (12,0) node {\includegraphics[scale=0.3]{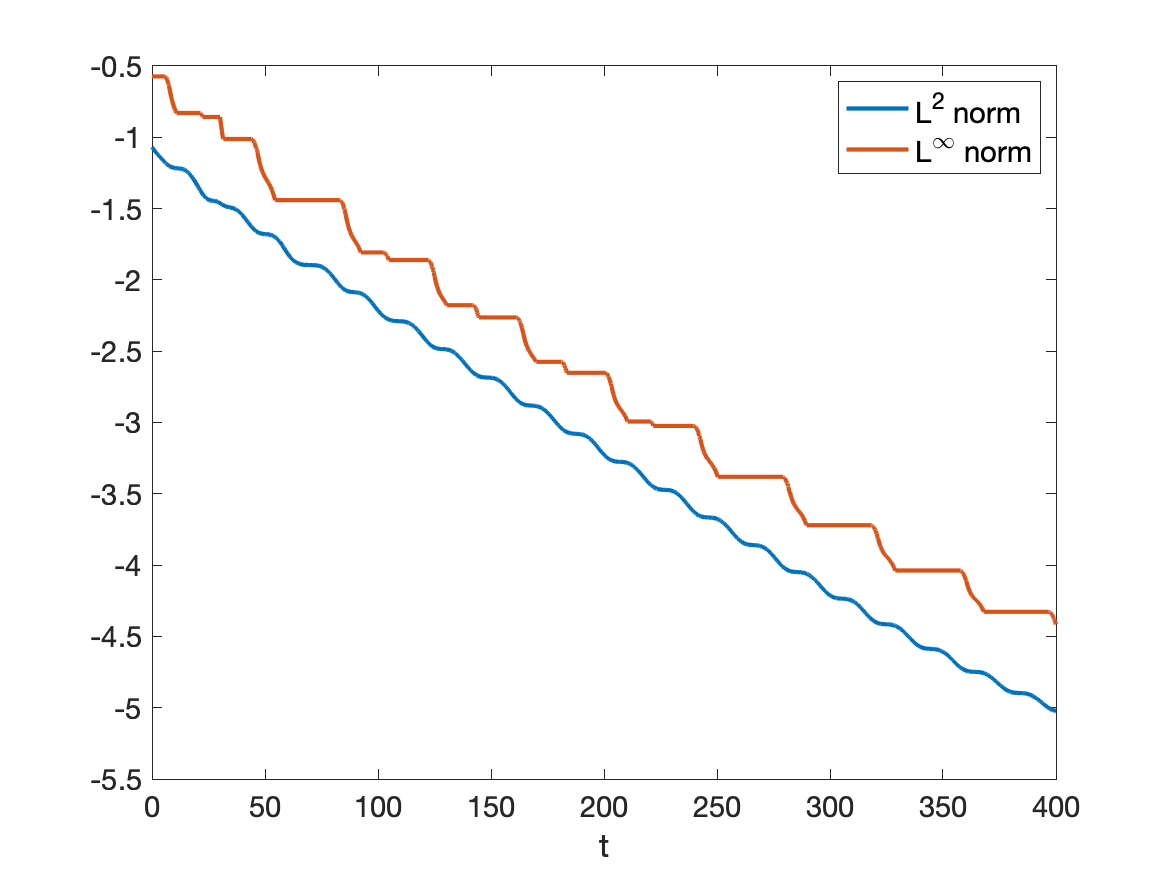}}; 
\end{tikzpicture}
\caption{{\small \textit{Snapshots of the solution $i \in I \mapsto x(t,i) \in [0,1]$ generated by the communication weights \eqref{eq:SymmetricModel} at different instants (left) along with the time-evolution of the $L^2$- and $L^{\infty}$-distance to the consensus point in natural scale (center) and log scale (right).}}}
\label{fig:ConsensusSymmetric}
\end{figure}


\paragraph*{Non-exponential consensus for a symmetric topology with null connectivity.} In this last paragraph, we provide an interesting limit example of the theory developed in this article, which highlights several of our results. To this end, we consider the stationary interaction kernel defined by 
\begin{equation}
\label{eq:NonConsensusModel}
a(i,j) :=
\left\{
\begin{aligned}
& 1 ~~ & \text{if either $i \in \big[ 0,\tfrac{1}{2} \big]$ and $\tfrac{i}{2} \leq j \leq 2i$ ~or~ $i,j \in \big[ \tfrac{1}{2},1]$},\\
& 0 ~~ & \text{otherwise,}
\end{aligned}
\right.
\end{equation}
for every $i,j \in I$, and which is illustrated in Figure \ref{fig:KernelNonConsensus}. 

\begin{figure}[!ht]
\centering
\begin{tikzpicture}
\draw (0,0) node {\includegraphics[scale=0.4]{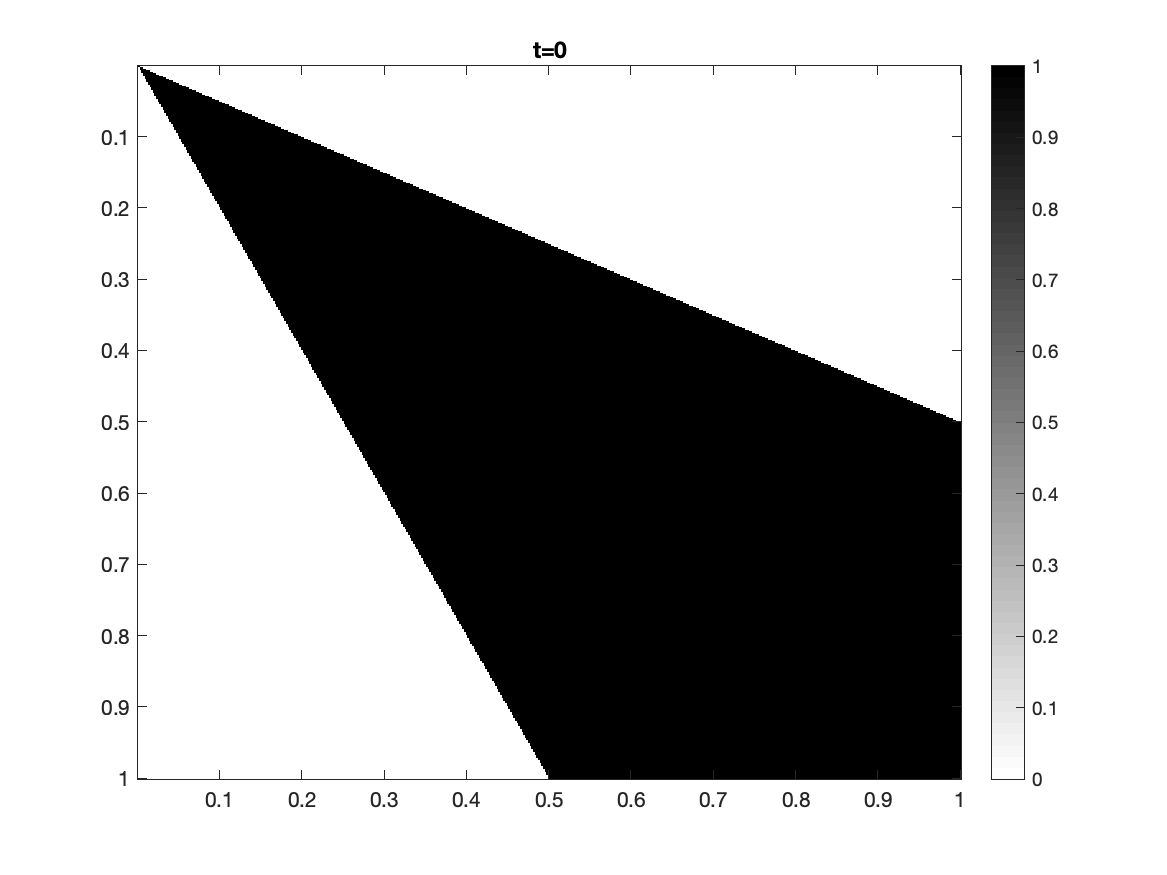}};
\end{tikzpicture}
\caption{{\small Representation of the interaction function $(i,j) \in I \times I \mapsto a(i,j) \in [0,1]$ defined in \eqref{eq:NonConsensusModel}.}}
\label{fig:KernelNonConsensus}
\end{figure}

It is clear from its definition that $a \in L^{\infty}(I \times I,[0,1])$ defines a symmetric topology, but one can readily check  that $\inf_{i \in I} \INTDom{a(i,j)}{I}{j} = 0$, which implies that the interaction graphon is not strongly connected in the sense of Definition \ref{def:StrongCon}. By Theorem \ref{thm:UndirectedConnectivity}, this implies in particular that the corresponding algebraic connectivity satisfies $\lambda_2(\Lbb) = 0$, which means that even though the convergence to consensus in the $L^2$-norm may occur, it may not be exponential. Furthermore, the sufficient condition of Theorem \ref{thm:Equivalence} ensuring that $L^2$- and $L^{\infty}$-consensus formation are equivalent is also violated, which suggests that solutions of the graphon dynamics may not converge to consensus in the $L^{\infty}$-norm topology. This intuition is supported by the fact that, in this precise example, the agent with label $i = 0$ is not connected to any other agent in the system, and should not move. 

\begin{figure}[!ht]
\hspace{-0.6cm}
\begin{tikzpicture}
\draw (0,0) node {\includegraphics[scale=0.3]{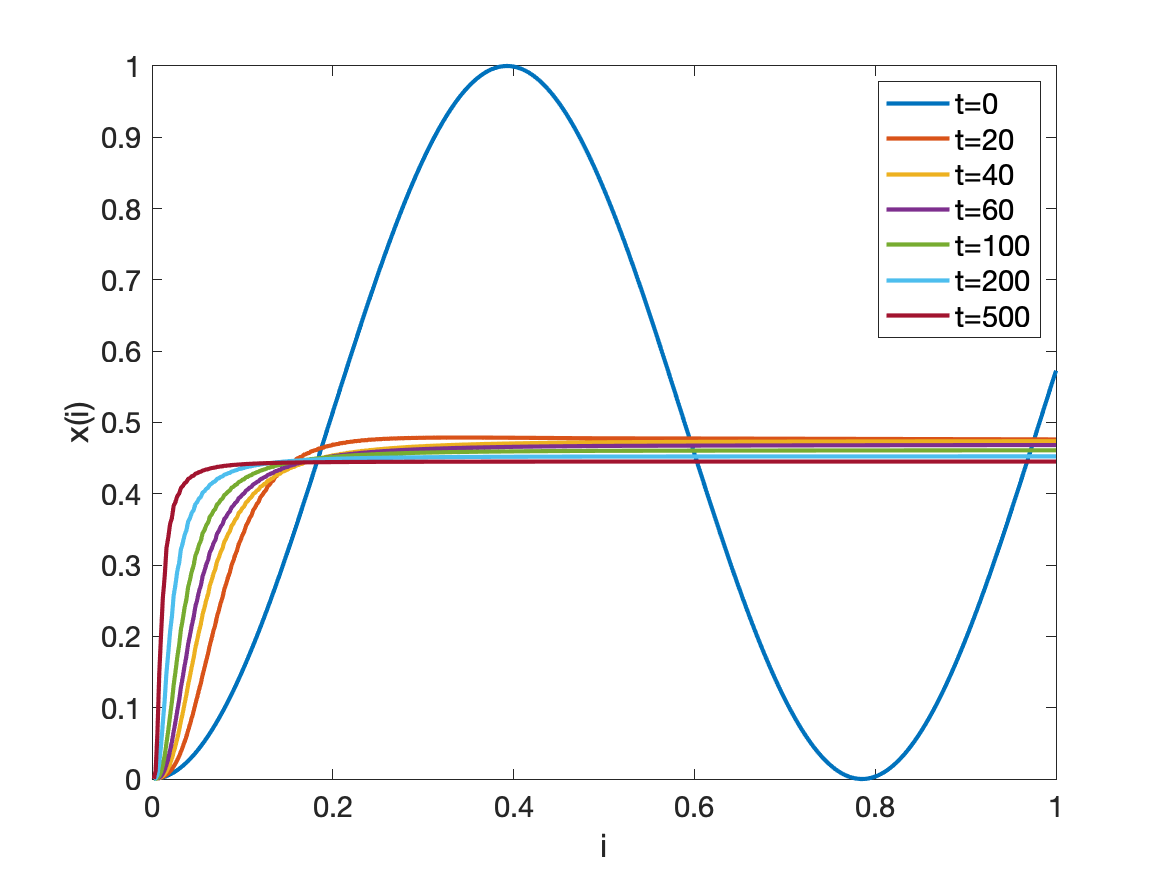}};
\draw (6,0) node {\includegraphics[scale=0.3]{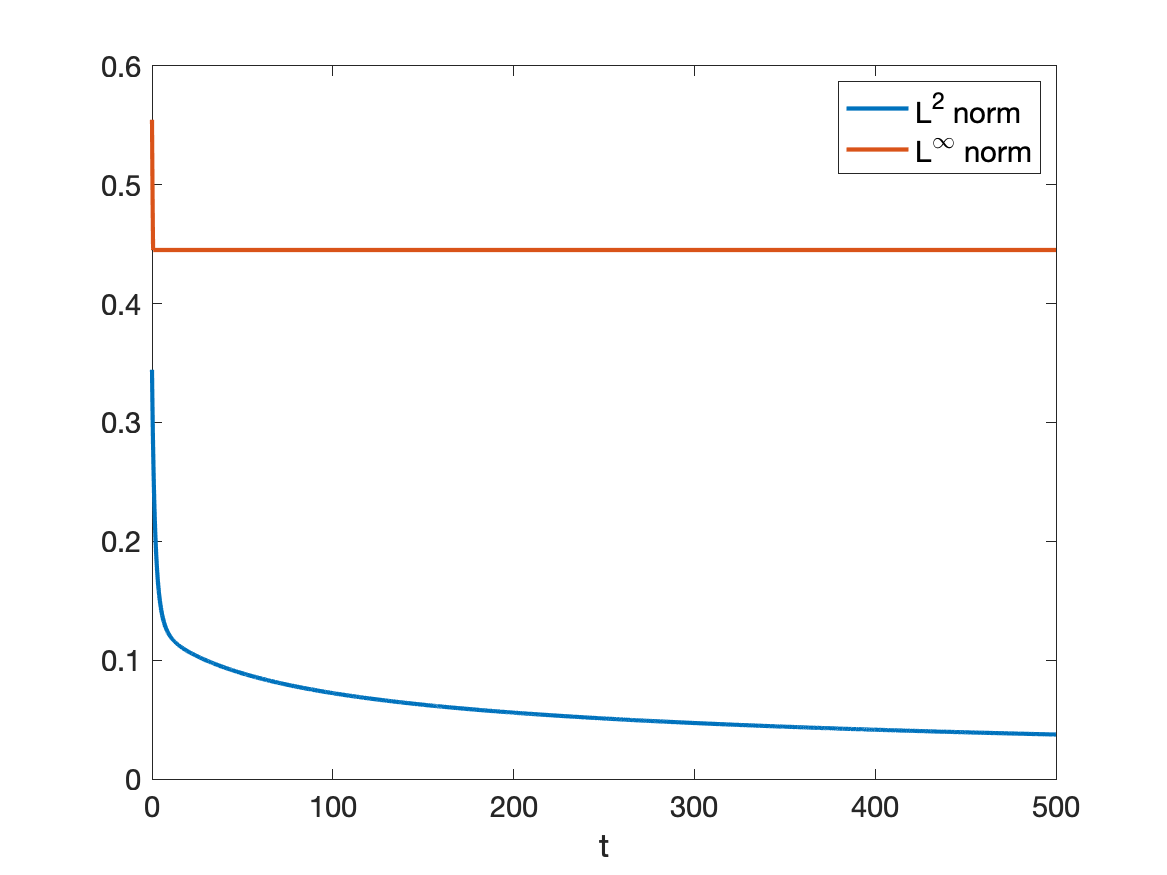}};
\draw (12,0) node {\includegraphics[scale=0.3]{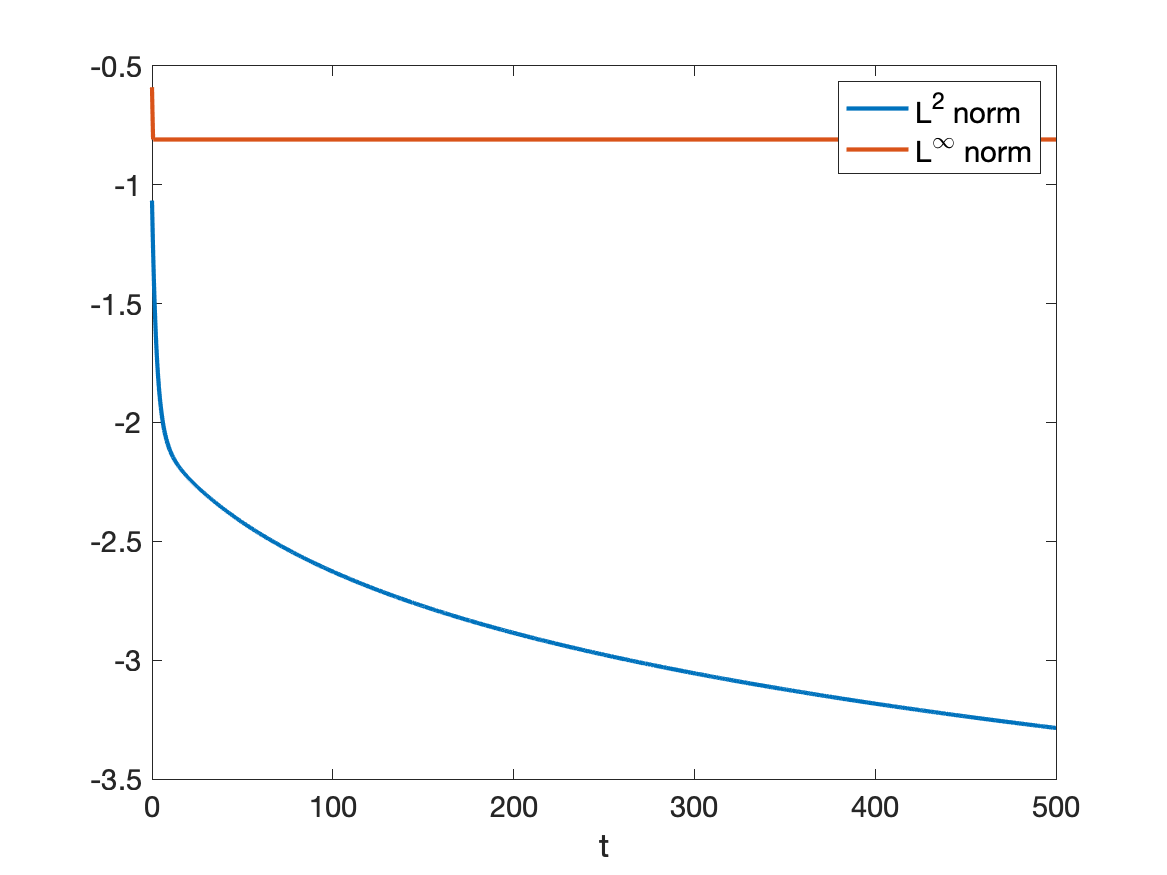}}; 
\end{tikzpicture}
\caption{{\small \textit{Snapshots of the solution $i \in I \mapsto x(t,i) \in [0,1]$ generated by the communication weights \eqref{eq:NonConsensusModel} at different instants (left) along with the time-evolution of the $L^2$- and $L^{\infty}$-distance to the consensus point in natural scale (center) and log scale (right).}}}
\label{fig:NonConsensus}
\end{figure}

The plots displayed in Figure \ref{fig:NonConsensus} are coherent with what was expected, in the sense that the $L^{\infty}$-norm remains bounded from below by a constant, while a non-exponential $L^2$-consensus formation seems to arise. To understand why the latter appears -- at least numerically --  even though $\lambda_2(\Lbb) = 0$, recall that the semi-discretisation of the graphon dynamics is given for any chosen integer $N \geq 1$ by 
\begin{equation}
\label{eq:DiscretisedBalancedDyn}
\dot{x}_i(t) = \frac{1}{N} \sum_{j=1}^N a_{ij}(x_j(t) - x_i(t)), \qquad x_i(0) =x^0_i, 
\end{equation}
for all times $t \geq 0$ and every $i \in \{1,\dots,N\}$, where 
\begin{equation}
\label{eq:DiscretisedBalanced}
a_{ij} := N^2 \INTSeg{\INTSeg{a(k,l)}{l}{(2j-1)/2N}{(2j+1)/2N}}{k}{(2i-1)/2N}{(2i+1)/2N} \qquad \text{and} \qquad x^0_i := \INTSeg{x^0(k)}{k}{(2i-1)/2N}{(2i+1)/2N}, 
\end{equation}
for every $i,j \in \{1,\dots,N\}$. The key point here is to remark that the adjacency matrices $\Ab_N := (a_{ij})_{1 \leq i,j \leq N} \in [0,1]^N$ -- that are represented for different values of $N$ in Figure \ref{fig:Matrices} -- define strongly connected interaction topologies with $\lambda_2(\Lb_N) > 0$ and $\min_{i \in \{1,\dots,N\}} \tfrac{1}{N} \sum_{j=1}^N a_{ij} > 0$. Therefore, at the discrete level, we always observe the formation of $\ell^2$- and $\ell^{\infty}$-consensus as illustrated in Figure \ref{fig:NonConsensusDiscrete}.

\begin{figure}[!ht]
\hspace{-0.6cm}
\begin{tikzpicture}
\draw (0,0) node {\includegraphics[scale=0.15]{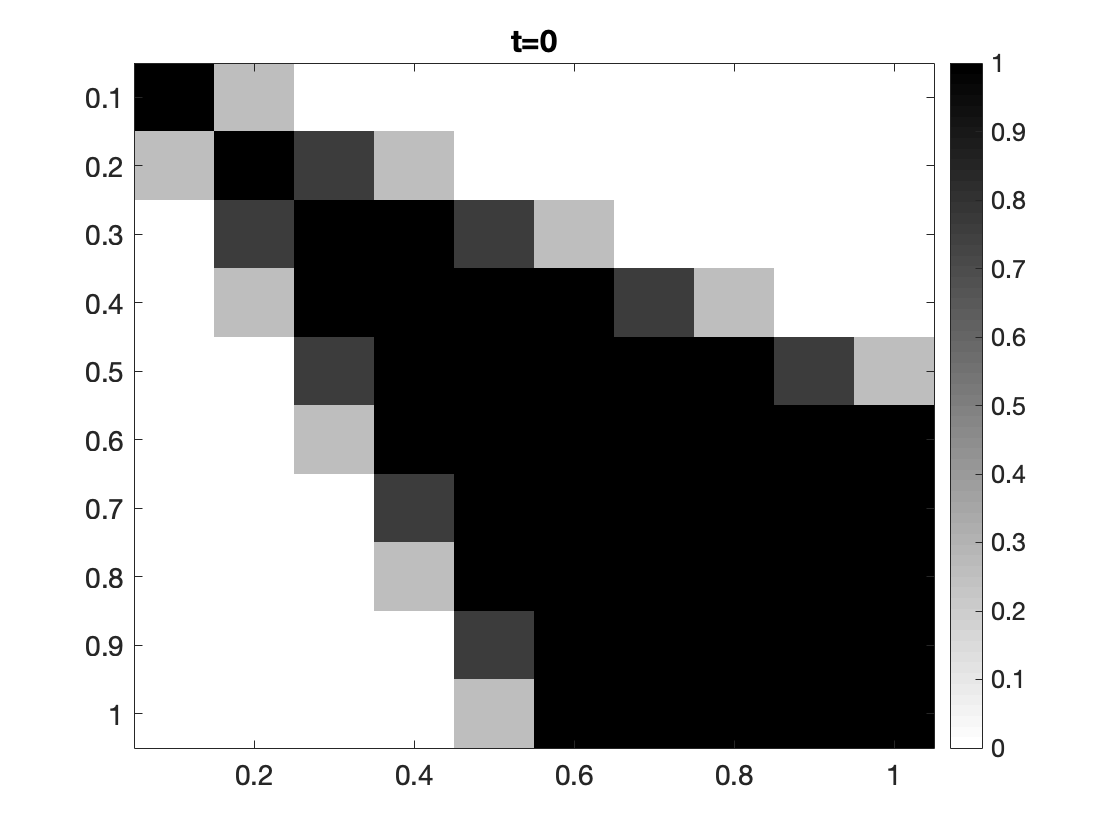}};
\draw (6,0) node {\includegraphics[scale=0.15]{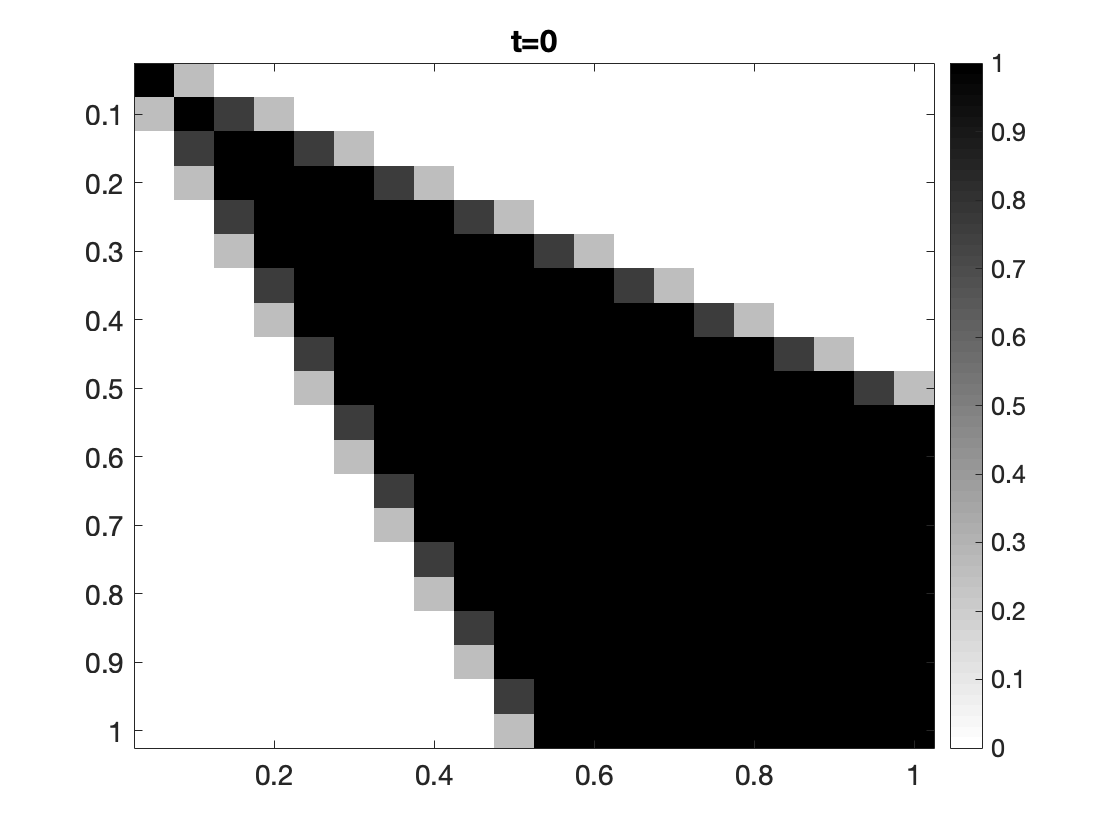}};
\draw (12,0) node {\includegraphics[scale=0.15]{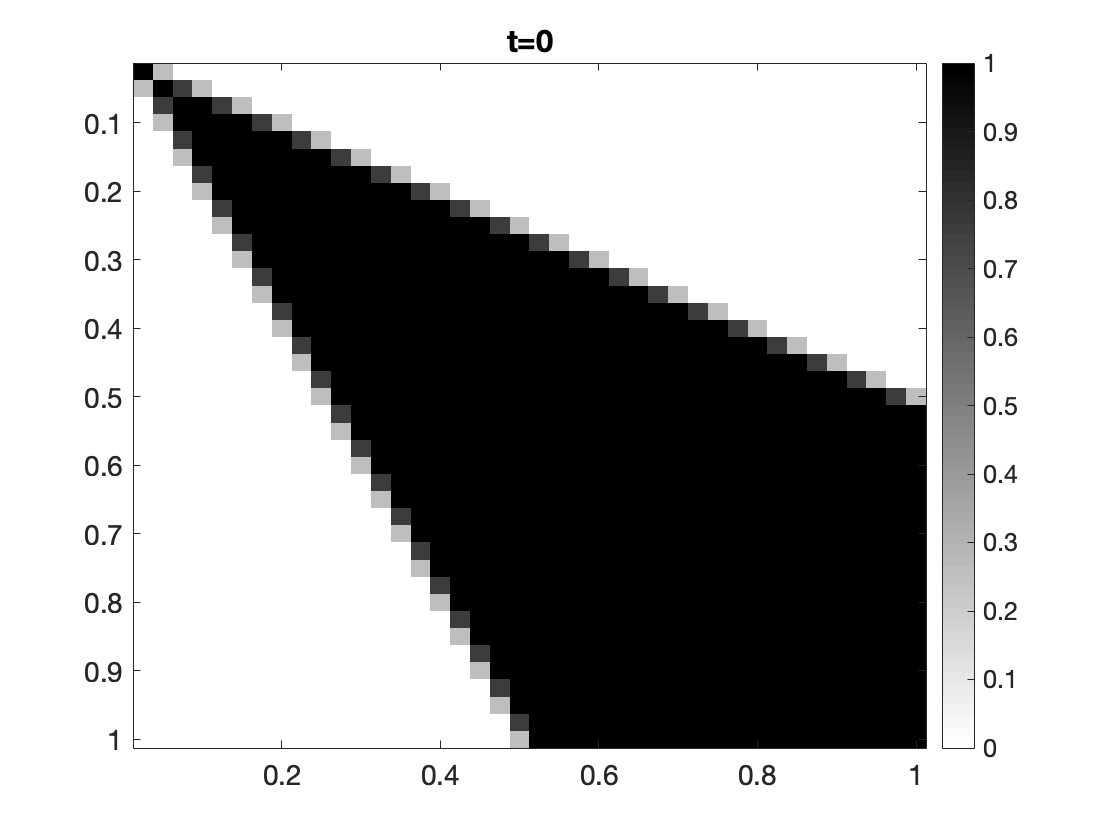}}; 
\end{tikzpicture}
\caption{{\small \textit{Representation of the interaction matrices $\Ab_N := (a_{ij})_{1 \leq i,j \leq N}$ whose coefficients are defined by \eqref{eq:DiscretisedBalanced} for $N=10$ (left), $N=20$ (center) and $N=40$ (right).}}}
\label{fig:Matrices}
\end{figure}

\begin{figure}[!ht]
\hspace{-0.6cm}
\begin{tikzpicture}
\draw (0,0) node {\includegraphics[scale=0.3]{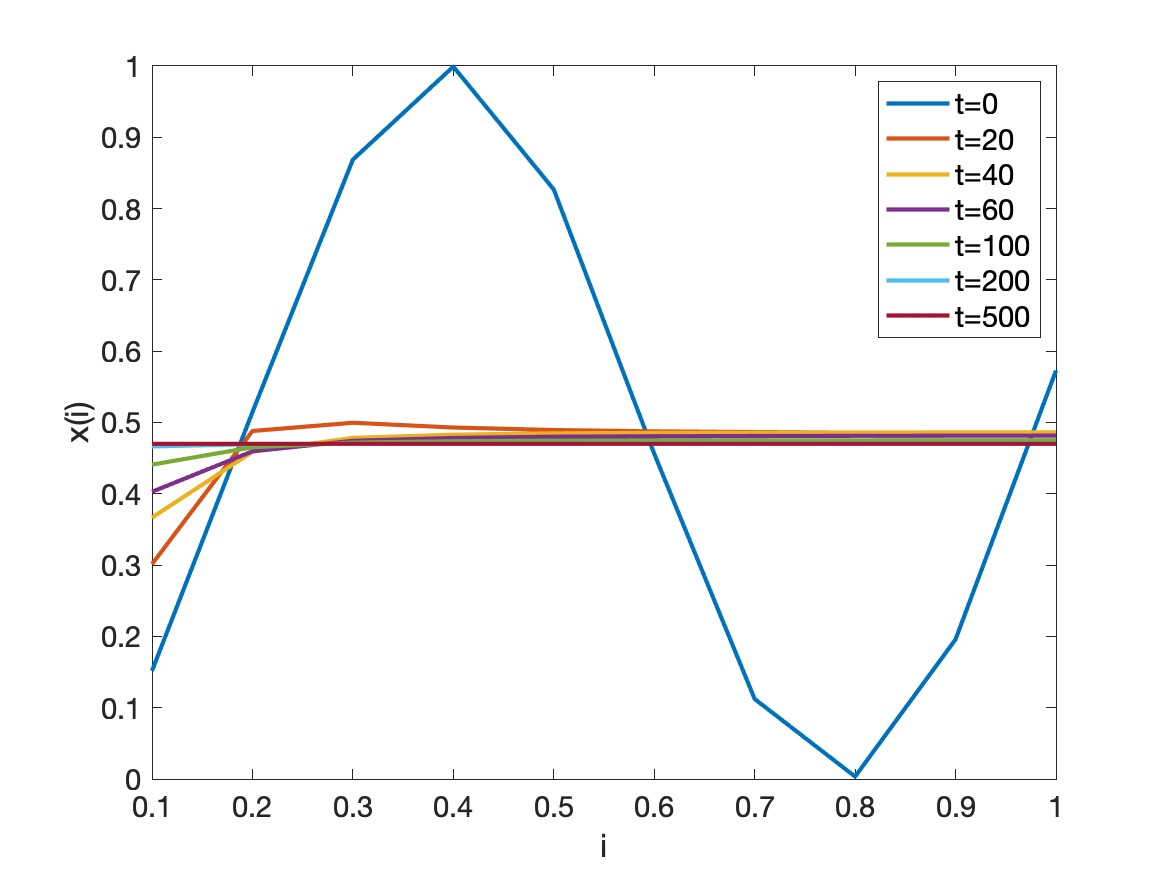}};
\draw (6,0) node {\includegraphics[scale=0.3]{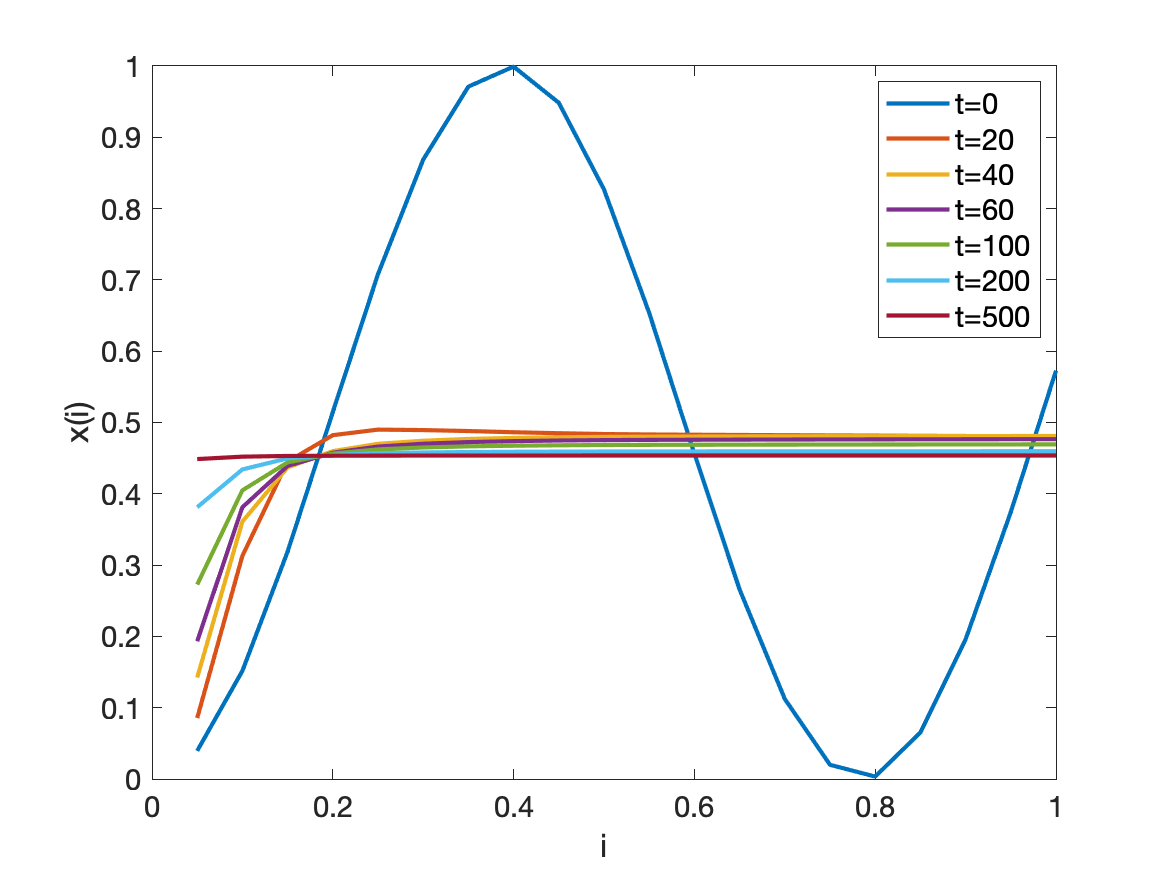}};
\draw (12,0) node {\includegraphics[scale=0.3]{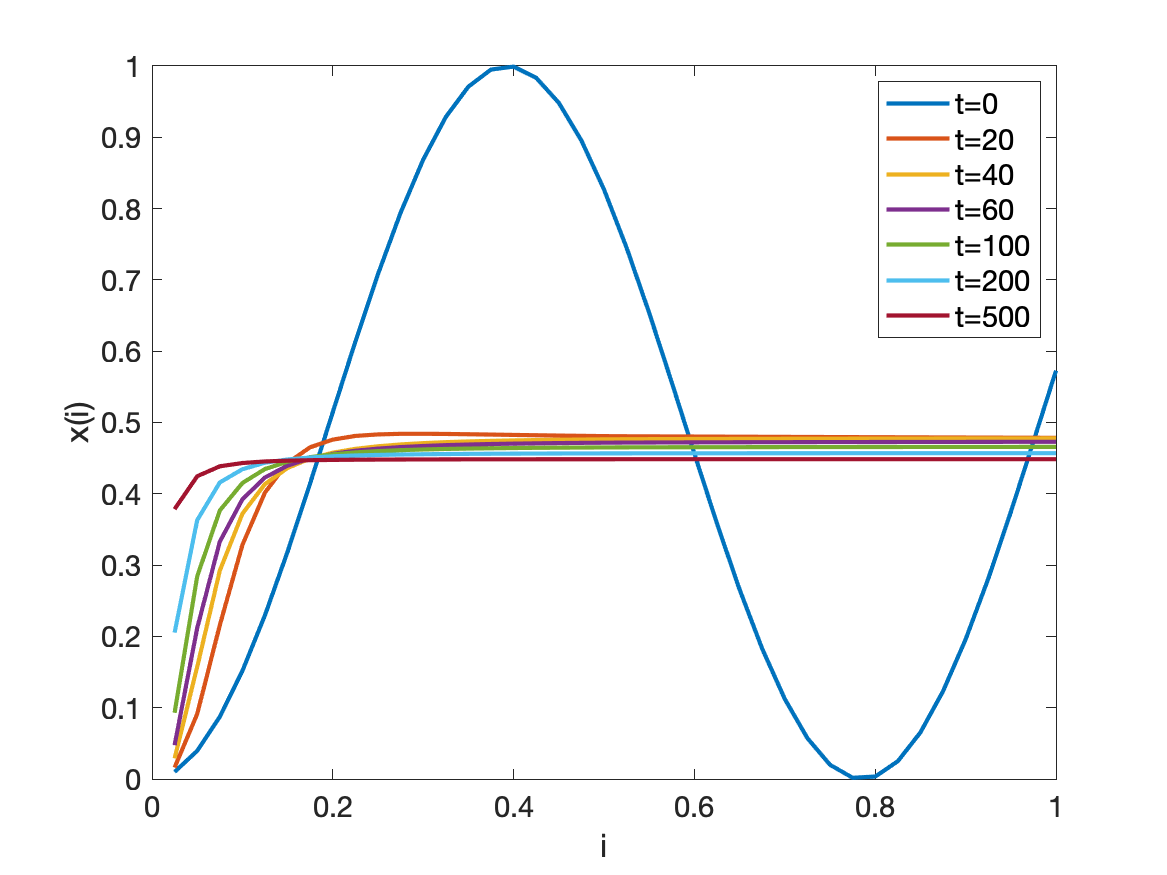}}; 
\end{tikzpicture}
\caption{{\small \textit{Snapshots of the solution $i \in I \mapsto x(t,i) \in [0,1]$ generated by the communication weights \eqref{eq:NonConsensusModel} at different instants for $N=10$ (left), $N=20$ (center) and $N=40$ (right).}}}
\label{fig:NonConsensusDiscrete}
\end{figure}

The fact that the exponential convergence to consensus of the sequence of approximating microscopic problems is not a stable property as $N \rightarrow +\infty$ can be further underpined by the following observation. As amply discussed in a more general context in \cite{VonLuxburg2008}, only the discrete part of the spectrum of a graph-Laplacian operator can be faithfully described by the limits of eigenvalues of discrete approximating graph-Laplacian matrices. In our example, the essential spectrum of the operator is $\sigma_{\ess}(\Lbb) = \rg(d) = [0,1]$, and the sequence $(\lambda_2(\Lb_N))_{N \geq 1}$ of algebraic connectivities of the discrete models goes to $0$ as $N \rightarrow +\infty$. This induces a depreciation of the convergence rates as $N$ gets larger, which is illustrated for both norms in Figure \ref{fig:ConvergenceRates}. 

\begin{figure}[!ht]
\centering
\begin{tikzpicture}
\draw (0,0) node {\includegraphics[scale=0.15]{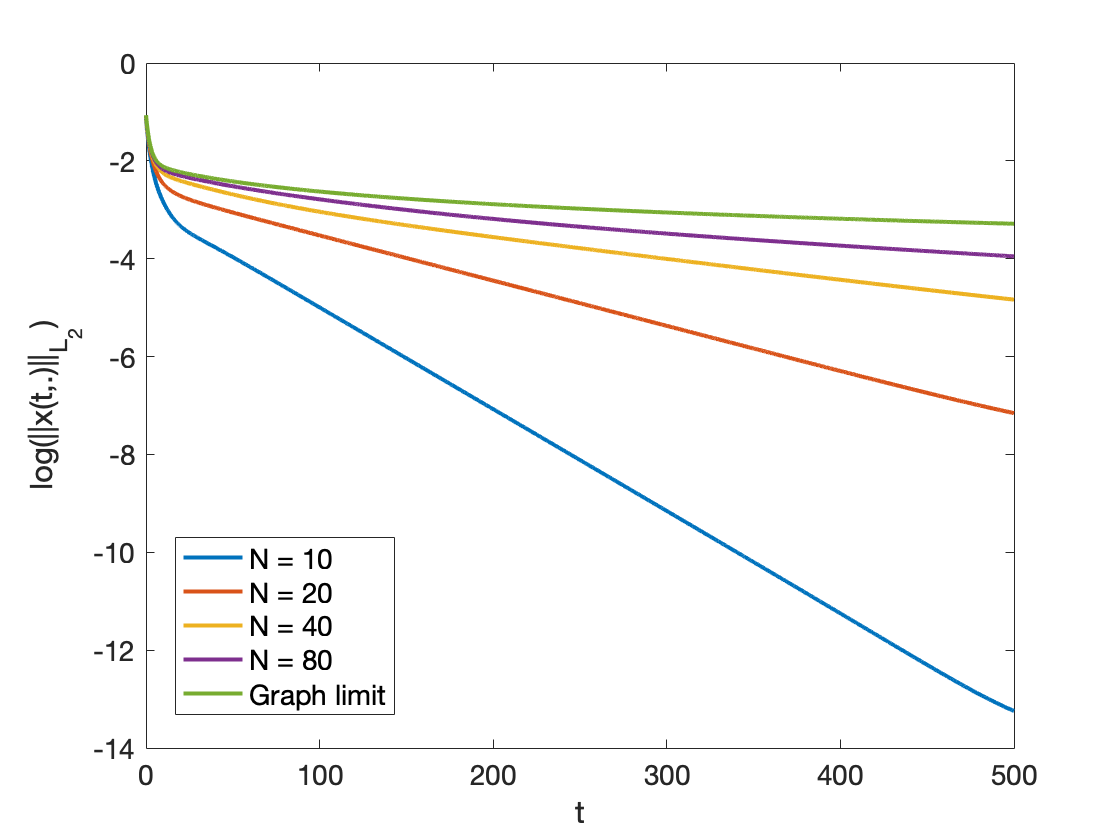}};
\draw (6,0) node {\includegraphics[scale=0.15]{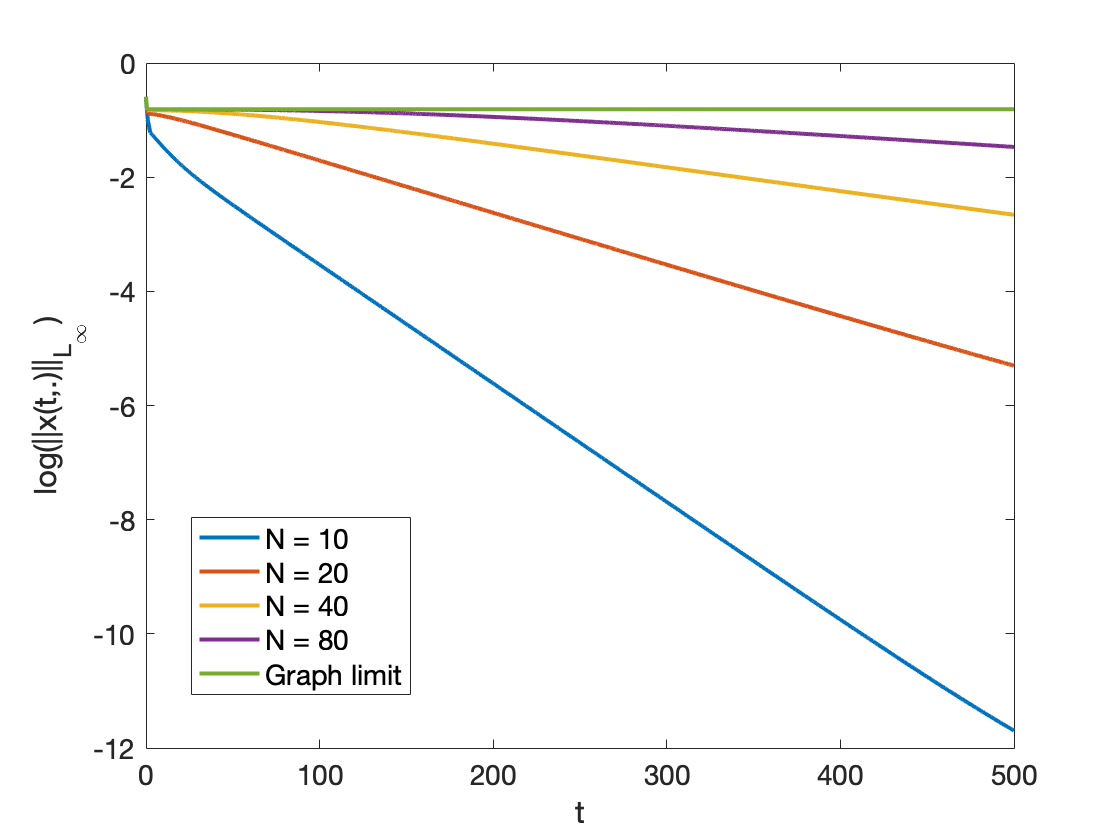}};
\end{tikzpicture}
\caption{{\small \textit{Evolution of the long-time convergence rates to consensus of solutions the discrete systems \eqref{eq:DiscretisedBalancedDyn} with respect to the $L^2$-norm (left) and the $L^{\infty}$-norm (right) as $N$ increases.}}}
\label{fig:ConvergenceRates}
\end{figure}


\smallskip

\begin{flushleft}
{\small{\bf  Acknowledgement.}  The authors wish to thank Christophe Hazard and Emmanuel Trélat for their useful insights and bibliographical suggestions.}
\end{flushleft}


\bibliographystyle{plain}
{\footnotesize
\bibliography{../../ControlWassersteinBib}

\begin{thebibliography}{10}

\bibitem{AmbrosioFuscoPallara}
L.~Ambrosio, N.~Fusco, and D.~Pallara.
\newblock {\em {F}unctions of {B}ounded {V}ariations and {F}ree {D}iscontinuity
  {P}roblems}.
\newblock Oxford Mathematical Monographs, 2000.

\bibitem{Aubin1984}
J.-P. Aubin and A.~Cellina.
\newblock {\em {Differential Inclusions}}.
\newblock Springer-Verlag, 1984.

\bibitem{Aubin1990}
J.-P. Aubin and H.~Frankowska.
\newblock {\em Set-Valued Analysis}.
\newblock Modern Birkh{\"a}user Classics. Birkh{\"a}user Basel, 1990.

\bibitem{Ayi2021}
N.~Ayi and N.~Pouradier~Duteil.
\newblock {Mean-Field and Graphs Limits for Collective Dynamics Models with
  Time-Varying Weights}.
\newblock {\em Journal of Differential Equations}, 299:65--110, 2021.

\bibitem{Ballerini}
M.~Ballerini, N.~Cabibbo, R.~Candelier, et~al.
\newblock {Interaction Ruling Animal Collective Behavior Depends on Topological
  Rather than Metric Distance: Evidence from a Field Study}.
\newblock {\em Proceedings of the National Academy of Sciences},
  105(4):1232--1237, 2008.

\bibitem{BeardRen}
R.W. Beard and W.~Ren.
\newblock {\em {Distributed Consensus in Multi-Vehicle Cooperative Control}}.
\newblock Springer-Verlag, 2008.

\bibitem{Bellomo2013}
N.~Bellomo, M.~A. Herrero, and A.~Tosin.
\newblock {On the Dynamics of Social Conflicts: Looking for the Black Swan}.
\newblock {\em Kinetic \& Related Models}, 6(3):459--479, 2013.

\bibitem{Berliocchi1973}
H.~Berliocchi and J.-M. Lasry.
\newblock {Int\'egrandes Normales et Mesures Param\'etr\'ees en Calcul des
  Variations}.
\newblock {\em Bulletins de la Soci\'et\'e Math\'ematique de France},
  101:129--184, 1973.

\bibitem{Bertozzi2015}
A.L. Bertozzi, T.~Kolokolnikov, H.~Sun, D.~Uminsky, and J.~Von~Brecht.
\newblock {Ring Patterns and their Bifurcations in a Nonlocal Model of
  Biological Swarms}.
\newblock {\em Communications in Mathematical Sciences}, 13(4):2015, 2015.

\bibitem{Biccari2019}
U.~Biccari, K.~Dongnam, and E.~Zuazua.
\newblock {Dynamics and Control for Multi-Agent Networked Systems: A
  Finite-Difference Approach}.
\newblock {\em Mathematical Models and Methods in Applied Sciences},
  29(4):755--790, 2019.

\bibitem{Birnir2007}
B.~Birnir.
\newblock {An ODE Model of the Motion of Pelagic Fish}.
\newblock {\em Journal of Statistical Physics}, 128(1):535--568, 2007.

\bibitem{Blondel2009}
V.D. Blondel, J.~M. Hendrickx, and J.~N. Tsitsiklis.
\newblock {On Krause’s Multi-Agent Consensus Model with State-Dependent
  Connectivity (Extended version)}.
\newblock {\em IEEE Transactions on Automatic Control}, 54(11):2586--2597,
  2009.

\bibitem{Blondel2005}
V.D Blondel, J.M. Hendrickx, A.~Olshevsky, and J.N. Tsitsiklis.
\newblock {Convergence in Multiagent Coordination, Consensus, and Flocking}.
\newblock In {\em Proceedings of the 2005 44th IEEE Conference on Decision and
  Control (CDC)}, pages 2996--3000, 2005.

\bibitem{CSComFail}
B.~Bonnet and \'E. Flayac.
\newblock {Consensus and Flocking under Communication Failures for a Class of
  Cucker-Smale Systems}.
\newblock {\em System and Control Letters}, 152:104930, 10, 2021.

\bibitem{ContInc}
B.~Bonnet and H.~Frankowska.
\newblock {Differential Inclusions in Wasserstein Spaces: The Cauchy-Lipschitz
  Framework}.
\newblock {\em Journal of Differential Equations}, 271:594--637, 2021.

\bibitem{Boudin2022}
L.~Boudin, F.~Salvarini, and E.~Tr\'elat.
\newblock {Exponential Convergence Towards Consensus for Non-Symmetric Linear
  First-Order Systems in Finite and Infinite Dimensions}.
\newblock {\em SIAM Journal on Mathematical Analysis}, 54(3):2727--2752, 2022.

\bibitem{Bullo2009}
F.~Bullo, J.~Cort{\'e}s, and S.~Martines.
\newblock {\em Distributed Control of Robotic Networks}.
\newblock Applied Mathematics. Princeton University Press, 2009.

\bibitem{Caines2019}
P.E. Caines and M.Y. Huang.
\newblock {Graphon Mean Field Games and the GMFG Equations: $\epsilon$-Nash
  Equilibria}.
\newblock In {\em 2019 IEEE 58th Conference on Decision and Control (CDC),
  Nice, France}, pages 286--292, 2019.

\bibitem{Caponigro2013}
M.~Caponigro, M.~Fornasier, B.~Piccoli, and E.~Tr{\'e}lat.
\newblock {Sparse Stabilization and Optimal Control of the Cucker-Smale Model}.
\newblock {\em {Mathematical Control and Related Fields}}, 3(4):447--466, 2013.

\bibitem{Caponigro2015}
M.~Caponigro, M.~Fornasier, B.~Piccoli, and E.~Tr{\'e}lat.
\newblock {Sparse Stabilization and Control of Alignment Models}.
\newblock {\em Mathematical Models and Methods in Applied Sciences}, 25
  (3):521--564, 2015.

\bibitem{Carmona2021}
R.~Carmona, C.V. Cooney, D.B.~Graves, and M.~Lauri\`ere.
\newblock {Stochastic Graphon Games: I. The Static Case}.
\newblock {\em Mathematics of Operations Research}, 47:750--778, 2021.

\bibitem{Carrillo2010}
J.A. Carrillo, M.~Fornasier, J.~Rosado, and G.~Toscani.
\newblock {Asymptotic Flocking for the Kinetic Cucker-Smale Model}.
\newblock {\em SIAM Journal on Mathematical Analysis}, 42(1):218--236, 2010.

\bibitem{Carrillo2021}
J.A. Carrillo, D.~Kalise, F.~Rossi, and E.~Tr\'elat.
\newblock {Controlling Swarming Models Towards Flocks and Mills}.
\newblock {\em arxiv preprint arXiv:2103.07304v3}, 2021.

\bibitem{Choi2014}
Y.-P. Choi, J.A. Carrillo, and M.~Hauray.
\newblock {The Derivation of Swarming Models: Mean-Field Limit and Wasserstein
  Distances}.
\newblock In {\em Collective Dynamics from Bacteria to Crowds, CISM
  International Centre for Mechanical Sciences}, volume 553, pages 1--46.
  Springer, 2014.

\bibitem{Chuang2007}
Y.-L. Chuang, M.R. D'Orsogna, D.~Marthaler, A.L. Bertozzi, and L.S. Chayes.
\newblock {State Transitions and the Continuum Limit for a 2D Interacting,
  Self-Propelled Particle System}.
\newblock {\em Physica D: Nonlinear Phenomena}, 232(1):33--47, 2007.

\bibitem{CPT}
E.~Cristiani, B.~Piccoli, and A.~Tosin.
\newblock {\em {Multiscale Modeling of Pedestrian Dynamics}}, volume~12.
\newblock Springer, 2014.

\bibitem{CS1}
F.~Cucker and S.~Smale.
\newblock {Emergent Behavior in Flocks}.
\newblock {\em IEEE Transactions on Automatic Control}, 52(5):852--862, 2007.

\bibitem{CS2}
F.~Cucker and S.~Smale.
\newblock {On the Mathematics of Emergence}.
\newblock {\em Japanese Journal of Mathematics}, 2(1):197--227, 2007.

\bibitem{Dalmao2011}
F.~Dalmao and E.~Mordecki.
\newblock {Cucker-Smale Flocking under Hierarchical Leadership and Random
  Interactions}.
\newblock {\em SIAM J. Appl. Math.}, 71(4):1307--1316, 2011.

\bibitem{Danskin1967}
J.M. Danskin.
\newblock {\em {The Theory of Max-Min and its Application to Weapons Allocation
  Problems}}, volume~5 of {\em {\"O}konometrie und Unternehmensforschung
  Econometrics and Operations Research}.
\newblock Springer-Verlag Berlin Heidelberg, 1967.

\bibitem{DiestelUhl}
J.~Diestel and J.J.Jr Uhl.
\newblock {\em {V}ector {M}easures}.
\newblock Number~15 in Mathematical Surveys. American Mathematical Society,
  1977.

\bibitem{Engel2001}
K.-J. Engel and R.~Nagel.
\newblock {\em One-Parameter Semigroups for Linear Evolution Equations}, volume
  194 of {\em Graduate Texts in Mathematics}.
\newblock Springer-Verlag, 2001.

\bibitem{EvansGariepy}
L.C. Evans and R.F. Gariepy.
\newblock {\em {M}easure {T}heory and {F}ine {P}roperties of {F}unctions}.
\newblock CRC Press, 1992.

\bibitem{Fiedler1973}
M.~Fiedler.
\newblock {Algebraic Connectivity of Graphs}.
\newblock {\em Czechoslovak Mathematical Journal}, 23(2):298–305, 1973.

\bibitem{Filippov2013}
A.F. Filippov.
\newblock {\em {Differential Equations with Discontinuous Righthand Sides:
  Control Systems}}, volume~18 of {\em Springer Science and Business Media}.
\newblock Springer, 2013.

\bibitem{Flierl1999}
G.~Flierl, D.~Gr\"unbaum, S.~Levin, and D.~Olson.
\newblock {From Individuals to Aggregations: the Interplay between Behavior and
  Physics}.
\newblock {\em Journal of Theoretical Biology}, 196:397–454, 1999.

\bibitem{Gao2019}
S.~Gao and P.E. Caines.
\newblock {Graphon Control of Large-Scale Networks of Linear Systems}.
\newblock {\em IEEE Transaction on Automatic Control}, 65(10):4090--4105, 2019.

\bibitem{HaLiu}
S.-Y. Ha and J.G. Liu.
\newblock {A Simple Proof of the Cucker-Smale Flocking Dynamics and Mean-Field
  Limit}.
\newblock {\em Comm. Math. Sci.}, 7(2):297--325, 2009.

\bibitem{Ha2008}
S.-Y. Ha and E.~Tadmor.
\newblock {From Particle to Kinetic and Hydrodynamic Descriptions of Flocking}.
\newblock {\em Kinetic and Related Models}, 1:415–435, 2008.

\bibitem{Hardt1996}
V.~Hardt and E.~Wagenf\"uhrer.
\newblock {Spectral Properties of a Multiplication Operator}.
\newblock {\em Mathematische Nachrichten}, 178:135--156, 1996.

\bibitem{Hegselmann2002}
R.~Hegselmann and U.~Krause.
\newblock {Opinion Dynamics and Bounded Confidence Models, Analysis, and
  Simulation}.
\newblock {\em Journal of Artificial Societies and Social Simulation}, 5(3),
  2002.

\bibitem{Heinrich1982}
S.~Heinrich and P.~Mankiewicz.
\newblock {Applications of Ultrapowers to the Uniform and Lipschitz
  Classification of Banach Spaces}.
\newblock {\em Studia Mathematica}, 73:225--251, 1982.

\bibitem{Horn1985}
R.A. Horn and C.R. Johnson.
\newblock {\em Matrix Analysis}.
\newblock Cambrigde University Texts. Cambridge, 1985.

\bibitem{Jabin2014}
P.-E. Jabin and S.~Motsch.
\newblock {Clustering and Asymptotic Behavior in Opinion Formation}.
\newblock {\em Journal of Differential Equations}, 257(11):4165–4187, 2014.

\bibitem{Jadbabaie2003}
A.~Jadbabaie, J.~Lin, and A.S. Morse.
\newblock {Coordination of Groups of Mobile AutonomousAgents Using Nearest
  Neighbor Rules}.
\newblock {\em IEEE Transactions in Automatic Control}, 48(6):988 –1001,
  2003.

\bibitem{Krause2000}
U.~Krause.
\newblock {A Discrete Nonlinear and Non-Autonomous Model of Consensus
  Formation}.
\newblock volume 2000, pages 227--236. Gordon and Breach, Amsterdam, 2000.

\bibitem{Lee2018}
B.E. Lee.
\newblock {Consensus and Voting on Large Graphs: An Application of Graph Limit
  Theory}.
\newblock {\em Discrete and Continuous Dynamical Systems - Series A},
  38(4):1719--1744, 2018.

\bibitem{Liberzon2003}
D.~Liberzon.
\newblock {\em Switching in Systems and Control}.
\newblock Systems and Control : Foundations and Applications. Birkha\"user
  Boston, 2003.

\bibitem{Lovasz2012}
L~Lov\'asz.
\newblock {\em Large Networks and Graph Limits}, volume~60 of {\em Colloquium
  Publications}.
\newblock American Mathematical Society, 2012.

\bibitem{Lovasz2006}
L.~Lov\'asz and B.~Szegedy.
\newblock {Limits of Dense Graph Sequences}.
\newblock {\em Journal of Combinatorial Theory, Series B}, 96(6):933--957,
  2006.

\bibitem{Manfredi2016}
S.~Manfredi and D.~Angeli.
\newblock {A Criterion for Exponential Consensus of Time-Varying Non-Monotone
  Nonlinear Networks}.
\newblock {\em IEEE Transactions on Automatic Control}, 62(5):2483 -- 2489,
  2016.

\bibitem{Martin2014}
S.~Martin, A.~Girard, A.~Fazeli, and A.~Jadbabaie.
\newblock {Multi-Agent Flocking under General Communication Rule}.
\newblock {\em IEEE Transactions on Control and Network Systems},
  1(2):155--166, 2014.

\bibitem{MazencMalisoff}
F.~Mazenc and M.~Malisoff.
\newblock {\em {Construction of Strict Lyapunov Functions}}.
\newblock Springer-Verlag, 2009.

\bibitem{MPPD}
S.~McQuade, B.~Piccoli, and N.~Pouradier~Duteil.
\newblock {Social Dynamics Models with Time-Varying Influence}.
\newblock {\em Mathematical Models and Methods in Applied Sciences},
  29(04):681--716, 2019.

\bibitem{Medvedev2014}
G.S. Medvedev.
\newblock {The Nonlinear Heat Equation on Dense Graphs and Graph Limits}.
\newblock {\em SIAM Journal on Mathematical Analysis}, 46:2743--2766, 2014.

\bibitem{Medvedev2014Bis}
G.S. Medvedev.
\newblock {The Nonlinear Heat Equation on W-Random Graphs}.
\newblock {\em Archive for Rational Mechanics and Analysis}, 212:pages
  781–803, 2014.

\bibitem{Egerstedt2010}
M.~Mesbahi and M.~Egerstedt.
\newblock {\em {Graph Theoretic Methods in Multiagent Networks}}.
\newblock 2010.

\bibitem{Mohar1991}
B.~Mohar.
\newblock The laplacian spectrum of graphs.
\newblock In {\em Graph Theory, Combinatorics, and Applications}, volume~2,
  pages 871--898. Wiley, 1991.

\bibitem{Moreau2005}
L.~Moreau.
\newblock {Stability of Multiagent Systems with Time-Dependent Communication
  Links}.
\newblock {\em IEEE Transactions on Automatic Control}, 50(2):169--182, 2005.

\bibitem{Motsch2011}
S.~Motsch and E.~Tadmor.
\newblock {A New Model for Self-Organized Dynamics and its Flocking Behavior}.
\newblock {\em Journal of Statistical Physics}, 144(923):923--947, 2011.

\bibitem{Motsch2014}
S.~Motsch and E.~Tadmor.
\newblock {Heterophilious Dynamics Enhances Consensus}.
\newblock {\em SIAM Review}, 56(4):577--621, 2014.

\bibitem{Golse}
A.~Muntean, J.~Rademacher, and A.~Zagaris.
\newblock {\em Macroscopic and Large Scale Phenomena: Coarse Graining, Mean
  Field Limits and Ergodicity}.
\newblock Springer, 2016.

\bibitem{Narendra1989}
K-S. Narendra and A-M. Annaswami.
\newblock {\em Stable Adaptative Systems}.
\newblock Dover Books on Electrical Engineering. Dover, 1989.

\bibitem{Olfati2004}
R.~Olfati-Saber and R.M. Murray.
\newblock {Consensus Problems in Networks of Agents with Switching Topology and
  Time-Delays}.
\newblock {\em IEEE Transaction on Automatic Control}, 49(9):1520--1533, 2004.

\bibitem{Pedestrian}
B.~Piccoli and F.~Rossi.
\newblock {Transport Equation with Nonlocal Velocity in Wasserstein Spaces :
  Convergence of Numerical Schemes}.
\newblock {\em Acta Applicandae Mathematicae}, 124(1):73--105, 2013.

\bibitem{Piccoli2018}
B.~Piccoli and F.~Rossi.
\newblock {Measure Theoretic Models for Crowd Dynamics}.
\newblock In {\em {Crowd Dynamics Volume 1 - Theory, Models, and Safety
  Problems}}, pages 137--165. Birkhauser, 2018.

\bibitem{ControlKCS}
B.~Piccoli, F.~Rossi, and E.~Tr{\'e}lat.
\newblock {Control to Flocking of the Kinetic Cucker-Smale model}.
\newblock {\em SIAM Journal on Mathematical Analysis}, 47(6):4685--4719, 2015.

\bibitem{ReedIV1978}
M.~Reed and B.~Simon.
\newblock {\em Analysis of Operators}, volume~4 of {\em Methods of Modern
  Mathematical Physics}.
\newblock Academic Press, 1978.

\bibitem{ReedI1981}
M.~Reed and B.~Simon.
\newblock {\em Functional Analysis}, volume~1 of {\em Methods of Modern
  Mathematical Physics}.
\newblock Academic Press, 1981.

\bibitem{Ruiz2020}
L.~Ruiz, L.F.O. Chamon, and A.~Ribeiro.
\newblock {Graphon Neural Networks and the Transferability of Graph Neural
  Networks}.
\newblock In {\em 34th Conference on Neural Information Processing Systems
  (NeurIPS 2020)}, page 1702–1712, 2020.

\bibitem{Seneta1979}
E.~Seneta.
\newblock {Coefficients of Ergodicity: Structure and Applications}.
\newblock {\em Advances in Applied Probability}, 11(3):576--590, 1979.

\bibitem{Shen2008}
J.~Shen.
\newblock {Cucker–Smale Flocking under Hierarchical Leadership}.
\newblock {\em SIAM Journal on Applied Mathematics}, 68(3):694–719, 2008.

\bibitem{TahbazSalehi2008}
A.~Tahbaz-Salehi and A.~Jadbabaie.
\newblock {A Necessary and Sufficient Condition for Consensus Over Random
  Networks}.
\newblock {\em IEEE Transaction on Automatic Control}, 53(3):791--795, 2008.

\bibitem{Tanner2007}
H.G. Tanner, A.~Jadbabaie, and G.J Pappas.
\newblock Flocking in fixed and switching networks.
\newblock {\em IEEE Transactions on Automatic Control}, 52(5):863--868, 2007.

\bibitem{Vicsek1995}
T.~Vicsek, A.~Czir\'ok, E.~Ben-Jacob, I.~Cohen, and O.~Shochet.
\newblock {Novel Type of Phase Transition in a System of Self-Driven
  Particles}.
\newblock {\em Physical Review Letters}, 75(6):1226–1229, 1995.

\bibitem{Vizuete2021}
R.~Vizuete, F.~Garin, and P.~Frasca.
\newblock {The Laplacian Spectrum of Large Graphs Sampled from Graphons}.
\newblock {\em IEEE Transactions on Network Science and Engineering},
  8(2):1711--1721, 2021.

\bibitem{VonLuxburg2008}
U.~von Luxburg, M.~Belkin, and O.~Bousquet.
\newblock {Consistency of Spectral Clustering}.
\newblock {\em The Annals of Statistics}, 36(2):555--586, 2008.

\bibitem{West2001}
D.B. West.
\newblock {\em Introduction to Graph Theory}.
\newblock Prentice Hall, 2001.

\bibitem{Wu2005}
C.W. Wu.
\newblock {Algebraic Connectivity of Directed Graphs}.
\newblock {\em Linear and Multilinear Algebra}, 53(3):203--223, 2005.

\bibitem{Wu2005ter}
C.W. Wu.
\newblock {On Bounds of Extremal Eigenvalues of Irreducible and m-Reducible
  Matrices}.
\newblock {\em Linear Algebra and its Applications}, 402:29–45, 2005.

\bibitem{Wu2005bis}
C.W. Wu.
\newblock {On the Rayleigh-Ritz Ratio of a Generalized Laplacian Matrix of
  Directed Graphs}.
\newblock {\em Linear Algebra and its Applications}, 402:207–227, 2005.

\end{thebibliography}
}

\end{document}